\newtheorem{theorem}{Theorem}[section]
\newtheorem{lemma}[theorem]{Lemma}
\newtheorem{proposition}[theorem]{Proposition}
\newtheorem{corollary}[theorem]{Corollary}
\newtheorem{remark}[theorem]{Remark}
\newtheorem{defi}[theorem]{Definition}
\newtheorem{stw}[theorem]{Proposition}
\newtheorem{question}[theorem]{Question}
\newcommand{\supp}{\operatorname{supp}}
\newcommand{\pref}{\operatorname{pref}}
\newcommand{\suff}{\operatorname{suff}}
\newcommand{\s}{\subseteq}
\newcommand{\arr}{\xrightarrow}
\DeclareMathOperator{\HK}{HK}
\begin{document}

\begin{titlepage}

\title{Combinatorics and structure of Hecke--Kiselman algebras}

\author{J. Okni\'nski, M. Wiertel}
\date{}
\maketitle

\begin{abstract}
Hecke--Kiselman monoids $\HK_{\Theta}$ and their algebras
$K[\HK_{\Theta}]$, over a field $K$, associated to finite oriented
graphs $\Theta$ are studied. In the case $\Theta $ is a cycle of
length $n\geqslant 3$, a hierarchy of certain unexpected
structures of matrix type is discovered within the monoid
$C_n=\HK_{\Theta}$ and it is used to describe the structure and
the properties of the algebra $K[C_n]$. In particular, it is shown
that $K[C_n]$ is a right and left Noetherian algebra, while it has
been known that it is a PI-algebra of Gelfand--Kirillov dimension
one. This is used to characterize all Noetherian algebras
$K[\HK_{\Theta}]$ in terms of the graphs $\Theta$. The strategy of
our approach is based on the crucial role played by submonoids of
the form $C_n$ in combinatorics and structure of arbitrary
Hecke--Kiselman monoids $\HK_{\Theta}$.
\end{abstract}

\vspace{20pt}

\noindent\textbf{2010 Mathematics Subject Classification}: 16S15,
16S36, 16P40, 20M05, 20M25, 20C08, 05C25.

\noindent\textbf{Key words}: Hecke--Kiselman algebra, monoid,
simple graph, reduced words, algebra of matrix type, Noetherian
algebra

\vspace{50pt}

\begin{tabular}{lll}
Jan Okni\'nski & \quad \quad \quad \quad \quad & Magdalena Wiertel \\
okninski@mimuw.edu.pl & & M.Wiertel@mimuw.edu.pl \\
 & & \\
  & & \\
Institute of Mathematics & & \\
University of Warsaw & & \\
Banacha 2 & & \\

02-097 Warsaw, Poland & &
\end{tabular}

\vspace{100pt}

 \noindent {\bf Acknowledgment.} This work was
supported by  grant 2016/23/B/ST1/01045 of the National Science
Centre (Poland).
\end{titlepage}

\section{Introduction} \label{intro}

For an arbitrary finite simple digraph $\Theta$ with $n$ vertices
$\{1, \ldots, n\}$, a finitely generated monoid $\HK_{\Theta}$ was
defined by Ganyushkin and Mazorchuk in \cite{maz} by specifying
generators and relations. Namely,
\begin{itemize}
    \item[(i)] $\HK_{\Theta}$ is generated by elements $ x_i=x_{i}^2$,
    where $1 \leqslant i \leqslant n$,
    \item[(ii)] if the vertices $i$, $j$ are not connected in  $\Theta$,
    then  $x_ix_j = x_jx_i$,
    \item[(iii)] if $i$, $j$ are connected by an arrow $i \to j$ in $\Theta$,
    then $x_ix_jx_i = x_jx_ix_j = x_ix_j$,
        \item[(iv)] if $i$, $j$ are connected by an (unoriented) edge in $\Theta$,
        then $x_ix_jx_i = x_jx_ix_j$.
\end{itemize}

If the graph $\Theta$ is unoriented (has no arrows), the monoid
$\HK_{\Theta}$ is isomorphic to the so-called $0$-Hecke monoid
$H_0(W)$, where $W$ is the Coxeter group of the graph $\Theta$.
Because of its strong connection to the Coxeter group, \cite{tsa},
and to the corresponding Hecke algebra, \cite{mathas}, the latter
monoid plays an important role in representation theory. One of
the reasons for the interest in the Hecke--Kiselman monoids is
that they are natural quotients of the Hecke monoids. It is worth
mentioning that relations of the above types arise also in a
natural way in certain other contexts of representation theory,
\cite{grensing}. If $K$ is a field, then by the Hecke--Kiselman
algebra defined by $\Theta$ we mean the semigroup algebra
$K[\HK_{\Theta}]$. In other words, this is the $K$--algebra
defined by the above presentation. Several combinatorial
properties of the Hecke--Kiselman monoids, and their
representations, have been studied in \cite{type
A},\cite{maz},\cite{OnK}. The aim of this paper is to continue the
study of the algebra $K[\HK_{\Theta}]$ in the case when $\Theta$
is an oriented graph, started in~\cite{mecel_okninski1}. In
particular, a version of growth alternative has been obtained
there and algebras of finite Gelfand--Kirillov dimension have been
characterized. If $\Theta $ is oriented, it is also well known
that $\HK_{\Theta}$ is finite if and only if the graph $\Theta$ is
acyclic. Because of these results, it seems that the structure of
the algebras $K[C_n]$, where $C_n$ is the Hecke--Kiselman monoid
corresponding to the oriented cycle of length $n$, is crucial for
understanding the structure and properties of arbitrary algebras
$K[\HK_{\Theta}]$. This is the starting point for the approach in
the present paper. We propose an entirely new structural approach
to Hecke--Kiselman monoids and their algebras.

We denote by $F$ the free monoid generated by the elements of the
set $X=\{x_1,\ldots, x_n\}$. However, the same notation will be
used for the generators of the monoid $\HK_\Theta$, if
unambiguous. For any words  $w, v\in F$ we say that $w$ is a
factor of $v$ if $v=v_1wv_2$ for some $v_1, v_2\in F$.

Let $C_n$ denote the  Hecke--Kiselman monoid associated to the
oriented cycle of length $n$, namely:   $x_1\rightarrow x_2
\rightarrow \cdots \rightarrow x_n\rightarrow x_1$. We consider
the deg-lex order on $F$ induced by $x_1<x_2<\cdots <x_n$. Let
$|w|_q$ denote the degree of $w\in F$ in the generator $x_q$. The
following result, proved in \cite{mecel_okninski2}, will be
crucial. We refer to \cite{diamond} for basic facts concerning
Gr\"obner bases and the diamond lemma.

\begin{theorem}\label{basisCn}
Let $\Theta = C_n$. Let $S$ be the system of reductions in $F$
consisting of all pairs of the form
\begin{enumerate}
    \item[(1)] $(x_ix_i,x_i)$ for all $i\in\{1,\dotsc,n\}$,
    \item[(2)] $(x_jx_i,x_ix_j)$ for all $i,j\in\{1,\dotsc,n\}$ such that $1<j-i<n-1$,
    \item[(3)] $(x_n(x_1\dotsm x_i)x_j,x_jx_n(x_1\dotsm x_i))$ for all
    $i,j\in\{1,\dotsc,n\}$ such that $i+1<j<n-1$,
    \item[(4)] $(x_iux_i,x_iu)$ for all $i\in\{1,\dotsc,n\}$ and $1\ne u\in F$
    such that $|u|_i=|u|_{i-1}=0$. Here, we
    write  $i-1 = n$ if $i = 1$,
    (we say, for the sake of simplicity, that the word $x_iux_i$ is of type $(4i)$),
    \item[(5)] $(x_ivx_i,vx_i)$ for all $i\in\{1,\dotsc,n\}$ and $1\ne v\in F$
    such that $|v|_i=|v|_{i+1}=0$. Here we write
     $i+1 = 1$ if $i =n$, (and similarly, we say that the word $x_ivx_i$ is of type $(5i)$).
\end{enumerate}
Then the set $\{w - v \mid \text{ for } (w, v) \in S\}$ is a
Gr\"obner basis of the algebra $K[C_n]$.
\end{theorem}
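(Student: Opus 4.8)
The plan is to apply the Diamond Lemma \cite{diamond} to the reduction system $S$, which requires two things: that every reduction strictly decreases the deg-lex order induced by $x_1<\cdots<x_n$, and that all ambiguities of $S$ are resolvable. The first point is immediate. Rules (1), (4) and (5) strictly decrease the total degree, while rules (2) and (3) preserve the degree and decrease the word lexicographically, since in each case the left-hand side begins with a larger letter than the right-hand side ($x_j$ against $x_i$ in (2), and $x_n$ against $x_j$ with $j<n$ in (3)). As deg-lex is a well-order on $F$, the reduction relation is terminating, and the whole force of the theorem lies in confluence.

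Before analysing ambiguities I would confirm that $S$ generates exactly the defining ideal of $K[C_n]$, so that the irreducible words furnished by the Diamond Lemma actually form a basis of $K[C_n]$. In one direction, each pair $(w,v)\in S$ satisfies $w=v$ in $C_n$: rules (1) and (2) are defining relations; rule (3) follows by iterating commutations, since for $i+1<j<n-1$ the index $j$ is non-adjacent to each of $n,1,\dots,i$, so $x_j$ commutes past the whole block $x_n x_1\cdots x_i$; and rules (4), (5) follow by induction on the number of occurrences in $u$ of $x_{i+1}$, respectively in $v$ of $x_{i-1}$, using that every other letter of the middle commutes with $x_i$ together with the arrow relations $x_ix_{i+1}x_i=x_ix_{i+1}$ for (4) and $x_ix_{i-1}x_i=x_{i-1}x_i$ for (5). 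Conversely, the defining relations are consequences of $S$: (i) and (ii) are rules (1) and (2), and the arrow relation $x_ix_{i+1}x_i=x_{i+1}x_ix_{i+1}=x_ix_{i+1}$ holds because both $x_ix_{i+1}x_i$ and $x_{i+1}x_ix_{i+1}$ reduce to $x_ix_{i+1}$ by a single application of rule (4) and of rule (5) respectively.

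The core of the proof is the resolubility of all ambiguities, and I would organise the verification according to which pair of rule-types has overlapping left-hand sides. The subtlety, absent from a finite Gr\"obner basis, is that rules (4) and (5) form infinite parametrised families, so each overlap class is itself a family indexed by the free words $u,v$. The inclusion ambiguities, where a redex of another rule sits inside the free middle, are routine: no reduction introduces a new letter or destroys the support conditions $|u|_i=|u|_{i-1}=0$ and $|v|_i=|v|_{i+1}=0$, so reducing inside the middle and then applying rule (4) or (5) commute, and one checks that the middle never collapses to the empty word. The overlaps at a shared boundary letter carry the real content. I expect the main obstacle to be the overlaps of rule (4) with itself and with rule (5), and above all of rule (3) with rules (4) and (5) sharing the boundary letter $x_n$ or $x_j$: applied for instance to $x_n x_1\cdots x_i x_j u x_j$, the two reduction branches lead to words of visibly different shape, and one must produce a common descendant by a genuine sequence of reductions rather than by a single step.

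I would manage these systematically by first recording a short list of auxiliary reduction facts — that non-adjacent equal-or-ordered letters can be transported past a maximal block $x_n x_1\cdots x_i$ exactly as rule (3) permits, and that the support conditions are stable under all reductions — and then using them to collapse both branches of each boundary overlap to the same irreducible word. Throughout, the constraints $|u|_i=|u|_{i-1}=0$ and $|v|_i=|v|_{i+1}=0$ are precisely what keeps the intermediate words within the scope of rules (4), (5) and of the commutation rules, so the case analysis, while lengthy, is driven entirely by these support conditions. Once every ambiguity is resolved, the Diamond Lemma yields that $\{w-v\mid (w,v)\in S\}$ is a Gr\"obner basis of $K[C_n]$.
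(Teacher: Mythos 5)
The paper does not prove Theorem~\ref{basisCn} at all: it is imported verbatim from \cite{mecel_okninski2}, where the Gr\"obner basis is established. So there is no in-paper argument to compare against, and your proposal has to be judged on its own. Your overall strategy --- termination of $S$ under deg-lex plus resolution of all ambiguities via the Diamond Lemma \cite{diamond}, together with the check that $S$ generates exactly the defining ideal of $K[C_n]$ --- is the correct and indeed the only reasonable route, and your treatment of termination and of the equivalence between $S$ and the defining relations is sound in outline (though the derivation of rules (4) and (5) from the presentation is itself a nontrivial double induction that you only gesture at).

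The genuine gap is that the heart of the theorem, the confluence check, is never carried out. Everything after ``The core of the proof is the resolubility of all ambiguities'' is written in the conditional: you say which overlaps you \emph{expect} to be hard, how you \emph{would} organise them, and what auxiliary facts you \emph{would} record, but no overlap is actually resolved. Since rules (4) and (5) are infinite families parametrised by arbitrary words $u,v$ subject only to support conditions, the boundary overlaps --- (4) with (4), (4) with (5), and (3) with (4)/(5) at the shared letters $x_n$ and $x_j$ --- each require a genuine argument producing a common descendant, typically by a chain of auxiliary reduction lemmas of exactly the kind proved in Section~\ref{form} of this paper for the reduced words themselves. Asserting that ``the case analysis, while lengthy, is driven entirely by these support conditions'' is a plausible heuristic, not a proof; as written, the proposal is a proof plan whose essential content is deferred, and it could not be accepted as a demonstration of the theorem without executing that case analysis (or, as this paper does, citing \cite{mecel_okninski2} for it).
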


\begin{corollary}\label{podstawowy wniosek}
$C_n$ can be identified with the monoid $\mathcal{R}(S)$ of words
in $F$ that are reduced with respect to the system $S$, with the
operation defined for $u, w\in C_n$ by $u\cdot
w=\mathcal{R}_S(uw)$, where $\mathcal{R}_S(uw)$ is the
$S$--reduced form of the word $uw$. More precisely,
$\mathcal{R}(S)$ is the set of words in $F$ that do not have
factors of the form $w_{\sigma}$, where $\sigma=(w_{\sigma},
v_{\sigma})\in S$.
\end{corollary}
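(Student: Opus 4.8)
The plan is to deduce the statement directly from Theorem~\ref{basisCn} by an application of the diamond lemma, the two key points being that the rewriting system $S$ is terminating and confluent, so that every word of $F$ has a unique $S$-reduced form.

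First I would check that $S$ is terminating, i.e. that the one-step reduction relation on $F$ determined by $S$ is noetherian. For this it suffices to verify that in each of the five families the right-hand side $v_\sigma$ is strictly smaller than the left-hand side $w_\sigma$ in the deg-lex order: reductions of types $(1)$, $(4)$, $(5)$ strictly decrease the length, while those of types $(2)$ and $(3)$ preserve the length but strictly decrease the word in the lexicographic order (in type $(2)$, since $i<j$, the smaller letter $x_i$ is moved to the front, and in type $(3)$, since $j<n$, the letter $x_j$ is moved before $x_n$). As deg-lex is a well-order on $F$ compatible with concatenation, no infinite descending sequence of reductions exists, so the reduction process always halts.

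Next, confluence. Theorem~\ref{basisCn} asserts that $\{w-v\mid (w,v)\in S\}$ is a Gr\"obner basis of $K[C_n]$; by the diamond lemma this is precisely the statement that all overlap ambiguities between the left-hand sides $w_\sigma$ resolve, that is, $S$ is locally confluent, hence (by termination) globally confluent. Combining termination and confluence, the reduction applied to any $w\in F$ leads to a \emph{unique} reduced word; this defines a function $\mathcal{R}_S\colon F\to\mathcal{R}(S)$ whose image is exactly $\mathcal{R}(S)$, the set of words having no factor of the form $w_\sigma$.

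Finally I would transport the monoid structure. Since the ideal generated by the differences $w_\sigma-v_\sigma$ is the defining ideal of $K[C_n]$, each single reduction leaves unchanged the image of a word in $C_n$; hence $w$ and $\mathcal{R}_S(w)$ represent the same element of $C_n$. The Gr\"obner basis property moreover guarantees that the reduced words are $K$-linearly independent in $K[C_n]$, so distinct reduced words represent distinct elements, and the canonical map $\mathcal{R}(S)\to C_n$ is a bijection. Transporting the multiplication of $C_n$ through this bijection yields, for reduced $u,w$, the product $u\cdot w=\mathcal{R}_S(uw)$, because in $C_n$ the class of $uw$ coincides with the class of its reduced form. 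I expect the only genuinely non-routine ingredient to be the confluence assertion, which is however exactly the content of Theorem~\ref{basisCn}; the remaining steps are a bookkeeping application of the diamond lemma.
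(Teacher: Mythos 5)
Your proposal is correct and follows exactly the route the paper intends: the corollary is stated without proof as an immediate consequence of Theorem~\ref{basisCn} via the diamond lemma of \cite{diamond}, and your termination check (types (1), (4), (5) drop the length; types (2), (3) preserve length but decrease the deg-lex order) together with the confluence supplied by the Gr\"obner basis property is precisely the standard bookkeeping that justifies it. Nothing is missing.
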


For $w,v\in F$, we write $w\arr{(\eta)} v$ in case $w=uw_\sigma z,
v=uv_\sigma z$ for some $u,z\in F$ and an element $(w_{\sigma},
v_{\sigma})$ of the set $S$ of reductions of type ($\eta$). Here
$(\eta)$ may be one of: ($1$) -- ($5$), or even more explicitly
($4i$) or ($5i$), for some $i$. More generally, $w\arr{(\eta)} v$
may also denote a sequence of consecutive reductions of type
($\eta$). If clear from the context, $w\rightarrow v$ will denote
an unspecified sequence of reductions.

Let $K\langle X\rangle=K\langle x_1,\ldots, x_n\rangle$ be the
free algebra over a field $K$.  The length of a word $w\in F$ is
denoted by $|w|$. By $\suff_m(w)$ ($\pref_m(w)$, respectively) we
mean the suffix (prefix, respectively) of length $m$ in $w$. For
every subset $Z\s F$ by $\suff(Z)$ ($\pref(Z)$, respectively) we
denote the set of all suffixes (prefixes, respectively) of
elements of $Z$. If $w\in F$ then $w^{\infty}$ denotes the
infinite word  $ww w\ldots$. For a subset $Z$ of a monoid $M$ by
$\langle Z \rangle$ we denote the submonoid generated by $Z$. If
$Z=\{w\}$, then we also write $\langle w\rangle$. If $S$ is a
semigroup, then $S^1$ stands for the monoid obtained by adjoining
an identity element to $S$. Also, $S^{0}$ denotes the semigroup
$S$ with zero adjoined. The support $\supp(\alpha)$ of an element
$\alpha =\sum_{i}m_is_i$, where $m_i\in K,s_i\in M$, of the
semigroup algebra $K[M]$ is defined as the set of all $s_{i}$ such
that $m_i\neq 0$. If $M$ has a zero element $\theta$ then the
contracted semigroup algebra $K_{0}[M]$ is defined as the factor
algebra $K[M]/K\theta$.

If $S$ is a semigroup, $A,B$ are nonempty sets and $P=(p_{ba})$ is
a $B\times A$ - matrix with entries in $S^{0}$, then the semigroup
of matrix type ${\mathcal M}^{0}(S,A,B;P)$ over $S$ is the set of
all triples $(s,a,b)$, where $s\in S, a\in A, b\in B$, with the
zero element $\theta$, with operation $(s,a,b)(s',a',b')=
(sp_{ba'}s',a,b')$ if $p_{ba'}\in S$ and $\theta $ otherwise.
Moreover ${\mathcal M}^{0}(K[S],A,B;P)$ denotes the corresponding
algebra of matrix type. It is defined as $K_{0}[{\mathcal
M}^{0}(S,A,B;P)]$ and (if $A,B$ are finite) it can be interpreted
as the set of all $A\times B$ - matrices over $K[S]$ with
operation $\alpha \beta = \alpha \circ P\circ \beta $, where
$\circ$ stands for the standard matrix product. This construction
plays a fundamental role in the structure and representations of
semigroup algebras. We refer to \cite{semalg}, Chapter~5, for
basic results. Structures of these types are crucial for the
approach and the results of this paper.

The material is organized as follows. In Section~\ref{form} we
derive the reduced form of almost all words in $F$ representing
the elements of the monoid $C_n$. The main results are stated in
Theorem~\ref{tw1} and Proposition~\ref{stw1}. In
Section~\ref{ideal} a natural ideal chain $I_{n-3}\subseteq
I_{n-4} \subseteq \cdots \subseteq I_{0}\subseteq I_{-1}$ of $C_n$
is introduced. And it is shown that all factors $I_{j-1}/I_{j}$,
and $(I_{n-3})^{0}$, have (modulo finitely many elements) a
structure of a semigroup $M_{j}$, $j=0,1,\ldots, n-2$, of matrix
type over an infinite cyclic semigroup. Using certain natural
involutions on $C_n$ (Definition~\ref{sigma-def}) it is shown that
these matrix structures have a very regular form, see
Corollary~\ref{wn3}, and Remark~\ref{symmetric}. The results of
this section are summarized in Corollary~\ref{struktura},  which
provides a very transparent structural tool for approaching the
monoids $C_n$ and their algebras. In Section~\ref{examples} these
results are illustrated with the (misleadingly simple) cases of
the monoids $C_n$ for $n=3$ and $n=4$. In
Section~\ref{Noetherian-chapter} it is first shown in
Theorem~\ref{semiprime} that all algebras of matrix type resulting
from the semigroups $M_j$ are prime. Then, in
Theorem~\ref{noether-cycle}, this is used to prove that $K[C_n]$
is a right and left Noetherian algebra. Existence of an embedding
of $K[C_n]$ into a matrix ring over a field follows, which is of
interest in the context of several earlier results on faithful
matrix representations of various classes of Hecke--Kiselman
monoids, \cite{type A},\cite{maz},\cite{OnK}.
Section~\ref{Noetherian-chapter} culminates with an important
application of the approach developed in this paper. Namely, a
characterization of all oriented graphs $\Theta$ such that the
corresponding Hecke--Kiselman algebra $K[\HK_{\Theta}]$ is
Noetherian, Theorem~\ref{Noetherian}. We prove that the latter is
equivalent to saying that each of the connected components of
$\Theta$ is either an oriented cycle or an acyclic graph. We
conclude with some open problems.

\section{The form of (almost all) reduced words in $C_n$}\label{form}

The main aim of this section is to prove that all elements of the
monoid $C_n$, except for finitely many words, have a very special
reduced form (with respect to the deg-lex order and the reduction
system $S$ introduced in Theorem~\ref{basisCn}). This will be the
key to describe the structure and properties of $C_n$ in the next
sections. Because, in view of Corollary~\ref{podstawowy wniosek},
we may identify the elements of $C_n$ with the reduced words in
$F$.

We adopt the following notation. If $i,j\in \{1,\ldots, n\}$ then
$x_i \cdots x_j$ denotes the product of all consecutive generators
from $x_i$ up to $x_j$ if $ i<j$, or down to $x_{j}$, if $i
> j$.

Let $q_i=x_1\cdots x_ix_{n-1}\cdots x_{i+1}\in F$, for
$i=0,\ldots, n-2$. Here we agree that $q_0 = x_{n-1} \cdots x_1$.
From Corollary~\ref{podstawowy wniosek} it follows that the word
$(x_nq_i)^k$ is reduced for every $k\geqslant0$.

 For every $i=0,\ldots, n-2$ we define two subsets $A_i$ and $B_i$ of $F$, as
 follows. First,
         \begin{equation*}
    A_i=\suff(\{(x_{k_s}\cdots x_s)(x_{k_{s+1}}\cdots x_{s+1})\cdots(x_{k_{i+1}}\cdots x_{i+1})\}),
    \end{equation*}
    where $s\in\{0, \ldots, i+1\}$, $k_{s+1}<k_{s+2}<\cdots <k_{i+1}\leqslant n-1$,
    $k_s\leqslant s$ and $k_q> q$ for $q=s+1, \ldots, i+1$.
    \\The convention is that the subset of $A_i$ corresponding to $s=i+1$ has the form
        $\suff(\{x_{k_{i+1}}\cdots x_{i+1}\})$, where $k_{i+1}\leqslant i+1$.
    Also, if $s=0$ then the corresponding subset of $A_i$  has the
    form\\
    $\suff (\{(x_{k_{s+1}}\cdots x_{s+1})\cdots(x_{k_{i+1}}\cdots x_{i+1})\})$,
    where $k_{s+1}<k_{s+2}<\cdots <k_{i+1}\leqslant n-1$ and $k_q> q$ for $q=s+1, \ldots, i+1$.\\
    The set $B_i$ is defined by
    \begin{equation*}
    B_i=\pref (\{x_n(x_{1}\cdots x_{i_1}x_{n-1}\cdots x_{j_1})
    \cdots x_n(x_{1}\cdots x_{i_r}x_{n-1}\cdots x_{j_r})x_nx_{n-1}\cdots x_{j_{r+1}}\}),
    \end{equation*}
    where $r\geqslant 0$, $i_{r}<i_{r-1}<\cdots< i_1<i+1$ and $i+1<j_1<j_2<\cdots <j_{r+1}\leqslant n$. \\
     Here, the subset of $B_i$ corresponding to $r=0$ has the form $\pref(\{x_nx_{n-1}\cdots x_{j_1}\})$.

The following result characterizes all reduced words that have a
factor of the form $x_nq_i$.

\begin{theorem}\label{tw1}
    \label{tw}
     Assume that $w$ is a reduced word that contains a factor of the form
    $x_nx_1\cdots x_i x_{n-1}\cdots x_{i+1}$ for some $i=0,\ldots, n-2$. Then
    $$w=a(x_nx_1\cdots x_i x_{n-1}\cdots x_{i+1})^kb$$
    for some $a\in A_i$, $b\in B_i$ and some $k\geq 1$.
    Moreover, all words of this type are reduced.
\end{theorem}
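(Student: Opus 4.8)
The plan is to read off the structure of $w$ by analysing, letter by letter, what can legally sit to the right and to the left of a fixed occurrence of the length-$n$ factor $F_i := x_nq_i = x_n(x_1\cdots x_i)(x_{n-1}\cdots x_{i+1})$, which uses each of $x_1,\dots,x_n$ exactly once. The starting point is a complete description of admissible pairs of consecutive letters in a reduced word: a pair $x_ax_b$ is allowed precisely when $a<b$, or $a=b+1$, or $(a,b)=(n,1)$ (the two cyclic descents by one), since every other descent $x_ax_b$ with $2\le a-b\le n-2$ is forbidden by reduction $(2)$ and $a=b$ is forbidden by $(1)$. These length-$2$ rules only constrain adjacency; the real forcing comes from the long reductions $(4)$ and $(5)$, which I would invoke through the following principle: if a generator $x_t$ has already occurred once in a block and the intervening letters omit both $x_t$ and the relevant neighbour ($x_{t-1}$ for type $(4t)$, $x_{t+1}$ for type $(5t)$), then a second $x_t$ cannot be appended without creating a forbidden factor.

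First I would establish right-propagation. The factor $F_i$ ends in $x_{i+1}$, and $q_i$ contains each of $x_1,\dots,x_i$ and each of $x_{i+1},\dots,x_{n-1}$ exactly once. For $i\ge1$, appending $x_i$ would complete a type-$(4i)$ factor against the $x_i$ of the ascending run, since the letters in between, namely $x_{n-1}\cdots x_{i+1}$, contain neither $x_i$ nor $x_{i-1}$; and appending any $x_t$ with $i+2\le t\le n-1$ would complete a type-$(5t)$ factor against the $x_t$ of the descending run, since the letters in between contain neither $x_t$ nor $x_{t+1}$. As these exhaust all locally admissible successors except $x_n$, the only possible continuation is $x_n$. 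Iterating past $x_n$, whose only admissible successors are $x_1$ and $x_{n-1}$, I would show that the word either reopens a full period $F_i$ or enters a tapering regime in which each successive block $x_1\cdots x_{i_s}x_{n-1}\cdots x_{j_s}$ is strictly shorter than its predecessor, i.e. $i_1>i_2>\cdots$ and $j_1<j_2<\cdots$, this monotonicity being forced once more by type-$(4)$ and type-$(5)$ reductions that block any attempt to lengthen a run after it has been truncated. Taking $k$ maximal so that the periodic block $(F_i)^k$ is as long as possible, the remaining right part is exactly an element of $B_i$; the boundary conditions $i_1<i+1$ and $j_1>i+1$ reflect precisely that, by maximality of $k$, the first post-period block is strictly shorter than $F_i$.

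Left-propagation I would obtain from right-propagation by symmetry rather than by repeating the argument. Reversing a word and relabelling the generators by a suitable reflection is an anti-automorphism of $C_n$ that preserves the reduction system of Theorem~\ref{basisCn}, interchanging ascending and descending runs and hence the roles of prefixes and suffixes. Under this map the already-proved right-hand statement transforms into the assertion that the prefix preceding $(F_i)^k$ is a suffix of the prescribed shape, that is, lies in $A_i$ for the matching index. Assembling the two halves yields $w=a(F_i)^kb$ with $a\in A_i$, $b\in B_i$ and $k\ge1$.

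It remains to prove the converse (\emph{moreover}) claim that every word $a(F_i)^kb$ of this shape is reduced. Inside $(F_i)^k$ this is already known, since $(x_nq_i)^k$ is reduced; and $a$, $b$ are reduced because the defining words of $A_i$ and $B_i$ are readily seen to be reduced, whence so are their factors. The work is therefore concentrated at the two junctions $a\,|\,F_i$ and $F_i\,|\,b$, where one checks directly that the monotonicity built into $A_i$ and $B_i$ prevents any type-$(4)$ or type-$(5)$ factor from spanning the boundary. I expect this junction verification, together with the monotonicity step in the forcing argument, to be the main obstacle: both require tracking, across block boundaries, exactly which generators have appeared since the last occurrence of a given $x_t$, since this is what determines whether a long reduction is triggered. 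The definitions of $A_i$ and $B_i$ are tailored so that this bookkeeping closes up, but confirming that they capture all admissible boundaries and nothing more is the delicate heart of the proof.
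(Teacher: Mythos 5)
Your right-propagation argument is sound and is essentially the route the paper takes: the classification of admissible adjacent pairs, the forcing of $x_n$ after $x_nq_i$, and the periodic-versus-tapering dichotomy for the blocks $x_n(x_1\cdots x_{i_s}x_{n-1}\cdots x_{j_s})$ correspond to Lemmas~\ref{lem1}--\ref{lem6} and Corollary~\ref{wn1} (note only that the monotonicity is not strict throughout: by Lemma~\ref{lem6} consecutive blocks coincide exactly as long as $i_s+1=j_s$, and strictness is forced only after the first block with $i_l+1<j_l$; your sketch acknowledges this but the bookkeeping there is exactly the content of Corollary~\ref{wn1}).

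The genuine gap is the left-propagation step. You claim that reversal composed with the reflection $x_i\mapsto x_{n-i}$ is an anti-automorphism ``that preserves the reduction system of Theorem~\ref{basisCn}''. It is an anti-automorphism of $C_n$ (this is the involution $\tau$ of Definition~\ref{involut}), but it does \emph{not} preserve the reduction system, because the system is built on the deg-lex order for $x_1<\cdots<x_n$, which is not reflection-invariant. Concretely, the type-(3) leading term $x_n(x_1\cdots x_i)x_j$ is sent to $x_{n-j}x_{n-i}x_{n-i+1}\cdots x_{n-1}x_n$, which is a \emph{reduced} word (for $n=6$, $x_6x_1x_4$ is reducible while its image $x_2x_5x_6$ is reduced). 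Hence the set of $S$-reduced words is not stable under the involution, and you cannot transport the suffix classification to a prefix classification by simply applying the map letterwise: the image of a reduced word $a(x_nq_i)^kb$ must itself be re-reduced, and one must then show that the re-reduced image of an element of $B_i$ lands exactly in the explicitly defined set $A_i$ (whose elements, unlike those of $B_i$, contain no $x_n$ at all). That identification is precisely Corollary~\ref{wn4}, which the paper proves \emph{after} and \emph{using} Theorem~\ref{tw1}, with a separate argument controlling which generators can move during re-reduction; invoking it here is circular. The paper instead classifies the prefixes directly by an independent chain of lemmas (Lemmas~\ref{lem7}--\ref{lem10}) analysing words $u$ with $|u|_n=0$ to the left of $x_nq_i$. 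To repair your proof you would either have to carry out that direct analysis, or first prove (without the theorem) that the reduced form of $\chi_i(b)$ for $b\in B_i$ is an element of $A_i$ and that this map is onto $A_i$ --- which is comparable in length to the direct argument. Your treatment of the ``moreover'' clause is fine; the paper obtains it immediately from Corollary~\ref{podstawowy wniosek}.
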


 We will use the following convention. By \textit{a block}
we mean a factor of the form $x_{k_{j}}\cdots x_{j}$, for some
$j\in\{s, \ldots, i+1\}$, appearing in the elements of the set
$A_i$ or a factor of the form $x_n(x_{1}\cdots
x_{i_k}x_{n-1}\cdots x_{j_k})$, for $k\in\{1, \ldots, r\}$, or
$x_nx_{n-1}\cdots x_{j_{r+1}}$, appearing in the elements of
$B_i$.

The main idea of the proof of Theorem~\ref{tw1} is to analyze the
possible forms of reduced words that satisfy certain additional
restrictions. The proof will be preceded by a series of technical
lemmas.
\begin{lemma}\label{lem1}
    If $w=x_{n-1}u$ is a reduced word, where $u\in F$ is such that $|u|_{n}=0$,
    then $w=x_{n-1}\cdots x_{k}$ for some $k\geqslant 1$.
\end{lemma}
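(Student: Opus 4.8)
The plan is to exploit the characterization of reduced words from Corollary~\ref{podstawowy wniosek}: $w$ is reduced precisely when it contains no factor equal to the left-hand side $w_\sigma$ of any reduction in $S$. Write $w=x_{b_0}x_{b_1}\cdots x_{b_m}$ with $b_0=n-1$ and all $b_t\in\{1,\ldots,n-1\}$ (this last point uses $|w|_n=0$). I claim $w$ is forced to be the strictly descending run $b_t=n-1-t$, which is exactly the assertion $w=x_{n-1}\cdots x_k$.

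First I would analyze a single consecutive pair $x_{b_{t-1}}x_{b_t}$ using reductions (1) and (2). Reduction (1) gives $b_t\neq b_{t-1}$. Reduction (2), whose redex is $x_jx_i$ with $1<j-i<n-1$, forbids any strict decrease by at least $2$: indeed, if $b_{t-1}>b_t$ then $b_{t-1}-b_t\leqslant(n-1)-1=n-2<n-1$, so $b_{t-1}-b_t\geqslant 2$ would make $x_{b_{t-1}}x_{b_t}$ a redex. Hence, for each $t$, either $b_t=b_{t-1}-1$ (a descent by one) or $b_t>b_{t-1}$ (an increase).

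The heart of the argument is a \emph{first deviation} step. Suppose the claim fails and let $t\geqslant 1$ be the least index with $b_t\neq b_{t-1}-1$; then by the previous paragraph $b_t>b_{t-1}$, and for $s<t$ the prefix is the exact descent $x_{n-1}x_{n-2}\cdots x_{n-t}$, so $b_{t-1}=n-t$. Since $b_t\leqslant n-1$ and $b_t>n-t$, the index $b_t$ lies in $\{n-t+1,\ldots,n-1\}$ and therefore already occurred inside this descending prefix. Reading from that earlier occurrence, $w$ contains the factor $x_{b_t}(x_{b_t-1}x_{b_t-2}\cdots x_{n-t})x_{b_t}$. Setting $v=x_{b_t-1}\cdots x_{n-t}$ we have $v\neq 1$, $|v|_{b_t}=0$ and $|v|_{b_t+1}=0$, so $x_{b_t}vx_{b_t}$ is precisely a redex of type $(5i)$ with $i=b_t$. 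This contradicts the assumption that $w$ is reduced. (The case $t=1$ cannot even produce an increase, since $b_1>n-1$ is impossible.)

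Consequently no deviation occurs, the word descends by one at each step, and $w=x_{n-1}x_{n-2}\cdots x_k$ for some $k\geqslant 1$. The main obstacle — and the only genuinely nontrivial point — is recognizing that an upward jump forces a repeated letter $x_{b_t}$ enclosing a block containing neither $x_{b_t}$ nor its cyclic successor $x_{b_t+1}$, i.e.\ producing a type-$(5)$ redex; the elimination of large downward jumps via reduction (2) and the bookkeeping of indices are routine.
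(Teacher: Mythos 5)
Your proof is correct and follows essentially the same route as the paper's: both arguments rule out a downward jump of size at least two via the type-(2) redex $x_jx_i$, and rule out a non-descent by locating the earlier occurrence of the repeated generator in the descending prefix, which produces a type-(5) redex $x_ivx_i$ with $|v|_i=|v|_{i+1}=0$. The only difference is organizational (you isolate a ``first deviation,'' while the paper examines the letter following the maximal descending prefix and then separately checks that nothing can follow $x_{n-1}\cdots x_1$), so the two proofs are the same in substance.
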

\begin{proof} Let $w=x_{n-1}\cdots x_tu'$ for some $n-1\geqslant t>1$
and $\pref_1(u')=x_s$. If $s<t-1$, then the word $x_{n-1}\cdots
x_tx_s$ has a factor $x_tx_s$, with  $s<t-1$, so it is not
reduced. If $n> s \geqslant t$, then the word $x_{n-1}\cdots
x_tx_s$ has a factor $x_svx_s$, where $|v|_{s+1}=0$, whence it is
not reduced. Therefore, we get that $s=t-1$ and $w$ is a prefix of
the word of the form $x_{n-1}\cdots x_1 v$ for some $v$. Notice
that for every $k\neq n$ the word $x_{n-1}\cdots x_1x_k$ has a
factor $x_{k}zx_k$, where $|z|_{k+1}=0$, so it is not reduced. It
follows that $v$ must be the empty word. The assertion follows.
\end{proof}
\begin{lemma}\label{lem2}
    If  $w=x_{n}x_1u$ is a reduced word, where  $|u|_n=0$, then $w$ is of one of the forms
    \begin{enumerate}
        \item $w=x_nx_1x_{2}\cdots x_{i}x_{n-1}\cdots x_{j}$ for some $1\leqslant i<j\leqslant n-1$;
        \item $w=x_nx_1x_2\cdots x_{i}$ for some $1\leqslant i<n-1$.
    \end{enumerate}
\end{lemma}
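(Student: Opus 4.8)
The plan is to read $w=x_nx_1u$ from left to right and, at each position, use the hypothesis that $w$ is reduced to pin down the only admissible next letter. Throughout I would repeatedly invoke four elementary consequences of the system $S$: by (1) no letter is immediately repeated; by (2) a descent $x_ax_b$ with $a>b$ can occur only when $a-b=1$ or $a-b=n-1$, so since $|u|_n=0$ every descent inside $u$ drops the index by exactly $1$; and by the relations of type $(4m)$ and $(5m)$, between two consecutive occurrences of a letter $x_m$ there must appear both $x_{m-1}$ and $x_{m+1}$ (indices read cyclically, so $x_0=x_n$). A first application: two copies of $x_1$ would enclose a factor lying inside $x_1u$ and hence containing no $x_0=x_n$, so $(41)$ would reduce it; thus $x_1$ occurs exactly once, as does $x_n$.

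Next I would isolate the \emph{ascending phase}. Let $x_nx_1x_2\cdots x_i$ be the longest prefix of $w$ whose indices increase by $1$, and examine the letter $x_j$ following $x_i$ (if any). A value $j<i$ re-enters the run and produces the factor $x_j(x_{j+1}\cdots x_i)x_j$, which contains no $x_{j-1}$ and is therefore reducible by $(4j)$; the value $j=i$ is excluded by (1), and $j=i+1$ by maximality. For a forward jump $j>i+1$, relation (3)---applicable precisely because the unique $x_n$ sits at the front, so $x_n(x_1\cdots x_i)x_j$ is a genuine factor---forbids every $j$ with $i+1<j<n-1$, leaving only $j=n-1$. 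Hence after $x_i$ the word either stops, giving the second form (or, when $i=n-1$, the word $x_nx_1\cdots x_{n-1}$, which is the first form with $i=n-2$, $j=n-1$), or it jumps to $x_{n-1}$ and enters a descending phase.

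Finally I would analyse the \emph{descending phase} starting at $x_{n-1}$. An ascent within it, from some $x_m$ to $x_p$ with $p>m$, repeats $x_p$ and yields the factor $x_p(x_{p-1}\cdots x_m)x_p$, which contains no $x_{p+1}$ and is reducible by $(5p)$; a gap descent is killed by (2); so the indices must fall by exactly $1$, giving $x_{n-1}x_{n-2}\cdots x_j$. This run cannot reach the ascending part, since hitting $x_i$ would create $x_i(x_{n-1}\cdots x_{i+1})x_i$, reducible by $(4i)$; hence $j>i$. The same turn-around and gap arguments force the word to terminate at $x_j$, and we obtain exactly the first form. I expect the delicate point to be the forward-jump step: it is precisely relation (3) that rules out all intermediate targets and leaves the single jump to $x_{n-1}$, and one must also check the boundary overlaps (small $i$, $i$ close to $n-1$, and the degenerate case $u=1$ where $w=x_nx_1$) so that the two phases assemble into the stated forms with $i<j$; the remaining verifications are routine bookkeeping with (1), (2), $(4m)$ and $(5m)$.
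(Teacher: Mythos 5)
Your proof is correct and follows essentially the same route as the paper: analyze the letter following the maximal ascending prefix $x_nx_1\cdots x_i$, eliminate all possibilities except $x_{n-1}$ (or termination) via reductions of types (1), (3) and (4), then force the tail to be a strictly descending run ending above $x_i$. The only cosmetic difference is that you re-derive the descending-run argument inline, where the paper simply invokes its Lemma~\ref{lem1}.
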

\begin{proof}
Let $w=x_nx_1\cdots x_ku'$ for some $n-1>k\geqslant 1$ and let
$\pref_1(u')=x_s$ for some $s<n$. If $s<k$, then $w$ has a factor
$x_svx_s$, with $|v|_{s-1}=0$. On the other hand, if $n-1>s>k+1$
then we get a factor of the form $x_nx_1\cdots
x_kx_s\arr{(3)}x_sx_nx_1\cdots x_k$. It follows that
$\pref_1(u)\in\{x_{k+1}, x_{n-1}\}$. This means that
$w=x_nx_1\cdots x_ix_{n-1}v$ for some $i$ and some $v$ or
$w=x_nx_1\cdots x_i$ for some $i<n-1$. In the former case
Lemma~\ref{lem1} implies that $w=x_nx_1\cdots
x_ix_{n-1}x_{n-2}\cdots x_k$ for some $k<n$. If  $i\geqslant k$,
then $w$ has a factor $x_ix_{n-1}\cdots x_{i}$ and $|x_{n-1}\cdots
x_{i+1}|_{i-1}=0$, so $w$ is not a reduced word.
\end{proof}
\begin{lemma}\label{lem3}
    If $w=x_nux_n$ is a reduced word, where  $|u|_{n}=0$, then $u$ is of one of the
    forms
    \begin{enumerate}
        \item $u=x_{n-1}\cdots x_1$;
        \item $u=x_1\cdots x_ix_{n-1}\cdots x_j$ for $1\leqslant i<j\leqslant n-1$.
    \end{enumerate}
\end{lemma}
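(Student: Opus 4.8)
The plan is to combine the one-sided boundary reductions involving $x_n$ with Lemmas~\ref{lem1} and~\ref{lem2}, which already describe the shape of reduced words that begin with $x_{n-1}$ or with $x_nx_1$. Throughout I use that every factor of a reduced word is again reduced. First I would record two divisibility constraints that reducedness of $w=x_nux_n$ forces on $u$. Since $w\neq x_nx_n$ (otherwise reduction $(1)$ applies) we have $u\neq 1$. If $|u|_{n-1}=0$, then, together with the hypothesis $|u|_n=0$, the word $w$ is of type $(4n)$ and reduces; hence $|u|_{n-1}\geqslant 1$. Symmetrically, if $|u|_1=0$, then (using the convention $i+1=1$ for $i=n$) the word $w$ is of type $(5n)$ and reduces; hence $|u|_1\geqslant 1$. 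Thus $u$ must contain both $x_1$ and $x_{n-1}$.

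Next I would determine the first letter of $u$. Write $\pref_1(u)=x_s$, where $1\leqslant s\leqslant n-1$ because $|u|_n=0$. If $2\leqslant s\leqslant n-2$, then $x_nx_s$ is a factor of $w$ and, since $1<n-s<n-1$, reduction $(2)$ applies, contradicting reducedness. Therefore $s\in\{1,n-1\}$, and I split into two cases. If $u$ begins with $x_{n-1}$, then $u$ itself is a reduced word of the form $x_{n-1}u'$ with $|u'|_n=0$, so Lemma~\ref{lem1} yields $u=x_{n-1}x_{n-2}\cdots x_k$ for some $k\geqslant 1$; the constraint $|u|_1\geqslant 1$ forces $k=1$, giving $u=x_{n-1}\cdots x_1$, which is form~(1). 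If instead $u$ begins with $x_1$, then the prefix $x_nu$ of $w$ is a reduced word of the form $x_nx_1u'$ with $|u'|_n=0$, so Lemma~\ref{lem2} forces $x_nu$ to be either $x_nx_1\cdots x_ix_{n-1}\cdots x_j$ with $1\leqslant i<j\leqslant n-1$, or $x_nx_1\cdots x_i$ with $1\leqslant i<n-1$. The latter possibility has $|u|_{n-1}=0$ and is excluded by the constraint above, leaving $u=x_1\cdots x_ix_{n-1}\cdots x_j$, which is form~(2).

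I expect the only genuine subtlety to be the bookkeeping around the two one-sided reductions $(4n)$ and $(5n)$: one must match the hypotheses $|u|_n=|u|_{n-1}=0$ and $|u|_n=|u|_1=0$ to the correct reduction, respecting the cyclic conventions $i-1=n$ and $i+1=1$, so that divisibility by $x_{n-1}$ and by $x_1$ is forced in the right direction and in particular rules out the degenerate branch of Lemma~\ref{lem2}. Once these two divisibility facts are established, Lemmas~\ref{lem1} and~\ref{lem2} carry out essentially all of the remaining analysis, and no separate treatment of the last letter of $u$ is required.
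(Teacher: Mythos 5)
Your proof is correct and follows essentially the same route as the paper: restrict $\pref_1(u)$ to $\{x_1,x_{n-1}\}$ via reduction (2), then invoke Lemma~\ref{lem1} in the first case and Lemma~\ref{lem2} in the second. The only difference is cosmetic — you extract the constraints $|u|_1\geqslant 1$ and $|u|_{n-1}\geqslant 1$ from reductions $(5n)$ and $(4n)$ up front, whereas the paper derives the same facts inside the respective cases to kill the degenerate branches.
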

\begin{proof}
Notice that $\pref_1(u)\in\{x_1, x_{n-1}\}$, since otherwise if
$\pref_1(u)=j$, then $w$ has a factor  $x_nx_j$ for some
$1<j<n-1$, and thus it is not reduced.

Assume first that $\pref_1(u)=x_{n-1}$. Notice that
$|u|_1\geqslant 1$. If $|u|_1=0$, then $x_nux_n= x_nu$, so $w$ is
not reduced. By Lemma~\ref{lem1} it now follows that $w$ must be
of the form $x_n\cdots x_1 v x_n$ for some $v$. Then $v$ must be
the empty word, because for every $k$, if $\pref_1(v)=x_k$, then
$w$ has a factor of the form $x_kx_{k-1}\cdots x_1x_k$, where
$|x_{k-1}\cdots x_1|_{k+1}=0$, hence $w$ is not reduced.

Thus, assume that $\pref_1(u)=x_1$. Let $w=x_nx_1u'x_n$ for some
$u'$ such that $|u'|_n=0$. From Lemma~\ref{lem2} we know that
$w=x_nx_1\cdots x_ix_{n-1}\cdots x_jx_n$ for some $1\leqslant i<
j\leqslant n-1$ or $w=x_nx_1\cdots x_ix_n$ for some $1\leqslant
i<n-1$. However, in the latter case $|x_1\cdots x_i|_{n-1}=0$, so
that $w$ is not a reduced word.
\end{proof}

The following lemma shows that in the case where $i=0$ or $i=n-2$,
the reduced words with a factor $x_nq_i$ have an extremely simple
form.
\begin{lemma}\label{lem4}
    If a reduced word has a factor of the form $x_nx_1\cdots x_{n-1}$ or
    $x_nx_{n-1}\cdots x_1$, then it must be a factor of the infinite word
    $(x_nx_1\cdots x_{n-1})^{\infty}$ or  $(x_nx_{n-1}\cdots x_1)^{\infty}$.
\end{lemma}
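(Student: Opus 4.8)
The plan is to prove both assertions by showing that a single occurrence of the factor forces every neighbouring letter, so that the word propagates periodically in both directions. The two cases are mirror images of each other (the factor $x_nx_1\cdots x_{n-1}$ equals $x_nq_{n-2}$ and the factor $x_nx_{n-1}\cdots x_1$ equals $x_nq_0$), so I will concentrate on the first one, where the target infinite word is $(x_nx_1\cdots x_{n-1})^{\infty}$; the second follows by the completely analogous mirror argument. Throughout I use only Corollary~\ref{podstawowy wniosek} (reduced words contain no forbidden factor) together with Lemmas~\ref{lem1}--\ref{lem3}.

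First I would read to the right. Suppose the block $x_nx_1\cdots x_{n-1}$ is followed by a letter $x_s$. If $s\in\{1,\dots,n-3\}$ then $x_{n-1}x_s$ is reducible by~(2); if $s=n-2$ then $x_{n-2}x_{n-1}x_{n-2}$ is reducible by~$(4)$ (taking $u=x_{n-1}$); and $s=n-1$ gives the square $x_{n-1}x_{n-1}$, reducible by~(1). Hence the only admissible continuation is $x_s=x_n$, which is indeed allowed since $x_nx_1\cdots x_{n-1}x_n$ is reduced by Lemma~\ref{lem3}. Now the letter following this $x_n$ cannot be $x_n$ (rule~(1)), nor any $x_{s'}$ with $2\leqslant s'\leqslant n-2$ (rule~(2) applied to $x_nx_{s'}$), nor $x_{n-1}$ (since $x_{n-1}x_nx_{n-1}$ is reducible by~$(4)$ with $u=x_n$); so it must be $x_1$. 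Lemma~\ref{lem2} then forces the ascending run $x_1x_2\cdots$ to reappear, and the analysis of the admissible successor of $x_{n-1}$ repeats verbatim. Iterating, the word extends to the right exactly as the periodic word $(x_nx_1\cdots x_{n-1})^{\infty}$.

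Next I would read to the left, which is the more delicate direction. Suppose the block is preceded by a letter $x_t$. The choice $t=n$ is impossible ($x_nx_n$, rule~(1)). If $1\leqslant t\leqslant n-2$, then, since the ascending run $x_1\cdots x_{n-1}$ reaches $x_t$, the word contains the factor $x_t\,x_nx_1\cdots x_{t-1}\,x_t$, and this is reducible by a reduction of type~$(5t)$: with $v=x_nx_1\cdots x_{t-1}$ one has $|v|_t=0$ and $|v|_{t+1}=0$ (because $t+1\leqslant n-1<n$ is not among the indices of $v$). Thus the only admissible predecessor is $x_t=x_{n-1}$, which is exactly the periodic choice and for which the support conditions of both~$(4)$ and~$(5)$ fail. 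The same mechanism drives an induction leftward: once a suffix agreeing with the periodic word has been reconstructed down to some $x_m$, the candidate letter to its left is eliminated for every value except the periodic predecessor, the large indices being ruled out by~(1) and~$(5)$ (for instance $x_nx_{n-1}x_n$ is reducible by~$(5)$ with $v=x_{n-1}$), and each small index $x_r$ being ruled out, exactly as above, by the span $x_r(\cdots)x_r$ closing a reduction of type~$(5r)$. This shows that $w$ matches $(x_nx_1\cdots x_{n-1})^{\infty}$ to the left as well, so altogether $w$ is a factor of that infinite word.

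The main obstacle is the leftward step. Unlike the rightward direction, where each forbidden successor is witnessed by a short factor adjacent to $x_{n-1}$ or $x_n$, the witness forbidding a wrong predecessor $x_r$ is the long factor $x_r x_n x_1\cdots x_{r-1}x_r$ that straddles $x_n$ and reaches back into the ascending run to the repeated index $x_r$. One must check that this factor is actually present (the run has already caught up to index $r$), that the support hypotheses $|v|_r=|v|_{r+1}=0$ of reduction~$(5r)$ hold, and that these hypotheses fail precisely at the boundary indices $r=n-1$ and $r=n-2$, which are the letters dictated by the period; organising this into a clean leftward induction is the crux of the argument. The second assertion, concerning $x_nx_{n-1}\cdots x_1$ and $(x_nx_{n-1}\cdots x_1)^{\infty}$, is obtained by the mirror-symmetric version of the same letter-by-letter forcing.
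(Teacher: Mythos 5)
Your proposal is correct and follows essentially the same route as the paper: the paper packages the letter-by-letter forcing into a single claim that any cyclic window $x_{k+1}\cdots x_nx_1\cdots x_k$ forces its right neighbour to be $x_{k+1}$ and its left neighbour to be $x_k$, using exactly the witnesses you describe (short type-(1)/(2) factors for nearby indices, and spans $x_s\cdots x_s$ closing a type-(4) or type-(5) reduction for the rest). The only difference is organisational — the uniform window claim lets the paper avoid the slightly hand-waved "repeats verbatim" step and the appeal to Lemma~\ref{lem2} in your rightward direction, but the underlying argument is the same.
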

\begin{proof} Define $x_{n+1}=x_1$ and $x_0=x_n$. Assume that
$w=ux_{k+1}x_{k+2}\cdots x_n x_1\cdots x_{k}v$ for some
$k=1,\ldots, n$ and some $u,v\in F$. We claim that
$\pref_1(v)=x_{k+1}$ and $\suff_1(u)=x_{k}$. Then the first part
of the assertion will follow.

If $\pref_1(v)=x_{s}$ for $s<k+1$, then the word $w$ has a factor
$x_sx_{s+1}\cdots x_{k}x_s$, whence it cannot be reduced.
Similarly, for $s>k+1$ the word $w$ has a factor of the form
$x_s\cdots x_{n}x_{1}\cdots x_{k}x_{s}$ and $|x_{s+1}\cdots
x_{n}x_{1}\cdots x_{k}|_{s-1}=0$, which is not possible. Suppose
that $\suff_1(u)=x_{s}$ for some  $s\neq k$. If $s<k$, then $w$
has a factor $x_{s}x_{k+1}\cdots x_nx_1\cdots x_{s-1} x_{s}$ and
$|x_{k+1}\cdots x_nx_1\cdots x_{s-1}|_{s+1}=0$. Similarly, if
$s>k$, then $w$ has a factor $x_sx_{k+1}\cdots x_s$, where
$|x_{k+1}\cdots x_{s-1}|_{s+1}=0$, which leads to a contradiction
again. This proves the claim.

The second part of the lemma follows by a symmetric argument.
\end{proof}

The next few lemmas will be used to determine the desired shape of
the elements of $B_i$, which are the endings of the considered
class of reduced words.

\begin{lemma}\label{lem5}
    Let $w\in F$ be of the form $w=x_nx_1\cdots x_{i_1}x_{n-1}\cdots x_{j_1}x_nx_1\cdots x_{i_2}x_{n-1}\cdots x_{j_2}$,
    where $1\leqslant i_p<j_p\leqslant n-1$ for $p=1, 2$. If $w$ is a reduced word, then
    \begin{enumerate}
        \item $i_2<j_1$,
        \item $i_1\geqslant i_2$,
        \item $j_1\leqslant j_2$.
    \end{enumerate}
\end{lemma}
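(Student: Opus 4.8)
The plan is to invoke Corollary~\ref{podstawowy wniosek}, by which a word is reduced exactly when it contains no factor $w_\sigma$ with $\sigma\in S$. So for each of the three assertions I will assume the opposite inequality and then exhibit an explicit forbidden factor of type $(4)$ or $(5)$: a factor $x_mzx_m$ of $w$ whose infix $z$ is nonempty and avoids the generators that would obstruct the relevant reduction. The whole argument amounts to choosing, in each case, the right generator $x_m$ that occurs twice in $w$ and reading off which indices appear between its two occurrences. In every case the infix will contain $x_n$, so it is automatically nonempty.

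I would dispose of parts (1) and (3) together, both using the generator $x_{j_1}$ that terminates the descending run $x_{n-1}\cdots x_{j_1}$ of the first block. For part (1), suppose $i_2\geqslant j_1$; then $x_{j_1}$ reappears inside the ascending run $x_1\cdots x_{i_2}$ of the second block, and the factor of $w$ between the two occurrences is $x_{j_1}(x_nx_1\cdots x_{j_1-1})x_{j_1}$. Its infix uses only the indices $n,1,\ldots,j_1-1$, so $|z|_{j_1}=|z|_{j_1+1}=0$ (here $j_1\leqslant n-2$ since $i_2<j_2\leqslant n-1$ forces $i_2\leqslant n-2$), i.e. a forbidden factor of type $(5j_1)$. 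For part (3), suppose $j_1>j_2$; together with the hypothesis $i_2<j_2$ this yields $i_2<j_2<j_1$, hence $i_2\leqslant j_1-2$. Since $j_2\leqslant j_1\leqslant n-1$, the generator $x_{j_1}$ reappears inside the descending run $x_{n-1}\cdots x_{j_2}$ of the second block, and the intervening factor is $x_{j_1}(x_nx_1\cdots x_{i_2}x_{n-1}\cdots x_{j_1+1})x_{j_1}$; its infix avoids $x_{j_1}$, and it avoids $x_{j_1-1}$ precisely because $i_2\leqslant j_1-2$, giving a forbidden factor of type $(4j_1)$.

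For part (2), suppose $i_1<i_2$; by part (1) we already know $i_2<j_1$, so $i_1+1<j_1$. This time the repeated generator is $x_{i_1}$, which ends the ascending run of the first block and, because $i_1<i_2$, reappears inside the ascending run $x_1\cdots x_{i_2}$ of the second block. The factor between the two occurrences is $x_{i_1}(x_{n-1}\cdots x_{j_1}x_nx_1\cdots x_{i_1-1})x_{i_1}$. Its infix avoids $x_{i_1}$, and it avoids $x_{i_1+1}$ precisely because $i_1+1<j_1$ places $i_1+1$ strictly below the descending run (and $i_1+1\leqslant n-2$ rules out the index $n$), so $|z|_{i_1}=|z|_{i_1+1}=0$ and we obtain a forbidden factor of type $(5i_1)$.

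The one point requiring genuine care is the bookkeeping of the index conditions $|z|_{m\pm1}=0$ and of the degenerate boundaries, rather than any conceptual obstacle. The crucial subtlety is that the obstructing neighbour is $x_{m+1}$ in parts (1) and (2) (yielding type $(5)$) but $x_{m-1}$ in part (3) (yielding type $(4)$); and in part (3) the dangerous case $i_2=j_1-1$ would violate $|z|_{j_1-1}=0$, which is exactly why the hypothesis $i_p<j_p$ is needed, since the chain $i_2<j_2<j_1$ excludes it. One should also record the degenerate subcase $j_1=n-1$ in part (3), where the first descending run is a single letter and the block $x_{n-1}\cdots x_{j_1+1}$ is empty; the same factor and index count still apply, so no separate treatment is in fact required.
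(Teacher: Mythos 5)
Your proof is correct and takes essentially the same route as the paper's: for each part you negate the inequality and exhibit exactly the same forbidden factor — $x_{j_1}(x_nx_1\cdots x_{j_1-1})x_{j_1}$ of type $(5j_1)$, $x_{i_1}(x_{n-1}\cdots x_{j_1}x_nx_1\cdots x_{i_1-1})x_{i_1}$ of type $(5i_1)$, and $x_{j_1}(x_nx_1\cdots x_{i_2}x_{n-1}\cdots x_{j_1+1})x_{j_1}$ of type $(4j_1)$ — with the same index bookkeeping ($j_1\leqslant n-2$ in part (1), $i_1+1<j_1$ via part (1) in part (2), $i_2\leqslant j_1-2$ in part (3)). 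The only differences are cosmetic: the order in which the parts are treated and your explicit remark on the degenerate case $j_1=n-1$.
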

\begin{proof} Suppose that $i_2\geqslant j_1$. Then $j_1<n-1$.
Thus, $w$ has a  factor of the form $x_{j_1}x_nx_1\cdots
x_{j_1-1}x_{j_{1}}$ and $|x_nx_1\cdots x_{j_1-1}|_{j_1+1}=0$,
whence $w$ is not in a reduced form.

 Suppose that $i_1<i_2$. Then
$i_1<n-2$ and $w$ has a factor of the form $x_{i_1}x_{n-1}\cdots
x_{j_1}x_nx_1\cdots x_{i_1}$ and $|x_{n-1}\cdots
x_{j_1}x_nx_1\cdots x_{i_{1}-1}|_{i_{1}+1}=0$, because we know
that $i_1+1<i_2+1\leqslant j_1$, so that $i_1+1<j_1$. Therefore,
again $w$ cannot be in the reduced form.

 Suppose that $j_1>j_2$.
Then the hypothesis and the first part of the lemma imply that
$i_2\leqslant j_2-1<j_1-1$. Notice that in this case $w$ has a
factor of the form $x_{j_1}x_nx_1\cdots x_{i_2}x_{n-1}\cdots
x_{j_1+1}x_{j_1}$ and $|x_nx_1\cdots x_{i_2}x_{n-1}\cdots
x_{j_1+1}|_{j_1-1}=0$, because $i_2<j_1-1$; so $w$ is not in the
reduced form.
\end{proof}
\begin{lemma}\label{lem6}
    Let $w$ be a reduced word such that
    \begin{equation*}
    w=x_nx_1\cdots x_{i_1}x_{n-1}\cdots x_{j_1}x_nx_1\cdots x_{i_2}x_{n-1}\cdots x_{j_2}.
    \end{equation*}
    If $i_1=i_2=i$, then $j_1=i+1$ and if  $j=j_1=j_2$, then $i_1=i_2=j-1$.
    Moreover, if  $i_1+1=j_1$ and $i_2+1<j_2$, then $j_2>j_1$.
\end{lemma}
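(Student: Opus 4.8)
The plan is to prove each of the three assertions by assuming its conclusion fails and then exhibiting an explicit redex of $S$ inside $w$, contradicting reducedness. Throughout I use Lemma~\ref{lem5}, whose hypotheses $w$ satisfies verbatim; in particular $i_2<j_1$, $i_1\geqslant i_2$ and $j_1\leqslant j_2$.

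First I would treat the implication $i_1=i_2=i\Rightarrow j_1=i+1$. Suppose $j_1\geqslant i+2$. The two copies of $x_i$ closing the ascending runs $x_1\cdots x_i$ of the two blocks bound the factor
\[
x_i\,\bigl(x_{n-1}\cdots x_{j_1}\,x_n\,x_1\cdots x_{i-1}\bigr)\,x_i ,
\]
and I set $v$ equal to the middle word. Since $j_1\geqslant i+2$, the descending run $x_{n-1}\cdots x_{j_1}$ omits both $x_i$ and $x_{i+1}$, while $x_1\cdots x_{i-1}$ and $x_n$ contribute neither; hence $|v|_i=|v|_{i+1}=0$ and $v\neq 1$, so $x_ivx_i$ is a redex of type $(5i)$ --- impossible. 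Thus $j_1=i+1$. (For $j_1=i+1$ the run contains $x_{i+1}$, which is exactly why the estimate breaks down and the argument is sharp.)

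Next, for $j=j_1=j_2\Rightarrow i_1=i_2=j-1$, I would argue dually. Suppose $i_2<j-1$. The two copies of $x_j$ ending the descending runs bound the factor $x_j\,v\,x_j$ with $v=x_n\,x_1\cdots x_{i_2}\,x_{n-1}\cdots x_{j+1}$. Here $x_1\cdots x_{i_2}$ reaches only up to $x_{i_2}$ with $i_2<j-1$, and $x_{n-1}\cdots x_{j+1}$ stays strictly above $x_j$, so $|v|_j=|v|_{j-1}=0$ and $v\neq 1$; thus $x_jvx_j$ is a redex of type $(4j)$, again impossible. Hence $i_2=j-1$, and then $i_1\geqslant i_2$ (Lemma~\ref{lem5}) together with $i_1<j_1=j$ forces $i_1=j-1$ too.

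Finally, assuming $i_1+1=j_1$ and $i_2+1<j_2$, Lemma~\ref{lem5} gives $j_2\geqslant j_1$, so it remains to rule out $j_2=j_1=:j$. In that case the previous paragraph yields $i_1=i_2=j-1$; since $j=j_1=i_1+1$ gives $j-1=i_1$, we get $i_2=i_1$, but the hypothesis $i_2+1<j_2=i_1+1$ forces $i_2<i_1$, a contradiction. Therefore $j_2>j_1$. The only genuine labor is the index bookkeeping in the first two assertions: once one spots that a repeated top-of-ascent letter $x_i$ forces a type $(5i)$ reduction and a repeated bottom-of-descent letter $x_j$ forces a type $(4j)$ reduction, verifying $|v|_i=|v|_{i+1}=0$, respectively $|v|_j=|v|_{j-1}=0$, is immediate, and the strict inequalities $j_1\geqslant i+2$ and $i_2<j-1$ are precisely what these verifications consume; the third assertion is then a short combinatorial consequence of the second together with Lemma~\ref{lem5}.
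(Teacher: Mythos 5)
Your proof is correct and follows essentially the same route as the paper's: both arguments isolate the factor $x_i(x_{n-1}\cdots x_{j_1}x_nx_1\cdots x_{i-1})x_i$, respectively $x_j(x_nx_1\cdots x_{i_2}x_{n-1}\cdots x_{j+1})x_j$, and use the type $(5i)$, respectively $(4j)$, reduction together with Lemma~\ref{lem5} to pin down the indices, the only difference being that you argue contrapositively (exhibiting a redex) where the paper argues directly (forcing $|v|_{i+1}\geqslant 1$, resp. $|u|_{j-1}\geqslant 1$). The third assertion is handled identically in both.
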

\begin{proof} Assume that $i_1=i_2=i$. Then $w$ contains a reduced factor
$x_{i}x_{n-1}\cdots x_{j_1}x_nx_1\cdots x_{i}$. Hence,
$|x_{n-1}\cdots x_{j_1}x_nx_1\cdots x_{i-1}|_{i+1}\geqslant 1$.
Since $i<j_1$, this implies that $j_1=i+1$.

Consider the case where $j=j_1=j_2$. Then $w$ contains a reduced
factor  $x_jx_nx_1\cdots x_{i_2}x_{n-1}\cdots x_j$. This implies
that $|x_nx_1\cdots x_{i_2}x_{n-1}\cdots x_{j+1}|_{j-1}\geqslant
1$. Since $i_2<j$, we must have $i_2=j-1$. By Lemma~\ref{lem5}
$j-1=i_2\leqslant i_1<j$, so that $i_1=i_2=j-1$.

Finally, assume that $i_1+1=j_1$ and $i_2+1<j_2$. From
Lemma~\ref{lem5} we know that $j_2\geqslant j_1$. Suppose that
$j_2=j_1$. Then the part of the statement that has already been
proved implies that $i_2=i_1=j_1-1=j_2-1$, which contradicts the
hypothesis. This completes the proof.
\end{proof}
\begin{corollary}\label{wn1}
    If a reduced word $w$ is of the form
    \begin{equation*}
    ux_n(x_{1}\cdots x_{i_1}x_{n-1}\cdots x_{j_1})x_n(x_{1}\cdots x_{i_2}x_{n-1}\cdots x_{j_2})\cdots x_n(x_{1}\cdots x_{i_r}x_{n-1}\cdots x_{j_r})x_nv
    \end{equation*}
    for some $u, v$ such that $|u|_{n}=|v|_{n}=0$, then it follows that
    \begin{equation*}
    i_{r}\leqslant i_{r-1}\leqslant\cdots\leqslant i_1< j_1\leqslant j_2\leqslant\cdots \leqslant j_{r}.
    \end{equation*}
    Furthermore, if  $i_k+1=j_k$ for some  $k$, then $i_k=i_s$ and $j_k=j_s$ for $s=1,\ldots, k-1$.\\
    Moreover, if for some $l$ we have
        $i_l+1<j_l$, then $i_{r}< i_{r-1}<\cdots<i_l< i_l+1<j_l<\cdots< j_{r}$.
    \\If $l>1$, then also $i_l\leqslant i_{l-1}<j_{l-1}<j_l$.
\end{corollary}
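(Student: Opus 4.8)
The plan is to prove Corollary~\ref{wn1} as a direct consequence of the two-block analysis already carried out in Lemmas~\ref{lem5} and~\ref{lem6}, by applying those lemmas to every \emph{consecutive pair} of blocks $x_n(x_1\cdots x_{i_p}x_{n-1}\cdots x_{j_p})$ occurring in $w$. The key observation is that any factor of a reduced word is itself reduced, so for each $p\in\{1,\dots,r-1\}$ the two-block factor
\[
x_nx_1\cdots x_{i_p}x_{n-1}\cdots x_{j_p}x_nx_1\cdots x_{i_{p+1}}x_{n-1}\cdots x_{j_{p+1}}
\]
is reduced and the hypotheses of Lemma~\ref{lem5} are met (here the case $j_p=n-1$, where the descending part is empty, must be handled, but the inequalities degenerate harmlessly). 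First I would invoke Lemma~\ref{lem5} on each such pair to get $i_{p+1}<j_p$, $i_p\geqslant i_{p+1}$, and $j_p\leqslant j_{p+1}$ for every $p$. Chaining these across $p=1,\dots,r-1$ immediately yields
\[
i_r\leqslant i_{r-1}\leqslant\cdots\leqslant i_1<j_1\leqslant j_2\leqslant\cdots\leqslant j_r,
\]
which is the first displayed conclusion. (The strict inequality $i_1<j_1$ is built into the block shape, since each block has the form $x_1\cdots x_{i_p}x_{n-1}\cdots x_{j_p}$ with $i_p<j_p$.)

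For the second assertion, I would suppose $i_k+1=j_k$ for some $k$ and argue that this collapse propagates backward. The monotonicity already established gives $i_k\leqslant i_s<j_s\leqslant j_k$ for $s<k$, hence $i_k\leqslant i_s<j_s\leqslant i_k+1$, forcing $i_s=i_k$ and $j_s=i_k+1=j_k$ for all $s=1,\dots,k-1$. This is essentially the ``$j_1=i+1$'' mechanism of Lemma~\ref{lem6} read off from the already-derived chain, so no new reduction argument is needed. For the third assertion I would assume $i_l+1<j_l$ for some $l$ and show that strictness propagates in the \emph{forward} direction. Applying the last clause of Lemma~\ref{lem6} to the pair $(l,l+1)$, together with the contrapositive of the second assertion (if any $i_m+1=j_m$ occurred for $m\geqslant l$ then all earlier blocks would collapse, contradicting $i_l+1<j_l$), I would upgrade each weak inequality $i_{p+1}\leqslant i_p$ and $j_p\leqslant j_{p+1}$ to a strict one for the relevant indices, giving
\[
i_r<i_{r-1}<\cdots<i_l<i_l+1<j_l<\cdots<j_r.
\]
Finally, the clause ``if $l>1$ then $i_l\leqslant i_{l-1}<j_{l-1}<j_l$'' follows by combining the established monotonicity $i_l\leqslant i_{l-1}$ and $j_{l-1}\leqslant j_l$ with the observation that $i_{l-1}<j_{l-1}$ holds by the block shape, and that $j_{l-1}=j_l$ is impossible because it would force $i_{l-1}=i_l=j_l-1$ by Lemma~\ref{lem6}, contradicting $i_l+1<j_l$.

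The routine part is the chaining of inequalities; the one place demanding genuine care is propagating \emph{strictness} in the third assertion. The subtlety is that Lemma~\ref{lem5} alone only delivers the weak chain, and Lemma~\ref{lem6} governs adjacent pairs; I expect the main obstacle to be bookkeeping the case distinction cleanly, namely ruling out any intermediate equality $i_{p+1}=i_p$ or $j_p=j_{p+1}$ for indices at or above $l$, since such an equality would (via the collapse mechanism of the second assertion) force $i_l+1=j_l$ and contradict the hypothesis $i_l+1<j_l$. I would therefore present the strictness propagation as: for $p\geqslant l$, were $j_p=j_{p+1}$ we would get $i_p=i_{p+1}=j_p-1$ by Lemma~\ref{lem6}, making $i_p+1=j_p$ and collapsing everything up to index $p$, in particular forcing $i_l+1=j_l$, a contradiction; the argument for the strict descent of the $i$'s is symmetric. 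Care must also be taken with the degenerate boundary block $x_nx_{n-1}\cdots x_{j_{r+1}}$ from the definition of $B_i$, but since the statement of the corollary concerns only the blocks with both an ascending and a descending part, this boundary case lies outside its scope and need not be treated here.
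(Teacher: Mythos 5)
Your proposal is correct and follows essentially the same route as the paper: Lemma~\ref{lem5} applied to consecutive block pairs yields the weak chain, and Lemma~\ref{lem6} handles both the backward collapse when $i_k+1=j_k$ and the forward propagation of strictness when $i_l+1<j_l$. Your integer-squeeze $i_k\leqslant i_s<j_s\leqslant i_k+1$ for the second assertion is a slightly more direct packaging of the paper's iterated two-block argument, but the substance is identical.
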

\begin{proof}
The inequalities of the first part of the assertion follow
directly from Lemma~\ref{lem5}.

Notice that if $i_k+1=j_k$, then Lemma~\ref{lem5} implies that
$i_k\leqslant i_{k-1}<j_{k-1}\leqslant j_k$, so that $i_k=i_{k-1}$
and $j_k=j_{k-1}$. Repeating this argument, we get that if
$i_k+1=j_k$ for some $k$, then $i_k=i_s$ and $j_k=j_s$ for
$s=1,\ldots, k-1$.

Suppose that for some  $l$ we have  $i_l+1<j_l$. If
$i_{l+1}=i_{l}$ or $j_{l+1}=j_{l}$, then Lemma~\ref{lem6} implies
that $j_{l+1}=i_l+1$, contradicting the hypothesis. Now, repeating
this argument and using the part of the assertion that has already
been proved we get that $i_{r}< i_{r-1}<\cdots<i_l$ and
$j_l<\cdots< j_{r}$.

Finally, assume that $l>1$ and also $i_l+1<j_l$. Then, by
Lemma~\ref{lem5} we know that $i_l\leqslant
i_{l-1}<j_{l-1}\leqslant j_l$. Suppose that $j_{l-1}=j_l$. Then
Lemma~\ref{lem6} implies that $i_l=i_{l-1}=j_{l}-1$, a
contradiction. Thus $j_{l-1}<j_l$. This completes the proof.
\end{proof}

The next few lemmas will be used to deal with the shape of the
elements of the set $A_i$, which are the beginnings of the
considered class of reduced words.

\begin{lemma}\label{lem7}
    Let $w$ be a reduced word such that
    \begin{equation*}
    w=v(x_{k_s}\cdots x_s)ux_{n}x_1\cdots x_{i}x_{n-1}\cdots x_{i+1},
    \end{equation*}
    where $i=1,\ldots, n-3$, $k_{s}<s\leqslant i+1$, $|u|_{j}=0$ for $j=1,\ldots, s$ and $|u|_n=|v|_n=0$.
    Then $v=x_{r}x_{r+1}\cdots x_{k_{s}-1}$ for some $r\geqslant 1$.
\end{lemma}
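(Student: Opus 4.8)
The plan is to induct on the length $|v|$, stripping off the last letter of $v$ and showing it must be $x_{k_s-1}$; this turns the ascending block $x_{k_s}\cdots x_s$ into $x_{k_s-1}\cdots x_s$ and lets the inductive hypothesis finish. If $v=1$ there is nothing to prove (the empty word has the claimed form), so I assume $v\neq 1$ and set $x_t=\suff_1(v)$. The entire argument rests on pinning down $t$.

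First I would rule out $t\geqslant k_s$. Since $|v|_n=0$ we have $t\leqslant n-1$, so $t-k_s\leqslant n-2<n-1$; hence if $t\geqslant k_s+2$ the factor $x_tx_{k_s}$ is a redex of type $(2)$. If $t=k_s$ we get the redex $x_{k_s}x_{k_s}$ of type $(1)$. If $t=k_s+1$, then since $k_s<s$ the block begins with $x_{k_s}x_{k_s+1}$, so $w$ contains the factor $x_{k_s+1}x_{k_s}x_{k_s+1}$, a redex of type $(5)$ with inner word $x_{k_s}$. In every case $w$ fails to be reduced, so $t\leqslant k_s-1$.

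The main step is to exclude $t\leqslant k_s-2$, and this is the point I expect to carry the whole proof. Here I use that $t<k_s\leqslant s\leqslant i+1$ forces $t\leqslant i$, so $x_t$ reappears inside the suffix $x_1\cdots x_i$. Because $t\leqslant s$ and $|u|_j=0$ for $j\leqslant s$, and $t\notin\{k_s,\ldots,s\}$, the generator $x_t$ occurs nowhere between the final $x_t$ of $v$ and this later occurrence; hence $w$ has a factor $x_tZx_t$ with $Z=(x_{k_s}\cdots x_s)\,u\,x_n\,(x_1\cdots x_{t-1})\neq 1$. A direct degree count gives $|Z|_t=0$ (the block avoids $x_t$, $|u|_t=0$, and $x_1\cdots x_{t-1}$ stops before $x_t$), and since $t+1\leqslant k_s-1<k_s$ and $t+1\leqslant s<n$ also $|Z|_{t+1}=0$. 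Thus $x_tZx_t$ is a redex of type $(5)$, contradicting reducedness; therefore $t=k_s-1$. It is exactly the boundary case $t=k_s-1$ that makes $x_{t+1}$ land at the start of the block and spoil this type-$(5)$ reduction, which is why the conclusion is sharp.

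It remains to run the induction. Writing $v=v''x_{k_s-1}$, the factor $x_{k_s-1}(x_{k_s}\cdots x_s)$ equals the longer ascending block $x_{k_s-1}\cdots x_s$, so $w$ again has the shape of the lemma with $k_s$ replaced by $k_s-1$ (note $k_s-1<s$, and the hypotheses on $u$ and $|v''|_n=0$ are inherited from $v$). By the inductive hypothesis $v''=x_r\cdots x_{k_s-2}$ for some $r\geqslant 1$, or $v''=1$ once the block start reaches the point forcing $\suff_1(v'')\leqslant 0$. Appending the stripped letter yields $v=x_r\cdots x_{k_s-1}$, as required, and since each step decreases $|v|$ by one the recursion terminates.
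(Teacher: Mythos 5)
Your proof is correct and follows essentially the same route as the paper's: pin down $\suff_1(v)=x_{k_s-1}$ by ruling out every other index via redexes of types $(1)$, $(2)$ and $(5)$ --- in particular the key factor $x_t\,(x_{k_s}\cdots x_s)\,u\,x_n\,(x_1\cdots x_{t-1})\,x_t$ for $t\leqslant k_s-2$ --- and then iterate leftwards along $v$. Your write-up is if anything slightly more complete, since you explicitly dispose of the boundary cases $t=k_s$ and $t=k_s+1$, which the paper's split into $j>k_s+1$ and $j<k_s-1$ passes over, and you absorb the terminal situation $k_s=1$ (which the paper treats by a separate argument showing nothing can precede $x_1\cdots x_s$) into the same case analysis.
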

\begin{proof}  Assume that $k_s>1$ and $x_j=\suff_1(v)$. If $j>k_{s}+1$, then $w$ has a factor
$x_jx_{k_s}$, which is not reduced. Similarly, if  $j<k_{s}-1<i$,
then $w$ has a factor $x_j(x_{k_s}\cdots x_s)u x_nx_1\cdots
x_{j}$. Since $|(x_{k_s}\cdots x_s)u x_n x_1\cdots
x_{j-1}|_{j+1}=0$, this also leads to a contradiction. It follows
that $j=k_{s}-1$.

Suppose that  $w=v'x_1\cdots x_sux_nx_1\cdots x_{i}x_{n-1}\cdots
x_{i+1}$ for some nonempty word $v'$. Let $x_p=\suff_1(v')$ for
some $p\neq n$. If $p\neq 2$, then  $w$ has a factor  $x_px_1$,
that is not reduced. Moreover, for $p=2$, since $1\leqslant
k_{s}<s$, and hence $2\leqslant s$, we get a factor of the form
$x_2x_1x_2$, that is not reduced. This contradiction completes the
proof.
\end{proof}
\begin{lemma}\label{lem8}
    Assume that a reduced word $w$ is of the form
    \begin{equation*}w=u(x_{k_s}\cdots x_s)(x_{k_{s+1}}\cdots x_{s+1})\cdots(x_{k_{i+1}}\cdots
    x_{i+1})x_nx_1\cdots x_ix_{n-1}\cdots x_{i+1}\end{equation*}
    where $s\leqslant i+1$, $k_s\geqslant s$, $k_q>q$ for $q=s+1,\ldots, i+1$, $|u|_n=0$.
    Then $\suff_1(u)\in\{x_{k_s+1}, x_{s-1}\}$.
\end{lemma}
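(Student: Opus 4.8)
The plan is to pin down the single letter $\suff_1(u)=x_p$ (note $p\neq n$, since $|u|_n=0$) by examining the reduction it would trigger at the junction with the first block $x_{k_s}\cdots x_s$. Because $k_s\geqslant s$, this first block is the \emph{decreasing} word $x_{k_s}x_{k_s-1}\cdots x_s$, and in particular it begins with $x_{k_s}$ and has smallest index $s$; together with $k_q>q\geqslant s$ for the later blocks, every block index is $\geqslant s$. I would show that any value of $p$ other than $k_s+1$ and $s-1$ creates a factor matching the left side of one of the reductions (1)--(5), contradicting that $w$ is reduced.

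First the local cases, handled by short factors. If $p=k_s$, the factor $x_px_{k_s}$ is of type (1). If $p>k_s$, then $x_px_{k_s}$ is of type (2) unless $p-k_s=1$ or $p-k_s\geqslant n-1$; since $p\leqslant n-1$ and $k_s\geqslant 1$ the latter is impossible, so only $p=k_s+1$ survives. If $s\leqslant p<k_s$, then $x_p$ reappears inside the decreasing block, and $x_p(x_{k_s}\cdots x_{p+1})x_p$ is a factor of type $(4p)$: its middle word uses only indices in $\{p+1,\dotsc,k_s\}$, so it contains neither $x_p$ nor $x_{p-1}$.

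The remaining, and genuinely nonlocal, case is $p<s-1$, i.e. $p\leqslant s-2$; here nothing reduces at the immediate junction, so the real work is to locate the next occurrence of $x_p$. As every block index is $\geqslant s\geqslant p+2$, the letter $x_p$ does not reappear in any block nor in $x_n$, and its next occurrence lies in the tail $x_1\cdots x_i$ (available because $p\leqslant s-2\leqslant i-1$). I would then read off the factor $x_pvx_p$ with $v=(x_{k_s}\cdots x_s)\cdots(x_{k_{i+1}}\cdots x_{i+1})x_nx_1\cdots x_{p-1}$ and check $|v|_p=|v|_{p+1}=0$: all block indices are $\geqslant p+2>p+1$, while $x_1\cdots x_{p-1}$ uses only indices below $p$. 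Hence $x_pvx_p$ is of type $(5p)$, again contradicting reducedness. Recognizing this long-range type-$(5)$ reduction, reaching from $\suff_1(u)$ all the way into the tail, is the one step that is not a two- or three-letter check, and is precisely where the hypothesis $k_s\geqslant s$ (which keeps the first block's minimal index equal to $s$, so that the intervening word avoids $x_{p+1}$) is essential. Eliminating $p\leqslant s-2$ together with $p<s$ forces $p=s-1$, so $\suff_1(u)\in\{x_{k_s+1},x_{s-1}\}$, as claimed.
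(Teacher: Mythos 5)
Your proof is correct and follows essentially the same route as the paper's: a case analysis on $p=\suff_1(u)$ that exhibits, for each excluded value, a forbidden factor of type (1), (2), (4$p$) at the junction with the first block, or the long-range type-(5$p$) factor reaching into the tail $x_1\cdots x_i$ when $p<s-1$. The only cosmetic difference is that you merge into one type-(4$p$) case what the paper splits into the sub-cases $p=k_s-1$ and $s\leqslant p<k_s-1$.
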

\begin{proof} Let $\suff_1(u)=x_j$ for some $j\neq n$. We consider the following cases:
\begin{itemize}
    \item if  $j>k_s+1$ or $j=k_s$, then  $w$ has a factor  $x_jx_{k_s}$, which is not reduced,
    \item if $j=k_s-1$ and $k_s>s$, then  $w$ has a factor  $x_{j}x_{j+1}x_{j}$, which is not reduced,
    \item if  $s \leqslant j<k_s-1\leqslant i$, then we have a factor  $x_jx_{k_s}\cdots x_{j+1}x_j$ such that
    $|x_{k_s}\cdots x_{j+1}|_{j-1}=0$; whence $w$ is not a reduced word,
    \item if $j<s-1$, then $w$ has a factor of the form
    \begin{equation*}x_j(x_{k_s}\cdots x_s)(x_{k_{s+1}}\cdots x_{s+1})\cdots(x_{k_{i+1}}\cdots
    x_{i+1})x_nx_1\cdots x_{j-1}x_j=x_jzx_j\end{equation*}
    where $|z|_{j+1}=0$; so $w$ is not a reduced word.
\end{itemize}
We have thus proved that $\suff_1(u')\in\{x_{k_s+1}, x_{s-1}\}$,
as desired.
\end{proof}
\begin{lemma}\label{lem9}
    Assume that a reduced word $w$ is of the form
    \begin{equation*} w=ux_nx_1\cdots x_ix_{n-1}\cdots x_{i+1}
    \end{equation*}
    for some $i\in\{1,\ldots, n-3\}$, where $|u|_{n}=0$ and $u$ has no factors of the form $x_l\cdots x_j$, where $l<j$.
    Then $u$ is of the form
    \begin{equation*}
    u=(x_{k_s}\cdots x_s)(x_{k_{s+1}}\cdots x_{s+1})\cdots(x_{k_{i+1}}\cdots x_{i+1}),
    \end{equation*}
    where $s\leqslant i+1$, $k_s<k_{s+1}<\cdots <k_{i+1}\leqslant n-1$, $k_s\geqslant s$, $k_q> q$ for $q=s+1, \ldots, i+1$.
\end{lemma}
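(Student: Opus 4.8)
The plan is to pin down the last letter of $u$, then to rebuild $u$ from right to left by repeatedly applying Lemma~\ref{lem8}, using the hypothesis that $u$ has no increasing factor to keep that lemma applicable throughout. First I would show that $\suff_1(u)=x_{i+1}$. Writing $\suff_1(u)=x_t$ with $t\ne n$ (recall $|u|_n=0$), suppose $t\leqslant i$. Then $x_t$ occurs inside the factor $x_1\cdots x_i$, so $w$ contains the factor $x_t(x_nx_1\cdots x_{t-1})x_t$ whose middle part involves neither $x_t$ nor $x_{t+1}$; a reduction of type $(5)$ would then apply, contradicting reducedness. Suppose instead $t\geqslant i+2$. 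Then $x_t$ occurs inside $x_{n-1}\cdots x_{i+1}$, so $w$ contains $x_t(x_nx_1\cdots x_ix_{n-1}\cdots x_{t+1})x_t$, whose middle involves neither $x_t$ nor $x_{t-1}$, and a reduction of type $(4)$ applies, again a contradiction. Hence $t=i+1$, so $u$ ends with the block $x_{i+1}$.

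Next I would argue by induction on the length of the still unprocessed prefix. Assume we have already written $w=u'(x_{k_s}\cdots x_s)\cdots(x_{k_{i+1}}\cdots x_{i+1})x_nx_1\cdots x_ix_{n-1}\cdots x_{i+1}$ with $k_s\geqslant s$, $k_q>q$ for $q=s+1,\ldots,i+1$, and $|u'|_n=0$; the previous paragraph gives the base case $s=i+1$, $k_{i+1}=i+1$. If $u'=1$ we are done. Otherwise Lemma~\ref{lem8} forces $\suff_1(u')\in\{x_{k_s+1},x_{s-1}\}$. In the first case I prepend $x_{k_s+1}$ to the leftmost block, which raises $k_s$ to $k_s+1>s$ and preserves every condition. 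In the second case, if $k_s>s$ I open a new one-letter block $x_{s-1}$, which is legitimate since the abandoned block now satisfies $k_s>s$ while $k_{s-1}=s-1$; if instead $k_s=s$, then prepending $x_{s-1}$ would create the factor $x_{s-1}x_s$ in $u$, an increasing factor excluded by hypothesis, so this subcase cannot occur. Thus the invariant persists, the unprocessed prefix strictly shortens, and the procedure terminates with the claimed decomposition, in which every block except possibly the leftmost satisfies $k_q>q$.

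It remains to prove $k_s<k_{s+1}<\cdots<k_{i+1}\leqslant n-1$. The bound $k_{i+1}\leqslant n-1$ is immediate from $|u|_n=0$. For the strict inequalities, suppose $k_q\geqslant k_{q+1}$ for some $q$. Since $k_{q+1}>q+1$, the letter $x_{k_{q+1}}$ occurs inside the block $x_{k_q}\cdots x_q$, and again as the first letter of the block $x_{k_{q+1}}\cdots x_{q+1}$; between these two occurrences lies the nonempty word $x_{k_{q+1}-1}\cdots x_q$, which involves neither $x_{k_{q+1}}$ nor $x_{k_{q+1}+1}$. A reduction of type $(5)$ then applies, contradicting reducedness, so $k_q<k_{q+1}$ for every $q$.

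The delicate point is the inductive step: the whole argument rests on Lemma~\ref{lem8} remaining applicable at each stage, which in turn requires that one is never forced to prepend $x_{s-1}$ to a single-letter leftmost block. This is exactly where the assumption that $u$ has no increasing factor enters, ruling out the offending subcase $k_s=s$. Keeping the block indices and the cyclic convention $x_0=x_n$ consistent throughout (relevant when $s=1$, where the option $x_{s-1}=x_n$ is vacuous because $|u|_n=0$) is the main bookkeeping hurdle.
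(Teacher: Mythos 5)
Your proof is correct and follows essentially the same route as the paper's: pin down $\suff_1(u)=x_{i+1}$ by exhibiting a type (4) or (5) reduction for any other ending, then iterate Lemma~\ref{lem8} from right to left, using the no-increasing-factor hypothesis to exclude the branch $\suff_1(u')=x_{s-1}$ when the leftmost block is a single letter, and finally derive $k_q<k_{q+1}$ from a type (5) reduction. Your write-up is in fact more explicit than the paper's (which compresses the iteration into ``repeating this argument''), and it correctly treats the case $\suff_1(u)=x_i$ that the paper's phrasing ``$s<i$'' glosses over.
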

\begin{proof}
Let $x_s=\suff_1(u)$ for some $s\neq n$. If $s>i+1$, then $w$ has
a factor $x_{s}x_nx_1\cdots x_ix_{n-1}\cdots x_{s+1}x_s$ and
$|x_nx_1\cdots x_ix_{n-1}\cdots x_{s+1}|_{s-1}=0$, whence $w$ is
not reduced. If $s<i$, then $w$ has a factor $x_{s}x_nx_1\cdots
x_{s-1}x_{s}$, so it is not reduced. It follows that
$\suff_1(u)=x_{i+1}$.

Applying Lemma~\ref{lem8}, we deduce that if $|u|>1$, then $w=u'
x_{i+2}x_{i+1}x_nx_1\cdots x_ix_{n-1}\cdots x_{i+1}$ or $w=u'
x_{i}x_{i+1}x_nx_1\cdots x_ix_{n-1}\cdots x_{i+1}$ for some $u'\in
F $. In the latter case, $w$ has a factor $x_ix_{i+1}$, which
contradicts the hypothesis. Repeating this argument we get the
desired form of $u$, because an inequality $k_j \geqslant k_{j+1}$
would allow a reduction of type (5).
\end{proof}

\begin{lemma}\label{lem10}
    If $w$ is a reduced word and $w=ux_nx_{1}\cdots x_i x_{n-1}\cdots x_{i+1}$
         for some
    $i=1,\ldots, n-3$ and some $u\in F$ such that $|u|_{n}=0$, then $u$ is of the form
    \begin{equation*}
    u=(x_{k_s}\cdots x_s)(x_{k_{s+1}}\cdots x_{s+1})\cdots(x_{k_{i+1}}\cdots x_{i+1}),
    \end{equation*}
    where $s\leqslant i+1$, $k_{s+1}<k_{s+2}<\cdots <k_{i+1}\leqslant n-1$ and $k_q> q$ for $q=s+1, \ldots, i+1$
    (but perhaps $k_s\leqslant s$).
\end{lemma}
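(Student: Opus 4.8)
The plan is to reduce the general statement to Lemma~\ref{lem9}, whose conclusion is exactly the desired form except that it forces the leftmost block to be decreasing ($k_s\geq s$); the only genuinely new phenomenon to account for is a leftmost block that is an increasing run ($k_s\leq s$), and this will be produced and controlled by Lemma~\ref{lem7}. First I would reuse the opening argument of Lemma~\ref{lem9} (which does not use any hypothesis on increasing factors) to show $\suff_1(u)=x_{i+1}$: a suffix $x_s$ with $s>i+1$ or $s<i$ produces a forbidden factor $x_svx_s$ against the tail, so the rightmost block must end in $x_{i+1}$.

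Next I would peel blocks off $u$ from the right by repeatedly applying Lemma~\ref{lem8}. At each stage, having written the already-identified suffix of $u$ as decreasing blocks $(x_{k_s}\cdots x_s)\cdots(x_{k_{i+1}}\cdots x_{i+1})$ with $k_s\geq s$ and $k_q>q$ for $q>s$, Lemma~\ref{lem8} leaves exactly two options for the next letter to the left: $x_{k_s+1}$, which lengthens the current block upward (keeping it decreasing), or $x_{s-1}$, which opens a new block of bottom $s-1$. Since the bottom can only drop by one at a time, the bottoms that occur are consecutive, namely $s,s+1,\dots,i+1$; and the requirement $k_{s+1}<\cdots<k_{i+1}$ follows because an inequality $k_j\geq k_{j+1}$ would expose a reduction of type $(5)$, exactly as in the proof of Lemma~\ref{lem9}. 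Crucially, the junction created by the $x_{s-1}$ option is $x_{s-1}x_{k_s}$, which is a consecutive increasing step only when $k_s=s$, that is, only when the current leftmost block is the single letter $x_s$.

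This pins down the terminal dichotomy. Either the peeling exhausts $u$ while the leftmost block is still decreasing, in which case $u$ has no factor $x_l\cdots x_j$ with $l<j$ and Lemma~\ref{lem9} gives the asserted form with $k_s\geq s$; or at some stage the leftmost block is a single letter $x_s$ and the next letter is $x_{s-1}$, producing the increasing step $x_{s-1}x_s$. In the latter situation I would invoke Lemma~\ref{lem7} with its increasing block taken to be $x_{s-1}x_s$: its hypotheses hold because the material lying between this block and the tail consists of the already-peeled blocks, all of bottom $>s$, so it contains no generator of index $\leq s$. Lemma~\ref{lem7} then forces everything further to the left to be the increasing run $x_r\cdots x_{s-2}$ with nothing before it, so the whole left part collapses into the single increasing block $x_r\cdots x_s$, giving a leftmost block with $k_s=r\leq s$. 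Concatenating with the decreasing blocks of bottoms $s+1,\dots,i+1$ yields precisely the claimed decomposition. The main obstacle is the careful bookkeeping of this peeling recursion — in particular verifying that an increasing step can arise only through the single-letter case and that the hypothesis $|u'|_j=0$ for $j\leq s$ needed for Lemma~\ref{lem7} is genuinely met — rather than any single hard estimate.
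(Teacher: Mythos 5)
Your proof is correct and follows essentially the same route as the paper: the part of $u$ lying to the right of the last increasing step is handled by Lemma~\ref{lem9} (which you unroll into an explicit right-to-left peeling via repeated applications of Lemma~\ref{lem8}, maintaining the invariant that only the leftmost block may fail to be strictly decreasing), and Lemma~\ref{lem7} then forces everything to the left of that increasing step to be a single increasing run. The paper performs the same three-lemma argument more compactly, splitting $u$ at the last increasing factor, invoking Lemma~\ref{lem9} on the right-hand piece in one stroke and Lemma~\ref{lem8} once to pin down the letter at the junction; your placement of the increasing block at $x_{s-1}x_s$ rather than the paper's $x_{s-2}x_{s-1}$ is only a shift of indexing.
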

\begin{proof}
If $u$ has no factors of the form $x_lx_{l+1}\cdots x_j$, where
$l<j$, then the desired form of the word follows from
Lemma~\ref{lem9}.

Thus, assume otherwise. Let
\begin{equation*}
w=zvx_nx_1\cdots x_i x_{n-1}\cdots x_{i+1},
\end{equation*}
where the word $v$ does not have increasing factors,
$\suff_2(z)=x_{j-1}x_{j}$ for some $j$ and $\pref_1(v)=x_{k_s}$
for $k_s\neq j+1$. Then Lemma~\ref{lem9} implies that
\begin{equation*}v=(x_{k_{s}}\cdots x_{s})\cdots(x_{k_{i+1}}\cdots x_{i+1})
\end{equation*}
and $s\leqslant i+1$, $k_{s}<k_{s+1}\cdots <k_{i+1}\leqslant n-1$,
$k_q>q$ for $q=s+1,\ldots, i+1$, $k_{s}\geqslant s$.

Moreover, by Lemma~\ref{lem8}, we have $x_j\in\{x_{k_s+1},
x_{s-1}\}$. If $j=k_s+1$, then $w$ has a factor
$x_{j-1}x_{j}x_{j-1}$, which is impossible. Hence $j=s-1$, and
then
\begin{equation*}|(x_{k_s}\cdots x_s)(x_{k_{s+1}}\cdots x_{s+1})\cdots(x_{k_{i+1}}\cdots x_{i+1})|_p=0\end{equation*}
for $p=1, \ldots, s-1$, and the assertion follows from
Lemma~\ref{lem7}.
\end{proof}

\noindent\textbf{Proof of Theorem~\ref{tw1}.} By
Corollary~\ref{podstawowy wniosek} it is clear that all words
described in the statement are reduced.

Let $w\in F$ be a reduced word that contains a factor $x_nq_i$. By
Lemma~\ref{lem4} the assertion holds for $i=0, n-2$. Notice that
if the word $w$ has the form $x_nx_1\cdots x_ix_{n-1}\cdots
x_{i+1}v$ for some $v\in C_n$, then we must have $\pref(v)=x_n$.
Indeed, if $\pref(v)=x_j$ for $j\leqslant i$, then
$x_nq_ix_j\arr{(4j)}x_nq_i$. Similarly, if $i+1\leqslant
j\leqslant n-1$, then $x_nq_ix_j\arr{(5j)}x_nx_1\cdots
x_ix_{n-1}\cdots x_{j+1}x_{j-1}\cdots x_{i+1}$.

From Lemma~\ref{lem3} and Lemma~\ref{lem5} we know that if
$i=1,\ldots, n-3$ then $w$ is of the form
\begin{equation*}
u(x_nx_1\cdots x_{i_1}x_{n-1}\cdots x_{j_1})\cdots (x_nx_1\cdots
x_{i_m}x_{n-1}\cdots x_{j_m})x_nv,
\end{equation*}
for some $m$, where $1\leqslant i_k<j_k\leqslant n-1$ for every
$k$ and $|u|_n=|v|_n=0$.

In view of Corollary~\ref{wn1} this implies that $w$ is of the
form
\begin{equation} \label{ux}
u(x_nx_{1}\cdots x_{i}x_{n-1}\cdots x_{i+1})^{k}x_n(x_{1}\cdots
x_{i_1}x_{n-1}\cdots x_{j_1}) \cdots x_n(x_{1}\cdots
x_{i_r}x_{n-1}\cdots x_{j_r})x_nv,
\end{equation}
where $i_{r}<i_{r-1}<\cdots<i_1<i+1$ and $i+1<j_1<j_2<\cdots
<j_{r}$ and $|u|_{n}=|v|_{n}=0$, where the factor of the form
$(x_{1}\cdots x_{i_1}x_{n-1}\cdots x_{j_1})\cdots x_n(x_{1}\cdots
x_{i_r}x_{n-1}\cdots x_{j_r})$ does not have to occur in $w$ (that
is, $w=u(x_nx_{1}\cdots x_{i}x_{n-1}\cdots x_{i+1})^{k}x_nv$) and
then we put $r=0$.

Notice that $\pref_1(v)\in\{x_1, x_{n-1}\}$, since otherwise $w$
contains a factor $x_nx_s$ for $s<n$, which is not reduced.

If $\pref_1(v)=x_{n-1}$, Lemma~\ref{lem1} implies that
$v=x_{n-1}\cdots x_{j_{r+1}}$. Moreover, we must have
$j_r<j_{r+1}$, as otherwise $w$ has a factor $x_{j_{r}}x_n\cdots
x_{j_r+1}x_{j_r}$ such that $|x_n\cdots x_{j_r+1}|_{j_r-1}=0$,
which is not reduced.

If $\pref_1(v)=x_{1}$, then by Lemma~\ref{lem2} and
Corollary~\ref{wn1} we get $v=x_1\cdots x_{i_{r+1}}x_{n-1}\cdots
x_{j_{r+1}}$ for $i_{r+1}<i_r$ and $j_{r+1}> j_{r}$, if $r>0$. If
$r=0$, then in view of (\ref{ux}) we have $w=pq$, where $p\in F$
and
\begin{equation*}q=x_nx_1\cdots x_ix_{n-1}\cdots x_{i+1}x_nx_1\cdots x_{i_{1}}x_{n-1}\cdots x_{j_{1}}\end{equation*}
Corollary~\ref{wn1} implies that $i_1\leqslant i$ and
$j_1\geqslant i+1$. The desired form of the elements of the set
$B_i$ follows.

Since $k\geqslant 1$, the desired form of the elements of the set $A_i$ follows by Lemma~\ref{lem10}.
This completes the proof of Theorem~\ref{tw1}. $\square$\\

Our next aim is to show that Theorem~\ref{tw1} characterizes
reduced forms of almost all elements of $C_n$.
\begin{defi} \label{matrix struct}
    For every $i=0,\ldots, n-2$ we denote by $\tilde{M_i}$ the following set
    \begin{equation}
    \tilde{M_i}=\{a(x_nq_i)^kb\in C_n: a\in A_i, b\in B_i, k\geqslant 1\}.
    \end{equation}
(the set of reduced forms of elements of $C_n$ that have a factor
$x_nq_i$). Define also $\tilde{M}=\bigcup_{i=0}^{n-2}\tilde{M_i}$.
\end{defi}

Corollary~\ref{podstawowy wniosek} ensures that two elements  $w,
w'\in\tilde{M}$ are equal in $C_n$ if and only if the equality
$w=w'$ holds in the free monoid $F$ generated by $x_1, \ldots,
x_n$. In particular, we can write $\tilde{M}\s C_n$. This
identification will be often used without further comment.

\begin{proposition}\label{stw1}
    $C_n\setminus \tilde{M}$ is a finite set.
\end{proposition}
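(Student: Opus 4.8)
The plan is to prove that every $w\in C_n\setminus\tilde{M}$ -- that is, every reduced word containing no factor of the form $x_nq_i$ ($i=0,\ldots,n-2$) -- has length bounded by a constant depending only on $n$; finiteness of $C_n\setminus\tilde{M}$ is then immediate, since $F$ has only finitely many words of bounded length. First I would fix such a $w$ and record its occurrences of the generator $x_n$, writing
\[
w=w_0\,x_n\,w_1\,x_n\,\cdots\,x_n\,w_t,\qquad |w_j|_n=0 \text{ for all } j,
\]
where $t=|w|_n$. The length of $w$ is controlled once I bound (a) the two tails $w_0,w_t$, (b) the number $t$ of occurrences of $x_n$, and (c) the inner factors $w_1,\ldots,w_{t-1}$.

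For the tails, note that $w_0$ and $w_t$ are factors of the reduced word $w$, hence reduced, and contain no $x_n$; thus they lie in the submonoid $\langle x_1,\ldots,x_{n-1}\rangle$ of $C_n$. The generators $x_1,\ldots,x_{n-1}$ satisfy the defining relations of the Hecke--Kiselman monoid of the oriented path $x_1\to x_2\to\cdots\to x_{n-1}$ (the subgraph of the cycle induced on $\{1,\ldots,n-1\}$): consecutive generators are joined by the corresponding arrows, while any two with $j-i\ge 2$ commute, as then $1<j-i<n-1$ and reduction (2) applies. Hence there is a monoid epimorphism from that path monoid onto $\langle x_1,\ldots,x_{n-1}\rangle$. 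Since a path is acyclic, its Hecke--Kiselman monoid is finite, so $\langle x_1,\ldots,x_{n-1}\rangle$ is finite as well. In particular the reduced words free of $x_n$ form a finite set, which bounds $|w_0|$ and $|w_t|$ and already disposes of the case $t=0$.

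For the inner factors, each $w_k$ with $1\le k\le t-1$ sits inside a reduced factor $x_nw_kx_n$ with $|w_k|_n=0$, so Lemma~\ref{lem3} applies: either $w_k=x_{n-1}\cdots x_1=q_0$, or $w_k=x_1\cdots x_{i_k}x_{n-1}\cdots x_{j_k}$ with $1\le i_k<j_k\le n-1$. The first possibility is excluded, as it would make $x_nw_k=x_nq_0$ a factor of $w$; and in the second we must have $j_k>i_k+1$, for otherwise $x_nw_k=x_nq_{i_k}$. In particular every inner $w_k$ has length $i_k+(n-j_k)\le n$, which bounds (c). It remains to bound $t$. Here I would invoke Corollary~\ref{wn1}, giving $i_1\ge i_2\ge\cdots\ge i_{t-1}$ and $j_1\le j_2\le\cdots\le j_{t-1}$, together with Lemma~\ref{lem6}, by which $i_k=i_{k+1}$ forces $j_k=i_k+1$ and $j_k=j_{k+1}$ forces $i_k=j_k-1$. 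Since $j_k=i_k+1$ cannot occur, both chains are in fact strict: $i_1>i_2>\cdots$ and $j_1<j_2<\cdots$. As the $i_k$ lie in $\{1,\ldots,n-2\}$, a strictly decreasing chain has length at most $n-2$, so $t-1\le n-2$, i.e.\ $t\le n-1$.

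Combining the three bounds gives $|w|\le 2B+n^2$, where $B$ is the (finite) maximal length of a reduced $x_n$-free word; hence $C_n\setminus\tilde{M}$ is finite. I expect the bound on $t$ in the third paragraph to be the main obstacle: the point is that the hypothesis ``no factor $x_nq_i$'' upgrades the weak monotonicity of Corollary~\ref{wn1} to \emph{strict} monotonicity through Lemma~\ref{lem6}, and one must not overlook the need to rule out the degenerate block $q_0$ arising from case~(1) of Lemma~\ref{lem3}. The tail bound, by contrast, is a soft consequence of the finiteness of the acyclic path monoid.
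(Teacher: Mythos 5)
Your proof is correct and follows essentially the same route as the paper: the same decomposition $w=w_0x_nw_1\cdots x_nw_t$, the same use of Lemma~\ref{lem3} to force each inner factor into the form $x_1\cdots x_{i_k}x_{n-1}\cdots x_{j_k}$ with $i_k+1<j_k$, and the same monotonicity of the indices to bound $t$. The two points where you deviate are both sound and subsidiary: for the $x_n$-free tails the paper runs an explicit induction showing $|w|_m\leqslant m$ (giving the concrete bound $n(n-1)/2$), whereas you get finiteness softly from the fact that $\langle x_1,\ldots,x_{n-1}\rangle$ is a quotient of the Hecke--Kiselman monoid of an acyclic path and hence finite; and for the count of inner blocks the paper uses the interleaved strict chain $i_{k-1}<\cdots<i_1<i_1+1<j_1<\cdots<j_{k-1}$ from Corollary~\ref{wn1} to get $t\leqslant n/2+1$, while you derive strictness of the two chains separately from Lemma~\ref{lem6} to get $t\leqslant n-1$. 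Both variants yield the needed uniform length bound, and you correctly identify the key point that the hypothesis ``no factor $x_nq_i$'' excludes the degenerate block $q_0$ and upgrades weak to strict monotonicity.
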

\begin{proof} Let $w$ be a reduced word that does not contain any factor
of the form $x_nx_1\cdots x_ix_{n-1}\cdots x_{i+1}$. Assume that
$|w|_n=k$, so that $w=u_0x_nu_1x_n\cdots x_nu_k$ for some words
$u_j$, $j=0,\ldots, k$, such that $|u_j|_n=0$.

If $k=0$, then $|w|_1\leqslant1$, since otherwise $w$ contains a
factor $x_1yx_1$, where $|y|_{n}=0$. We will prove by induction on
$m=1,\ldots, n-1$, that $|w|_{m}\leqslant m$. We know that the
assertion holds for $m=1$. Suppose that $|w|_{m-1}\leqslant m-1$
and also that $|w|_m>m$. Then $w$ has a factor of the form
$x_{m}yx_{m}$, where $|y|_{m-1}=0$ (because $|w|_{m-1}\leqslant
m-1$), whence $w$ is not reduced, a contradiction. This proves the
inductive assertion. It follows that if $|w|_n=0$ then $|w|\leq
1+\cdots+ (n-1)=\frac{n(n-1)}{2}=C_1(n)$. So the number of such
possible words does not exceed $n^{C_1(n)}$.

Assume now that $|w|_n=k\geqslant 1$. By Lemma~\ref{lem3} for
$s=1, \ldots, k-1$ we get $u_s= x_1\cdots x_{i_s}x_{n-1}\cdots
x_{j_s}$ for some $1\leqslant i_s<j_s\leqslant n-1$. Hence, in
particular it follows that the length of every $u_s$, for
$s=1,\ldots, k-1$, is at most $n-1$. Moreover, the assumption
implies that $u_1=x_1\cdots x_{i_1}x_{n-1}\cdots x_{j_1}$ where
$i_1+1<j_1$. From Corollary~\ref{wn1} it follows that we have the
following inequalities: $1\leqslant i_{k-1}< i_{k-2}<\cdots<
i_1+1<j_1<\cdots< j_{k-1}\leqslant n-1$. In particular, this
implies that $k\leqslant\frac{n}{2}+1$, and hence the length of
the reduced word $v=x_nu_1x_n\cdots x_n$ (such that $w=u_0vu_k$)
is at most $k+(k-1)(n-1)<kn\leqslant (\frac{n}{2}+1)n=C_2(n)$. By
the first part of the proof we get that that the length of $u_0$
and of $u_k$ is at most $C_1(n)$. We have proved that every
reduced word of the form  $w=u_0x_nu_1x_n\cdots x_nu_k$ has length
at most $C_2(n)+2C_1(n)$; so there are at most
$(n+1)^{C_2(n)+2C_1(n)}$ such words.

Consequently, the cardinality of $C_n\setminus\tilde{M}$ is
bounded by $n^{C_1(n)}+ (n+1)^{C_2(n)+2C_1(n)}$, so it is finite.
\end{proof}

\section{An ideal chain and related structures of matrix type}\label{ideal}

Our next goal is to introduce a special ideal chain in the monoid
$C_{n}$ that is strongly related to certain structures of matrix
type. This will be essential when dealing with the structure and
properties of the algebra $K[C_n]$, and consequently of every
Hecke--Kiselman algebra, in Section~\ref{Noetherian-chapter}.

In view of Corollary~\ref{podstawowy wniosek} we identify elements
of $C_n$ with the (unique) reduced forms of words in $F$.

\subsection{An ideal chain}\label{s: idealy}
We will introduce two special families of ideals of  $C_n$. First,
let
$$I_i=\{w\in C_n: C_nwC_n\cap\langle
x_nq_i\rangle=\emptyset\}$$ for $i=0,\ldots, n-2$.  We also define
$I_{-1}=I_0\cup C_n x_nq_{0}C_n$. It is clear that every $I_i$ is
an ideal in $C_n$, if it is nonempty. We show that
$I_{n-2}=\emptyset$. This is a consequence of the following
observation.
\begin{lemma}\label {lem o mno}
    Let $w\in C_n$. If $k=1,\ldots, n$ then the reduced form of
    $(x_{k+1}\cdots x_nx_1\cdots x_{k})w$
    is a factor of the infinite word $(x_nx_1\cdots x_{n-1})^{\infty}$.
    Moreover $(x_{k+1}\cdots x_nx_1\cdots x_{k})w$ has a prefix of the form $x_{k+1}\cdots x_nx_1\cdots x_{k}$.
\end{lemma}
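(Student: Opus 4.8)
The plan is to prove this by induction on the number of generators composing $w$, reducing everything to the base case where $w$ is a single generator $x_m$. The key idea is that the prefix $p_k := x_{k+1}\cdots x_nx_1\cdots x_k$ is exactly a cyclic rotation of the word $x_nx_1\cdots x_{n-1}$, so that $p_k$ is itself a factor of $(x_nx_1\cdots x_{n-1})^{\infty}$, and moreover $p_k$ ends with the generator $x_k$ and is ``poised'' to continue the infinite word with $x_{k+1}$. First I would observe that once I know $p_k w$ reduces to a factor of $(x_nx_1\cdots x_{n-1})^{\infty}$ that retains the prefix $p_k$, the full statement follows, since $p_k$ is the claimed prefix. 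Because $w$ is a product of generators $w = x_{m_1}x_{m_2}\cdots x_{m_t}$, it suffices to understand how right-multiplication by a single generator $x_m$ acts on a reduced word of the form $x_{k+1}\cdots x_nx_1\cdots x_\ell$ (a factor of the infinite word ending in $x_\ell$), and to check that the result is again such a factor with the same prefix $p_k$.

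For the single-generator step, I would analyze the reduced form of $(x_{k+1}\cdots x_nx_1\cdots x_\ell)\,x_m$, where the current word is a factor of $(x_nx_1\cdots x_{n-1})^{\infty}$ ending in $x_\ell$. There are essentially three cases according to the residue of $m$ relative to $\ell$. If $m=\ell+1$ (mod $n$), the generator simply extends the word, lengthening the factor of the infinite word by one while preserving the prefix. If $m=\ell$, then we have a factor $x_\ell x_\ell$ which collapses via a type-(1) reduction, leaving the word unchanged. In the remaining case, where $m$ is neither $\ell$ nor $\ell+1$, I would show that $x_m$ commutes leftward or triggers a reduction of type $(4)$ or $(5)$ that ultimately absorbs $x_m$ into the existing word: since the current word already contains the generators $x_{k+1},\dots,x_n,x_1,\dots,x_\ell$ in the fixed cyclic pattern, any such $x_m$ creates a factor of the form $x_m y x_m$ with $|y|$ missing the appropriate neighbor, and the reduction removes the newly appended copy. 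In every subcase the outcome is again a factor of $(x_nx_1\cdots x_{n-1})^{\infty}$ beginning with $p_k$, so the inductive hypothesis is preserved.

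The base case, $w = x_m$ a single generator, is covered by the same single-generator analysis applied to the starting word $p_k = x_{k+1}\cdots x_nx_1\cdots x_k$, which ends in $x_k$ and is already a factor of the infinite word with prefix $p_k$; appending $x_m$ yields one of the three outcomes above. The reductions of type $(2)$ (commutations) require a short verification that they do not destroy the prefix $p_k$: when $x_m$ must slide left past several generators, I would check that it can only slide within the trailing portion of the word beyond position $|p_k|$, never disturbing the fixed leading block $p_k$ itself. This is where one uses that the prefix $p_k$ comprises a full cyclic run of all $n$ generators, so that the leftmost occurrences are locked in place.

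The main obstacle I anticipate is the bookkeeping in the third case: verifying carefully that a generator $x_m$ ``out of phase'' with the current ending $x_\ell$ always produces a reduction that annihilates it rather than introducing a genuinely new letter or breaking the cyclic pattern. In particular, I would need to rule out the possibility that a type-$(3)$ reduction (the long swap $x_n(x_1\cdots x_i)x_j \to x_j x_n(x_1\cdots x_i)$) moves a letter across the prefix $p_k$; the safeguard is again that $p_k$ is a complete cyclic block, forcing any such rearrangement to act entirely on the suffix. Once this case analysis is pinned down, the induction closes and both assertions — being a factor of $(x_nx_1\cdots x_{n-1})^{\infty}$ and retaining the prefix $x_{k+1}\cdots x_nx_1\cdots x_k$ — follow simultaneously.
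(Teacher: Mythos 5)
Your proposal is correct and follows essentially the same route as the paper: an induction on the length of $w$ with a three-way case split on the next generator relative to the current position in the cycle, absorbing out-of-phase generators via idempotency or a type (4) reduction. The paper's version merely peels letters from the left of $w$ and re-indexes $k$ so that the working prefix always has length exactly $n$, which makes the absorption step immediate and avoids the type (2)/(3) bookkeeping you flag as a potential obstacle.
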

\begin{proof}
Let $w\in C_n$.  We proceed by induction on the length of $w$. The
assertion is clear for the empty word. Assume that $|w|>1$. Let
$w=x_jw'$ for some $j$. If $j=k+1$ (for $k=n$ we put $j=1$), then
the assertion follows by the induction hypothesis for $k+1$ and
$w'$. If $j=k$, the assertion is also clear by induction.
Otherwise,
\begin{equation*}
(x_{k+1}\cdots x_nx_1\cdots x_{k})x_jw'\arr{(4j)}(x_{k+1}\cdots
x_nx_1\cdots x_{k})w'
\end{equation*}
and the assertion again follows by the induction hypothesis.
\end{proof}
\begin{corollary}\label{wn5}
    $I_{n-2}=\emptyset$
\end{corollary}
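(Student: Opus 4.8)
The plan is to show that no element of $C_n$ lies in $I_{n-2}$, by exhibiting, for each $w\in C_n$, a product $awb$ that falls inside $\langle x_nq_{n-2}\rangle$. The key observation is that $q_{n-2}=x_1\cdots x_{n-2}x_{n-1}=x_1\cdots x_{n-1}$, so $x_nq_{n-2}=x_nx_1\cdots x_{n-1}$ is exactly one period of the infinite word $(x_nx_1\cdots x_{n-1})^{\infty}$ that governs Lemma~\ref{lem o mno}.

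First I would fix an arbitrary $w\in C_n$ and apply Lemma~\ref{lem o mno} with $k=n-1$. For this value the left multiplier $x_{k+1}\cdots x_nx_1\cdots x_k$ is precisely $a:=x_nx_1\cdots x_{n-1}=x_nq_{n-2}$, an element of $C_n$. The lemma then tells me two things about $aw$: its reduced form is a factor of $(x_nx_1\cdots x_{n-1})^{\infty}$, and it has $x_nx_1\cdots x_{n-1}$ as a prefix; in particular the reduced form of $aw$ begins with $x_n$.

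Next I would note that in the periodic word $(x_nx_1\cdots x_{n-1})^{\infty}$ the letter $x_n$ occurs only at the period boundaries, so any factor beginning with $x_n$ is necessarily a prefix of the periodic word. Hence the reduced form $P$ of $aw$ has the shape $(x_nx_1\cdots x_{n-1})^{m}x_nx_1\cdots x_j$ for some $m\geqslant 1$ and some $0\leqslant j\leqslant n-1$ (with the convention that $j=n-1$ means $P$ is already a full power and the trailing block is empty). I would then right-multiply by the complementary block $b:=x_{j+1}\cdots x_{n-1}$ (taken empty when $j=n-1$), so that the free word $Pb$ equals $(x_nx_1\cdots x_{n-1})^{m+1}$. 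Since every power $(x_nq_{n-2})^{t}$ is reduced (as recorded just after the definition of the $q_i$), the product $a\cdot w\cdot b=\mathcal{R}_S(Pb)$ equals $(x_nq_{n-2})^{m+1}\in\langle x_nq_{n-2}\rangle$.

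This shows $awb\in C_nwC_n\cap\langle x_nq_{n-2}\rangle$, so $w\notin I_{n-2}$; as $w$ was arbitrary, $I_{n-2}=\emptyset$. The only point requiring care is the middle step, namely verifying that a factor of the periodic word beginning with $x_n$ must be an honest prefix, and that appending $b$ genuinely closes $P$ up to a full power rather than triggering further reductions. Both reduce to the periodicity of $(x_nx_1\cdots x_{n-1})^{\infty}$ together with the already-established fact that its powers are reduced, so I expect no real obstacle beyond bookkeeping the cyclic indices.
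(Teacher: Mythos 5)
Your proof is correct and follows essentially the same route as the paper: left-multiply $w$ by $x_nx_1\cdots x_{n-1}=x_nq_{n-2}$ and invoke Lemma~\ref{lem o mno} to see that the product is a factor of $(x_nx_1\cdots x_{n-1})^{\infty}$ with that word as a prefix. The only difference is that you make explicit the final step (completing the prefix to a full power of $x_nq_{n-2}$ by a suitable right multiplier), which the paper's proof leaves implicit.
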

\begin{proof} Let $w\in C_n$. By Lemma~\ref{lem o mno}, $(x_nx_1\cdots x_{n-1})w$ is a factor of $(x_nx_1\cdots x_{n-1})^{\infty}$.
 Hence, in particular
$(x_nx_1\cdots x_{n-1})w\notin I_{n-2}$, so that $w\notin
I_{n-2}$. The assertion follows.
\end{proof}
 A dual version of Lemma~\ref{lem o mno} also holds.
 In order to prove this, we introduce a natural involution of the monoid $C_n$ that will be useful also later.
\begin{defi} \label{involut}
    Let $\tau: \langle x_1,\ldots, x_n\rangle\longrightarrow \langle x_1,\ldots, x_n\rangle$ be the involution such that
    \begin{equation*}
    \tau(x_i)=\begin{cases}x_{n-i} & \textrm{ for }i\neq n\\ x_n & \textrm{ for }i=n \end{cases}.
    \end{equation*}
    It is easy to see that $\tau$ preserves the set of defining relations of $C_n$. Hence,
    it determines an involution of $C_n$, also denoted by $\tau$.
    \end{defi}

\begin{lemma}\label{lem o mno dual}
    Let $w\in C_n$. If $k=0,\ldots, n-1$ then the reduced form of
    $w(x_{k+1}\cdots x_nx_1\cdots x_{k})$ is a factor of the infinite word
     $(x_nx_1\cdots x_{n-1})^{\infty}$.
    Moreover, $w(x_{k+1}\cdots x_nx_1\cdots x_{k})$ has a suffix of the form $x_{k+1}\cdots x_nx_1\cdots x_{k}$.
\end{lemma}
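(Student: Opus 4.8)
The plan is to deduce this dual statement directly from Lemma~\ref{lem o mno} by transporting it through the involution $\tau$ of Definition~\ref{involut}. Recall that $\tau$ is an involution of $C_n$, that is, an anti-automorphism with $\tau^2=\mathrm{id}$, so that $\tau(uv)=\tau(v)\tau(u)$ for all $u,v\in C_n$. The point is that $\tau$ interchanges left and right multiplication, and therefore converts the "prefix'' statement of Lemma~\ref{lem o mno} into the desired "suffix'' statement.

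First I would record the effect of $\tau$ on the relevant full-cycle words. Writing $v_k=x_{k+1}\cdots x_n x_1\cdots x_k$ and using $\tau(x_i)=x_{n-i}$ (indices read modulo $n$ with $x_n=x_0$, so that $\tau$ simply negates indices), a direct computation reversing the order of the letters and negating their indices gives
\[
\tau(v_k)=x_{(n-k-1)+1}\cdots x_n x_1\cdots x_{n-k-1}=v_{n-k-1}.
\]
As $k$ ranges over $0,\ldots,n-1$, the value $k'=n-k-1$ ranges over the indices admissible in Lemma~\ref{lem o mno}; the only boundary case is $k=n-1$, where $\tau(v_{n-1})=x_1\cdots x_n$, matching the $k'=n$ instance of Lemma~\ref{lem o mno} under the convention that $x_{n+1}\cdots x_n$ is the empty word.

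Now I apply $\tau$ to the product in question. Since $\tau$ is an anti-automorphism, $\tau(w\,v_k)=\tau(v_k)\tau(w)=v_{k'}\,\tau(w)$ with $k'=n-k-1$. By Lemma~\ref{lem o mno}, the reduced form $P$ of $v_{k'}\tau(w)$ is a factor of $(x_nx_1\cdots x_{n-1})^{\infty}$ having $v_{k'}$ as a prefix. Applying $\tau$ once more and using $\tau^2=\mathrm{id}$, the reduced form of $w\,v_k$ coincides with the reduced form of $\tau(P)$, and the prefix $v_{k'}$ of $P$ becomes a suffix $\tau(v_{k'})=v_k$ of $\tau(P)$. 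It then remains to check that $\tau$ carries factors of $(x_nx_1\cdots x_{n-1})^{\infty}$ to factors of the same infinite word: these factors are exactly the runs of consecutive generators whose indices increase by $1$ modulo $n$, and reversing the order while negating all indices again yields such a run. Hence $\tau(P)$ is a factor of $(x_nx_1\cdots x_{n-1})^{\infty}$, and being a factor of a reduced word it is itself reduced (by Corollary~\ref{podstawowy wniosek}); therefore it is the reduced form of $w\,v_k$, which proves both assertions.

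The main point to handle with care is the index bookkeeping modulo $n$: matching the range $k\in\{0,\ldots,n-1\}$ here with the range $k\in\{1,\ldots,n\}$ of Lemma~\ref{lem o mno} via the cyclic identification $k'=0\leftrightarrow k'=n$, and verifying explicitly that $\tau$ preserves the class of factors of $(x_nx_1\cdots x_{n-1})^{\infty}$ while sending the prefix $v_{k'}$ to the suffix $v_k$. Once these purely combinatorial checks are in place, no computation with reductions is needed, since the statement is pulled back verbatim from Lemma~\ref{lem o mno}.
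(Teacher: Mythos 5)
Your proposal is correct and follows essentially the same route as the paper: both transport Lemma~\ref{lem o mno} through the anti-automorphism $\tau$, using the computation $\tau(x_{k+1}\cdots x_nx_1\cdots x_k)=x_{n-k}\cdots x_nx_1\cdots x_{n-k-1}$ and the fact that $\tau$ preserves the set of factors of $(x_nx_1\cdots x_{n-1})^{\infty}$. Your extra care with the boundary case $k=n-1$ and with why $\tau(P)$ is itself the reduced form only makes explicit what the paper leaves implicit.
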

\begin{proof}
 Notice that for every $k\in\{0,\ldots, n-1\}$ we have
$\tau(x_{k+1}\cdots x_nx_1\cdots x_{k})=x_{n-k}\cdots x_nx_1\cdots
x_{n-k-1}$, where for $k=0$ we put  $x_{k+1}\cdots x_nx_1\cdots
x_{k}=x_1\cdots x_n$. In particular, the image under  $\tau$ of
every factor of the infinite word $(x_nx_1\cdots
x_{n-1})^{\infty}$ is a factor of this word. Let $w\in C_n$. The
above shows that $\tau(w(x_{k+1}\cdots x_nx_1\cdots
x_{k}))=x_{n-k}\cdots x_nx_1\cdots x_{n-k-1}\tau(w)$.
Lemma~\ref{lem o mno} yields that $\tau(w(x_{k+1}\cdots
x_nx_1\cdots x_{k}))$ is a factor of $(x_nx_{1}\cdots
x_{n-1})^{\infty}$, with a prefix of the form $x_{n-k}\cdots
x_nx_1\cdots x_{n-k-1}$. Applying the involution $\tau$ again we
get that $w(x_{k+1}\cdots x_nx_1\cdots x_{k})$ is a factor of
$(x_nx_{1}\cdots x_{n-1})^{\infty}$, with a suffix of the form
$x_{k+1}\cdots x_nx_1\cdots x_{k} = \tau(x_{n-k}\cdots
x_nx_1\cdots x_{n-k-1})$. The assertion follows.
\end{proof}

We can now return to the ideal chain $I_j$.

\begin{lemma}
$I_{i+1}\subseteq I_i$ for $i=0,\ldots, n-3$.
\end{lemma}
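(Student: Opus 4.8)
The plan is to base the whole argument on one fundamental identity in $C_n$, namely
\begin{equation*}
x_nq_{i+1}=(x_nq_i)\,x_{i+2}\qquad(0\le i\le n-3).
\end{equation*}
To verify it I would append $x_{i+2}$ to $x_nq_i=x_nx_1\cdots x_i\,x_{n-1}\cdots x_{i+2}x_{i+1}$; the resulting suffix $x_{i+2}x_{i+1}x_{i+2}$ reduces by a reduction of type $(5)$ at the index $i+2$ to $x_{i+1}x_{i+2}$, after which $x_{i+1}$ can be moved left past each of $x_{n-1},\dots,x_{i+3}$ by reductions of type $(2)$ (legitimate because $1<j-(i+1)<n-1$ for $i+3\le j\le n-1$). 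What remains is exactly $x_nx_1\cdots x_{i+1}x_{n-1}\cdots x_{i+2}=x_nq_{i+1}$. (For $i=n-3$ the commutation step is vacuous and one lands directly on $x_nq_{n-2}$.)

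Next I would argue by contraposition, proving that $w\notin I_i$ forces $w\notin I_{i+1}$. Assume $w\notin I_i$, so $(x_nq_i)^k=awb$ for some $a,b\in C_n$ and $k\ge 1$; thus $(x_nq_i)^k$ lies in the two-sided ideal $T:=C_nwC_n$. Multiplying on the right by $x_{i+2}$ and using the identity above once keeps us inside $T$ and yields
\begin{equation*}
p_k:=(x_nq_i)^{k}x_{i+2}=(x_nq_i)^{k-1}x_nq_{i+1}\in T.
\end{equation*}
The heart of the matter is then the claim that $p_k$ is a factor of the power $(x_nq_{i+1})^{k}$, i.e. $(x_nq_{i+1})^{k}\in C_n p_k C_n$. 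Granting this, since $p_k\in T$ and $T$ is an ideal we obtain $(x_nq_{i+1})^{k}\in C_np_kC_n\subseteq T=C_nwC_n$, so $C_nwC_n\cap\langle x_nq_{i+1}\rangle\ne\emptyset$ and $w\notin I_{i+1}$, as required. For $k=1$ the claim is immediate, because $p_1=x_nq_{i+1}$.

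The main obstacle is precisely this factorization claim for $k\ge 2$. I expect it to follow from the sharper statement that $(x_nq_{i+1})^{k}=u_k\,p_k$, where $u_k=\pref_k\big((x_nq_{i+1})^{k}\big)$; a length count is consistent, since one checks that the free word $(x_nq_i)^{k-1}x_nq_{i+1}$ collapses by exactly $k$ letters, so that $|p_k|=k(n-1)=|(x_nq_{i+1})^{k}|-k$ and $|u_k|=k$. The plan is to prove this by induction on $k$, tracking how the generators introduced by $x_nq_{i+1}=(x_nq_i)x_{i+2}$ migrate through the blocks via type $(2)$ commutations; the explicit description of reduced words containing the factor $x_nq_{i+1}$ furnished by Theorem~\ref{tw1}, together with the monotonicity constraints of Corollary~\ref{wn1}, should let me recognize the reduced form of $p_k$ as the required suffix of $(x_nq_{i+1})^{k}$, and hence as a factor of that power. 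Controlling these cascading reductions uniformly in $k$ is the delicate point of the whole proof.
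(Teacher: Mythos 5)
Your overall strategy is the same as the paper's: you prove the contrapositive by showing that no power $(x_nq_i)^k$ lies in $I_{i+1}$, and the engine is the identity $x_nq_{i+1}=(x_nq_i)x_{i+2}$, which you verify correctly (and which is exactly the paper's $k=1$ computation). The final deduction from ``$(x_nq_i)^k\notin I_{i+1}$'' to the lemma is also the paper's. However, the step you yourself flag as ``the delicate point'' --- determining what $(x_nq_i)^k x_{i+2}$ reduces to for $k\geq 2$ --- is precisely the content of the paper's proof, and the quantitative form in which you propose to prove it is false, so the induction you sketch would not close. The paper shows that for $1<k\leqslant n-i-1$ the reduced form of $p_k=(x_nq_i)^kx_{i+2}$ is $x_{n-k+1}\cdots x_{i+2}\,(x_nq_{i+1})^{k-1}$: the free word collapses by $k+i$ letters (not $k$), so $|p_k|=kn-k-i$, which contradicts your count $|p_k|=k(n-1)$ whenever $i>0$. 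Worse, for $k>n-i-1$ the exponent of $x_nq_{i+1}$ in the reduced form drops well below $k-1$: writing $k-1=l(n-i-1)+j$, one gets $(x_nq_{i+1})^{k-l-1}$ or $(x_nq_{i+1})^{k-l}$, e.g.\ $(x_nq_i)^{n-i}x_{i+2}=(x_nq_{i+1})^{n-i-1}$. Hence the ``sharper statement'' $(x_nq_{i+1})^k=u_kp_k$ with $u_k=\pref_k\bigl((x_nq_{i+1})^k\bigr)$ fails already at $k=n-i$, and an induction built on it cannot be carried out uniformly in $k$.

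The gap is repairable, but only by doing the work you deferred: one must explicitly push $x_{i+2}$ leftwards through the blocks (the paper does this with one type $(5)$ reduction, type $(2)$ commutations, and a cascade of type $(5)$ reductions consuming one copy of $x_nq_i$ every $n-i-1$ steps), identify the reduced form of $p_k$ as $a\,(x_nq_{i+1})^{k'}$ with $a$ a decreasing run $x_{n-j}\cdots x_{i+2}$ and $k'\geqslant 1$, and then left-multiply by the complementary prefix $x_nx_1\cdots x_{i+1}x_{n-1}\cdots x_{n-j+1}$ of $x_nq_{i+1}$ to land in $\langle x_nq_{i+1}\rangle$. Note also that for the lemma you only need \emph{some} power of $x_nq_{i+1}$ in $C_nwC_n$, not the specific power $k$; insisting on $(x_nq_{i+1})^k$ is an unnecessary extra constraint that your (false) factorization was meant to deliver.
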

 \begin{proof} We claim that for every $i=0,\ldots, n-3$ and $l\geqslant1$ we
have $(x_nq_i)^l\notin I_{i+1}$; in other words, there exist $u,
v\in C_n$ such that $u(x_nq_i)^lv\in\langle x_nq_{i+1}\rangle$.

By Corollary~\ref{wn5}, the assertion holds for $i=n-3$. So,
assume that $i\in\{0, \ldots, n-4\}$.  First, notice that for
every $i+2\leqslant m\leqslant n-1$ we have
\begin{equation*}
(x_nx_1\cdots x_ix_{n-1}\cdots x_{i+1})x_m\cdots
x_{i+2}\arr{(5m)}x_nx_1\cdots x_ix_{n-1}\cdots
x_{m+1}x_{m-1}\cdots x_{i+1}x_m\cdots x_{i+2}.
\end{equation*}
Using the inequality $i+1<i+2<n-1$ we conclude that for $i\neq
n-3$
\begin{align*}
(x_nq_i)x_m\cdots x_{i+2}&\arr{(2)}x_n(x_1\cdots x_{i})x_{m-1}\cdots{x_{i+1}}x_{n-1}\cdots x_{i+2}\\
&\arr{(3)}(x_{m-1}\cdots x_{i+2})x_nx_1\cdots x_{i+1}x_{n-1}\cdots
x_{i+2}.
\end{align*}
It follows that for $1<k\leqslant n-i-1$
\begin{align*}
(x_nq_i)^kx_{i+2}&\arr{(5 (i+2))}
(x_nq_i)^{k-1}x_nx_1\cdots x_ix_{n-1}\cdots x_{i+3}x_{i+1}x_{i+2}\\
&\arr{(2)}(x_nq_i)^{k-1}x_nx_1\cdots x_{i+1}x_{n-1}\cdots x_{i+2}\\
&\arr{(5 (i+1))}(x_nq_i)^{k-2}x_nx_1\cdots x_ix_{n-1}\cdots x_{i+2}x_nx_1\cdots x_{i+1}x_{n-1}\cdots x_{i+2}\\
&\arr{(5i)}(x_nq_i)^{k-2}x_nx_1\cdots x_{i-1}x_{n-1}\cdots x_{i+2}x_nx_1\cdots x_{i+1}x_{n-1}\cdots x_{i+2}\\
&\arr{(5(i-1))}\arr{(5(i-2))}\cdots\arr{(51)}\arr{(5n)}(x_nq_i)^{k-2}x_{n-1}\cdots x_{i+2}x_nx_1\cdots x_{i+1}x_{n-1}\cdots x_{i+2}\\
&\arr{(*)} x_{n-k+1}\cdots x_{i+2}(x_nx_1\cdots
x_{i+1}x_{n-1}\cdots x_{i+2})^{k-1},\end{align*}
where $\arr{(*)}$ stands for applying $k-2$ times the observation made before. \\
Moreover, the first two steps of the above computation show that
for $k=1$ we get
\begin{align*}
(x_nq_i)^kx_{i+2}\arr{}(x_nq_{i+1})^k.
\end{align*}
Similarly, for every positive integer $m$, if $m-1=l(n-i-1)+k$,
where $l\geqslant0$ and $n-i-2\geqslant k\geqslant0$, then
\begin{equation*}(x_nx_1\cdots x_ix_{n-1}\cdots x_{i+1})^mx_{i+2}\arr{}\begin{cases}x_{n-k}\cdots x_{i+2}
(x_nx_1\cdots x_{i+1}x_{n-1}\cdots x_{i+2})^{m-l-1}\textrm{ for }k\neq 0\\
(x_nq_{i+1})^{m-l}\textrm{ for }k=0.
\end{cases}\end{equation*}
Hence, for every $i$ and $m-1=l(n-i-1)+k$, defining
$u=x_nx_1\cdots x_{i+1}x_{n-1}\cdots x_{n-k+1}$ if $k\neq 0$, and
$u=1$ if $k=0$, we get $u(x_nq_i)^mx_{i+2}\in \langle
x_nq_{i+1}\rangle$. Thus,  $(x_nq_{i})^m\notin I_{i+1}$, which
proves our claim.

Let $i=0,\ldots, n-3$. Suppose that there exists $w\in
I_{i+1}\setminus I_i$.
 Let  $x, y\in C_n$ and $k\geqslant1$ be such that  $xwy=(x_nq_i)^k$.
 By the first part of the proof, there exist $u, v\in C_n$ such that
 $uxwyv\in \langle x_nq_{i+1}\rangle$. Thus, $w\notin I_{i+1}$, which leads to a contradiction.
 \end{proof}

As a consequence, we get the following ideal chain in $C_n$
\begin{equation*} I_{n-3}\vartriangleleft  \cdots \vartriangleleft I_0 \vartriangleleft I_{-1} \vartriangleleft C_n.
\end{equation*}

In order to prove that certain elements of $C_n$ are contained in
$I_i$ we introduce another useful family of ideals $Q_i$. Let
$\textrm{Map}(\mathbb{Z}^n, \mathbb{Z}^n)$ denote the monoid of
all functions $\mathbb{Z}^n\longrightarrow \mathbb{Z}^n$, under
composition. Consider the homomorphism $f: C_n\longrightarrow
\textrm{Map}(\mathbb{Z}^n, \mathbb{Z}^n)$, introduced in
\cite{ara}, which is defined on generators $x_{i}$ of $C_{n}$ as
follows.
\begin{equation*}
f(x_i)(m_1, \ldots, m_n)= \begin{cases} (m_1, \ldots m_{i-1},
m_{i+1}, m_{i+1},\ldots, m_{n})\textrm{ for }i\neq n\\(m_1,
\ldots, m_{n-1}, m_1+1)\textrm{ for }i=n .\end{cases}
\end{equation*}
If $w\in C_n$ then the components of $f(w)(m_1, \ldots, m_n)$ are
polynomials in the variables $m_1,\ldots, m_n$. Let $\supp (f(w))$
be the minimal subset $N$ of the set $M=\{1, \ldots, n\}$ such
that for every $(m_1,\ldots, m_n)\in\mathbb{Z}^n$ the components
of $f(w)(m_1,\ldots, m_n)$ are polynomials depending on the
variables with indices from the set $N$. So $|\supp (f(w))|$
denotes the number of variables on which the value of $f(w)$
depends. For example, if  $f(w)(m_1,\ldots, m_n)=(m_1, \ldots,
m_{i-1}, m_{i+1}, m_{i+1},\ldots,m_{n})$, then $\supp
(f(w))=\{1,\ldots, i-1, i+1,\ldots, n\}$ and $|\supp (f(w))|=n-1$.

We now show that the value of $|\supp (f((x_nq_i)^k))|$ does not
depend on $k$.
\begin{lemma}\label{lem11}
    For every  $k\geqslant 1$ and $i=0,\ldots, n-2$ we have $|\supp (f((x_nq_i)^k))|=n-i-1$.
\end{lemma}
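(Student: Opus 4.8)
The plan is to compute the map $g_i := f(x_nq_i)$ explicitly and then reduce the statement to an elementary combinatorial fact about a single self-map of $\{1,\ldots,n\}$. First I would track the action of $f$ on the generators occurring in $x_nq_i = x_n(x_1\cdots x_i)(x_{n-1}\cdots x_{i+1})$, applying the rightmost letter first (as dictated by the fact that $f$ is a homomorphism into $\mathrm{Map}(\mathbb Z^n,\mathbb Z^n)$ under composition). Recording the ``content'' of each coordinate slot, the increasing run $x_{i+1}\cdots x_{n-1}$ shifts slots $i+1,\ldots,n-1$ so that slot $j$ holds $m_{j+1}$; the decreasing run $x_i\cdots x_1$ then fills slots $1,\ldots,i$ all with the current value of slot $i+1$, namely $m_{i+2}$; and finally $f(x_n)$ sets slot $n$ to $m_{i+2}+1$. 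The outcome is
\[
g_i(m_1,\ldots,m_n)_p = \begin{cases} m_{i+2} & 1\le p\le i+1,\\ m_{p+1} & i+2\le p\le n-1,\\ m_{i+2}+1 & p=n.\end{cases}
\]
In particular $\supp(f(x_nq_i))=\{i+2,\ldots,n\}$, which already has the required cardinality $n-i-1$.

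Next I would observe that every coordinate of $g_i$ has the shape $m_{\phi(p)}+c_p$ with $c_p\in\{0,1\}$, where $\phi\colon\{1,\ldots,n\}\to\{1,\ldots,n\}$ is defined by $\phi(p)=i+2$ for $p\in\{1,\ldots,i+1,n\}$ and $\phi(p)=p+1$ for $i+2\le p\le n-1$. A one-line induction on $k$ then gives $g_i^k(m)_p = m_{\phi^k(p)} + (\text{const})$, since substituting $g_i$ into itself only composes the index map $\phi$ while accumulating constants. Hence the genuine dependence of $g_i^k=f((x_nq_i)^k)$ on the variables is recorded exactly by $\phi^k$; that is, $\supp(f((x_nq_i)^k)) = \phi^k(\{1,\ldots,n\})=\operatorname{Im}(\phi^k)$, and the claim becomes $|\operatorname{Im}(\phi^k)| = n-i-1$.

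Finally I would analyse $\phi$. Its restriction to $C:=\{i+2,\ldots,n\}$ is the single cycle $i+2\to i+3\to\cdots\to n\to i+2$, a permutation of the $(n-i-1)$-element set $C$, while the transient points $1,\ldots,i+1$ are all sent into $C$ (to $i+2$) after one step. Consequently $\operatorname{Im}(\phi)=C$ and $\phi(C)=C$, so for every $k\ge 1$ one gets $\operatorname{Im}(\phi^k)=\phi^{k-1}(\operatorname{Im}(\phi))=\phi^{k-1}(C)=C$. Thus $|\operatorname{Im}(\phi^k)|=|C|=n-i-1$ independently of $k$, as required. The degenerate cases $i=0$ (cycle $\{2,\ldots,n\}$) and $i=n-2$ (fixed point $n$) are covered by the same argument.

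The only delicate step is the explicit evaluation of $g_i$ in the first paragraph: one must keep careful track of the order in which the copy operations $f(x_j)$ are composed and of how they chain through the increasing and decreasing runs of $q_i$ — in particular that the entire decreasing block collapses to the single value $m_{i+2}$, and that the final $f(x_n)$ reads this collapsed value. Once $g_i$ is in hand, the reduction to $\phi$ and the image computation are routine.
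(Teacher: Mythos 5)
Your proof is correct, and it diverges from the paper's in an instructive way after the common first step. Both arguments begin by computing $f(x_nq_i)$ explicitly, and your formula for $g_i=f(x_nq_i)$ agrees with the paper's equation (\ref{wz1}); your careful remark about applying the rightmost letter first is exactly the right convention, as one sees from the paper's identity $f((x_nq_i)^k)=f(x_nq_i)\circ f((x_nq_i)^{k-1})$. From that point on the paper proves, by induction on $k$, a complete closed form for $f((x_nq_i)^k)$ (writing $k$ in terms of a quotient and remainder modulo the cycle length and tracking all the additive constants), and then reads off the support. You instead strip away the constants, encode each coordinate as $m_{\phi(p)}+c_p$ for a single self-map $\phi$ of $\{1,\dots,n\}$, observe that iteration only composes $\phi$, and compute $\operatorname{Im}(\phi^k)=\{i+2,\dots,n\}$ for all $k\geqslant 1$ because $\phi$ permutes this set cyclically and sends every transient point into it after one step. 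This is genuinely leaner: it avoids the case analysis on the residue of $k$ entirely, it is immune to off-by-one issues in the closed form (the paper's decomposition $k=m(n-i)+r$ appears to want $n-i-1$ as the modulus, as a check at $n=3$, $i=0$, $k=3$ shows), and it yields not just the cardinality but the identification $\supp(f((x_nq_i)^k))=\{i+2,\dots,n\}$ uniformly in $k$. What your argument gives up is the explicit formula for the iterate itself, but nothing downstream (Lemmas~\ref{lem12}, \ref{lem13} and Corollary~\ref{wn2}) uses more than the support, so your proof is a full substitute.
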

\begin{proof}
We show by induction that for every $k\geqslant1$, if
$k=m(n-i)+r$, where $0\leqslant r \leqslant n-i-1$, then
$f((x_nq_i)^k)(m_1,\ldots, m_n)$ has the form
\begin{equation*}
(\underbrace{m_{i+r+1}+m,\ldots, m_{i+r+1}+m}_{i+1}, m_{i+r+2}+m, \ldots, m_n+m, m_{i+2}+(m+1),\ldots, m_{i+r+1}+(m+1)),
\end{equation*}
where for $r=n-i-1$ the above expression is interpreted as
\begin{equation*}
(\underbrace{m_{i+r+1}+m,\ldots, m_{i+r+1}+m}_{i+1}, m_{i+2}+(m+1),\ldots, m_{i+r+1}+(m+1)).
\end{equation*}
This immediately yields the assertion. \\
First, notice that
\begin{equation}\label{wz1}
f(x_nq_i)(m_1,\ldots, m_n)=(\underbrace{m_{i+2}, m_{i+2}, \ldots, m_{i+2}}_{i+1}, m_{i+3},\ldots, m_n, m_{i+2}+1).
\end{equation}
In particular, for $k=1$ we have $|\supp (f((x_nq_i)^k))|=n-i-1$.
Assume that the claim holds for $k-1$. Then, applying (\ref{wz1})
and the induction hypothesis we see that
$f((x_nq_i)^k)(m_1,\ldots, m_n)=
f(x_nq_i)(f((x_nq_i)^{k-1}(m_1,\ldots, m_n)))$ has the form
\begin{equation*}
(\underbrace{m_{i+r+2}+m,\ldots, m_{i+r+2}+m}_{i+1}, m_{i+r+3}+m, \ldots, m_n+m, m_{i+2}+(m+1),\ldots, m_{i+r+2}+(m+1))
\end{equation*}
if $r\neq n-i-1$ and
\begin{equation*}
(\underbrace{m_{i+2}+m+1,\ldots, m_{i+2}+m+1}_{i+1}, m_{i+3}+m+1, \ldots, m_n+m+1, m_{i+2}+(m+2)),
\end{equation*}
otherwise. This proves the inductive step.
\end{proof}
 For every $i=-1, \ldots, n-2$ consider the following set
\begin{equation*}
Q_i=\{w\in C_n: |\supp (f(w))|\leqslant n-i-2 \}.
\end{equation*}
Then $Q_i$ is an ideal in $C_n$ for $i<n-2$ because for every
$x,y, w\in C_n$ we have $\supp (f(xw))\subseteq \supp (f(w))$ and
$\supp (f(wy))\subseteq \supp (f(w))$.  Thus, we get the following
chain of ideals
\begin{equation*}\emptyset= Q_{n-2}\s Q_{n-3}\s\cdots\s Q_{-1}\s C_n.
\end{equation*}
We will show that this chain is strongly related to the ideals
$I_j$ introduced in this section.
\begin{lemma}\label{lem12}
    For every $i=0,\ldots, n-2$ we have $Q_i\subseteq I_i$.
\end{lemma}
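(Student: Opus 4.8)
The plan is to argue by contradiction, transporting the membership question through the homomorphism $f$ and comparing the size of the support on the two sides of the resulting identity. The only ingredients needed are Lemma~\ref{lem11} and the multiplicative monotonicity of $|\supp(f(\cdot))|$, both already available.

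Suppose, towards a contradiction, that $w\in Q_i$ but $w\notin I_i$. By the definition of $I_i$, the latter means $C_nwC_n\cap\langle x_nq_i\rangle\neq\emptyset$, so there exist $x,y\in C_n$ and an integer $k\geqslant1$ with $xwy=(x_nq_i)^k$ in $C_n$. Applying the homomorphism $f$ gives $f(xwy)=f((x_nq_i)^k)$, and Lemma~\ref{lem11} evaluates the right-hand side, namely $|\supp(f((x_nq_i)^k))|=n-i-1$.

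The key step is the monotonicity of the support under left and right multiplication, which is exactly the property recorded when verifying that $Q_i$ is an ideal: for all $x,y,w\in C_n$ one has $\supp(f(xw))\subseteq\supp(f(w))$ and $\supp(f(wy))\subseteq\supp(f(w))$. Applying the first inclusion to the pair $x$ and $wy$, and the second to the pair $w$ and $y$, I would chain them as $\supp(f(xwy))\subseteq\supp(f(wy))\subseteq\supp(f(w))$, whence $|\supp(f(xwy))|\leqslant|\supp(f(w))|$. Since $w\in Q_i$, the right-hand side is at most $n-i-2$.

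Combining the two computations yields $n-i-1=|\supp(f(xwy))|\leqslant n-i-2$, which is absurd. Hence no such $x,y,k$ can exist, so $w\in I_i$, and therefore $Q_i\subseteq I_i$. I expect no genuine obstacle here: the whole argument reduces to Lemma~\ref{lem11} together with the two support inclusions, so the only care required is to chain the left- and right-multiplication bounds in the correct order.
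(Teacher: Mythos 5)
Your proof is correct and follows essentially the same route as the paper: assume $xwy=(x_nq_i)^k$, apply Lemma~\ref{lem11} to get $|\supp(f(xwy))|=n-i-1$, and contradict the bound $n-i-2$ coming from $w\in Q_i$. The only cosmetic difference is that you invoke the support monotonicity inclusions directly, whereas the paper packages the same fact as ``$Q_i$ is an ideal, hence $xwy\in Q_i$''.
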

\begin{proof}
Suppose that for some $i\in\{0,\ldots, n-2\}$ there exists $w$
such that $w\in Q_i\setminus I_i$. Then, by the definition of the
ideals $I_i$ we get that for some $x,y\in C_n$ the element $xwy$
has the form $(x_nq_i)^k$ for some $k\geqslant 1$. Then
Lemma~\ref{lem11} implies that $|\supp (f(xwy))|=n-i-1$. On the
other hand, $xwy\in Q_i$, and thus $|\supp (f(xwy))|\leqslant
n-i-2$, which leads to a contradiction.
\end{proof}
 The following technical lemma will be used in the proof of Theorem~\ref{tw3}.
\begin{lemma}\label{lem13}
    For all $n-1\geqslant j>i+1\geqslant1$ we have
    \begin{enumerate}
        \item $x_nx_1\cdots x_{i}x_{n-1}\cdots x_{i+1}\in I_{i-1}$;
        \item $w=x_{j}\cdots x_{i+2}x_nx_1\cdots x_{i+1}x_{n-1}\cdots x_{j+1}\in I_i$;
    \end{enumerate}
    where for $j=n-1$ we put $w=x_{n-1}\cdots x_{i+2}x_nx_1\cdots x_{i+1}$.
\end{lemma}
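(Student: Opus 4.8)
The plan is to prove both statements by passing to the homomorphism $f\colon C_n\to\textrm{Map}(\mathbb{Z}^n,\mathbb{Z}^n)$ and invoking Lemma~\ref{lem12}, which reduces membership in $I_i$ to a bound on $|\supp(f(\cdot))|$. Concretely, to show $w\in I_i$ it suffices to show $w\in Q_i$, that is $|\supp(f(w))|\leqslant n-i-2$, and then $w\in I_i$ follows at once.

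For part (1), I would observe that $x_nx_1\cdots x_i x_{n-1}\cdots x_{i+1}$ is exactly $x_nq_i$. When $i\geqslant 1$, Lemma~\ref{lem11} applied with $k=1$ gives $|\supp(f(x_nq_i))|=n-i-1=n-(i-1)-2$, so $x_nq_i\in Q_{i-1}\subseteq I_{i-1}$ by Lemma~\ref{lem12}. The case $i=0$ is immediate from the definition $I_{-1}=I_0\cup C_nx_nq_0C_n$, since $x_nq_0\in C_nx_nq_0C_n$.

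For part (2), I would compute $f(w)(m_1,\ldots,m_n)$ directly from the defining formulas, recalling that $f$ satisfies $f(uv)(m)=f(u)(f(v)(m))$, so the rightmost generator acts first. Here each generator $x_p$ with $p\neq n$ overwrites coordinate $p$ by the current value of coordinate $p+1$, while $x_n$ overwrites coordinate $n$ by coordinate $1$ incremented by one. Tracing the four blocks $x_{n-1}\cdots x_{j+1}$, $x_1\cdots x_{i+1}$, $x_n$, $x_j\cdots x_{i+2}$ of $w$ from right to left, I expect the output to depend precisely on the variables $m_{i+2},\ldots,m_j$ and $m_{j+2},\ldots,m_n$, giving $|\supp(f(w))|=(j-i-1)+(n-j-1)=n-i-2$. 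Hence $w\in Q_i\subseteq I_i$ by Lemma~\ref{lem12}. The boundary case $j=n-1$, where the first block is empty and $\{m_{j+2},\ldots,m_n\}=\emptyset$, is handled identically and yields the same count.

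The only delicate point is to confirm that the coordinate $m_{j+1}$ genuinely disappears from $f(w)$. The mechanism is that in the first block the assignment $x_{j+1}$ overwrites register $j+1$ by the value originally held in register $j+2$ before any later generator can read register $j+1$; in particular the generator $x_j$ in the last block, which does read register $j+1$, already sees $m_{j+2}$ rather than $m_{j+1}$. Verifying that no register ever transports $m_{j+1}$ to the output, i.e. the careful bookkeeping of the cascading assignments across the four blocks, is the main (though routine) obstacle; once it is settled, the support count and the conclusion via Lemma~\ref{lem12} are immediate.
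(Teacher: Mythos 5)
Your proposal is correct and follows essentially the same route as the paper: part (1) via Lemma~\ref{lem11} (with $k=1$) combined with Lemma~\ref{lem12}, plus the definitional remark for $i=0$, and part (2) via a direct computation of $f(w)$ showing $|\supp(f(w))|=n-i-2$ and then invoking Lemma~\ref{lem12}. Your identification of the mechanism by which $m_{j+1}$ drops out (the generator $x_{j+1}$ in the rightmost block overwrites register $j+1$ before $x_j$ in the leftmost block reads it) matches the computation the paper carries out, and your support count agrees with the explicit formula given there.
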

\begin{proof}
For $i\neq 0$, the first part  is a direct consequence of
Lemma~\ref{lem11} and Lemma~\ref{lem12}. Moreover,
$x_nx_{n-1}\cdots x_1\in I_{-1}$ by the definition of $I_{-1}$.

 We
will prove the second part. A direct computation shows that for
every  $(m_1,\ldots, m_n)\in\mathbb{Z}^n$ and for $j<n-1$ the
element $f(x_{j}\cdots x_{i+2}x_nx_1\cdots x_{i+1}x_{n-1}\cdots
x_{j+1})(m_1,\ldots, m_n)$ has the form
\begin{equation*}
(\underbrace{m_{i+2},\ldots, m_{i+2}}_{i+1}, m_{i+3}, \ldots, m_{j}, m_{j+2},m_{j+2},\ldots, m_{n}, m_{i+2}+1).
\end{equation*}
Similarly, if $j=n-1$ then it is easy to see that $f(w)(m_1,\ldots
m_n)$ has the form
\begin{equation*}
(\underbrace{m_{i+2},\ldots, m_{i+2}}_{i+1}, m_{i+3}, \ldots,m_{n-1}, m_{i+2}+1, m_{i+2}+1).
\end{equation*}
In both cases $|\supp (f(w))|= n-i-2$, so Lemma~\ref{lem12}
implies that $w\in I_i$.
\end{proof}

The following is a direct consequence of the definition of the
ideals $I_i$ and of Lemma~\ref{lem13}.

\begin{corollary}\label{wn2}
For every $i\in\{0,\ldots, n-2\}$ we have $(x_nq_i)^k\in
I_{i-1}\setminus I_i$. Moreover, for all $i\in\{1, \ldots, n-2\}$
and $j\in \{i+1, \ldots, n-1\}$ we have $x_{j}\cdots
x_{i+1}x_nx_1\cdots x_{i}x_{n-1}\cdots x_{j+1}\in I_{i-1}\setminus
I_i$.
\end{corollary}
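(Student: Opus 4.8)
The plan is to handle the two assertions separately, and in each case to prove membership in $I_{i-1}$ via Lemma~\ref{lem13} together with the ideal property, and non-membership in $I_i$ straight from the definition $I_i=\{w\in C_n: C_nwC_n\cap\langle x_nq_i\rangle=\emptyset\}$ by exhibiting a two-sided multiple that is a genuine power of $x_nq_i$.

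For the first assertion, I would first note that $x_nq_i\in I_{i-1}$: for $i\geqslant 1$ this is exactly part (1) of Lemma~\ref{lem13} (equivalently, Lemma~\ref{lem11} and Lemma~\ref{lem12} give $|\supp(f(x_nq_i))|=n-i-1\leqslant n-(i-1)-2$, so $x_nq_i\in Q_{i-1}\subseteq I_{i-1}$), while for $i=0$ it holds since $x_nq_0\in C_nx_nq_0C_n\subseteq I_{-1}$ by the definition of $I_{-1}$. As $I_{i-1}$ is an ideal and $(x_nq_i)^k=(x_nq_i)(x_nq_i)^{k-1}$, we get $(x_nq_i)^k\in I_{i-1}$. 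On the other hand $(x_nq_i)^k$ lies both in $\langle x_nq_i\rangle$ and, with $x=y=1$, in $C_n(x_nq_i)^kC_n$, so $C_n(x_nq_i)^kC_n\cap\langle x_nq_i\rangle\neq\emptyset$ and hence $(x_nq_i)^k\notin I_i$; for $i=n-2$ this is vacuous since $I_{n-2}=\emptyset$ by Corollary~\ref{wn5}.

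For the second assertion write $w=x_j\cdots x_{i+1}x_nx_1\cdots x_ix_{n-1}\cdots x_{j+1}$. Membership $w\in I_{i-1}$ follows by applying part (2) of Lemma~\ref{lem13} with $i$ replaced by $i-1$: the hypotheses $n-1\geqslant j>(i-1)+1=i\geqslant 1$ hold precisely because $i\in\{1,\ldots,n-2\}$ and $j\in\{i+1,\ldots,n-1\}$, and the degenerate case $j=n-1$ matches the convention stated there. The key point is $w\notin I_i$, for which I would observe that $w$ is a factor of $(x_nq_i)^2$. Indeed, splitting the descending block $x_{n-1}\cdots x_{i+1}$ of $q_i$ as $(x_{n-1}\cdots x_{j+1})(x_j\cdots x_{i+1})$ yields the identity in the free monoid $F$
\[
(x_nq_i)^2=(x_nx_1\cdots x_ix_{n-1}\cdots x_{j+1})\,w\,(x_j\cdots x_{i+1}),
\]
with the usual conventions for $j=n-1$ (empty left descending tail) and $j=i+1$ (the factor $x_j\cdots x_{i+1}$ collapsing to the single letter $x_{i+1}$). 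Since $(x_nq_i)^2$ is reduced, by Corollary~\ref{podstawowy wniosek} the product in $C_n$ of the three displayed reduced factors equals $(x_nq_i)^2$, so $C_nwC_n\cap\langle x_nq_i\rangle\neq\emptyset$ and therefore $w\notin I_i$.

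I expect the only non-routine step to be spotting and verifying the factorization identity, that is, recognizing $w$ as a cyclic rotation of $x_nq_i$ that sits inside $(x_nq_i)^2$; checking it at the two boundary values $j=i+1$ and $j=n-1$, and confirming that the displayed free word is already $S$-reduced so that the monoid product requires no further reduction, is then a direct verification. Everything else is bookkeeping with the index shift into Lemma~\ref{lem13} and with the definition of the ideals $I_i$.
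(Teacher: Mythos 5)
Your proof is correct and takes essentially the same route as the paper, which states this corollary as a direct consequence of the definition of the ideals $I_i$ and of Lemma~\ref{lem13} without writing out details. The one piece of content the paper leaves implicit is the non-membership in $I_i$ for the second assertion, and your factorization $(x_nq_i)^2=(x_nx_1\cdots x_ix_{n-1}\cdots x_{j+1})\,w\,(x_j\cdots x_{i+1})$ in the free monoid supplies exactly the required witness, including at the boundary values $j=i+1$ and $j=n-1$.
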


 Next, we prove a result implying that, after factoring the ideal $I_i$, the set $\tilde{M_i}\cup\{\theta\}$
becomes a  semigroup.
This will be crucial for the results of the next section.
\begin{theorem}\label{tw3}
    Let $w\in C_n$. Then for every  $i\in\{0,\ldots, n-2\}$ we have $(x_nq_i)w(x_nq_i)\in\{(x_nq_i)^k: k\geqslant 2\}\cup I_i$.
\end{theorem}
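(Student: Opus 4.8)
The plan is to show that the product $(x_nq_i)w(x_nq_i)$ either reduces to a power $(x_nq_i)^k$ with $k\geqslant 2$, or lands in the ideal $I_i$. The natural dichotomy to exploit is whether or not the reduced form of $(x_nq_i)w(x_nq_i)$ contains a factor of the form $x_nx_1\cdots x_i x_{n-1}\cdots x_{i+1} = x_nq_i$. If it does not, then by Corollary~\ref{wn2} together with the characterization of $I_i$ via $\supp(f(\cdot))$ (Lemma~\ref{lem11} and Lemma~\ref{lem12}) I expect the element to drop into $I_i$; the cleanest way to detect this is to compute $|\supp(f((x_nq_i)w(x_nq_i)))|$ and argue that it cannot equal $n-i-1$ unless the word is genuinely a power of $x_nq_i$. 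So the first step is to invoke Lemma~\ref{lem11}, which tells us $|\supp(f((x_nq_i)^k))| = n-i-1$ for every $k\geqslant 1$, and Lemma~\ref{lem12}, which gives $Q_i\subseteq I_i$, reducing the whole problem to tracking the support of $f$.

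The second and central step is the case where the reduced form of $v:=(x_nq_i)w(x_nq_i)$ does contain the factor $x_nq_i$. Here I would apply Theorem~\ref{tw1}: since $v$ is a reduced word containing the factor $x_nq_i$, it must have the shape $v = a(x_nq_i)^k b$ with $a\in A_i$, $b\in B_i$, $k\geqslant 1$. The key observation is that $v$ begins with $x_nq_i$ on the left and ends with $x_nq_i$ on the right \emph{before} reduction, and I need to argue that the only way the reduced form retains the factor $x_nq_i$ while respecting the forced prefix and suffix is for $a$ and $b$ to be trivial, forcing $v=(x_nq_i)^k$. Concretely, the leftmost $x_nq_i$ acts as a prefix that constrains $a$: inspecting the definition of $A_i$, a nonempty $a$ would have to supply letters $x_{k_s}\cdots x_s$ to the left of $x_nq_i$, but the product we started with already places $x_nq_i$ at the very front, and the reductions of types (4) and (5) described in the proof of Theorem~\ref{tw1} (namely $x_nq_ix_j\to x_nq_i$ for $j\leqslant i$ and the type-$(5j)$ reductions for $i+1\leqslant j\leqslant n-1$) show that nothing can be legitimately prepended. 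A symmetric argument, using the involution $\tau$ from Definition~\ref{involut} and the dual Lemma~\ref{lem o mno dual}, handles the suffix and forces $b$ to be trivial as well; since $k\geqslant 1$ but the product clearly contains at least two disjoint occurrences of $x_nq_i$ (one from each factor), we get $k\geqslant 2$.

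The main obstacle I anticipate is the bookkeeping in the middle: one must verify carefully that when $w$ is sandwiched between two copies of $x_nq_i$, the reductions genuinely merge $w$ into the cyclic pattern $(x_nq_i)^\star$ rather than leaving some residual block that survives as a nontrivial $a$ or $b$. This is exactly the point where Lemma~\ref{lem o mno} is decisive: reading $(x_nq_i)w$ from the left, $x_nq_i$ begins with $x_{i+1}\cdots x_n x_1\cdots x_i$ up to a cyclic relabeling, so Lemma~\ref{lem o mno} (and its dual for the right factor) asserts that the reduced form of $(x_{k+1}\cdots x_n x_1\cdots x_k)w$ is a factor of the infinite word $(x_nx_1\cdots x_{n-1})^\infty$ whenever the support is maximal. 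Thus, if $|\supp(f(v))|=n-i-1$, the word $v$ is pinned to the periodic pattern generated by $x_nq_i$ and must be a power of it, whereas if the support is smaller, $v\in Q_i\subseteq I_i$. Combining the two cases yields the claimed dichotomy. I would therefore structure the proof as: first dispose of the small-support case via Lemma~\ref{lem12}; then, assuming full support, apply Theorem~\ref{tw1} and use Lemma~\ref{lem o mno} together with its dual to force both $a$ and $b$ trivial, concluding $v=(x_nq_i)^k$ with $k\geqslant 2$.
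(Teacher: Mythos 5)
Your proposed dichotomy (either $(x_nq_i)w(x_nq_i)$ has full $f$-support and is a power of $x_nq_i$, or it drops into $Q_i\subseteq I_i$) is appealing, but the central step is missing: the implication that $|\supp (f((x_nq_i)w(x_nq_i)))|=n-i-1$ forces the element to be a power of $x_nq_i$ is essentially the theorem itself, and neither of your supporting arguments establishes it. First, Lemma~\ref{lem o mno} concerns left factors of the form $x_{k+1}\cdots x_nx_1\cdots x_k$, i.e.\ cyclic rotations of $x_1\cdots x_n$; the word $x_nq_i=x_nx_1\cdots x_ix_{n-1}\cdots x_{i+1}$ has this shape only for $i=n-2$ (and, after reversal, $i=0$, which is exactly the easy case already settled by Lemma~\ref{lem4}). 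For $1\leqslant i\leqslant n-3$ that lemma says nothing about $(x_nq_i)w$, so it cannot ``pin $v$ to the periodic pattern.'' Second, the paper only proves $Q_i\subseteq I_i$ (Lemma~\ref{lem12}); you are implicitly relying on a converse --- that full support rules out membership in $I_i$ and hence forces the power form --- for which no argument is given and which is not available.

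The Case 2 argument fails for the same underlying reason. It is not true that ``nothing can be legitimately prepended'': reductions of types (5), (2) and (3) genuinely move generators from the interior of the word to its front. For instance, for $n>j>i$ one has $(x_nq_i)x_jw'(x_nq_i)\arr{(5j)}\arr{(2)}x_{j-1}\cdots x_{i+2}\,x_nx_1\cdots x_{i+1}x_{n-1}\cdots x_jw'(x_nq_i)$, so a nontrivial prefix does appear in the reduced form; the saving grace is that such words land in $I_i$ by Lemma~\ref{lem13}, and verifying this in every configuration is precisely the content of the proof. The paper does that work by induction on $|w|$, with a case analysis on $\pref_1(w)$ and, when $\pref_1(w)=x_n$, on the structure of the last block $x_nu_k$ via Lemmas~\ref{lem1} and~\ref{lem2}, feeding each outcome either back into the induction hypothesis or into Lemma~\ref{lem13}. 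Your proposal replaces this combinatorial case analysis with an appeal to Theorem~\ref{tw1} and Lemma~\ref{lem o mno} that does not apply in the relevant range of $i$, so the argument as written does not go through.
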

\begin{proof}
We proceed by induction on the length of the (reduced) word $w$.
If $w$ is the empty word, the assertion is clear. Assume that
$|w|>0$.

 Let $\pref_1(w)=x_j$, so $w=x_jw'$. If  $j\leqslant i$
then $(x_nq_i)w(x_nq_i)\arr{(4j)}(x_nq_i)w'(x_nq_i)$ and the
assertion follows by induction. Hence, assume next that $n>j>i$.
Then
\begin{align*}
(x_nq_i)w(x_nq_i)&= x_nx_1\cdots x_ix_{n-1}\cdots
x_{i+1}x_jw'x(x_nq_i)\\
 &\arr{(5j)} x_nx_1\cdots x_ix_{n-1}\cdots
x_{j+1}x_{j-1}\cdots x_{i+1}x_jw'(x_nq_i)\\
&\arr{(2)} x_{j-1}\cdots x_{i+2}x_nx_1\cdots x_{i+1} x_{n-1}\cdots
x_jw' (x_nq_i).\end{align*} Lemma~\ref{lem13} implies that
$x_{j-1}\cdots x_{i+2}x_nx_1\cdots x_{i+1} x_{n-1}\cdots x_jw'
(x_nq_i)\in I_i$, so the assertion follows.

 Hence, for the rest of the proof we may assume that
$\pref_1(w)=x_n$. Then  $w=x_nu_1x_nu_2\cdots x_nu_k$ for some
$k\geqslant 1$ and some $u_j$ such that $|u_j|_n=0$.

 If $u_k$ is
the empty word, then  $w(x_nq_i)\arr{(1)}x_nu_1\cdots
x_nu_{k-1}(x_nq_i)$, and the assertion follows by the induction
hypothesis. Hence, assume that $|u_k|\geqslant 1$ and the word
$x_nu_k$ is in the reduced form. If $\pref_1(u_k)=x_j$
($u_k=x_ju_k'$), then $j\in\{1, n-1\}$, since otherwise
$x_nu_k\arr{(2)} x_jx_nu_k'$, which contradicts the assumption
that $w$ is in the reduced form. Therefore, we consider two cases:
\begin{enumerate}
    \item $u_k=x_{n-1}u$, where $|u|_n=0$. Then, by Lemma~\ref{lem1} it follows that
    $u_k=x_{n-1}x_{n-2}\cdots x_j$ for some $n-1\geqslant j\geqslant1$.
    Then $x_nu_k(x_{n}q_i)= x_{n}x_{n-1}\cdots x_j(x_nq_i)$. If $j>1$ then
    $x_nu_k(x_{n}q_i)\arr{(5n)} u_k(x_nq_i)$ and $(x_nq_i)w(x_nq_i)$ after this reduction still contains at least two factors
    of the form $x_nq_i$; so the assertion follows by the induction hypothesis.
    Similarly, if $j=1$ and $i\neq 0$ then we get
    $x_nu_k(x_{n}q_i)\arr{(51)} x_nx_{n-1}\cdots x_2(x_nq_i)$ and then
    the word $x_{n-1}\cdots x_2$ is shorter than $u_k$. On the other hand, if
    $j=1$ and $i=0$, then $x_nu_k(x_{n}q_i)=(x_nq_i)^2$ and the assertion also follows from the induction hypothesis.
    \item $u_k=x_1u$, where $|u|_n=0$. Then by Lemma~\ref{lem2} we know that
    $x_nu_k=x_nx_1\cdots x_j$, where $1\leqslant j<n-1$ or
    $x_nu_k=x_nx_1\cdots x_rx_{n-1}\cdots x_s$ for some $1\leqslant r<s\leqslant n-1$.

    Let $x_nu_k=x_nx_1\cdots x_j$ and consider the case where $j\leqslant i$.
    Then  $x_nu_k(x_nq_i)\arr{(5j)} x_nx_1\cdots x_{j-1}(x_nq_i)$, the number of factors $x_nq_i$
    after this reduction is at least $2$ and the assertion follows from the induction hypothesis.
    If $n-1>j>i+1$, then
    \begin{align*}
    x_nu_k(x_nq_i)&\arr{(4)} x_nx_1\cdots x_jx_{n-1}\cdots x_{i+1}
    \arr{(4j)} x_nx_1\cdots x_jx_{n-1}\cdots x_{j+1}x_{j-1}\cdots x_{i+1}\\
    &\arr{(2)}x_nx_1\cdots x_jx_{j-1}\cdots x_{i+1}x_{n-1}\cdots x_{j+1}
    \arr{(4)}x_nx_1\cdots x_jx_{n-1}\cdots x_{j+1}\in I_i.
    \end{align*}
    Similarly, if $j=i+1$, then
    \begin{align*}
    x_nu_k(x_nq_i)&\arr{(4)} x_nx_1\cdots x_jx_{n-1}\cdots x_{i+1}
    \arr{(4j)} x_nx_1\cdots x_{j}x_{n-1}\cdots x_{j+1}\in I_i
    \end{align*}
    and the assertion also follows.

     Let $x_nu_k=x_nx_1\cdots x_rx_{n-1}\cdots x_s$, where $1\leqslant r<s\leqslant n-1$.
    \begin{itemize}
        \item If $s\leqslant i$, then $x_nx_1\cdots x_rx_{n-1}\cdots x_s(x_nq_i)\arr{(5s)}
        x_nx_1\cdots x_rx_{n-1}\cdots x_{s+1}(x_nq_i)$,
        and the obtained word also has at least two factors $x_nq_i$; so the assertion follows by induction.
        \item If $s=i+1$, then $r<i+1$. If additionally $r<i$, then
        $x_nx_1\cdots x_rx_{n-1}\cdots x_s(x_nq_i)\arr{(5r)} x_nx_1\cdots x_{r-1}x_{n-1}\cdots x_s(x_nq_i)$,
        and the obtained word still contains at least two factors $x_nq_i$; so the assertion follows by induction.
        \\If $r=i$, then $x_nu_k(x_nq_i)=(x_nq_i)^2$ and we are done by induction.
        \item If $n-1>s>i+1$, then
        \begin{align*} x_nu_k(x_nq_i)& \arr{(4s)} x_nx_1\cdots x_rx_{n-1}\cdots x_{s}x_n x_1\cdots
        x_ix_{n-1}\cdots x_{s+1}x_{s-1}\cdots x_{i+1}\\
        & \arr{(2),(3)} x_nx_1\cdots x_rx_{n-1}\cdots x_{i+2}x_nx_1\cdots x_{i+1}x_{n-1}\cdots x_{s+1}.\end{align*}
        Similarly, if $n-1=s>i+1$, then
        \begin{align*} x_nu_k(x_nq_i)& \arr{(4s)} x_nx_1\cdots x_rx_{n-1}x_n x_1\cdots x_ix_{n-2}\cdots x_{i+1}\\
        & \arr{(2),(3)} x_nx_1\cdots x_rx_{n-1}\cdots x_{i+2}x_nx_1\cdots x_{i+1}.\end{align*}
        Lemma~\ref{lem13} implies that $ x_nx_1\cdots x_rx_{n-1}\cdots x_{i+2}x_nx_1\cdots x_{i+1}\in
        I_i$. Therefore we also get $x_nx_1\cdots x_rx_{n-1}\cdots x_{i+2}x_nx_1\cdots x_{i+1}x_{n-1}\cdots x_{s+1}\in I_i$,
        and $w(x_nq_i)\in I_i$.
    \end{itemize}
\end{enumerate}
    This completes the proof.
\end{proof}

\subsection{Structures of matrix type}\label{s: stm}
Our next aim is to refine the information on the ideal chain
 \begin{equation*}\emptyset =I_{n-2} \vartriangleleft  I_{n-3}\vartriangleleft \cdots \vartriangleleft I_0\vartriangleleft
  I_{-1}  \vartriangleleft C_n
    \end{equation*}
of $C_n$ defined in the previous section. We will show that every
factor $I_{j-1}/I_j$, for $j=0,\ldots, n-2$, is, up to finitely
many elements, a semigroup of matrix type over a cyclic semigroup
and also that $C_n/I_{-1}$ is finite. Namely, the elements of the
family  $\tilde{M}_j$, described in Definition~\ref{matrix
struct}, with a zero element adjoined, treated as elements of the
Rees factor $I_{j-1}/I_j$, form a semigroup of matrix type. Using
certain natural involutions on $C_n$, we will also show that the
corresponding sandwich matrices are square matrices and they are
symmetric. In particular, this means that, for every $j$, there is
a bijection between the sets $A_j$ and $B_j$, which is not clear
directly from the description obtained in Theorem~\ref{tw1}.

Recall the definition of the sets $\tilde{M_i}$ and $\tilde{M}$
(Definition~\ref{matrix struct}).  For every $i=0,\ldots, n-2$ we
write
\begin{equation*}
    \tilde{M_i}=\{a(x_nq_i)^kb\in C_n: a\in A_i, b\in B_i, k\geqslant 1\}.
\end{equation*}
This is the set of elements considered in Theorem~\ref{tw1}. In
what follows, we identify elements of $C_n$ with the corresponding
reduced words. Hence, $\tilde{M}=\bigcup_{i=0}^{n-2}\tilde{M_i}$
consists of elements of $C_n$ that have (in the reduced form) a
factor of the form $x_nq_i$, for some $i$. Moreover, from
Proposition~\ref{stw1} we know that almost all elements of $C_n$
are in this set.

Certain involutions that preserve the ideals $I_{i-1}$ and sets
$\tilde{M}_{i}$, for $i\in\{0, \ldots, n-2\}$, will be useful in
this context. In particular, they can be used to establish an
internal symmetry of every set $\tilde{M}_{i}$.

\begin{defi} \label{sigma-def}
    Let $\tau: C_n \longrightarrow C_n$
    be the involution defined in Definition~\ref{involut}.
    So $\tau(x_i)=x_{n-i} $ for $i\neq n$ and $\tau (x_n) = x_n$.
    Let $\sigma: \langle x_1,\ldots, x_n\rangle\longrightarrow \langle x_1,\ldots, x_n\rangle$
    be the automorphism such that  $\sigma(x_i)=x_{i+1}$ for every $i=1,\ldots, n$, where we put
    $x_{n+1}=x_1$.  It is easy to check that
    $\sigma$ preserves the set of defining relations of $C_n$. Hence, $\sigma$ can be viewed as an automorphism of $C_n$.
    Therefore, the map $\sigma^{i}\tau$ also is an involution of $C_n$, for $i=0,\ldots, n-1$.
\end{defi}

\begin{corollary}\label{wn4}
    The involution $\chi_{i} =\sigma^{i+1}\tau$ satisfies: $\chi_i ((x_nq_i)^k)=(x_nq_i)^k$ for every $k\geqslant
    1$, $\chi_i (A_i)=B_i, \chi_i (B_i)=A_i$, and in particular $\chi_i (\tilde{M}_{i}) =
    \tilde{M}_i$.
\end{corollary}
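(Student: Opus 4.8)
The plan is to prove the three assertions in turn, the third being a formal consequence of the first two. Throughout I record that, as a permutation of the generators, $\chi_i=\sigma^{i+1}\tau$ acts by $\chi_i(x_j)=x_{i+1-j}$, with indices read modulo $n$ in $\{1,\dots,n\}$; in particular $\chi_i(x_{i+1})=x_n$ and $\chi_i(x_n)=x_{i+1}$. Since $\tau$ is an anti-automorphism and $\sigma$ an automorphism, $\chi_i$ is an \emph{involutive anti-automorphism} of $C_n$, i.e. $\chi_i(uv)=\chi_i(v)\chi_i(u)$ and $\chi_i^{2}=\mathrm{id}$.

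For the first assertion I would simply compute the image word. Reversing $x_nq_i=x_nx_1\cdots x_ix_{n-1}\cdots x_{i+1}$ and relabelling by $x_j\mapsto x_{i+1-j}$ gives
\[
\chi_i(x_nq_i)=x_nx_{n-1}\cdots x_{i+2}\,x_1\cdots x_i\,x_{i+1}.
\]
Every generator occurring in the decreasing block $x_{n-1}\cdots x_{i+2}$ and every generator in $x_1\cdots x_i$ have indices differing by some $d$ with $1<d<n-1$, so these two blocks commute by relation~(2); moving $x_1\cdots x_i$ to the front and reattaching $x_{i+1}$ to the bottom of the decreasing run yields $x_n(x_1\cdots x_i)(x_{n-1}\cdots x_{i+1})=x_nq_i$. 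Thus $\chi_i(x_nq_i)=x_nq_i$, and anti-multiplicativity gives $\chi_i((x_nq_i)^k)=\chi_i(x_nq_i)^k=(x_nq_i)^k$ for all $k\ge 1$.

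Since $\chi_i$ is an involution, the equalities $\chi_i(A_i)=B_i$ and $\chi_i(B_i)=A_i$ both follow once I establish the two inclusions $\chi_i(A_i)\subseteq B_i$ and $\chi_i(B_i)\subseteq A_i$ (applying $\chi_i$ to one inclusion yields the reverse of the other). Granting these, the last assertion is formal: any $w\in\tilde{M}_i$ has, by Theorem~\ref{tw1}, the form $w=a(x_nq_i)^kb$ with $a\in A_i$, $b\in B_i$, $k\ge1$, so by the first assertion and anti-multiplicativity $\chi_i(w)=\chi_i(b)\,(x_nq_i)^k\,\chi_i(a)$ with $\chi_i(b)\in A_i$ and $\chi_i(a)\in B_i$; Theorem~\ref{tw1} then gives $\chi_i(w)\in\tilde{M}_i$, so $\chi_i(\tilde{M}_i)\subseteq\tilde{M}_i$, which involutivity upgrades to equality.

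It remains to establish the two inclusions, and this is where I expect the real work to lie. The method is to apply $\chi_i$ — reversing a word and relabelling $x_j\mapsto x_{i+1-j}$ — directly to the explicit words describing $A_i$ and $B_i$. For example, relabelling and reversing a defining word of $B_i$ with data $(r;i_1,\dots,i_r;j_1,\dots,j_{r+1})$ produces
\[
x_{m_{r+1}}\cdots x_{i+2}\;x_{i+1}\;(x_{m_r}\cdots x_{i+2})(x_{i+1-i_r}\cdots x_i)\;x_{i+1}\cdots(x_{m_1}\cdots x_{i+2})(x_{i+1-i_1}\cdots x_i)\;x_{i+1},
\]
where $m_p=n+i+1-j_p$ and the $r+1$ copies of $x_n$ have become the $r+1$ copies of $x_{i+1}$. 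The obstacle is that $\chi_i$ does \emph{not} preserve reduced forms — neither $\sigma$ nor $\tau$ respects the deg-lex order — so this word is generally not reduced and must first be rewritten. The rewriting uses exactly the move from the first assertion: one commutes each increasing block $x_{i+1-i_p}\cdots x_i$ leftwards past the decreasing blocks $x_{m_q}\cdots x_{i+2}$ via relation~(2) and merges the fragments until the block-bottoms form the consecutive values $s,s+1,\dots,i+1$ demanded by the definition of $A_i$. One must verify that the outcome — and, more generally, the image of every prefix of such a word — is a suffix of a word of the prescribed form, so that it lies in $A_i$; the inclusion $\chi_i(A_i)\subseteq B_i$ is symmetric. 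Carrying out these reductions uniformly over all admissible parameters, together with the way the modular wrap-around interchanges the $x_n$-free block structure of $A_i$ with the $x_n$-separated structure of $B_i$, and separately over the degenerate values ($s\in\{0,i+1\}$, $r=0$) giving the special forms of $A_i$ and $B_i$, is the technical core of the argument.
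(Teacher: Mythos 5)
Your treatment of the first assertion is correct and coincides with the paper's: $\chi_i$ is an involutive anti-automorphism sending $x_j\mapsto x_{i+1-j}$ (indices mod $n$), the image word $x_nx_{n-1}\cdots x_{i+2}x_1\cdots x_{i+1}$ reduces to $x_nq_i$ by relation (2), and anti-multiplicativity handles the powers. The formal reductions are also fine: involutivity turns the two inclusions into equalities, and the invariance of $\tilde M_i$ follows from Theorem~\ref{tw1}. The problem is that the actual content of the corollary is the pair of inclusions $\chi_i(B_i)\subseteq A_i$ and $\chi_i(A_i)\subseteq B_i$, and for these you only state a plan: apply $\chi_i$ letter by letter to the defining words of $B_i$ and then rewrite the resulting non-reduced word into the block form of $A_i$ ``uniformly over all admissible parameters''. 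You explicitly defer this as the technical core, so nothing is verified about which reductions apply to the image word, in which order, why the process terminates in a word satisfying the inequality constraints $k_s\leqslant s$, $k_{s+1}<\cdots<k_{i+1}\leqslant n-1$, $k_q>q$ from the definition of $A_i$, or why the image of an arbitrary \emph{prefix} of a defining word of $B_i$ lands among the \emph{suffixes} defining $A_i$. Since $\chi_i$ scrambles the deg-lex order, this is exactly the part that cannot be waved at; as written the proof is incomplete.

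For comparison, the paper bypasses the computation entirely. Take $w=(x_nq_i)^mb\in\tilde M_i$ with $b\in B_i$, so $\chi_i(w)=\chi_i(b)(x_nq_i)^m$. Because $\chi_i$ permutes the generators and $\chi_i^2=\mathrm{id}$ on $C_n$, the reduced form of $\chi_i(w)$ must have the same length as $w$; hence only the length-preserving reductions (2) and (3) can occur, and these only move single generators to the left. Since $x_n$ is never the generator being moved, the first letter of the suffix $(x_nq_i)^m$ stays put, and each subsequent letter of $(x_nq_i)^m$ is blocked by its predecessor; thus all reductions happen inside the prefix, and the reduced form of $\chi_i(w)$ is $a(x_nq_i)^m$ with $a$ the reduced form of $\chi_i(b)$. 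Applying Theorem~\ref{tw1} to this reduced word containing the factor $x_nq_i$ (and checking that $a$ cannot end in a positive power of $x_nq_i$) identifies $a$ as an element of $A_i$. This yields $\chi_i(B_i)\subseteq A_i$ with no case analysis over the parameters of $B_i$, and the symmetric argument gives the other inclusion. Either adopt an argument of this kind, or actually carry out the rewriting for all parameter values, including the degenerate cases you list.
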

\begin{proof}
Notice that $\chi_i (x_nx_1\cdots x_{i})=x_1\cdots x_{i+1}$ and
$\chi_i (x_{n-1}\cdots x_{i+1})=x_nx_{n-1}\cdots x_{i+2}$. Hence
$\chi_i (x_nq_i) = (x_nx_{n-1}\cdots x_{i+2} )(x_1\cdots
x_{i+1})=x_nq_i$ in $C_n$, so for $k\geqslant 1$ we get $\chi_i
((x_nq_{i})^k)=(x_nq_{i})^k$ for every $i$. Let $w=(x_nq_i)^m b\in
\tilde{M}_{i}$, where $b\in B_i$ and $m\geqslant 1$. Then $\chi_i
(w)= \chi_i (b) (x_nq_i)^m$. Moreover, the reduced form of $\chi_i
(b) (x_nq_i)^m$ is obtained by moving certain generators to the
left (other reductions are not possible because they reduce the
length of the word, while $\chi_i^2 =id$). But $x_n$ cannot be
moved to the left, so the consecutive generators in $(x_nq_i)^m$
cannot be moved to the left either. It follows that the reduced
form of $\chi_i (w)$ is equal to $a (x_nq_i)^m$, where $a$ is the
reduced form of $\chi_i (b)$. If $a=u(x_nq_i)^r$ for some
$r\geqslant 1$ and some $u$, then $b=\chi_i(a) =(x_nq_i)^{r}\chi_i
(u)$ and $w= \chi_i (a(x_nq_i)^m)= (x_nq_i)^{r+m}\chi_i (u)$. And
again, since no generator can be moved into the word $x_nq_i$ from
the right (without making the word shorter), it follows that we
cannot have $r>0$.  Thus, $a\in A_{i}$. Therefore,
$\chi_i(B_{i})\subseteq A_{i}$. A symmetric argument shows that
$\chi_i(A_{i})\subseteq B_{i}$. Since $\chi_i^2 = id$, we get
$\chi_i(B_{i})= A_{i}$ and $\chi_i(A_{i})= B_{i}$. The result
follows.
\end{proof}

As noticed in Lemma~\ref{lem4}, if $i=0$ or $i=n-2$, then reduced
words in $C_n$ that have a factor of the form $x_nq_i$ must come
from the infinite word $(x_nq_i)^{\infty}$. It is then clear that
for such a word $s$ we can find $w$,$z\in C_n$ such that
$wsz\in\langle x_nq_i\rangle$. We will show  that the latter
property remains valid for all $i$.
\begin{theorem}\label{tw2}
    Let $i\in\{0, \ldots, n-2\}$. Then:
    \begin{enumerate}
        \item[1)] for every $a\in A_i$ there exists $w\in C_n$ such that $wa\in \langle x_nq_i\rangle$;
        \item[2)] for every $b\in B_i$ there exists $w\in C_n$ such that $bw\in\langle x_nq_i\rangle$.
    \end{enumerate}
\end{theorem}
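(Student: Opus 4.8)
The plan is to exploit the involution $\chi_i=\sigma^{i+1}\tau$ of Corollary~\ref{wn4} to collapse the two assertions into one, and then to prove that one assertion by an explicit completion. Since $\tau$ reverses products (it is an anti-automorphism, as is implicit in the computation $\chi_i((x_nq_i)^mb)=\chi_i(b)(x_nq_i)^m$ of Corollary~\ref{wn4}) and $\sigma$ is an automorphism, $\chi_i$ is an order-reversing involution that fixes every power $(x_nq_i)^k$ and interchanges $A_i$ and $B_i$. Hence, assuming 2) is known, for $a\in A_i$ I write $a=\chi_i(b)$ with $b=\chi_i(a)\in B_i$, pick $w$ with $bw=(x_nq_i)^k$, and apply $\chi_i$ to obtain $\chi_i(w)\,a=\chi_i(bw)=\chi_i((x_nq_i)^k)=(x_nq_i)^k$; thus $\chi_i(w)\in C_n$ witnesses 1). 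So it suffices to prove 2). The cases $i=0$ and $i=n-2$ are immediate from Lemma~\ref{lem4}: there every reduced word containing $x_nq_i$ is a factor of $(x_nq_i)^{\infty}$, and a prefix $b=x_nx_{n-1}\cdots x_m$ of such a word is completed on the right by $x_{m-1}\cdots x_1$ (in the case $i=0$) to a full $x_nq_i$.

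To prove 2) for $1\le i\le n-3$ I would first reduce to full words. If $b\in B_i$ is a prefix of $P=x_n(x_1\cdots x_{i_1}x_{n-1}\cdots x_{j_1})\cdots x_n(x_1\cdots x_{i_r}x_{n-1}\cdots x_{j_r})x_nx_{n-1}\cdots x_{j_{r+1}}$, then $P$ is reduced by Theorem~\ref{tw1}, so $P=bc$ as words in $F$ and $b\cdot c=P$ in $C_n$. By associativity $b\cdot(c\,w')=P\cdot w'$, so it is enough to find $w'$ with $P w'\in\langle x_nq_i\rangle$. I would do this by induction on the number $r+1$ of blocks of $P$. The point is that each block $x_n(x_1\cdots x_{i_l}x_{n-1}\cdots x_{j_l})$ is a \emph{truncation} of $x_nq_i=x_n(x_1\cdots x_ix_{n-1}\cdots x_{i+1})$, missing the ascending tail $x_{i_l+1}\cdots x_i$ and the descending tail $x_{j_l-1}\cdots x_{i+1}$. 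The engine is relation~(3), which transports a generator $x_j$ with $i+1<j<n-1$ leftwards over a whole factor $x_n(x_1\cdots x_{i'})$, combined with the commutations~(2) and the absorptions~(1) and~(5n). One appends the missing letters in a carefully chosen order so that they travel leftwards and refill the blocks, a leading occurrence of $x_n$ being absorbed via~(5n) when needed; the base case $r=0$ is handled by a direct reduction.

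The main obstacle is exactly the verification of these reduction sequences and the design of $w'$. The naive idea of appending the ascending block $x_1\cdots x_i$ and sliding it into place fails: consecutive generators $x_p,x_{p+1}$ are arrow-connected, so relation~(2) (which needs $1<|p-q|<n-1$) does not let them commute, and the ascending letters cannot simply be commuted back to just after an $x_n$. One must therefore interleave the appended letters so that each missing descending letter extends a descending run by~(2), each missing ascending letter is routed by~(3), and the fate of each $x_n$ (some surviving, some absorbed by~(5n)) is tracked so that the result is \emph{exactly} $(x_nq_i)^K$ with no leftover partial block. Keeping this cascade coherent across all $r+1$ blocks at once — guaranteeing that completing one block does not corrupt a previously completed one, since transport past an already-filled $x_nq_i$ is blocked at its leading $x_n$ — is where the real work lies, and it is the step I expect to be hardest.
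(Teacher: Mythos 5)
Your overall architecture coincides with the paper's: reduce assertion 1) to assertion 2) via the anti-automorphism $\chi_i=\sigma^{i+1}\tau$ of Corollary~\ref{wn4} (your computation $\chi_i(w)a=\chi_i(bw)=(x_nq_i)^k$ is exactly the paper's reduction), dispose of $i=0,n-2$ directly, replace an arbitrary $b\in B_i$ by the full word $b'$ of which it is a prefix, and induct on the number of blocks with base case $b=x_nx_{n-1}\cdots x_{j_1}$ completed by $x_1\cdots x_ix_{j_1-1}\cdots x_{i+1}$ via reduction (2). However, the inductive step --- which you yourself flag as ``where the real work lies'' and ``the step I expect to be hardest'' --- is precisely the substantive content of the proof, and you have not supplied it: you give no explicit $w'$ and no verified reduction sequence, only the intention to ``interleave the appended letters in a carefully chosen order.'' That is a genuine gap, not a routine verification one can wave at; without the concrete cascade the argument does not establish anything for $1\leqslant i\leqslant n-3$.

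For what it is worth, the difficulty you anticipate --- ``keeping the cascade coherent across all $r+1$ blocks at once'' and the danger that transport is blocked at the leading $x_n$ of an already-completed $x_nq_i$ --- is dissolved by organizing the induction differently from what you describe. The paper appends only the letters missing from the \emph{first} (leftmost) block, namely $w'=x_{i_1+1}\cdots x_ix_{j_1-1}\cdots x_{i+1}$, and transports them leftward through all the intermediate blocks by alternating reductions (2) (using $1<j_k-i<n-1$) and (3) (using $i_k+1<i_1+1<n-1$); no use of (1) or (5n) is needed, and no $x_n$ is absorbed. The outcome is $bw'=(x_nq_i)\,b''$ in $C_n$, where $b''\in B_i$ has $r$ blocks; the inductive hypothesis yields $v$ with $b''v=(x_nq_i)^K$, and associativity gives $b(w'v)=(x_nq_i)(x_nq_i)^K=(x_nq_i)^{K+1}$, so one never transports anything past a completed $x_nq_i$. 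Identifying this one-block-at-a-time scheme, and verifying the two alternating families of inequalities that legitimize reductions (2) and (3), is the missing core of your proposal.
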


\begin{proof} It is clear that the assertion holds for $i=0, n-2$. So, we will
further assume that $i\in\{1,\ldots, n-3\}$. In view of
Corollary~\ref{wn4} it is enough to prove assertion 2). We will
use notation of Theorem~\ref{tw1}. This includes the notion of
blocks, introduced directly after the formulation of this theorem.

     The description of the set $B_i$  implies directly that every $b\in B_i$
    is a prefix of the following element
    \begin{equation*}
    b'= x_n(x_{1}\cdots x_{i_1}x_{n-1}\cdots x_{j_1})\cdots x_n(x_{1}\cdots x_{i_r}x_{n-1}\cdots x_{j_r})x_nx_{n-1}\cdots x_{j_{r+1}}.
    \end{equation*}
    Clearly, if the assertion holds for some word then it also holds for
    every prefix of this word. Thus, it is enough to prove the assertion in the case
    where $b=b'$. We proceed by induction on the number of blocks in the word $b$ of
    this type; so on $r+1$. Let $r=0$. Then $b=x_nx_{n-1}\cdots x_{j_1}$ for some $j_1>i+1$.
    If we define $w=x_1\cdots x_ix_{j_1-1}\cdots x_{i+1}$ then
    $bw\arr{(2)}x_nx_1\cdots x_ix_{n-1}\cdots x_{i+1}$, as desired.

    So, assume that the assertion holds for all words in $B_i$,
    that are of the form

    $$x_n(x_{1}\cdots x_{i_1}x_{n-1}\cdots x_{j_1})\cdots
    x_n(x_{1}\cdots x_{i_k}x_{n-1}\cdots x_{j_k})x_nx_{n-1}\cdots
    x_{j_{k+1}},$$
 where $k<r$ (the number of blocks is $k+1$). Let $b$ be the following word with $r+1$
    blocks:
    \begin{equation*}b=x_n(x_{1}\cdots x_{i_1}x_{n-1}\cdots x_{j_1})\cdots x_n(x_{1}
    \cdots x_{i_r}x_{n-1}\cdots x_{j_r})x_nx_{n-1}\cdots x_{j_{r+1}}.\end{equation*}
    Let  $w'=x_{i_1+1}\cdots x_ix_{j_1-1}\cdots x_{i+1}$, where for $i_1=i$,
    we put $w'=x_{j_1-1}\cdots x_{i+1}$.
    Since for $k=r, r+1$ we have inequalities  $1<j_{k}-i<n-1$, using reduction
    (2) applied several times, it is easy to see that the word $bw'$ can be reduced to
    \begin{align*}
    x_n(x_{1}\cdots x_{i_1}x_{n-1}\cdots x_{j_1})\cdots x_n(x_{1}\cdots x_{i_r}(x_{i_1+1}
    \cdots x_i)x_{n-1}&\cdots x_{j_r})x_nx_{n-1}\cdots x_{j_{r+1}}(x_{j_1-1}\cdots x_{i+1})
    \end{align*}
    Moreover, we know that  $i_r+1<i_1+1<n-1$, so applying reduction  (3)
    several times, we can rewrite the word $bw'$ to the form
   \begin{align*}
   x_n(x_{1}\cdots x_{i_1}x_{n-1}\cdots x_{j_1})\cdots (x_{i_1+1}\cdots x_i)x_n(x_{1}
   \cdots x_{i_r}x_{n-1}&\cdots x_{j_r})x_nx_{n-1}\cdots x_{j_{r+1}}(x_{j_1-1}\cdots x_{i+1}).
   \end{align*}
   Repeating this process, alternately using the inequalities
   $1<j_k-i<n-1$ for $k>1$ (and applying reduction (2)), and using the inequalities
 $i_k+1<i_1+1<n-1$  for $k>1$ (and applying reduction (3)), we
 come to the following form of the element $bw'$:
   \begin{equation*}
   x_n(x_1\cdots x_ix_{n-1}\cdots x_{j_1})\cdots x_n(x_{1}\cdots x_{i_r}x_{n-1}
   \cdots x_{j_r})x_nx_{n-1}\cdots x_{j_{r+1}}(x_{j_1-1}\cdots x_{i+1}).
   \end{equation*}
  Similarly, applying alternately inequalities  $1<m-(j_{1}-1)<n-1$ for $m=j_{2},\ldots ,n$
  (and reduction  (2)) and inequalities  $i_k+1<m+1<n-1$ for $m=i+1, \ldots, j_1-1$, $k=2,\ldots ,r$
  (and reduction (3) applied several times) we can finally rewrite the word $bw'$ to the form
  \begin{align*}
  x_n(x_1\cdots x_ix_{n-1}\cdots x_{i+1})x_n(x_{1}\cdots x_{i_2}x_{n-1}\cdots x_{j_2})
  \cdots x_n(x_1\cdots x_{i_r}x_{n-1}&\cdots x_{j_r})x_nx_{n-1}\cdots x_{j_{r+1}}.
  \end{align*}
    Then $b'=x_n(x_{1}\cdots x_{i_2}x_{n-1}\cdots x_{j_2})\cdots x_n(x_1\cdots x_{i_r}x_{n-1}
    \cdots x_{j_r})x_nx_{n-1}\cdots x_{j_{r+1}} \in B_i$.
    Thus, by the inductive hypothesis, there exists $v\in C_n$ such that
     $b'v\in \langle x_nq_i\rangle$. Hence, for $w=w'v$ we have $bw\in \langle x_nq_i\rangle$.
This completes the proof.
\end{proof}

Next we show that for every $i$ the set $\tilde{M_i}$ is contained
in $I_{i-1}\setminus I_i$.

\begin{stw}\label{stw2}
    For every $i\in\{0,\ldots, n-2\}$ we have $\tilde{M_i}\subseteq I_{i-1}\setminus I_i$.
\end{stw}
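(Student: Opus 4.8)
The plan is to show the two containments $\tilde{M_i}\subseteq I_{i-1}$ and $\tilde{M_i}\cap I_i=\emptyset$ separately, exploiting the structural results already established. Recall that a typical element of $\tilde{M_i}$ has the reduced form $w=a(x_nq_i)^kb$ with $a\in A_i$, $b\in B_i$, $k\geqslant 1$.

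For the containment $\tilde{M_i}\subseteq I_{i-1}$, the key observation is that membership in $I_{i-1}$ means that $C_nwC_n\cap\langle x_nq_{i-1}\rangle=\emptyset$, equivalently that $w$ cannot be completed (on both sides, by multiplication in $C_n$) to a power of $x_nq_{i-1}$. The natural tool here is the $\supp$-invariant from Lemma~\ref{lem11}, together with Lemma~\ref{lem12}. First I would note that $w$ contains a factor $x_nq_i$, and since $\supp(f(xw))\subseteq\supp(f(w))$ and $\supp(f(wy))\subseteq\supp(f(w))$ for all $x,y\in C_n$, any element of $C_nwC_n$ has $\supp$-size at most $|\supp(f(w))|$. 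Using that $x_nq_i$ is a factor of $w$ and Lemma~\ref{lem11}, one gets $|\supp(f(w))|\leqslant n-i-1$. But $\langle x_nq_{i-1}\rangle$ consists of words with $\supp$-size exactly $n-(i-1)-1=n-i$ by Lemma~\ref{lem11}; since $n-i-1<n-i$, no element of $C_nwC_n$ can lie in $\langle x_nq_{i-1}\rangle$. This places $w$ in $Q_{i-1}$, and then Lemma~\ref{lem12} gives $w\in I_{i-1}$ directly. (The boundary case $i=0$, where $I_{-1}$ is defined specially, should be checked: there $w$ has the factor $x_nq_0=x_nx_{n-1}\cdots x_1$, and by Lemma~\ref{lem4} such a $w$ is a factor of $(x_nq_0)^\infty$, so it trivially lies in $I_{-1}=I_0\cup C_nx_nq_0C_n$.)

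For the complementary statement $\tilde{M_i}\cap I_i=\emptyset$, I would use Theorem~\ref{tw2}. Given $w=a(x_nq_i)^kb$, Theorem~\ref{tw2}(1) supplies $u\in C_n$ with $ua\in\langle x_nq_i\rangle$, and Theorem~\ref{tw2}(2) supplies $v\in C_n$ with $bv\in\langle x_nq_i\rangle$. The hope is that $uwv=ua(x_nq_i)^kbv$ reduces to a genuine power of $x_nq_i$, which would witness $C_nwC_n\cap\langle x_nq_i\rangle\neq\emptyset$ and hence $w\notin I_i$. The point requiring care is that $ua\in\langle x_nq_i\rangle$ means $ua$ equals some $(x_nq_i)^p$ \emph{as elements of $C_n$}, and similarly $bv=(x_nq_i)^q$; one must argue that the product $ua\cdot(x_nq_i)^k\cdot bv$, computed in $C_n$, equals $(x_nq_i)^{p+k+q}$ rather than collapsing into $I_i$. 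Here the cleanest route is Theorem~\ref{tw3}: it guarantees that $(x_nq_i)z(x_nq_i)\in\{(x_nq_i)^m:m\geqslant 2\}\cup I_i$ for any $z$, so by inserting such sandwiches we can control the product, and the fact that $ua$ and $bv$ are themselves positive powers of $x_nq_i$ should force the outcome into the power set rather than the ideal.

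The main obstacle I anticipate is precisely this last gluing step: verifying that the concatenation $(x_nq_i)^p(x_nq_i)^k(x_nq_i)^q$ does not fall into $I_i$ under reduction. The subtlety is that $ua$ equals $(x_nq_i)^p$ only after $S$-reduction, so when forming $uwv$ one is really reducing $u\cdot a(x_nq_i)^kb\cdot v$, and the reductions near the junctions must be tracked. I expect this is resolved by applying Theorem~\ref{tw3} repeatedly to collapse the word into a power of $x_nq_i$, together with the elementary fact (from Corollary~\ref{podstawowy wniosek}) that $(x_nq_i)^m$ is itself reduced for every $m$, so distinct powers are genuinely distinct nonzero elements; in particular a positive power of $x_nq_i$ lies in $\langle x_nq_i\rangle$ and is therefore not in $I_i$ by definition of $I_i$. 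Combining the two containments yields $\tilde{M_i}\subseteq I_{i-1}\setminus I_i$, as claimed.
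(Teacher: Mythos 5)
Your proposal is correct and follows essentially the same route as the paper: the first inclusion is the paper's Lemma~\ref{lem13}(1) combined with the fact that $I_{i-1}$ is an ideal (and Lemma~\ref{lem13}(1) is proved there exactly via the $\supp$-argument from Lemmas~\ref{lem11} and~\ref{lem12} that you reproduce, with $i=0$ handled by the definition of $I_{-1}$), while the second part is the paper's application of Theorem~\ref{tw2}. The gluing step you flag as the main obstacle is in fact vacuous: multiplication in $C_n$ is associative, so $uwv=(ua)\cdot (x_nq_i)^k\cdot (bv)$ is a product of three elements of the submonoid $\langle x_nq_i\rangle$ and hence lies in $\langle x_nq_i\rangle$, which already witnesses $C_nwC_n\cap\langle x_nq_i\rangle\neq\emptyset$ without any appeal to Theorem~\ref{tw3}.
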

\begin{proof}
Since $I_{i}$ is an ideal of $C_n$, Lemma~\ref{lem13} implies that
$\tilde{M_i}\subseteq I_{i-1}$. Let $a(x_nq_i)^kb\in\tilde{M_i}$.
Theorem~\ref{tw2} implies that there exist $w, v\in C_n$ such that
$wa, bv\in\langle x_nq_i\rangle$. So, $wa(x_nq_i)^kbv\in\langle
x_nq_i\rangle$ and $a(x_nq_i)^kb\notin I_{i}$. The assertion
follows.
\end{proof}

\begin{corollary}\label{wn6}
    For every $i\in\{0,\ldots, n-2\}$ the set $(I_{i-1}\setminus I_i)\setminus\tilde{M_i}$ is finite.
\end{corollary}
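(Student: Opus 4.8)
The plan is to deduce this finiteness statement directly from the two results already established: Proposition~\ref{stw1}, which says that $C_n\setminus\tilde{M}$ is finite, and Proposition~\ref{stw2}, which locates each $\tilde{M_j}$ precisely inside the Rees layer $I_{j-1}\setminus I_j$. The first thing I would record is that, since the ideals form a chain $I_{n-2}\vartriangleleft\cdots\vartriangleleft I_0\vartriangleleft I_{-1}$, the difference sets $I_{j-1}\setminus I_j$, for $j=0,\ldots,n-2$, are pairwise disjoint: membership in the $I_j$ is monotone along the chain, so each $w\in I_{-1}$ lies in exactly one layer (the one determined by the largest index $m$ with $w\in I_m$, using $I_{n-2}=\emptyset$). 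In particular these layers partition $I_{-1}$.

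The key step is then a single inclusion. I would take any $w\in(I_{i-1}\setminus I_i)\setminus\tilde{M_i}$ and suppose, towards a contradiction, that $w\in\tilde{M}$. Since $\tilde{M}=\bigcup_{j=0}^{n-2}\tilde{M_j}$, we have $w\in\tilde{M_j}$ for some $j$, and Proposition~\ref{stw2} gives $\tilde{M_j}\subseteq I_{j-1}\setminus I_j$, so $w$ sits in the $j$-th layer. But $w$ also lies in the $i$-th layer $I_{i-1}\setminus I_i$, and disjointness of the layers forces $j=i$; hence $w\in\tilde{M_i}$, contradicting the choice of $w$. Therefore $w\notin\tilde{M}$, which establishes
$$(I_{i-1}\setminus I_i)\setminus\tilde{M_i}\subseteq C_n\setminus\tilde{M}.$$
Finiteness of the right-hand side, guaranteed by Proposition~\ref{stw1}, then gives the claim at once.

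I do not expect a genuine obstacle here: once one observes that the layers are disjoint and that each $\tilde{M_j}$ is confined to a single layer, the argument is pure bookkeeping. The only point that must be in place — and which is exactly what Proposition~\ref{stw2} supplies — is the disjointness $\tilde{M_j}\cap I_j=\emptyset$; without it one could not rule out that some element of a $\tilde{M_j}$ with $j\neq i$ sneaks into $I_{i-1}\setminus I_i$, and the clean inclusion above would fail. Thus the whole weight of the corollary really rests on the two preceding propositions, which is why it is best presented as an immediate consequence rather than with a fresh combinatorial analysis of reduced words.
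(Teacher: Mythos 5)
Your proposal is correct and follows essentially the same route as the paper: establish that the layers $I_{j-1}\setminus I_j$ are pairwise disjoint from the chain structure, use Proposition~\ref{stw2} to confine each $\tilde{M_j}$ to its own layer so that $(I_{i-1}\setminus I_i)\setminus\tilde{M_i}\subseteq C_n\setminus\tilde{M}$, and conclude by the finiteness from Proposition~\ref{stw1}. No gaps.
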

\begin{proof}
By Proposition~\ref{stw2}, $\tilde{M_i}\s I_{i-1}\setminus I_i$.
Let $j>i$. Then $I_{j-1}\s I_i$, so that $(I_{i-1}\setminus
I_i)\cap(I_{j-1}\setminus I_j)=\emptyset$. In particular, for
$j\neq i$ we get $\tilde{M_j}\cap (I_{i-1}\setminus
I_i)=\emptyset$. Since from Proposition~\ref{stw1} we know that
$C_n\setminus\tilde{M}$ is finite, it follows that
$(I_{i-1}\setminus I_i)\setminus\tilde{M_i}$ is also finite.
\end{proof}

By Proposition~\ref{stw1} we know that $C_n\setminus\tilde{M}$ is
a finite set. Moreover, Proposition~\ref{stw2} implies that for
every $i=0,\ldots, n-2$ we have $\tilde{M_i}\s I_{i-1}\setminus
I_i\s I_{-1}$, so that also $\tilde{M}\s I_{-1}$. Our next
observation follows.

\begin{corollary}\label{I-1}
$C_n/I_{-1}$ is a finite semigroup.
\end{corollary}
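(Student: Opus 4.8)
The plan is to deduce finiteness of $C_n/I_{-1}$ directly from the two facts already established in the excerpt, namely Proposition~\ref{stw1} (that $C_n\setminus\tilde{M}$ is finite) and Proposition~\ref{stw2} (that $\tilde{M_i}\subseteq I_{i-1}\setminus I_i$ for every $i$). First I would observe that $C_n/I_{-1}$, as a set, is in bijection with $(C_n\setminus I_{-1})\cup\{\theta\}$, where $\theta$ is the zero of the Rees factor. Thus it suffices to show that the complement $C_n\setminus I_{-1}$ is finite.

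The key step is to show $\tilde{M}\subseteq I_{-1}$. This follows from Proposition~\ref{stw2}: for each $i\in\{0,\ldots,n-2\}$ we have $\tilde{M_i}\subseteq I_{i-1}\setminus I_i\subseteq I_{i-1}$, and since the ideals are nested as $I_{n-3}\vartriangleleft\cdots\vartriangleleft I_0\vartriangleleft I_{-1}$, every $I_{i-1}$ is contained in $I_{-1}$ (for $i-1\geqslant -1$). Hence $\tilde{M}=\bigcup_{i=0}^{n-2}\tilde{M_i}\subseteq I_{-1}$. Consequently $C_n\setminus I_{-1}\subseteq C_n\setminus\tilde{M}$, and the latter is finite by Proposition~\ref{stw1}.

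Combining these observations, $C_n\setminus I_{-1}$ is a subset of the finite set $C_n\setminus\tilde{M}$, so it is finite; adjoining the single zero element $\theta$ keeps the set finite, whence $C_n/I_{-1}$ is a finite semigroup. I do not expect any genuine obstacle here, since all the substantive work has already been carried out in Propositions~\ref{stw1} and~\ref{stw2}; the only point requiring a moment's care is the bookkeeping that $I_{i-1}\subseteq I_{-1}$ for all relevant $i$, which is immediate from the established ideal chain together with the definition $I_{-1}=I_0\cup C_nx_nq_0C_n\supseteq I_0$. The statement is essentially a formal corollary, as its placement in the text suggests.
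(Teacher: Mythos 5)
Your argument is exactly the paper's: the authors also deduce $\tilde{M}\subseteq I_{-1}$ from Proposition~\ref{stw2} together with the ideal chain, and then conclude that $C_n\setminus I_{-1}\subseteq C_n\setminus\tilde{M}$ is finite by Proposition~\ref{stw1}. The proposal is correct and takes essentially the same route.
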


The following two simple lemmas will be also useful.

\begin{lemma}\label{sigma}
For every $i\in\{0, \ldots, n-3\}$ we have $\sigma(I_i)=I_i$.
Moreover, $\sigma (w)\in \tilde{M}_i$ for almost all $w\in
\tilde{M}_i$, if $i\in \{ 0,\ldots , n-2\}$.
\end{lemma}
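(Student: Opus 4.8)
The plan is to reduce everything to a single invariance property of the homomorphism $f\colon C_n\to\mathrm{Map}(\mathbb{Z}^n,\mathbb{Z}^n)$, namely that $\sigma$ preserves the quantity $|\supp(f(w))|$. Concretely, I would first prove: for every $w\in C_n$ one has $|\supp(f(\sigma(w)))|=|\supp(f(w))|$, and likewise for $\sigma^{-1}$. To see this, let $\rho\colon\mathbb{Z}^n\to\mathbb{Z}^n$ be the cyclic coordinate shift $\rho(m_1,\dots,m_n)=(m_2,\dots,m_n,m_1)$. A direct check on the generators $x_1,\dots,x_n$ shows that $f(\sigma(x_j))=f(x_{j+1})$ coincides with $\rho^{-1}\circ f(x_j)\circ\rho$ up to a translation of $\mathbb{Z}^n$, the translation appearing only in the two ``wrap'' cases $j=n-1$ and $j=n$, which come from the $+1$ in the definition of $f(x_n)$. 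Since by induction on $|w|$ every $f(w)$ sends each coordinate to a single input variable plus a constant, a translation composed on either side never changes the set of variables occurring; pushing the translations to the outside yields $f(\sigma(w))=T_w\circ\rho^{-1}\circ f(w)\circ\rho$ for some translation $T_w$. As conjugation by the coordinate permutation $\rho$ merely relabels indices, this gives $|\supp(f(\sigma(w)))|=|\supp(f(w))|$. Establishing this identity cleanly, in particular bookkeeping the translations through the induction, is the main technical obstacle.

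Granting support invariance, the ``Moreover'' part is immediate. Put $D_i=\{w\in C_n:|\supp(f(w))|=n-i-1\}$. By Lemma~\ref{lem11}, together with the inclusions $\supp(f(xw))\subseteq\supp(f(w))$ and $\supp(f(wy))\subseteq\supp(f(w))$ and using Theorem~\ref{tw2}, every element $a(x_nq_i)^kb$ of $\tilde{M_i}$ has $|\supp(f(a(x_nq_i)^kb))|=n-i-1$; hence $\tilde{M_i}\subseteq D_i$. The sets $D_i$ are pairwise disjoint, so $D_i\cap\tilde{M}=\tilde{M_i}$, and Proposition~\ref{stw1} shows that $D_i\setminus\tilde{M_i}\subseteq C_n\setminus\tilde{M}$ is finite. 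Support invariance gives $\sigma(\tilde{M_i})\subseteq\sigma(D_i)=D_i=\tilde{M_i}\cup(D_i\setminus\tilde{M_i})$; as $\sigma$ is injective, only finitely many elements of $\tilde{M_i}$ can land in the finite set $D_i\setminus\tilde{M_i}$. Thus $\sigma(w)\in\tilde{M_i}$ for almost all $w\in\tilde{M_i}$, which is the second assertion; the same argument applies verbatim to $\sigma^{-1}$.

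Finally I would deduce $\sigma(I_i)=I_i$. Since $\sigma$ is an automorphism of $C_n$, for every $w$ one has $C_n\sigma(w)C_n=\sigma(C_nwC_n)$, so $\sigma(w)\notin I_i$ iff $(\sigma^{-1}(x_nq_i))^k\in C_nwC_n$ for some $k\geq1$. The key point is that $(\sigma^{-1}(x_nq_i))^m=\sigma^{-1}((x_nq_i)^m)\notin I_i$ for all large $m$: indeed $(x_nq_i)^m\in\tilde{M_i}$ for every $m$, and by the ``Moreover'' part applied to $\sigma^{-1}$ all but finitely many of these are sent into $\tilde{M_i}$, which is disjoint from $I_i$ by Proposition~\ref{stw2}. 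Now if $\sigma(w)\notin I_i$, then $C_nwC_n$, being an ideal, contains $(\sigma^{-1}(x_nq_i))^{m}$ for all $m\geq k$; choosing such an $m$ with $(\sigma^{-1}(x_nq_i))^{m}\notin I_i$ forces $C_nwC_n\cap\langle x_nq_i\rangle\neq\emptyset$, i.e. $w\notin I_i$. This proves $\sigma(I_i)\subseteq I_i$, and the reverse inclusion follows by exchanging the roles of $\sigma$ and $\sigma^{-1}$, the required statement $(\sigma(x_nq_i))^m\notin I_i$ for large $m$ coming from the ``Moreover'' part applied to $\sigma$. Hence $\sigma(I_i)=I_i$, the case $i=n-2$ with $I_{n-2}=\emptyset$ being trivial.
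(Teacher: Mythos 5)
Your proof is correct, but it takes a genuinely different route from the paper's. The paper proves the two assertions in the opposite order: it first shows $\sigma(I_i)=I_i$ by a direct word computation, namely that $\sigma((x_nq_i)^m)$ reduces to $(x_1\cdots x_{i+1})(x_nq_i)^{m-1}(x_n\cdots x_{i+2})$ with $x_1\cdots x_{i+1}\in A_i$ and $x_n\cdots x_{i+2}\in B_i$, so that Theorem~\ref{tw2} produces multipliers bringing it back into $\langle x_nq_i\rangle$; this shows $\sigma(C_n\setminus I_i)\subseteq C_n\setminus I_i$, and $\sigma^n=\mathrm{id}$ gives equality. The ``Moreover'' part is then a two-line consequence of $\sigma(I_{i-1}\setminus I_i)=I_{i-1}\setminus I_i$ together with Corollary~\ref{wn6} (resp.\ Corollary~\ref{I-1} for $i=0$). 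You instead prove the ``Moreover'' part first, by introducing a new invariant: $|\supp(f(\cdot))|$ is preserved by $\sigma$ because $f\circ\sigma$ is, up to translations, the conjugate of $f$ by the cyclic coordinate shift $\rho$. I checked the generator identities (exact equality for $j\leqslant n-2$, translations exactly in the wrap cases $j=n-1,n$) and the commutation $f(u)\circ T_c=T_{c'}\circ f(u)$ needed to push translations outward; this all works, and the level sets $D_i$ together with Theorem~\ref{tw2}, Proposition~\ref{stw1} and injectivity of $\sigma$ give the second assertion. Your deduction of $\sigma(I_i)=I_i$ from the fact that $\sigma^{\pm 1}((x_nq_i)^m)\notin I_i$ for large $m$ is also sound. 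What the paper's argument buys is brevity, since Theorem~\ref{tw2} and Corollary~\ref{wn6} are already available; what yours buys is that it extends the $f$-based technique already used for the ideals $Q_i$ and entirely avoids the explicit reduction of $\sigma(x_nq_i)$, at the cost of the (admittedly fiddly, but correct) bookkeeping of translations in the conjugation identity.
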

 \begin{proof}
 Clearly, $\sigma^n=id$ and $\sigma: C_n\longrightarrow C_n$ is an automorphism.
 Hence, for the first assertion, it is enough to prove that
  $\sigma(C_n\setminus I_i)\s C_n\setminus I_i$. Let $w\in C_n\setminus I_i$. Then there exist $u, v\in C_n$ such that
   $uwv=(x_nq_i)^m$, where $m\geqslant 1$.
 A direct computation shows that
 $\sigma((x_nq_i)^m)=(x_1x_2\cdots x_{i+1}x_nx_{n-1}\cdots x_{i+2})^m\arr{(2)}(x_1\cdots x_{i+1})(x_nq_i)^{m-1}(x_n\cdots x_{i+2})$.
 In particular, $x_1\cdots x_{i+1}\in A_i$, $x_n\cdots x_{i+2}\in B_i$,
 so from Theorem~\ref{tw2} we know that there exist $u', v'\in C_n$ such that
 $u'(x_1\cdots x_{i+1})(x_nq_i)^{m-1}(x_n\cdots x_{i+2})v'\in\langle
 x_nq_i\rangle$.
 Then for $p=u'\sigma(u)$ and $q=\sigma(v)v'$ we get $p\sigma(w)q\in\langle x_nq_i\rangle$,
 so that $\sigma(w)\in C_n\setminus I_i$, as desired.

 The second assertion follows now directly from
 Corollary~\ref{wn6} (if $i>0$) and from Corollary~\ref{I-1} (if
 $i=0$).
\end{proof}

\begin{lemma}\label{lem14}
    For every $i\in\{1,\ldots, n-2\}$ and every nonnegative integer
$m$ we have $\sigma^{m}\tau (I_{i-1}\setminus
I_i)=I_{i-1}\setminus I_i$ and $\sigma^{m}\tau (I_{i-1})=I_{i-1}$.
\end{lemma}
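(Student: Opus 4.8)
The plan is to reduce the two equalities to the invariance of the single ideals $I_{i-1}$ and $I_i$ under $\sigma^m\tau$, and then to split this invariance into the separate (already available) invariance under $\sigma$ and the invariance under $\tau$. First I would note that $\sigma^m\tau$ is a bijection of $C_n$, being the composite of the automorphism $\sigma^m$ with the involution $\tau$; hence it preserves set-theoretic differences. Consequently, once we know $\sigma^m\tau(I_{i-1})=I_{i-1}$ and $\sigma^m\tau(I_i)=I_i$, we immediately obtain $\sigma^m\tau(I_{i-1}\setminus I_i)=I_{i-1}\setminus I_i$, using $I_i\s I_{i-1}$. By Lemma~\ref{sigma} we already have $\sigma(I_j)=I_j$ for $j\in\{0,\ldots,n-3\}$, while $I_{n-2}=\emptyset$ is fixed trivially; since for $i\in\{1,\ldots,n-2\}$ the indices satisfy $i-1\leqslant n-3$ and $i\leqslant n-2$, the factor $\sigma^m$ poses no difficulty. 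Thus everything comes down to showing that $\tau$ fixes each of $I_{i-1}$ and $I_i$.

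The key step is the claim that $\tau(I_j)=I_j$ for every $j\in\{0,\ldots,n-2\}$. I would argue through the involution $\chi_j=\sigma^{j+1}\tau$ of Corollary~\ref{wn4}. This $\chi_j$ is an \emph{anti-automorphism} of $C_n$, since it is the composite of the automorphism $\sigma^{j+1}$ with the order-reversing involution $\tau$, and by Corollary~\ref{wn4} it fixes every power $(x_nq_j)^k$. Recall from the definition of the ideals that $w\notin I_j$ if and only if there exist $u,v\in C_n$ and $k\geqslant 1$ with $uwv=(x_nq_j)^k$. Applying the anti-automorphism $\chi_j$ to such an equation gives
\begin{equation*}
\chi_j(v)\,\chi_j(w)\,\chi_j(u)=\chi_j\big((x_nq_j)^k\big)=(x_nq_j)^k,
\end{equation*}
which exhibits $\chi_j(w)\notin I_j$. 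Hence $\chi_j(C_n\setminus I_j)\s C_n\setminus I_j$, and because $\chi_j^2=id$ this forces $\chi_j(I_j)=I_j$.

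It then remains to transfer this back to $\tau$. Writing $\tau=\sigma^{-(j+1)}\chi_j=\sigma^{n-j-1}\chi_j$ (using $\sigma^n=id$) and invoking $\sigma(I_j)=I_j$, we get
\begin{equation*}
\tau(I_j)=\sigma^{n-j-1}\big(\chi_j(I_j)\big)=\sigma^{n-j-1}(I_j)=I_j.
\end{equation*}
Applying this with $j=i-1$ and $j=i$, and combining with $\sigma(I_{i-1})=I_{i-1}$, $\sigma(I_i)=I_i$, yields $\sigma^m\tau(I_{i-1})=\sigma^m(I_{i-1})=I_{i-1}$ and likewise $\sigma^m\tau(I_i)=I_i$, whence $\sigma^m\tau(I_{i-1}\setminus I_i)=I_{i-1}\setminus I_i$. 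I expect the only genuinely delicate point to be the bookkeeping of order-reversal: one must use $\chi_j$ as an anti-automorphism, so that $\chi_j(uwv)=\chi_j(v)\chi_j(w)\chi_j(u)$, and one must check that the boundary indices $j=n-2$ (where $I_j=\emptyset$) and $j=i-1\leqslant n-3$ stay within the range where Lemma~\ref{sigma} and Corollary~\ref{wn4} apply.
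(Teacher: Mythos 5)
Your proposal is correct and follows essentially the same route as the paper: the paper likewise applies the anti-automorphism $\sigma^{i}\tau=\chi_{i-1}$, which fixes the powers $(x_nq_{i-1})^k$, to an equation $x\,\chi_{i-1}(w)\,y=(x_nq_{i-1})^j$ witnessing $\chi_{i-1}(w)\notin I_{i-1}$, concludes $\chi_{i-1}(I_{i-1})\subseteq I_{i-1}$, and then transfers this to $\tau$ and to $\sigma^m\tau$ using $\sigma^n=id$ and Lemma~\ref{sigma}. Your extra bookkeeping (treating $I_i$ separately, including the trivial case $I_{n-2}=\emptyset$, and noting that a bijection preserves set differences) only makes explicit what the paper leaves to the reader in "the assertion follows."
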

\begin{proof}
We have seen in the proof of Corollary~\ref{wn4} that
$\sigma^{i}\tau((x_nq_{i-1})^k)=(x_nq_{i-1})^k$ for $k\geqslant
1$.

Let $w\in C_n$ be such that $\sigma^{i}\tau(w)\notin I_{i-1}$.
Then, by the definition of $I_{i-1}$  there exist $x,y\in C_n$
such that $x\sigma^{i}\tau(w)y=(x_nq_{i-1})^j$, for some $j$. Then
$\sigma^{i}\tau(y)w\sigma^{i}\tau(x)=(x_nq_{i-1})^j$, so in
particular $w\notin I_{i-1}$. This shows that
$\sigma^{i}\tau(I_{i-1})\subseteq I_{i-1}$.

Since $\sigma^n = id$, from Lemma~\ref{sigma} it now follows that
$\tau (I_{i-1})\subseteq I_{i-1}$, and thus $\tau (I_{i-1}) =
I_{i-1}$ because $\tau$ is an involution. Therefore, $\sigma^m
\tau (I_{i-1})=I_{i-1}$ for every nonnegative $m$. The assertion
follows.
\end{proof}

In the two extreme cases, namely for $i=0$ and $i=n-2$, the
description of $M_{i}$ is quite simple (see Lemma~\ref{lem4}). In
particular, $\tilde{M}_{n-2}$ coincides with the set of all
factors of the word $(x_nx_1\cdots x_{n-1})^{\infty}$, that
contain a factor $x_nx_1\cdots x_{n-1}$. Moreover, our earlier
results lead to the following consequence.

\begin{corollary}\label{M_{n-2}}
    $\tilde{M}_{n-2}=C_n(x_nx_1\cdots x_{n-1})C_n$.
\end{corollary}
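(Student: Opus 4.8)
The plan is to prove the two inclusions separately. Throughout I write $B = x_nx_1\cdots x_{n-1} = x_nq_{n-2}$ for brevity (recall $q_{n-2}=x_1\cdots x_{n-1}$), and I identify elements of $C_n$ with their reduced forms via Corollary~\ref{podstawowy wniosek}. I shall also use that every factor of the periodic word $B^\infty$ is reduced; this follows from the observation recorded just after the definition of $q_i$ that each power $(x_nq_{n-2})^k = B^k$ is reduced, since any factor of a reduced word is reduced.

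For the inclusion $\tilde{M}_{n-2}\subseteq C_n B C_n$, I would take $w\in\tilde{M}_{n-2}$. By definition $w$ is a reduced word containing $B$ as a factor, so in the free monoid $F$ we may write $w=\alpha B\gamma$. As factors of the reduced word $w$, both $\alpha$ and $\gamma$ are themselves reduced, hence lie in $C_n$, and by confluence of $S$ the product $\alpha\cdot B\cdot\gamma$ computed in $C_n$ equals $\mathcal{R}_S(\alpha B\gamma)=\mathcal{R}_S(w)=w$. Thus $w\in C_nBC_n$. This direction is routine.

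The substance lies in $C_nBC_n\subseteq\tilde{M}_{n-2}$. Given $u,v\in C_n$, the goal is to show that the reduced form of $uBv$ contains $B$ as a factor. First I would apply Lemma~\ref{lem o mno dual} with $k=n-1$ (so that $x_{k+1}\cdots x_nx_1\cdots x_k = B$) to see that $\mathcal{R}_S(uB)$ is a factor of $B^\infty$ with suffix $B$, and write it as $s_0B$. Dually, Lemma~\ref{lem o mno} with $k=n-1$ shows that $\mathcal{R}_S(Bv)$ is a factor of $B^\infty$ with prefix $B$, which I write as $Bp_0$. Since reductions performed inside a factor of a word remain valid inside the whole word, confluence yields
\begin{equation*}
\mathcal{R}_S(uBv)=\mathcal{R}_S(s_0Bv)=\mathcal{R}_S(s_0Bp_0).
\end{equation*}

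It then remains to check that $s_0Bp_0$ is already reduced, and this is the crux of the argument. The word $s_0B$ is a factor of $B^\infty$ ending in a full period $B$, so $s_0$ ends exactly at a period boundary of $B^\infty$; likewise $Bp_0$ is a factor of $B^\infty$ beginning at a period boundary. Hence the seam in the concatenation $s_0\cdot(Bp_0)$ aligns, so $s_0Bp_0$ is again a factor of $B^\infty$ and is therefore reduced. Consequently $\mathcal{R}_S(uBv)=s_0Bp_0$, which visibly contains $B$, giving $uBv\in\tilde{M}_{n-2}$. The only delicate point is this alignment step ensuring that the concatenation stays within the periodic word $B^\infty$; once it is justified by tracking period boundaries, the two inclusions together establish the asserted equality.
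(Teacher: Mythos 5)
Your proof is correct and follows essentially the same route as the paper: both directions rest on the same two facts, namely that $\tilde{M}_{n-2}\subseteq C_n(x_nq_{n-2})C_n$ by definition, and that Lemmas~\ref{lem o mno} and \ref{lem o mno dual} (applied with $k=n-1$) force the reduced form of $u(x_nx_1\cdots x_{n-1})v$ to be a factor of $(x_nx_1\cdots x_{n-1})^{\infty}$ containing a full period. Your explicit tracking of period boundaries to justify that $s_0Bp_0$ is reduced is a welcome elaboration of a step the paper leaves implicit, but it is not a different argument.
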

\begin{proof} The definition implies that $\tilde{M}_{n-2}\s C_n(x_nx_1\cdots x_{n-1})C_n$.
By Lemmas~\ref{lem o mno} and \ref{lem o mno dual} we know that
every element of $C_n(x_nx_1\cdots x_{n-1})C_n$ has reduced form
that is a factor of $(x_nx_1\cdots x_{n-1})^{\infty}$. Moreover,
the second parts of both lemmas imply that if $w(x_nx_1\cdots
x_{n-1})v\in C_n(x_nx_1\cdots x_{n-1})C_n$, then the reduced form
of this word has a factor $x_nx_1\cdots x_{n-1}$.
 \end{proof}

In the second  extreme case, namely when $i=0$, we have
$\tilde{M}_0 \subseteq C_n(x_nq_{0})C_n\subseteq I_{-1}$.
Moreover, equality holds modulo the ideal $I_0$, as proved in the
following lemma.

\begin{lemma}\label{lemI_{-1}}
    $I_{-1}=\tilde{M_0}\cup I_0$.
\end{lemma}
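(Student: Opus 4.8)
The plan is to prove the two inclusions separately, the first being immediate and the second carrying all the content. Since by definition $I_{-1}=I_0\cup C_nx_nq_0C_n$ we have $I_0\subseteq I_{-1}$, while Proposition~\ref{stw2} (for $i=0$) gives $\tilde{M_0}\subseteq I_{-1}\setminus I_0\subseteq I_{-1}$; together these yield $\tilde{M_0}\cup I_0\subseteq I_{-1}$. For the reverse inclusion it suffices, again because $I_{-1}=I_0\cup C_nx_nq_0C_n$, to show $C_nx_nq_0C_n\subseteq\tilde{M_0}\cup I_0$; equivalently, that every $\gamma\in C_nx_nq_0C_n$ with $\gamma\notin I_0$ lies in $\tilde{M_0}$, i.e. its reduced form contains a factor $x_nq_0$.

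So fix $\gamma\in C_nx_nq_0C_n\setminus I_0$. By the definition of $I_0$ there are $p,q\in C_n$ and $k\geq1$ with $p\gamma q=(x_nq_0)^k$. First I would locate $\gamma$ in the partition $C_n=\big(\bigcup_{i=0}^{n-2}\tilde{M_i}\big)\cup(C_n\setminus\tilde{M})$. If $\gamma\in\tilde{M_i}$ for some $i\geq1$, then Proposition~\ref{stw2} gives $\gamma\in I_{i-1}\setminus I_i\subseteq I_{i-1}\subseteq I_0$, contradicting $\gamma\notin I_0$; hence if $\gamma\in\tilde{M}$ then necessarily $\gamma\in\tilde{M_0}$ and we are done. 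This is the analogue of the argument behind Corollary~\ref{M_{n-2}}, where the top term of the chain is empty; here the subtlety is precisely that $I_0$ need not be empty, which is why the statement carries the extra summand $I_0$.

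It remains to treat the finitely many elements of $C_n\setminus\tilde{M}$ (finiteness by Proposition~\ref{stw1}), and I expect this to be the main obstacle. The goal is to show that none of them can lie in $C_nx_nq_0C_n\setminus I_0$, i.e. that $(C_n\setminus\tilde{M})\cap C_nx_nq_0C_n\subseteq I_0$. For such a $\gamma$ the reduced form has no factor $x_nq_i$ for any $i$, so by Lemma~\ref{lem4} it does not contain a full block $x_nq_0$. The plan is to feed $\gamma$ into Theorem~\ref{tw3} with $i=0$: then $(x_nq_0)\gamma(x_nq_0)\in\{(x_nq_0)^m:m\geq2\}\cup I_0$. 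I would then argue that the hypothesis $\gamma\notin I_0$, together with the relation $p\gamma q=(x_nq_0)^k$ and the absorption identities used in the proof of Theorem~\ref{tw2} and of the inclusion $I_{i+1}\subseteq I_i$, forces $(x_nq_0)\gamma(x_nq_0)$ to stay outside $I_0$, hence to equal $(x_nq_0)^m$ for some $m\geq2$; reading off this reduced equality and using that an $x_n$ occurring in the interior of a reduced factor of $(x_nq_0)^{\infty}$ can only sit inside a full block $x_nq_0$, one deduces that the reduced form of $\gamma$ itself already contains $x_nq_0$, so $\gamma\in\tilde{M_0}\subseteq\tilde{M}$, contradicting $\gamma\in C_n\setminus\tilde{M}$. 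Thus no exceptional element survives and $C_nx_nq_0C_n\subseteq\tilde{M_0}\cup I_0$, completing the proof.

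The step I expect to be genuinely delicate is showing that $\gamma\notin I_0$ propagates to $(x_nq_0)\gamma(x_nq_0)\notin I_0$: ideals absorb products, so this does not follow formally and must be extracted from the explicit reduction identities (of the type $(x_nq_i)^kx_{i+2}\rightarrow\cdots$) underlying the multiplicative description of $\langle x_nq_0\rangle$. One must be careful here, since for words outside the ideal the deduction genuinely fails (for instance $(x_nq_0)x_1(x_nq_0)=(x_nq_0)^2$ although $x_1$ has no factor $x_nq_0$), so the argument has to use membership of $\gamma$ in $C_nx_nq_0C_n$ essentially. Should the route through Theorem~\ref{tw3} prove awkward, an alternative is to bypass it and analyse the reduction sequence $pwq\rightarrow(x_nq_0)^k$ directly, tracking the occurrences of $x_n$, to show that a reduced word with no factor $x_nq_0$ which is a two-sided factor of a power of $x_nq_0$ must already lie in $I_0$.
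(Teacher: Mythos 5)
Your first inclusion and the reduction of the second inclusion to the elements of $C_n\setminus\tilde{M}$ are both correct, but the remaining step --- showing that no $\gamma\in C_nx_nq_0C_n\setminus I_0$ can lie in $C_n\setminus\tilde{M}$ --- is where the entire content of the lemma sits, and there you have only a sketch resting on two claims that are not established. Claim (a), that $\gamma\notin I_0$ forces $(x_nq_0)\gamma(x_nq_0)\notin I_0$, is false as a general implication: for $1<j<n$ one has $x_j\notin I_0$ (since $x_n\cdots x_{j+1}\cdot x_j\cdot x_{j-1}\cdots x_1=x_nq_0$), yet $(x_nq_0)x_j(x_nq_0)\in I_0$ by the computation $(x_nq_0)x_j\rightarrow x_{j-1}\cdots x_2x_nx_1x_{n-1}\cdots x_j$ and Lemma~\ref{lem13}. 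Claim (b), that $(x_nq_0)\gamma(x_nq_0)=(x_nq_0)^m$ forces the reduced form of $\gamma$ to contain $x_nq_0$, is also false in general, as your own example $\gamma=x_1$ shows. You correctly observe that both failures are blocked only if one uses $\gamma\in C_nx_nq_0C_n$ in an essential way, but you never say how; the phrase ``I would then argue \ldots using the absorption identities'' is precisely the missing proof. So the proposal as written does not close.

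For comparison, the paper sidesteps the need to analyse arbitrary elements of $C_nx_nq_0C_n$ entirely: since $x_nq_0\in\tilde{M}_0$ and $I_{-1}=I_0\cup C_nx_nq_0C_n$, it suffices to prove that $\tilde{M}_0\cup I_0$ is an ideal of $C_n$. Using the involution $\sigma\tau$ (Corollary~\ref{wn4} and Lemma~\ref{lem14}) this reduces to right multiplication, and then one only has to compute $wx_j$ for $w=a(x_nq_0)^mb$ with $b$ ranging over the explicitly known set $B_0=\{1\}\cup\{x_nx_{n-1}\cdots x_l\}$, checking in each case that the product lands in $\tilde{M}_0$ or in $I_0$. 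That finite, concrete case analysis is exactly the work your sketch defers; if you want to salvage your route, the honest version of it essentially collapses back into these same generator-by-generator computations.
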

\begin{proof}
By the definition, $I_{-1}=I_0\cup C_n(x_nq_0)C_n$. It is clear
that $\tilde{M}_0\s C_n(x_nq_0)C_n$, so also $\tilde{M}_0\cup
I_0\s I_{-1}$. Since $x_nq_0\in \tilde{M}_0$, it is enough to
prove that $\tilde{M}_0\cup I_0$ is an ideal in $C_n$. Since
$\sigma \tau (\tilde{M}_0)=\tilde{M}_0$ and $\sigma \tau
(I_0)=I_0$ by Corollary~\ref{wn4} and Lemma~\ref{lem14}, it is
enough to check that $\tilde{M}_0\cup I_0$ is a right ideal of
$C_n$. So, for every $w\in\tilde{M}_0\cup I_0$ and
$x_j\in\{x_1,\ldots, x_n\}$ we have $wx_j\in\tilde{M}_0\cup I_0$.
If $w\in I_0$, then $wx_j\in I_0$, because  $I_0$ is an ideal of
$C_n$.

 So, let
$w=a(x_nx_{n-1}\cdots x_1)^mb\in\tilde{M_0}$, where $a\in A_0,
b\in B_0$. From Theorem~\ref{tw1} we know that
 $b=x_nx_{n-1}\cdots x_l$ for some $l\in\{2,
\ldots, n\}$ or $b$ is the empty word. First, notice that by
Lemma~\ref{lem13}, for $j\neq 1, n$, we have
\begin{align*}
(x_nx_{n-1}\cdots x_1)x_j\arr{(5j)}x_n\cdots x_{j+1}x_{j-1}\cdots
x_1x_j\arr{(2)}x_{j-1}\cdots x_2x_nx_1x_{n-1}\cdots x_j \in I_0 .
\end{align*}
Clearly, for $j=1, n$ we have $(x_nx_{n-1}\cdots x_1)x_j
\in\tilde{M_0}$. This implies that the assertion holds if $b$ is
the empty word.

Hence, assume that $b=x_nx_{n-1}\cdots x_l$ for some $l$. We will
show that $(x_nx_{n-1}\cdots x_1)x_nx_{n-1}\cdots x_lx_j\in I_0$
for every $j\neq l-1, l$. This will prove the result because $I_0$
is an ideal in $C_n$. Notice that for $j\neq l-1, l$ we have
\begin{align*}
(x_nq_0)&x_nx_{n-1}\cdots x_lx_j\\&\arr{(*)}
\begin{cases}(x_nq_0)(x_n\cdots x_{j+1})(x_{j-1}\cdots x_1)x_j\arr{(2)}(x_n\cdots x_1)x_{j-1}\cdots x_1(x_n\cdots x_{j})\textrm{ if }j>l\\
(x_nq_0)x_j(x_n\cdots x_l)\textrm{ if }1<j<l-1\\(x_{n}\cdots
x_2)(x_n\cdots x_l)x_1\arr{(2), (5n)}x_{n-1}\cdots
x_2x_nx_1x_{n-1}\cdots x_l \textrm{ if }j=1 , \end{cases}
\end{align*}
where $\arr{(*)}$ denotes $\arr{(5j)}$, $\arr{(2)}$ and $\arr{(5j)}$, respectively.\\
In each case,  $(x_nq_0)x_nx_{n-1}\cdots x_lx_j\in I_0$. In the
first case, this is a consequence of the fact that  $I_0$ is an
ideal in  $C_n$ and $j-1\neq1, n$, so the previous computation
implies that $(x_nq_0)x_j\in I_0$. Similarly, in the second case
$1<j<l-1\leqslant n-1$, so that also $(x_nq_i)x_j\in I_0$. In the
third case, from Lemma~\ref{lem13} and since $I_0$ is an ideal we
know that $x_{n-1}\cdots x_2x_nx_1x_{n-1}\cdots x_l\in I_0$. The
result follows.
\end{proof}

We are now in a position to improve the assertion of
Theorem~\ref{tw2}.
\begin{corollary} \label{multip-rows}
    Let $i\in\{0, \ldots, n-2\}$. Then
    \begin{enumerate}
        \item for every $a\in A_i$ there exists $w\in\tilde{M_i}$ such that $wa\in \langle
        x_nq_i\rangle$,
        \item for every $b\in B_i$ there exists $v\in\tilde{M_i}$ such that $bv\in\langle x_nq_i\rangle$.
    \end{enumerate}
    Consequently, $x\mapsto wx$ is an injective map $a \langle x_nq_i\rangle B_i \longrightarrow
     \langle x_nq_i\rangle B_i$, and
     $x\mapsto xv$ is an injective map $ A_i \langle x_nq_i\rangle b  \longrightarrow
     A_i\langle x_nq_i\rangle $.
\end{corollary}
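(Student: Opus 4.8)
The plan is to strengthen Theorem~\ref{tw2} by upgrading the auxiliary word $w$ (and respectively $v$) produced there to an element of $\tilde{M_i}$ itself, and then to read off the injectivity statements as a formal consequence. First I would establish part~(1). By Theorem~\ref{tw2} there already exists some $w_0\in C_n$ with $w_0 a\in\langle x_nq_i\rangle$, say $w_0 a=(x_nq_i)^t$ for some $t\geqslant 1$. The issue is only that $w_0$ need not itself contain a factor $x_nq_i$, hence need not lie in $\tilde{M_i}$. To repair this, I would prepend a suitable power of $x_nq_i$: since multiplication on the left by $x_nq_i$ keeps us inside $\langle x_nq_i\rangle$ on the right-hand side, the element $w:=(x_nq_i)w_0$ satisfies $wa=(x_nq_i)^{t+1}\in\langle x_nq_i\rangle$, and now $w$ visibly contains the factor $x_nq_i$. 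It remains to confirm that the reduced form of $w$ still contains such a factor (so that $w\in\tilde{M_i}$ genuinely); this is where I would invoke Theorem~\ref{tw3}, which guarantees that a product sandwiched by $x_nq_i$ either stays a power of $x_nq_i$ or falls into $I_i$—and the latter is excluded here because $wa\notin I_i$ by Proposition~\ref{stw2}. Part~(2) follows by the identical argument on the other side, or simply by applying the involution $\chi_i$ of Corollary~\ref{wn4}, which exchanges $A_i$ and $B_i$ and fixes powers of $x_nq_i$.

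Once the refined statements (1) and (2) are in hand, I would derive the two injectivity claims. Consider the map $x\mapsto wx$ on $a\langle x_nq_i\rangle B_i$, where $w\in\tilde{M_i}$ is the element furnished by part~(1) for the given $a$, so that $wa=(x_nq_i)^t$. For a typical element $x=a(x_nq_i)^k b$ with $b\in B_i$ and $k\geqslant 1$, we compute $wx=wa(x_nq_i)^k b=(x_nq_i)^{t+k}b$, which indeed lands in $\langle x_nq_i\rangle B_i$. To see this map is injective, suppose $wx=wx'$ for $x=a(x_nq_i)^k b$ and $x'=a(x_nq_i)^{k'}b'$. Then $(x_nq_i)^{t+k}b=(x_nq_i)^{t+k'}b'$, and because these are honest reduced words in $F$ (by Corollary~\ref{podstawowy wniosek} and the fact that all the words in question lie in $\tilde{M}$, where equality in $C_n$ coincides with equality in $F$), I can cancel the common prefix $x_n q_i$ one block at a time. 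The key point is that $b,b'\in B_i$ begin with $x_n$ followed by a block structure incompatible with further $x_nq_i$ factors being absorbed, so matching the reduced forms forces $k=k'$ and $b=b'$, giving $x=x'$. The map $x\mapsto xv$ is treated symmetrically using part~(2), or again via $\chi_i$.

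The main obstacle I anticipate is the cancellation step underpinning injectivity: concluding from $(x_nq_i)^{t+k}b=(x_nq_i)^{t+k'}b'$ that $k=k'$ and $b=b'$. This is not purely formal monoid cancellation (free monoids have it, but $C_n$ is far from free), so I must argue at the level of reduced words in $F$. The leverage is precisely the uniqueness of reduced forms (Corollary~\ref{podstawowy wniosek}) together with the rigid description of $\tilde{M_i}$ in Theorem~\ref{tw1}: every element of $\tilde{M_i}$ has the canonical form $a(x_nq_i)^k b$ with $a\in A_i$, $b\in B_i$, and the prefix consisting of maximal consecutive $x_nq_i$ factors is determined by the word. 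Since prepending $(x_nq_i)^t$ simply increases this leading power by $t$ without disturbing the trailing factor $b$—the block structure of $b\in B_i$ cannot merge with a preceding $x_nq_i$ to create an additional full $x_nq_i$ factor, as that would contradict the constraints $i+1<j_1$ recorded in the definition of $B_i$—the leading power and the suffix $b$ can both be recovered uniquely from $wx$. This recovery is exactly what yields injectivity, and it is the place where I would need to be careful rather than wave hands.
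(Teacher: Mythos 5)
Your overall plan---upgrade the element produced by Theorem~\ref{tw2} to one lying in $\tilde{M_i}$ by prepending powers of $x_nq_i$, and then read off the injectivity claims from $wa=(x_nq_i)^t$ together with uniqueness of reduced forms---is the paper's plan, and your treatment of part (2) via $\chi_i$ and of the injectivity statements is adequate (if anything, more detailed than the paper's). The gap is in the upgrade step. You set $w=(x_nq_i)w_0$ and argue that its reduced form still contains a factor $x_nq_i$ by invoking Theorem~\ref{tw3}. But Theorem~\ref{tw3} is a statement about the two-sided sandwich $(x_nq_i)u(x_nq_i)$; it says nothing about the one-sided product $(x_nq_i)w_0$. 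What you actually know is only that $w\notin I_i$ (since $wa=(x_nq_i)^{t+1}\notin I_i$), and that alone does not force $w\in\tilde{M_i}$: by Corollary~\ref{wn6} the set $(I_{i-1}\setminus I_i)\setminus\tilde{M_i}$ is finite but need not be empty, and nothing you cite rules out that the reduced form of $(x_nq_i)w_0$ lands there. Reductions triggered at the interface between $q_i$ and $w_0$ can in principle destroy the prefix $x_nq_i$ --- for instance $(x_nq_i)x_j$ with $i<j<n$ reduces to $x_{j-1}\cdots x_{i+2}x_nx_1\cdots x_{i+1}x_{n-1}\cdots x_j$, which contains no factor $x_nq_i$ at all. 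A one-sided analogue of Theorem~\ref{tw3} (``if $(x_nq_i)v\notin I_i$ then its reduced form has prefix $x_nq_i$'') is plausible but would need its own inductive proof; it is not in the paper.

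The paper sidesteps exactly this point. With $ua=(x_nq_i)^k$ from Theorem~\ref{tw2}, it observes that the elements $(x_nq_i)^mu$, $m\geqslant1$, are pairwise distinct in $C_n$ (right-multiply by $a$ and compare powers of $x_nq_i$), so by finiteness of $C_n\setminus\tilde{M}$ (Proposition~\ref{stw1}) some $(x_nq_i)^mu$ lies in $\tilde{M}$; since it also lies in $I_{i-1}$ (because $x_nq_i\in I_{i-1}$ and $I_{i-1}$ is an ideal) and not in $I_i$ (again by right-multiplying by $a$), Proposition~\ref{stw2} places it in $\tilde{M}\cap(I_{i-1}\setminus I_i)=\tilde{M_i}$. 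If you replace your single prepension by this pigeonhole over all powers $m$, your argument closes; as written, the key membership $w\in\tilde{M_i}$ is unjustified.
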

\begin{proof}
We only prove the first part;  the second part then follows from
Corollary~\ref{wn4}. Let $a\in A_i$. From Theorem~\ref{tw2} we
know that there exists $u\in C_n$ such that $ua=(x_nq_i)^k$ for
some $k\geqslant 1$. The elements $(x_nq_i)^mu\in C_n$, where
$m\geqslant 1$ are pairwise different. Indeed, suppose that for
some $m_1, m_2\geqslant 1$ in $C_n$ we have
$(x_nq_i)^{m_1}u=(x_nq_i)^{m_2}u$. Then also
$(x_nq_i)^{m_1}ua=(x_nq_i)^{m_2}ua$, so that
$(x_nq_i)^{m_1+k}=(x_nq_i)^{m_2+k}$, and consequently $m_1=m_2$.
By Proposition~\ref{stw1} there exists $m\geqslant1$ such that
$(x_nq_i)^mu\in\tilde{M}$. According to Lemma~\ref{lem13}, for
every $m\geqslant1$ we have $(x_nq_i)^m\in I_{i-1}\setminus I_i$.
Since $I_{i-1}$ is an ideal of $C_n$, we get $(x_nq_i)^mu\in
I_{i-1}$. If $(x_nq_i)^mu\in I_{i}$, then also
$(x_nq_i)^{m}ua=(x_nq_i)^{m+k}\in I_{i}$, a contradiction.
Therefore, by Proposition~\ref{stw2},
$w=(x_nq_i)^mu\in\tilde{M}\cap (I_{i-1}\setminus
I_i)=\tilde{M}_i$, and the assertion follows.
\end{proof}

Notice that $\tilde{M_i}\subseteq C_n$, for $i< n-2$, is not
closed under multiplication. For example, consider
 $u=(x_nq_i)x_nx_{n-1}$, $w=x_nq_i$ for some $1\leqslant i\leqslant n-3$.
 It is easy to check that the reduced form of $uw$ is equal to
 $x_{n-2}\cdots x_{i+2}x_nx_1\cdots x_{i+1}x_{n-1}\cdots x_{i+2}x_nx_1\cdots x_{i+1}$,
so that indeed $uw\notin \tilde{M_i}$. However, from
Theorem~\ref{tw3} we know that if $u=a(x_nq_i)^kb,
w=a'(x_nq_i)^{k'}b'\in \tilde{M_i}$, then either
$uw=a(x_nq_i)^mb'\in \tilde{M}_{i}$ for some $m\geqslant 2$ or
$uw\in I_i$.

We will define semigroups $M_i$, with properties described in the
beginning of this section.
\begin{defi} \label{matrix}
    Let $i\in\{0,\ldots, n-2\}$. Consider the set $M_i=\tilde{M_i}\cup\{\theta\}$
    with operation, defined for any  $u=a(x_nq_i)^kb, w=a'(x_nq_i)^{k'}b'\in\tilde{M_i}$ by
    \begin{equation*}uw=\begin{cases}a(x_nq_i)^kba'(x_nq_i)^{k'}b' & \textrm{ if }(x_nq_i)ba'(x_nq_i)\in \langle x_n q_i\rangle \\
    \theta &\textrm{ if }(x_nq_i)ba'(x_nq_i)\in I_i\end{cases}
    \end{equation*}
    and  $w\theta=\theta w=\theta$ for every $w\in M_i$. Then, by Theorem~\ref{tw3}
    the definition is correct and $M_i$  is a semigroup under this operation.
    \end{defi}
\noindent These semigroups can be interpreted as Rees factor
semigroups. Namely, for $i\leqslant n-3$, $I_i$ is an ideal of
$C_n$, and we may consider the factor semigroup $C_n/I_i$. In
other words, $C_n/I_i$ is the semigroup $(C_n\setminus I_i)\cup
\{\theta\}$ with zero $\theta$ and with operation
\begin{equation*}
s\cdot t=\begin{cases}st & \textrm{ if }st\notin I_i \\\theta
 & \textrm{ if }st\in I_i . \end{cases}
\end{equation*}
 While $I_{n-2}=\emptyset$, for every subsemigroup $J$ of
$C_n$ we define $J/I_{n-2}=J^0$; the semigroup $J$ with zero
adjoined. Notice that $J_{i}=\tilde M_{i}\cup I_{i}$ is a
subsemigroup of $I_{i-1}$ by Theorem~\ref{tw3} and
Proposition~\ref{stw2}. Thus, our definition yields
$M_i=J_{i-1}/I_i\s C_n/I_i$.

From Lemma~\ref{lemI_{-1}} we know that $I_{-1}/I_0=M_0$.
Moreover, Lemma~\ref{lem o mno} implies that
$\tilde{M}_{n-2}=M_{n-2}\setminus \{\theta\}$ is an ideal in
$C_n$.

We will show that all $M_i$ defined above are semigroups of matrix
type, see \cite{semalg}, Section~5. Let $S_i$  denote the cyclic
semigroup generated by $x_nq_i$.
\begin{corollary}\label{wn3}
    $M_i$ is a semigroup of matrix type. Namely, $M_i\cong \mathcal{M}^0(S_i, A_i, B_i; P_i)$,
    where $P_i$ is a matrix of size $B_i\times A_i$ with coefficients in
    $\langle x_nq_i\rangle\cup\{\theta\}$.
\end{corollary}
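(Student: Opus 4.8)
The plan is to exhibit an explicit isomorphism $\phi\colon M_i\to\mathcal{M}^0(S_i,A_i,B_i;P_i)$ that sends $\theta$ to $\theta$ and reads off the ``coordinates'' of each nonzero element. By Theorem~\ref{tw1} every element of $\tilde M_i$ can be written as $a(x_nq_i)^kb$ with $a\in A_i$, $b\in B_i$, $k\geqslant 1$, and all such words are reduced; so I would set $\phi\bigl(a(x_nq_i)^kb\bigr)=((x_nq_i)^k,a,b)$, the first coordinate ranging over $S_i=\{(x_nq_i)^k:k\geqslant 1\}$. The first point to settle is that $\phi$ is a well-defined bijection, i.e.\ that the triple $(a,k,b)$ is uniquely determined by the reduced word. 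Since every $a\in A_i$ has $|a|_n=0$ while the central part begins with $x_n$, the factor $a$ must be the maximal $x_n$-free prefix of the word, hence it is unique. Given $a$, the exponent $k$ equals the number of consecutive copies of the block $x_nq_i=x_nx_1\cdots x_ix_{n-1}\cdots x_{i+1}$ that can next be peeled off; this agrees with the exponent in the decomposition precisely because no $b\in B_i$ begins with a full block $x_nq_i$ (each word in $B_i$ either begins with $x_nx_{n-1}$ or with a block $x_nx_1\cdots x_{i_1}x_{n-1}\cdots x_{j_1}$ where $j_1\geqslant i+2$, so it cannot contain $x_nq_i$ as a prefix). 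With $a$ and $k$ fixed, $b$ is determined, so $\phi$ restricts to a bijection $\tilde M_i\to S_i\times A_i\times B_i$.

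Next I would define the sandwich matrix. For $b\in B_i$ and $a'\in A_i$, Theorem~\ref{tw3} applied to $w=ba'\in C_n$ gives $(x_nq_i)\,ba'\,(x_nq_i)\in\{(x_nq_i)^m:m\geqslant 2\}\cup I_i$. Accordingly I set $p_{b,a'}=(x_nq_i)^{m-2}\in\langle x_nq_i\rangle$ in the first case (note that this is the identity $1$ when $m=2$, e.g.\ for $b=a'=1$) and $p_{b,a'}=\theta$ in the second. This yields a $B_i\times A_i$ matrix $P_i$ with entries in $\langle x_nq_i\rangle\cup\{\theta\}$, as the statement requires. Strictly, the base of $\mathcal{M}^0(S_i,\dots)$ is the cyclic semigroup $S_i$, so the identity entry $1\notin S_i$ deserves a word: it is harmless, since in every nonzero product the outer factors $(x_nq_i)^k,(x_nq_i)^{k'}$ have $k,k'\geqslant 1$ and keep the result inside $S_i$, the product being declared $\theta$ exactly when $p_{b,a'}=\theta$.

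It then remains to verify that $\phi$ is a homomorphism, which is a direct computation. Let $u=a(x_nq_i)^kb$ and $w=a'(x_nq_i)^{k'}b'$. Using associativity in $C_n$ together with $k,k'\geqslant 1$,
\begin{equation*}
(x_nq_i)^k\,ba'\,(x_nq_i)^{k'}=(x_nq_i)^{k-1}\bigl[(x_nq_i)\,ba'\,(x_nq_i)\bigr](x_nq_i)^{k'-1}.
\end{equation*}
If $(x_nq_i)ba'(x_nq_i)=(x_nq_i)^m$, this equals $(x_nq_i)^{k+k'+m-2}$, so by Definition~\ref{matrix} we get $uw=a(x_nq_i)^{k+k'+m-2}b'\in\tilde M_i$ (the exponent is $\geqslant 1$), matching $\phi(u)\phi(w)=((x_nq_i)^kp_{b,a'}(x_nq_i)^{k'},a,b')=((x_nq_i)^{k+k'+m-2},a,b')$. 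If instead $(x_nq_i)ba'(x_nq_i)\in I_i$, Definition~\ref{matrix} gives $uw=\theta$, while $p_{b,a'}=\theta$ forces $\phi(u)\phi(w)=\theta$; the two agree. As $\phi$ also respects $\theta$, it is an isomorphism, establishing the claim.

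The step I expect to be the main obstacle is the bijectivity of $\phi$, that is, the uniqueness of the coordinate triple: existence of the decomposition is handed to us by Theorem~\ref{tw1}, but making the boundaries between $a$, the central powers, and $b$ unambiguous rests on the combinatorial fact that words in $B_i$ never reopen a full $x_nq_i$ block. Once that point is secured, the homomorphism property is an immediate consequence of Theorem~\ref{tw3} and the definition of $P_i$, and the two extreme cases $i=0$ and $i=n-2$ follow the same scheme using the simpler forms of $A_i,B_i$ recorded in Lemma~\ref{lem4}.
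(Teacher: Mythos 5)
Your proposal is correct and follows essentially the same route as the paper: the same map $\phi\bigl(a(x_nq_i)^kb\bigr)=((x_nq_i)^k,a,b)$, the same sandwich matrix defined via Theorem~\ref{tw3} applied to $(x_nq_i)ba'(x_nq_i)$, and the same verification of the homomorphism property. The only difference is that you spell out the uniqueness of the triple $(a,k,b)$ (via the maximal $x_n$-free prefix and the fact that no $b\in B_i$ has $x_nq_i$ as a prefix), where the paper simply appeals to the uniqueness of reduced forms and declares bijectivity clear; your added detail is sound and, if anything, a welcome clarification.
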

\begin{proof}
Let $P_i=(p_{ba})$ be the matrix defined as follows
\begin{equation}  \label{sandwich}
p_{ba}=\begin{cases}(x_nq_i)^{\alpha-2} & \textrm{ if }
(x_nq_i)ba(x_nq_i)= (x_n q_i)^{\alpha}\in\langle x_nq_i\rangle
\\\theta & \textrm{ if }(x_nq_i)ba(x_nq_i)\in I_i . \end{cases}
\end{equation}
We define  $\phi:M_i\longrightarrow \mathcal{M}^{0}(S_i, A_i, B_i;
P_i)$ by the formula $\phi(a(x_nq_i)^kb)=((x_nq_i)^{k}; a, b)$ and
$\phi(\theta)=\theta$. From the uniqueness of the reduced forms of
elements of $M_i$ and by Proposition~\ref{stw2} this function is
well defined. It is clear that $\phi$ is bijective. Moreover, we
claim that $\phi$ is a homomorphism. Indeed, let $u=a(x_nq_i)^kb,
w=a'(x_nq_i)^{k'}b'\in M_i$.

If $uw\notin I_i$, then $(x_nq_i)ba'(x_nq_i)=(x_nq_i)^{\alpha}$
    for some $\alpha\geqslant 2$ and then
    \begin{align*}
    \phi(u)\phi(w)&=((x_nq_i)^{k}; a, b)((x_nq_i)^{k'}; a', b')
    \\&=((x_nq_i)^{k}p_{ba'}(x_nq_i)^{k'}; a, b')=((x_nq_i)^{k+k'+\alpha-2}; a, b')
    \\&=\phi(a(x_nq_i)^{k-1+k'-1+\alpha}b')
    \\&=    \phi (a(x_nq_i)^{k}ba'(x_nq_i)^{k'}b')= \phi(uw).
\end{align*}
If $uw\in I_i$ then it is easy to see that
$\phi(uw)=\theta=\phi(u)\phi(w)$, because $p_{ba'}=\theta$. This
completes the proof.
 \end{proof}

\begin{remark} \label{symmetric}
Assume that $x_nq_i ba x_nq_i = (x_nq_i)^{\alpha}$ for some $a\in
A_i, b\in B_i$.  Then
\begin{eqnarray*}
(x_nq_i)^{\alpha}=\chi_i((x_nq_i)^{\alpha})= \chi_i (x_nq_i)
\chi_i (ba)\chi_i (x_nq_i) = x_nq_i\chi_i (a) \chi_i (b) x_nq_i .
\end{eqnarray*} By Corollary~\ref{wn4}, $\chi_i$ determines a
bijection between the sets $A_i$ and $B_i$.  Hence, from
(\ref{sandwich}) in the proof of Corollary~\ref{wn3} it follows
that the matrix $P_{i}$ is symmetric, if the ordering of the
elements of the set $A_i$ corresponds to the ordering of their
images under $\chi_i$ (see the examples in
Section~\ref{examples}).
\end{remark}

The main results of this section can be now summarized as follows.
\begin{corollary}\label{struktura}
     $C_n$ has a chain of ideals
    \begin{equation*}\emptyset =I_{n-2} \vartriangleleft  I_{n-3}\vartriangleleft \cdots \vartriangleleft I_0\vartriangleleft
  I_{-1}  \vartriangleleft C_n
    \end{equation*}
    with the following properties
    \begin{enumerate}
        \item for $i=0,\ldots, n-2$ there exist semigroups of matrix type
        $M_i=\mathcal{M}^0(S_i, A_i, B_i; P_i)$,
        such that $M_i\s I_{i-1}/I_i$, where $S_{i}$ is the cyclic semigroup generated
        by $x_nq_i$, $P_i$
        is a square symmetric matrix of size $B_i\times A_i$ and with coefficients in
        $\langle x_nq_i\rangle\cup\{\theta\}$;
        \item for $i=1,\ldots, n-2$ the sets $(I_{i-1}/I_i)\setminus M_i$ are finite;
        \item $I_{-1}/I_0=M_0$;
        \item $\tilde{M}_{n-2}=M_{n-2}\setminus \{\theta\}\vartriangleleft C_n$;
        \item  $C_n/I_{-1}$ is a finite semigroup.
    \end{enumerate}
\end{corollary}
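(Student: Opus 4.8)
The plan is to treat Corollary~\ref{struktura} as a synthesis of the material developed in Subsections~\ref{s: idealy} and \ref{s: stm}: once the individual statements are in hand, the proof amounts to assembling them in the right order rather than running a new argument. First I would record the ideal chain itself. The strict inclusions $I_{i+1}\subseteq I_i$ are supplied by the lemma preceding the chain display in Subsection~\ref{s: idealy}, while the two endpoints are fixed by Corollary~\ref{wn5}, which gives $I_{n-2}=\emptyset$, and by the definition $I_{-1}=I_0\cup C_nx_nq_0C_n$. This yields exactly the chain $\emptyset=I_{n-2}\vartriangleleft I_{n-3}\vartriangleleft\cdots\vartriangleleft I_0\vartriangleleft I_{-1}\vartriangleleft C_n$.

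For property (1) I would combine three earlier facts. The isomorphism $M_i\cong\mathcal{M}^0(S_i,A_i,B_i;P_i)$, with $S_i=\langle x_nq_i\rangle$ the cyclic semigroup generated by $x_nq_i$ and with the entries of $P_i$ lying in $\langle x_nq_i\rangle\cup\{\theta\}$, is precisely Corollary~\ref{wn3}. The realization of $M_i$ inside the Rees factor is Definition~\ref{matrix} together with the remarks following it: $J_{i-1}=\tilde M_i\cup I_i$ is a subsemigroup of $I_{i-1}$ by Theorem~\ref{tw3} and Proposition~\ref{stw2}, and $M_i=J_{i-1}/I_i\s C_n/I_i$, whence $M_i\s I_{i-1}/I_i$. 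The remaining items of property~(1) are the square shape and the symmetry of $P_i$; this is the one part of the corollary that is not a bare citation. Here I would invoke the involution $\chi_i=\sigma^{i+1}\tau$ of Corollary~\ref{wn4}, which fixes each power $(x_nq_i)^k$ and restricts to a bijection $A_i\to B_i$. The bijection forces $|A_i|=|B_i|$, so $P_i$ is a square matrix, and Remark~\ref{symmetric} then shows that, once the elements of $A_i$ are ordered to match their $\chi_i$-images in $B_i$, the sandwich matrix $P_i$ is symmetric.

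The remaining properties are direct references. Property~(2), the finiteness of $(I_{i-1}/I_i)\setminus M_i$ for $i\in\{1,\ldots,n-2\}$, is Corollary~\ref{wn6}. Property~(3), the equality $I_{-1}/I_0=M_0$, follows from Lemma~\ref{lemI_{-1}}, which gives $I_{-1}=\tilde M_0\cup I_0$. Property~(4), that $\tilde M_{n-2}=M_{n-2}\setminus\{\theta\}$ is an ideal of $C_n$, is recorded in Corollary~\ref{M_{n-2}} (equivalently in the remark following Lemma~\ref{lemI_{-1}}). Finally, property~(5), the finiteness of $C_n/I_{-1}$, is Corollary~\ref{I-1}. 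I expect the only genuinely non-mechanical step to be the squareness and symmetry of $P_i$, since this is where the involutions $\sigma,\tau$ actively enter and where a bijection between $A_i$ and $B_i$—not visible directly from the combinatorial descriptions in Theorem~\ref{tw1}—is extracted; everything else is a matter of citing and lining up the already established statements.
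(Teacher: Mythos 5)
Your proposal is correct and matches the paper exactly: the corollary is stated there as a summary with no separate proof, and the references you assemble (the ideal-chain lemma and Corollary~\ref{wn5}, Corollary~\ref{wn3} with Corollary~\ref{wn4} and Remark~\ref{symmetric} for the square symmetric sandwich matrix, Corollary~\ref{wn6}, Lemma~\ref{lemI_{-1}}, the remark after it together with Lemma~\ref{lem o mno} for property (4), and Corollary~\ref{I-1}) are precisely the ones the paper relies on.
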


\section{Examples}\label{examples}

In this section we illustrate our results with the simplest cases
of Hecke--Kiselman monoids $C_n$ associated to cyclic graphs, for
$n=3$ and $4$.

\subsection{Cycle of length $3$}
For simplicity, write $x_1=a$, $x_2=b$, $x_3=c$. Recall that
\begin{equation*}
C_3=\langle a, b, c: a^2=a, b^2=b, c^2=c, ab=aba=bab, bc=bcb=cbc,
ca=cac=aca \rangle.
\end{equation*}
The following is a  direct consequence of
Corollary~\ref{podstawowy wniosek}.

\begin{lemma}\label{lem n=3}
The reduced form of every element of $C_3$ is a factor of one of
the following infinite words: $(cab)^{\infty}$, $(cba)^{\infty}$.
\end{lemma}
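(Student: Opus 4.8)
The plan is to show that every reduced word in $C_3$ is a factor of either $(cab)^{\infty}$ or $(cba)^{\infty}$. Recall that in the notation $n=3$, $a=x_1$, $b=x_2$, $c=x_3$, so the generator $x_n$ is $c$, and the two basic reduced ``full cycles'' $x_nq_i$ for $i=0,\ldots,n-2=1$ are $x_3q_0 = cba$ and $x_3q_1 = cab$. First I would invoke Proposition~\ref{stw1} together with the structural description obtained earlier: every reduced word either lies in $\tilde{M}=\tilde{M}_0\cup\tilde{M}_1$ or is one of the finitely many exceptional words in $C_3\setminus\tilde{M}$. The words in $\tilde{M}$ are exactly those containing a factor of the form $x_nq_i$, i.e.\ a factor $cba$ or $cab$. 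So the argument splits into two parts: handling the elements of $\tilde{M}$, and handling the finite exceptional set.

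For the elements of $\tilde{M}$, I would apply Lemma~\ref{lem4} directly. That lemma states that any reduced word containing a factor $x_nx_1\cdots x_{n-1}$ or $x_nx_{n-1}\cdots x_1$ must be a factor of $(x_nx_1\cdots x_{n-1})^{\infty}$ or $(x_nx_{n-1}\cdots x_1)^{\infty}$ respectively. For $n=3$ these two infinite words are precisely $(cab)^{\infty}$ and $(cba)^{\infty}$, which settles the claim for every element of $\tilde{M}_0\cup\tilde{M}_1$ at once. This is the main structural input and it does essentially all the work; no separate case analysis on the shapes of $A_i$ and $B_i$ is needed here because $n=3$ falls into the extreme cases $i=0$ and $i=n-2$ of Lemma~\ref{lem4}.

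The remaining, more hands-on part is the finite exceptional set $C_3\setminus\tilde{M}$: reduced words containing no factor $cab$ or $cba$. I would bound these directly, in the spirit of the proof of Proposition~\ref{stw1}. A reduced word $w$ with $|w|_c=0$ satisfies $|w|_a\le 1$ and $|w|_b\le 2$ (using that a repeated generator with the intermediate generator absent would create a type-$(4)$ or $(5)$ reducible factor), and if $|w|_c\ge 1$ one writes $w=u_0cu_1c\cdots cu_k$ and uses Lemma~\ref{lem3} to constrain each interior $u_s$; for $n=3$ this forces $k\le 2$ and each block to have length at most $2$. One then checks by hand that every such short reduced word—words like $a,b,c,ab,ba,ca,bc,cb,ac,\ldots$ and the few admissible words of length up to about $4$—is indeed a factor of $(cab)^{\infty}$ or of $(cba)^{\infty}$.

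The main obstacle I anticipate is not conceptual but bookkeeping: verifying the finite exceptional list completely and making sure none of those short reduced words fails to embed in one of the two target infinite words. The clean way to avoid an error-prone enumeration is to argue uniformly: any reduced word of length $\le 2$ is trivially a factor of one of the two infinite words (each length-$2$ factor $x_ix_j$ with $i\ne j$ appears in $(cab)^{\infty}$ or $(cba)^{\infty}$), and any reduced word of length $\ge 3$ must, after a short direct check, already contain a factor $cab$ or $cba$ and hence falls under the first part via Lemma~\ref{lem4}. This reduces the finite check to words of length at most $2$, which is immediate, and completes the proof.
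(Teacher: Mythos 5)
There is a genuine gap in your final step. The ``uniform'' argument you use to avoid the enumeration --- that every reduced word of length $\geqslant 3$ already contains a factor $cab$ or $cba$ --- is false. For instance, $abc$ and $bac$ are reduced words of length $3$ containing neither, and $abca=x_1x_2x_3x_1$ is a reduced word of length $4$ (the factor $x_1(x_2x_3)x_1$ admits neither a type~(4) nor a type~(5) reduction, since $x_2x_3$ contains both $x_2$ and $x_3$) whose only length-$3$ factors are $abc$ and $bca$. These words do lie in the finite exceptional set $C_3\setminus\tilde{M}$ and they \emph{are} factors of $(cab)^{\infty}$, so the lemma is not threatened, but your proposed mechanism for handling them fails, and the ``reduction to words of length at most $2$'' does not go through. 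To close the gap you must either honestly carry out the finite check (exceptional reduced words go up to length $4$, e.g.\ $abca$ and $bacb$), or argue differently.

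It is also worth noting that the paper offers no argument at all here: it states the lemma as a direct consequence of Corollary~\ref{podstawowy wniosek}, and the direct route is both shorter and cleaner than your detour through Proposition~\ref{stw1} and Lemma~\ref{lem4}. For $n=3$ the reductions of types (2) and (3) are vacuous, so a word is reduced if and only if it has no factor $x_ix_i$ and no factor $x_iux_i$ in which $u$ omits $x_{i-1}$ or omits $x_{i+1}$. Applied to a length-$3$ factor $x_ix_jx_i$ this forces the single middle letter to contain both other generators, which is impossible; hence every length-$3$ factor of a reduced word is a permutation of $\{a,b,c\}$. Knowing three consecutive letters then determines the next one (it must differ from the last two), so the word is periodic of period $3$ and is a factor of $\pi^{\infty}$ for one of the two cyclic orders $\pi\in\{cab,cba\}$. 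This handles all reduced words, short ones included, in one stroke and avoids both Lemma~\ref{lem4} and any case analysis on the exceptional set.
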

Taking $n=3$ in Theorem~\ref{tw1}, we get that $\tilde{M_0}$
consists of all factors of $(cba)^{\infty}$, that have $cba$ as a
subfactor. Similarly, $\tilde{M_1}$ consists of all factors of
$(cab)^{\infty}$ that have a subfactor $cab$.

According to Corollary~\ref{struktura}, $C_3$ has an ideal chain
\begin{equation*}
I_0\subseteq I_{-1},
\end{equation*}
where $I_0=\{w\in C_3: C_3wC_3\cap \langle cba\rangle=\emptyset\}$
and $I_{-1}=I_0\cup\tilde{M_0}$.

\begin{lemma}\label{lem16}
     Let $T$ be the cyclic semigroup generated by $t=cab$. Then $M_1=(C_3cabC_3)^0$ is
     a semigroup of matrix type $\mathcal{M}^{0}(T, A_1, B_1; P_1)$, where
     $A_1=\{1, b, ab\}$, $B_1=\{1, c, ca\}$, with sandwich matrix
     (with coefficients in $T^1$)
    \begin{equation*}
    P_1=\begin{pmatrix}
    1 & 1 & 1\\1 & 1 & t\\ 1 & t & t
    \end{pmatrix}.
    \end{equation*}
\end{lemma}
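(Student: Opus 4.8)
The plan is to apply the general theory developed in the previous section to the specific case $n=3$, $i=1$. By Corollary~\ref{struktura}(1), we already know that $M_1 \cong \mathcal{M}^0(S_1, A_1, B_1; P_1)$ where $S_1$ is the cyclic semigroup generated by $x_nq_1 = cab$ (here $x_3 = c$, $q_1 = x_1 x_{n-1}\cdots x_{i+1} = x_1 x_2 = ab$, so $x_3 q_1 = cab = t$), and $P_1$ is a square symmetric matrix with entries in $\langle t\rangle \cup \{\theta\}$. So the main content is threefold: first, that $M_1 = (C_3 cab\, C_3)^0$; second, to compute the sets $A_1$ and $B_1$ explicitly; and third, to compute the sandwich matrix $P_1$.

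\textbf{Identifying the sets $A_1$ and $B_1$.}
First I would specialize the definitions of $A_i$ and $B_i$ from Section~\ref{form} to $n=3$, $i=1$. For $A_1$, the defining data is a sequence of blocks $(x_{k_s}\cdots x_s)\cdots(x_{k_{i+1}}\cdots x_{i+1})$ with $i+1 = 2$, and the constraints $k_s \leq s$, $k_q > q$ for the remaining blocks, together with the ceiling $k_{i+1} \leq n-1 = 2$. Running through the admissible index choices (with $s \in \{0,1,2\}$) and taking suffixes, I expect to obtain exactly $A_1 = \{1, b, ab\}$, where $1$ denotes the empty word; these are precisely the suffixes of $ab = x_1 x_2$. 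Dually, $B_1$ is built from prefixes of words $x_n(x_1\cdots x_{i_k}x_{n-1}\cdots x_{j_k})\cdots x_n x_{n-1}\cdots x_{j_{r+1}}$ with the stated inequalities; for $n=3$, $i=1$ the only surviving option (since $i+1 = 2 < j \leq 3$ forces $j = 3$, i.e.\ $x_{n-1}\cdots x_{j}$ is empty) is the $r=0$ case giving prefixes of $x_n = c$ together with the contribution from $A$-type suffixes on the right, yielding $B_1 = \{1, c, ca\}$. I would verify these are exactly the suffixes of $ca$, consistent with the bijection $\chi_1$ of Corollary~\ref{wn4} sending $A_1$ to $B_1$.

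\textbf{Computing the sandwich matrix and the identification $M_1 = (C_3 cab\, C_3)^0$.}
To get $P_1$, I use formula~(\ref{sandwich}): for each pair $(b,a) \in B_1 \times A_1$, I reduce the word $t\,b\,a\,t = cab\cdot b \cdot a \cdot cab$ in $C_3$ and read off whether it lies in $\langle t\rangle$ (recording the power $t^{\alpha-2}$) or in $I_1$ (recording $\theta$, displayed as a blank/entry outside $\langle t\rangle$). Ordering $A_1 = (1, b, ab)$ and $B_1 = (1, c, ca)$ so that the $k$-th element of $B_1$ is $\chi_1$ of the $k$-th element of $A_1$, Remark~\ref{symmetric} guarantees the resulting matrix is symmetric, which cuts the number of independent reductions roughly in half. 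The routine reductions (using $a^2=a$, $b^2=b$, $c^2=c$ and the braid-type relations) should give the claimed entries, all equal to $1$ except the lower-right $2\times 2$ block, where the off-diagonal and bottom-right entries equal $t$. Finally, the identification $M_1 = (C_3 cab\, C_3)^0$ follows from the general fact recorded just before Corollary~\ref{M_{n-2}}: since $n-2 = 1$ here, Corollary~\ref{M_{n-2}} states exactly that $\tilde M_{n-2} = \tilde M_1 = C_n(x_nx_1\cdots x_{n-1})C_n = C_3\, cab\, C_3$, and $M_1 = \tilde M_1 \cup \{\theta\} = (C_3 cab\, C_3)^0$.

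\textbf{Where the difficulty lies.}
The conceptual steps are all supplied by Corollaries~\ref{struktura} and~\ref{M_{n-2}} and Remark~\ref{symmetric}; the only real work is the bookkeeping in extracting $A_1$ and $B_1$ from their general (and notationally heavy) definitions, since the $n=3$ case sits at the degenerate boundary $i = n-2$ where several index ranges collapse to empty or single-element sets. I expect the main obstacle to be verifying that no admissible block configuration has been overlooked—i.e.\ confirming that $A_1$ and $B_1$ contain \emph{exactly} three elements each and nothing more—and then carrying out the finitely many word reductions for the $P_1$ entries carefully enough to confirm both the symmetry and the precise placement of the two occurrences of $t$.
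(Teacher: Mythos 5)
Your proposal is correct and follows the paper's own route: the paper's proof of this lemma is literally the remark that it ``follows directly from Corollary~\ref{wn3}'', together with exactly the entry-by-entry reduction of the words $t\beta\alpha t$ via formula~(\ref{sandwich}) that you describe (and your identification $M_1=(C_3cab\,C_3)^0$ via Corollary~\ref{M_{n-2}} is the intended one, since $1=n-2$ here). One small caveat: at the boundary $i=n-2$ the general definition of $B_i$ degenerates (only $r=0$, $j_1=n$ survives, literally yielding just $\{1,c\}$), so the clean justification of $B_1=\{1,c,ca\}$ is not a ``contribution from $A$-type suffixes on the right'' but rather Lemma~\ref{lem4} --- elements of $\tilde{M}_{n-2}$ are factors of $(cab)^{\infty}$, so $B_1$ is the set of proper prefixes of $cab$ --- or, equivalently, the bijection $\chi_1(A_1)=B_1$ from Corollary~\ref{wn4} that you also invoke.
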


Similarly, for $I_{-1}$ we have
\begin{lemma}\label{lem17}
     Let $S$ be the cyclic semigroup generated by  $s=cba$. Then the semigroup
     $M_0=I_{-1}/I_0$ is a
     semigroup of matrix type $\mathcal{M}^{0}(S, A_0, B_0; P_0)$,  $A_0=\{1, a, ba\}$,
     $B_0=\{1, c, cb\}$, with sandwich matrix (with coefficients in $S^1\cup\{\theta\}$)
    \begin{equation*}
    P_0=\begin{pmatrix}
    1 & 1 & \theta\\1 & \theta & s\\ \theta & s & s
    \end{pmatrix}.
    \end{equation*}
\end{lemma}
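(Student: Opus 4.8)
The plan is to prove Lemma~\ref{lem17} by the same strategy that establishes Lemma~\ref{lem16}, specializing the general machinery of Section~\ref{ideal} to the case $n=3$, $i=0$. By Corollary~\ref{wn3} we already know that $M_0 = I_{-1}/I_0$ is a semigroup of matrix type $\mathcal{M}^0(S_0, A_0, B_0; P_0)$ over the cyclic semigroup $S_0 = \langle x_3 q_0\rangle = \langle cba\rangle$, so the content to verify is threefold: that $A_0 = \{1, a, ba\}$ and $B_0 = \{1, c, cb\}$, that the sandwich matrix $P_0$ has the displayed entries, and that $I_{-1}/I_0 = M_0$ (which is exactly Lemma~\ref{lemI_{-1}}, already available). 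First I would compute $A_0$ and $B_0$ directly from the definitions in Section~\ref{form} with $n=3$, $i=0$. Here $q_0 = x_{n-1}\cdots x_1 = ba$, and the defining formula for $A_0 = \suff(\{x_{k_{1}}\cdots x_1\})$ with $k_1 \leqslant 1$ collapses (since the only blocks available are suffixes of $x_1 = a$ and of $x_2 x_1 = ba$), yielding $A_0 = \{1, a, ba\}$. Dually, $B_0 = \pref(\{x_3 x_2 \cdots x_{j_1}\})$ gives the prefixes $1, c, cb$ of $x_3 x_2 = cb$, so $B_0 = \{1, c, cb\}$.

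The main computational step is to fill in the $3\times 3$ matrix $P_0$ using formula~(\ref{sandwich}) from the proof of Corollary~\ref{wn3}: for $a\in A_0$, $b\in B_0$ one sets $p_{ba} = (cba)^{\alpha-2}$ if $(cba)\,b\,a\,(cba) = (cba)^{\alpha} \in \langle cba\rangle$, and $p_{ba}=\theta$ if $(cba)\,b\,a\,(cba)\in I_0$. I would index rows by $B_0$ in the order $1, c, cb$ and columns by $A_0$ in the order $1, a, ba$, matching the ordering under the involution $\chi_0 = \sigma\tau$ so that Remark~\ref{symmetric} guarantees symmetry. The nine entries reduce to short free-monoid reductions using the system $S$ of Theorem~\ref{basisCn}: for instance $p_{1,1}$ comes from $(cba)(cba) = (cba)^2$, giving $(cba)^0 = 1$; the corner entry $p_{cb, ba}$ comes from reducing $(cba)(cb)(ba)(cba)$, and the entries such as $p_{c, ba}$ or $p_{cb, a}$ are the ones that land in $I_0$ and hence equal $\theta$. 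In each case one either recognizes a power of $cba$ after applying reductions of types (1), (2), (5), or detects membership in $I_0$ via Lemma~\ref{lem13} and Corollary~\ref{wn2} (equivalently, via the support function $f$ of Lemma~\ref{lem11}, checking $|\supp(f(\cdot))| \leqslant n - i - 2 = 1$).

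The hard part will be verifying that the entries genuinely landing in $I_0$ are $\theta$ rather than powers of $cba$, since this requires showing a word does \emph{not} reduce into $\langle cba\rangle$ under any multiplication in $C_3$. For these I would invoke the ideal $Q_0 = \{w : |\supp(f(w))| \leqslant 1\}$ and Lemma~\ref{lem12}, computing the relevant support to confirm membership in $I_0$; this is cleaner than attempting to show directly that no reduction produces a power of $cba$. By contrast, the nonzero entries are routine: one reduces the explicit word $(cba)\,b\,a\,(cba)$ and reads off the exponent $\alpha$. Once all nine entries are computed, symmetry of $P_0$ (guaranteed abstractly by Remark~\ref{symmetric}) serves as a consistency check, and the identification $M_0 = I_{-1}/I_0$ is supplied verbatim by Lemma~\ref{lemI_{-1}}, completing the proof.
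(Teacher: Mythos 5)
Your strategy coincides with the paper's: the authors prove Lemma~\ref{lem17} exactly as you propose, by remarking that it follows directly from Corollary~\ref{wn3} (with Lemma~\ref{lemI_{-1}} supplying $M_0=I_{-1}/I_0$) and then computing the entries of $P_0$ by reducing the words $(cba)\beta\alpha(cba)$; their written proof consists of this remark plus the single sample computation $\overline{p_{(cb)(ba)}}=(cba)cbba(cba)\arr{(1)}(cba)ca(cba)\arr{(2)}(cba)^3$. Your derivation of $A_0=\{1,a,ba\}$ and $B_0=\{1,c,cb\}$ from the definitions, and your mechanism for certifying the $\theta$ entries, are sound; note that Theorem~\ref{tw3} already guarantees that $(cba)\beta\alpha(cba)$ is either a power of $cba$ or lies in $I_0$, so once a reduced form is seen not to be a power of $cba$ the entry is $\theta$ without any further appeal to $Q_0$ (though your route through $Q_0$ and Lemma~\ref{lem12} also works).

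There is, however, a concrete error in the one place where you commit to specific values: you name $p_{c,\,ba}$ and $p_{cb,\,a}$ as the entries that land in $I_0$. In fact $(cba)\cdot c\cdot ba\cdot(cba)$ and $(cba)\cdot cb\cdot a\cdot(cba)$ are both literally the word $cbacbacba=(cba)^3$ in the free monoid, so both entries equal $s$ --- they are the two off-diagonal $s$'s in the displayed matrix. The genuine $\theta$ entries are $p_{1,\,ba}$, $p_{c,\,a}$ and $p_{cb,\,1}$: for instance $(cba)(ba)(cba)=cbabacba$ reduces via $aba\arr{(4)}ab$, $bab\arr{(5)}ab$, $bcb\arr{(4)}bc$ to the reduced word $cabca$, which is not a power of $cba$ and hence lies in $I_0$ by Theorem~\ref{tw3}. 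Since the entire content of the lemma is the list of nine entries, this misidentification would have to be caught and corrected when the computations are actually carried out; the method you describe would then yield the stated matrix, and the symmetry check against Remark~\ref{symmetric} would pass.
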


 Recall that the rows of $P_i$ are indexed by the set $B_i$, and
columns by the set $A_i$. For simplicity, we identify the elements
of these sets with $1, 2, 3$, in the order in which these elements
were listed. For example, the $(3,2)$-entry of the sandwich matrix
$P_0$ corresponds to the pair $(cb, a)$.

The above two lemmas follow directly from Corollary~\ref{wn3}. To
indicate computations that are used to determine the coefficients
of the sandwich matrices, let us focus on $P_0$. For simplicity,
if $\alpha\in A_0$, $\beta\in B_0$, then we write
\begin{equation*}\overline{p_{\beta\alpha}}=\begin{cases}(cba)\beta\alpha(cba) & \textrm{ if }
\beta\alpha\in \langle s\rangle\\  \theta &\textrm{ if
}\beta\alpha\in I_0 , \end{cases}
\end{equation*}
that is if  $\overline{p_{\beta\alpha}}=s^k$, then
$p_{\beta\alpha}=s^{k-2}$ and if $\overline{p_{\beta
\alpha}}=\theta$, then also  $p_{\beta \alpha}=\theta$. Then, for
example
$\overline{p_{(cb)(ba)}}=(cba)cbba(cba)\arr{(1)}(cba)ca(cba)\arr{(2)}(cba)^3$.
So, $p_{(cb)(ba)} =s$.

We derive the following consequence for the algebras $K_0[M_0]$
and $K_0[M_1]$.
\begin{corollary}
    Algebras  $K_0[M_0]$ and $K_0[M_1]$ are of matrix type. Namely,
    $K_0[M_1]=\mathcal{M}(K[T], A_1, B_1; P_1)$ and
    $K_0[M_0]=\mathcal{M}(K[S], A_0, B_0; P_0)$,
    where $T$ and $S$ are the cyclic semigroups generated by $t=cab$ and $s=cba$,
    respectively.
\end{corollary}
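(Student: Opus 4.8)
The plan is to deduce this directly from the semigroup-level identifications already established, by applying the contracted semigroup algebra construction. Recall from Section~\ref{intro} that, by definition, the algebra of matrix type $\mathcal{M}(K[S_i], A_i, B_i; P_i)$ is nothing but the contracted semigroup algebra $K_0[\mathcal{M}^{0}(S_i, A_i, B_i; P_i)]$ of the corresponding semigroup of matrix type. Thus the entire content of the corollary is that passing to contracted algebras commutes with the isomorphisms $M_1 \cong \mathcal{M}^{0}(T, A_1, B_1; P_1)$ and $M_0 \cong \mathcal{M}^{0}(S, A_0, B_0; P_0)$ provided by Lemmas~\ref{lem16} and~\ref{lem17} (equivalently, by Corollary~\ref{wn3}).

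First I would record that the isomorphisms $\phi_i \colon M_i \longrightarrow \mathcal{M}^{0}(S_i, A_i, B_i; P_i)$ constructed in the proof of Corollary~\ref{wn3} are isomorphisms of semigroups \emph{with zero}, since $\phi_i(\theta) = \theta$ by construction. Next I would invoke the elementary fact that $N \mapsto K_0[N]$ is functorial on semigroups with zero: any zero-preserving isomorphism of such semigroups extends $K$-linearly to an isomorphism of the associated contracted semigroup algebras, because the one-dimensional ideal $K\theta$ spanned by the zero is carried onto the corresponding ideal and hence factored out consistently on both sides. Applying this to $\phi_i$ yields $K_0[M_i] \cong K_0[\mathcal{M}^{0}(S_i, A_i, B_i; P_i)]$.

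Finally, using the definition of the algebra of matrix type recalled above, the right-hand side is precisely $\mathcal{M}(K[S_i], A_i, B_i; P_i)$; specializing $i=0,1$, with $S_0 = S$ generated by $s=cba$ and $S_1 = T$ generated by $t=cab$, gives exactly the two asserted identities. Since the index sets $A_i, B_i$ are finite and explicitly listed in Lemmas~\ref{lem16} and~\ref{lem17}, the resulting algebra admits the concrete description as the space of $A_i \times B_i$ matrices over $K[S_i]$ under the sandwich multiplication $\alpha\beta = \alpha \circ P_i \circ \beta$. I do not expect any genuine obstacle: the statement is a formal consequence of the semigroup-level results, and the only point meriting a line of verification is that each $\phi_i$ respects the zero element, which is immediate from its definition.
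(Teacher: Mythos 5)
Your proposal is correct and matches the paper's treatment: the paper offers no separate argument for this corollary, regarding it as an immediate consequence of Lemmas~\ref{lem16} and~\ref{lem17} (via Corollary~\ref{wn3}) together with the definition of $\mathcal{M}(K[S],A,B;P)$ as the contracted semigroup algebra $K_0[\mathcal{M}^{0}(S,A,B;P)]$. Your explicit verification that the isomorphisms preserve the zero and that $K_0[\,\cdot\,]$ respects zero-preserving isomorphisms is exactly the (routine) content being invoked.
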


It is easy to see that $\det P_1=-(t-1)^2\neq 0$ and $\det
P_0=-s(s+1)\neq 0$, whence $P_1$ and $P_0$ are not zero divisors
in $M_3(K[T])$ and $M_3(K[S])$. Using standard results (see
\cite{semalg}, Chapter 5), we get

\begin{corollary} \label{semipfactor3}
Semigroup algebras $K_0[M_0]$ and $K_0[M_1]$ are prime.
\end{corollary}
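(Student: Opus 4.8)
The plan is to use the structure of $M_0$ and $M_1$ as semigroups of matrix type, established in Lemmas~\ref{lem16} and \ref{lem17}, together with the explicit computation of the determinants of the sandwich matrices. The key observation is that both $M_0$ and $M_1$ are semigroups of matrix type over a cyclic (hence commutative, cancellative) semigroup, and the corresponding algebras $K_0[M_i]$ are algebras of matrix type $\mathcal{M}(K[S_i], A_i, B_i; P_i)$, interpreted as a set of matrices over $K[S_i]$ with multiplication twisted by the sandwich matrix $P_i$. In this interpretation, primeness of the algebra is governed by whether the sandwich matrix $P_i$ is a non-zero-divisor in the matrix ring over the group algebra (or, here, over the semigroup algebra of the cyclic semigroup), which is precisely the criterion supplied by the standard theory.

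First I would recall the relevant standard result from \cite{semalg}, Chapter~5: if $S$ is a cancellative semigroup whose group of fractions $G$ exists, and $P$ is a square sandwich matrix over $K[G]$ that is invertible in $M_n(K[G_0])$ (equivalently, a non-zero-divisor in $M_n(\mathrm{Frac}(K[G]))$ when $K[G]$ is a domain with a classical ring of fractions), then the algebra of matrix type $\mathcal{M}(K[S], A, B; P)$ is prime. Here $S_0 = T$ and $S_0 = S$ are infinite cyclic semigroups, so $K[T] \cong K[t]$ and $K[S] \cong K[s]$ are polynomial rings in one variable, hence commutative domains whose fields of fractions are $K(t)$ and $K(s)$ respectively. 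The applicability of the criterion then reduces to checking that $P_1$ and $P_0$ are non-zero-divisors in $M_3(K[t])$ and $M_3(K[s])$.

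Next I would carry out the determinant computation, which has already been announced in the text: $\det P_1 = -(t-1)^2$ and $\det P_0 = -s(s+1)$. Both determinants are nonzero elements of the respective polynomial rings $K[t]$ and $K[s]$, hence nonzero elements of the fields of fractions $K(t)$ and $K(s)$. Since $K[t]$ and $K[s]$ are integral domains, a matrix over such a ring is a non-zero-divisor in the full matrix ring precisely when its determinant is a non-zero-divisor in the base ring; as $-(t-1)^2$ and $-s(s+1)$ are nonzero and the base rings are domains, these determinants are non-zero-divisors. Therefore $P_1$ and $P_0$ are invertible over the respective fields of fractions and are non-zero-divisors in $M_3(K[T])$ and $M_3(K[S])$.

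Finally, invoking the standard primeness criterion for algebras of matrix type with the non-zero-divisor sandwich matrices $P_0, P_1$, we conclude that $K_0[M_0]$ and $K_0[M_1]$ are prime. The main (and essentially only) obstacle is to verify that the hypotheses of the cited primeness criterion from \cite{semalg} are genuinely met in this setting — in particular, that the cyclic semigroups $T$ and $S$ satisfy the cancellativity and localization conditions required so that the determinant test over $K[t]$ and $K[s]$ is a valid and sufficient criterion for primeness of the twisted matrix algebra. Once this is in place, the argument is immediate from the two determinant evaluations, both of which are routine expansions of a $3\times 3$ matrix.
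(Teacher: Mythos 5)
Your argument is exactly the one the paper uses: identify $K_0[M_0]$ and $K_0[M_1]$ as algebras of matrix type over the cyclic semigroups via Lemmas~\ref{lem16} and \ref{lem17}, compute $\det P_1=-(t-1)^2\neq 0$ and $\det P_0=-s(s+1)\neq 0$ to see that the sandwich matrices are not zero divisors in $M_3(K[T])$ and $M_3(K[S])$, and then invoke the standard primeness criterion from \cite{semalg}, Chapter~5. The proposal is correct and takes essentially the same route as the paper.
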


Due to the very special form of all reduced words in $C_3$ one can
also prove the following result.

\begin{theorem}\label{semiprimeC3}
     $K[C_3]$ is a semiprime algebra.
\end{theorem}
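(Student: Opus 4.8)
The plan is to show that $K[C_3]$ has no nonzero nilpotent ideals by analyzing its structure through the ideal chain $I_0 \vartriangleleft I_{-1} \vartriangleleft C_3$ established above. The key structural facts are already in hand: the Rees factors $K_0[M_0]$ and $K_0[M_1]$ are prime algebras (Corollary~\ref{semipfactor3}), $M_1 = (C_3 c a b C_3)^0$ is an ideal of $C_3$ with zero, and $C_3/I_{-1}$ is a finite semigroup (Corollary~\ref{I-1}). Recall that an algebra is semiprime if and only if it has no nonzero nilpotent (two-sided) ideal. So I would assume $N$ is a nonzero ideal of $K[C_3]$ with $N^2 = 0$ and aim for a contradiction by locating the lowest level of the chain that $N$ touches nontrivially.

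First I would filter $N$ against the ideal chain of $K[C_3]$ induced by the semigroup ideals, namely the chain of algebra ideals $K[I_0] \subseteq K[I_{-1}] \subseteq K[C_3]$ (where $K[I]$ denotes the span of an ideal $I$ of $C_3$). The associated factors are $K[C_3]/K[I_{-1}]$, which is a finite-dimensional algebra $K_0[C_3/I_{-1}]$, then $K[I_{-1}]/K[I_0] \cong K_0[M_0]$, and finally $K[I_0]$. Since $M_{n-2}=M_1$ gives that $\tilde M_1 = M_1\setminus\{\theta\}$ is itself an ideal of $C_3$ (Corollary~\ref{M_{n-2}}), the bottom layer $K[I_0]$ decomposes further, with $K_0[M_1]$ appearing as a factor. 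The strategy is to push $N$ downward: a nilpotent ideal maps to a nilpotent ideal in each factor, and the two interesting factors $K_0[M_0]$ and $K_0[M_1]$ are \emph{prime}, hence semiprime, so they contain no nonzero nilpotent ideal.

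The crux is the passage between consecutive layers. For the algebra of matrix type $K_0[M_i]=\mathcal M(K[S_i],A_i,B_i;P_i)$, primeness follows (as recorded) from the sandwich matrix $P_i$ being a non-zero-divisor over the domain $K[S_i]$; this is exactly what makes the image of any would-be nilpotent ideal vanish there. If $N$ projects to zero in $K_0[M_0]$ and in $K_0[M_1]$ and in the finite piece $C_3/I_{-1}$, then $N$ must live inside the remaining ``finite'' part of $K[C_3]$, namely the span of $C_3 \setminus \tilde M$ together with the low-dimensional connective pieces, which by Proposition~\ref{stw1} is finite. Here I would argue directly, using Lemma~\ref{lem n=3}: every element of $C_3$ is a factor of $(cab)^{\infty}$ or $(cba)^{\infty}$, so the finitely many ``exceptional'' reduced words can be listed and checked to generate no nilpotent ideal, the point being that multiplying on the left and right by suitable powers $(cab)^k$ or $(cba)^k$ lands any nonzero element in one of the prime matrix parts via Lemma~\ref{lem o mno} and Lemma~\ref{lem o mno dual}.

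\textbf{The main obstacle} I expect is controlling the ``gluing'' between the matrix-type layers and the finite top/bottom pieces: the filtration factors are semiprime individually, but semiprimeness is not automatically inherited by an extension built from semiprime factors. The real work is to show that a nonzero ideal $N$ cannot be supported entirely in the kernel of all three projections simultaneously — equivalently, that the ideal $K[I_0]$ (whose only genuinely infinite part is $K_0[M_1]$) contains no nilpotent ideal meeting the finite complement. I would handle this by the multiplication trick above: given $0 \ne \alpha \in N$, Lemmas~\ref{lem o mno} and \ref{lem o mno dual} let me multiply $\alpha$ by a suitable monomial so that the support is forced into $\tilde M_1$ (factors of $(cab)^\infty$ containing $cab$) without collapsing, contradicting $N^2=0$ via the primeness of $K_0[M_1]$. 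Assembling these projections and the explicit determinant computations $\det P_1 = -(t-1)^2 \ne 0$, $\det P_0 = -s(s+1) \ne 0$ then rules out any nonzero nilpotent ideal, giving semiprimeness.
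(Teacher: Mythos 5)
There is a genuine gap, and it sits exactly where you flagged ``the main obstacle''. Your filtration argument through $I_0 \vartriangleleft I_{-1} \vartriangleleft C_3$, combined with primeness of $K_0[M_0]$ and $K_0[M_1]$, only yields that a nilpotent ideal $N$ must be supported (modulo the prime matrix layers) in a finite-dimensional piece --- i.e.\ that ${\mathcal P}(K[C_3])$ is finite dimensional. This is precisely what the paper itself extracts from these structural facts in the proof of Theorem~\ref{noether-cycle}, for \emph{all} $n$; and yet the paper explicitly leaves semiprimeness of $K[C_n]$ open for $n>3$. So the structural ingredients you invoke cannot by themselves close the argument, and the whole content of Theorem~\ref{semiprimeC3} is the step you defer to a ``multiplication trick''. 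That trick, as stated, fails: left or right multiplication by powers of $cab$ or $cba$ does push supports into $\tilde M_1$ or $\tilde M_0$ by Lemmas~\ref{lem o mno} and \ref{lem o mno dual}, but it can collapse distinct support elements onto each other and annihilate the element. For instance $cab\cdot a = cab$ in $C_3$ (since $aba=ab$), so $cab\cdot(1-a)=0$; nothing in Lemmas~\ref{lem o mno}--\ref{lem o mno dual} guarantees that a nonzero $\alpha$ survives the multiplication with nonzero image in a prime matrix part. You never justify ``without collapsing'', and that is the theorem.

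The paper's proof supplies exactly the missing control, via an ordering argument rather than the ideal chain. Writing a hypothetical $x\neq 0$ with $xK[C_3]x=0$ in the normal form $\sum \sigma_i u_i(cba)^{n_i}v_i+\sum\tau_j w_j(cab)^{l_j}z_j$, it takes the deg-lex \emph{leading term} $u_0\alpha^m v_0$ ($\alpha\in\{cba,cab\}$) and chooses $p,q$ with $pu_0=v_0q=\alpha$ in the free monoid. Then $pxq$ has leading term $\alpha^{m+2}$ which strictly dominates every other term of the support, so it cannot cancel, and the leading term of $(pxq)^2$ is $\alpha^{2(m+2)}\neq 0$, contradicting $(pxq)^2=0$. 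The deg-lex bookkeeping is what rules out the collapse your proposal leaves unaddressed. To repair your write-up you would have to add an argument of this kind (or some other injectivity/non-cancellation statement for the chosen multipliers); as it stands the proposal is a plan whose decisive step is asserted rather than proved.
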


\begin{proof}
By $F$ we denote the free monoid generated by $a, b, c$. Write
$R=K[C_3]$ and suppose that  $x\in R$ is a nonzero element such
that $xRx=0$. Then $x$ can be uniquely written in the form
\begin{equation*}x= \sum\limits_{i\in I} \sigma_i u_i(cba)^{n_i}v_i+\sum\limits_{j\in J}\tau_j w_j(cab)^{l_j}z_j,\end{equation*}
for some disjoint index sets $I$, $J$, where $\sigma_i, \tau_j\neq
0$ are elements of the field $K$, $n_i, l_j\geqslant 0$,
$u_i\in\{1, a, ba\}$, $v_i\in\{1, c, cb\}$, $w_j\in\{1, b, ab\}$,
$z_j\in \{1, c, ca\}$. In $F$ consider the deg-lex order induced
by $a<b<c$. Let $u_0\alpha^mv_0$ ($\alpha \in\{cba, cab\}$) be the
leading term in the support of $x$. We may assume that its
coefficient is equal to $1$. Notice that $u_0$ and $v_0$ must be a
suffix, and a prefix respectively, of $\alpha$. Hence, there exist
words $p, q$ such that $pu_0=v_0q=\alpha$ holds in the free monoid
$F$. Then for all elements $w\neq u_0\alpha^mv_0$ in the support
of $x$ we have $\alpha^{m+2}=pu_0\alpha^mv_0q>pwq$ in $F$. If
$xRx=0$, then also $pxqRpxq=0$. In particular, $(pxq)^2=0$. On the
other hand, we know that in $K[C_3]$
\begin{equation*}
pxq=\alpha^{m+2}+\sum\limits_{y_i<\alpha^{m+2}}\rho_iy_i,
\end{equation*}
where $i\in (I\sqcup J)\setminus\{0\}$, $\rho_i\in K$, $y_i$ is
the reduced form of the word $pu_i(cba)^{n_i}v_iq$ if $i\in I$,
and if $i\in J$, then  $y_i$ is the reduced form of
$pw_i(cab)^{l_i}z_iq$. In particular, for every
$y_i\neq\alpha^{k+2}$ we have $y_i<\alpha^{m+2}$.

Since $\alpha^{m+2}\alpha^{m+2}$ has reduced form
$\alpha^{2(m+2)}$, for every $l,n\in I\sqcup J$ such that $(y_l,
y_n)\neq (\alpha^{m+2}, \alpha^{m+2})$ the reduced form $y_{ln}$
of the word $y_ly_n$ satisfies $y_{ln}<\alpha^{2(m+2)}$. In
particular, the leading term of  $(pxq)^2$ is equal to
$\alpha^{2(m+2)}$ and it is nonzero. This contradiction shows that
$K[C_3]$ is semiprime.
\end{proof}

\begin{remark} \label{semiprimeM0}
{\rm The argument used in the above proof also shows that for
every $n\geqslant 3$ the algebras of matrix type $K_{0}[M_{0}]$
and $K_{0}[M_{n-2}]$ defined for $K[C_n]$ are semiprime (actually,
prime). Indeed, $A_{0},B_{0}$ consist of all suffixes and
prefixes, respectively, of the word $x_{n}x_{n-1}\cdots x_{1}$.
While $A_{n-2}, B_{n-2}$ consist of all suffixes and prefixes,
respectively, of $x_{n}x_{1}\cdots x_{n-1}$, so the argument used
in the proof can be applied.}
\end{remark}

\subsection{Cycle of length $4$} For simplicity, we write $x_1=a$,
$x_2=b$, $x_3=c$, $x_4=d$. Recall that  $C_4$ has the following
presentation
\begin{gather*}
C_4=\langle a, b, c, d: a^2=a, b^2=b, c^2=c, d^2=d,ab=aba=bab,
bc=bcb=cbc,\\ cd=cdc=dcd, da=dad=ada, ac=ca, bd=db \rangle.
\end{gather*}
\noindent The form of the sets $A_0, B_0, A_1,B_1,A_2, B_2$
follows directly from  Theorem~\ref{tw1}.
\begin{lemma}\label{n=4}
    If an element of $C_4$ has a factor of the form  $w_0=dcba$,
    $w_1=dacb$ or $w_2=dabc$ then it is of the form  $\alpha_iw_i^k\beta_i$,
    with $k\geqslant1$, $\alpha_i\in A_i$, $\beta_i\in B_i$, where
    \begin{enumerate}
        \item $A_0=\{1, a, ba, cba\}$, $B_0=\{1, d, dc, dcb\}$;
        \item $A_1=\{1, b, cb, acb, ab, bacb\}$, $B_1=\{1, d, da, dac, dc, dacd\}$;
        \item $A_2=\{1, c, bc, abc\}$, $B_2=\{1, d, da, dab\}$.
    \end{enumerate}
\end{lemma}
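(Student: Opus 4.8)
The plan is to apply Theorem~\ref{tw1} directly with $n=4$, and simply read off the sets $A_i$ and $B_i$ from the combinatorial description given there. Since Theorem~\ref{tw1} already asserts that every reduced word containing a factor $x_nq_i=x_nx_1\cdots x_ix_{n-1}\cdots x_{i+1}$ has the form $a(x_nq_i)^kb$ with $a\in A_i$, $b\in B_i$, $k\geqslant 1$, the only content of Lemma~\ref{n=4} is to enumerate explicitly the (finite) sets $A_i$ and $B_i$ for $i=0,1,2$. For $n=4$ we have $q_0=x_3x_2x_1=cba$, $q_1=x_1x_3x_2=acb$, $q_2=x_1x_2x_3=abc$, so that $x_4q_0=dcba=w_0$, $x_4q_1=dacb=w_1$, $x_4q_2=dabc=w_2$, matching the notation in the statement.

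First I would write down $A_i$ using its definition as $\suff$ of products of blocks $(x_{k_s}\cdots x_s)\cdots(x_{k_{i+1}}\cdots x_{i+1})$ subject to the constraints $s\in\{0,\ldots,i+1\}$, $k_{s+1}<\cdots<k_{i+1}\leqslant n-1$, $k_s\leqslant s$, and $k_q>q$ for $q>s$. For $i=0$ and $n=4$ the admissible words are suffixes of $x_3x_2x_1=cba$, giving $A_0=\{1,a,ba,cba\}$; dually $B_0$ consists of prefixes of $x_4x_3x_2x_1$-type words, here prefixes of $dcba$, namely $\{1,d,dc,dcb\}$. For $i=2$ (the other extreme, $i=n-2$), Lemma~\ref{lem4} tells us the reduced words come from $(x_4x_1x_2x_3)^\infty=(dabc)^\infty$, so $A_2$ is the set of suffixes of $abc$ and $B_2$ the set of prefixes of $dab$, giving the stated lists. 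The only genuinely nontrivial case is $i=1$, where both the $A_1$ and $B_1$ constraints allow two choices of the parameter $s$ (resp.\ $r$), producing the six-element sets; here I would carefully list the blocks for each value of $s$ and take suffixes (resp.\ prefixes), checking that the enumeration exhausts all admissible parameter choices and that no two descriptions collapse to the same word.

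The main obstacle will be the bookkeeping in the case $i=1$: one must verify that the description in Theorem~\ref{tw1} yields exactly the six listed elements of $A_1$ and the six listed elements of $B_1$, with no omissions or repetitions. Concretely, for $A_1$ the block structure allows $s\in\{0,1,2\}$ with the stated inequalities on the $k_q$, and I would check that the resulting set of suffixes is precisely $\{1,b,cb,acb,ab,bacb\}$; symmetrically for $B_1$ one examines the parameters $r\geqslant 0$, $i_r<\cdots<i_1<2$, $2<j_1<\cdots<j_{r+1}\leqslant 4$, obtaining $\{1,d,da,dac,dc,dacd\}$. Since $\chi_1=\sigma^2\tau$ gives a bijection $A_1\to B_1$ by Corollary~\ref{wn4}, a useful consistency check is that $|A_1|=|B_1|=6$, which the involution guarantees and which serves to confirm the enumeration is complete.
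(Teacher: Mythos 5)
Your proposal is correct and follows exactly the paper's route: the paper proves Lemma~\ref{n=4} simply by specializing Theorem~\ref{tw1} to $n=4$ and reading off the sets $A_i$, $B_i$ (with Lemma~\ref{lem4} implicitly covering the extreme cases $i=0$ and $i=n-2$), which is precisely your plan. The parameter bookkeeping you outline for $i=1$ does yield the six listed elements of $A_1$ and $B_1$, and the $\chi_1$-bijection check is a sound confirmation.
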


 From Corollary~\ref{struktura} we know that $C_4$ has a chain of ideals
\begin{equation*}
\emptyset= I_2\s I_1\s I_0\s I_{-1},
\end{equation*}
 with semigroups of matrix type $M_0$, $M_1$ and $M_2$, coming from
 Corollary~\ref{wn3}, such that
\begin{enumerate}
    \item $M_2=(C_4dabcC_4)^0 \subseteq I_1/I_2$ and the set $(I_1/I_2)\setminus M_2$ is finite,
    \item $M_1=\{\alpha(dacb)^k\beta: \alpha\in A_1, \beta\in B_1, k\geqslant 1\}\cup\{\theta\}\subseteq I_0/I_1$
    and the set $(I_0/I_1)\setminus M_1$ is finite,
    \item $M_0=\{\alpha(dcba)^k\beta: \alpha\in A_0, \beta\in B_0, k\geqslant
    1\}\cup\{\theta\}=I_{-1}/I_0$,
    \item $C_4\setminus I_{-1}$ is finite, where $I_{-1}=I_0\cup
    C_4dcbaC_4$.
\end{enumerate}
We present these structures of matrix type below.
 A simple verification is left to the reader.
\begin{lemma}\label{lem19}
     Let $S_2$ be the cyclic semigroup generated by $s=dabc$. Then the ideal generated by  $s$ in $C_4$,
     with a zero adjoined, that is $M_2=(C_4dacbC_4)^{0}$, is a semigroup of matrix type
     $\mathcal{M}^{0}(S_2, A_2, B_2; P_2)$, where $A_2=\{1, c, bc, abc\}$, $B_2=\{1, d, da, dab\}$,
     with sandwich matrix (with coefficients in $S_2^1$)
    \begin{equation*}
    P_2= \begin{pmatrix}
    1 & 1 & 1 & 1\\
    1 & 1 & 1 & s\\
    1 & 1 & s & s\\
    1 & s & s & s
    \end{pmatrix}.
    \end{equation*}
\end{lemma}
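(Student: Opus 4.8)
The statement is precisely the specialization of the general structure result, Corollary~\ref{wn3}, to the values $n=4$ and $i=n-2=2$, so the plan is to unwind that corollary in this case and then read off the sandwich matrix. First I would fix notation: here $s=x_4q_2=dabc$ and $S_2=\langle s\rangle$ is the cyclic semigroup it generates, while $A_2$ and $B_2$ are exactly the sets produced by the $i=2$ instance of Theorem~\ref{tw1}, as already recorded in Lemma~\ref{n=4}. Since $i=n-2$, Corollary~\ref{wn5} gives $I_2=\emptyset$, and Corollary~\ref{M_{n-2}} gives $\tilde{M}_2=C_4(x_4x_1x_2x_3)C_4=C_4\,dabc\,C_4$; hence $M_2=\tilde{M}_2\cup\{\theta\}=(C_4\,dabc\,C_4)^0$, which is the first assertion of the lemma.

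With this identification in place, Corollary~\ref{wn3} furnishes the isomorphism $\phi\colon a s^k b\mapsto (s^k;a,b)$, $\theta\mapsto\theta$, of $M_2$ onto $\mathcal{M}^0(S_2,A_2,B_2;P_2)$, where the entries of $P_2=(p_{ba})$ are given by formula (\ref{sandwich}). Here I would emphasize one simplification special to the extreme case $i=n-2$: because $\tilde{M}_2$ is an ideal of $C_4$ (Corollary~\ref{struktura}(4)), equivalently because $I_2=\emptyset$, the second branch of (\ref{sandwich}) never occurs, so every product $s\,b\,a\,s$ lies in $\langle s\rangle$ and each $p_{ba}$ is a genuine power of $s$. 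Thus all sixteen entries belong to $\{1,s\}=\{s^0,s^1\}\subseteq S_2^1$ and no $\theta$ can appear, in agreement with the displayed matrix.

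The remaining and only computational step is to evaluate the entries: for each pair $(b,a)\in B_2\times A_2$ one reduces the word $s\,b\,a\,s=dabc\,b\,a\,dabc$ to its normal form $s^\alpha$ and sets $p_{ba}=s^{\alpha-2}$. This is routine and is made transparent by Lemma~\ref{lem4} and Corollary~\ref{M_{n-2}}: every reduced word containing the factor $dabc$ is a factor of the periodic word $(dabc)^\infty$, and since $b$ ranges over the prefixes and $a$ over the suffixes of $dabc$, each reduction merely collapses the overlap inside $(dabc)^\infty$. For instance $dabc\,d\,c\,dabc$ reduces through $cdcd\to cd$ to $s^2$, giving the entry $1$ at $(d,c)$, while $dabc\,dab\,abc\,dabc$ reduces through $bab\to ab$ and $aa\to a$ to $s^3$, giving the entry $s$ at $(dab,abc)$. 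A convenient check is the predicted symmetry of $P_2$: by Remark~\ref{symmetric} the involution $\chi_2=\sigma^3\tau$ carries $A_2$ bijectively onto $B_2$, matching $1\mapsto 1$, $c\mapsto d$, $bc\mapsto da$, $abc\mapsto dab$ under the chosen length orderings, so $P_2$ must come out symmetric, as it does. The main obstacle is therefore nothing more than bookkeeping across the sixteen reductions; all the conceptual work has already been carried out in Section~\ref{ideal}.
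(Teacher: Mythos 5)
Your proposal is correct and follows exactly the route the paper intends: the paper states that these matrix structures come from Corollary~\ref{wn3} and leaves the entry-by-entry verification to the reader, which is precisely what you carry out (including the correct observation that $I_2=\emptyset$ forces all entries into $S_2^1$, and the symmetry check via $\chi_2$). The sample reductions you give are accurate, so nothing further is needed.
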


\begin{lemma}\label{lem20}
     Let $S_1$ be the cyclic semigroup generated by  $s=dacb$. Then $M_1$
     is a semigroup of matrix type  $\mathcal{M}^{0}(S_1, A_1, B_1; P_1)$ where
     $A_1=\{1, b, cb, acb, ab, bacb\}$, $B_1=\{1, d, dc, dac, da, dacd\}$,
     with sandwich matrix (with coefficients in  $S_1^1\cup\{\theta\}$)
    \begin{equation*}
    P_1=
    \begin{pmatrix}
    1 & 1 & \theta & \theta & 1 & \theta\\
    1 & 1 & \theta & s & \theta & s\\
    \theta & \theta & \theta & s & s & s\\
    \theta & s & s & s & s & \theta \\
    1 & \theta & s & s & \theta & \theta\\
    \theta & s & s & \theta & \theta & s^2
    \end{pmatrix}.
    \end{equation*}
\end{lemma}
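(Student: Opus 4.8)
The plan is to read off the statement as a concrete instance of Corollary~\ref{wn3}, which already establishes the abstract isomorphism $M_1\cong\mathcal{M}^0(S_1,A_1,B_1;P_1)$ with $S_1=\langle x_4q_1\rangle=\langle dacb\rangle$ and with the sandwich matrix given entrywise by formula (\ref{sandwich}). Thus the real content splits into two parts: fixing the sets $A_1,B_1$ explicitly, and evaluating each $p_{ba}$. The first part is nothing but Theorem~\ref{tw1} specialised to $n=4$, $i=1$, i.e.\ exactly the lists recorded in Lemma~\ref{n=4}; the only modification is that I enumerate $B_1$ as $1,d,dc,dac,da,dacd$ rather than in the order of Lemma~\ref{n=4}. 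This reordering is dictated by the involution $\chi_1=\sigma^2\tau$ of Corollary~\ref{wn4}: one checks on letters that $\chi_1(a)=a$, $\chi_1(b)=d$, $\chi_1(c)=c$, $\chi_1(d)=b$, and since $\chi_1$ is an anti-automorphism this gives $\chi_1(1)=1$, $\chi_1(b)=d$, $\chi_1(cb)=dc$, $\chi_1(acb)=dca=dac$, $\chi_1(ab)=da$, $\chi_1(bacb)=dcad=dacd$ in $C_4$. Hence the $j$-th element of $B_1$ is precisely the $\chi_1$-image of the $j$-th element of $A_1$, which is the ordering required in Remark~\ref{symmetric}.

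For the entries I would compute straight from (\ref{sandwich}): for each pair $(b,a)\in B_1\times A_1$ I form the word $(dacb)\,b\,a\,(dacb)$ and reduce it with the system $S$ of Theorem~\ref{basisCn}. The conceptual work is done by Theorem~\ref{tw3}: since $b\in B_1$ and $a\in A_1$, the element $(x_4q_1)ba(x_4q_1)$ lies in $\{(x_4q_1)^k:k\geqslant 2\}\cup I_1$, so its reduced form is either a power $s^{\alpha}$ of $s=dacb$, giving $p_{ba}=s^{\alpha-2}$, or it lies in $I_1$, giving $p_{ba}=\theta$. Thus each entry is settled by a single rewriting. A representative case is $b=1$, $a=ab$: here $(dacb)\,ab\,(dacb)=dacbabdacb\arr{(4)}dacbbdacb\arr{(1)}(dacb)^2$, so $\alpha=2$ and $p_{(1)(ab)}=s^0=1$, matching the $(1,5)$-entry. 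For the borderline cases, where it is not immediate whether the reduced word is a power of $s$ or already lies in $I_1$, I would confirm membership in $I_1$ through the support criterion of Lemmas~\ref{lem11} and~\ref{lem12}: a reduced word $w$ with $|\supp(f(w))|\leqslant n-i-2=1$ lies in $Q_1\subseteq I_1$ (hence gives $\theta$), whereas $|\supp(f(s^{\alpha}))|=n-i-1=2$.

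The amount of computation is halved by Remark~\ref{symmetric}: with $A_1$ and $B_1$ ordered by the bijection $\chi_1$, the matrix $P_1$ is symmetric, so I only need the $\binom{6}{2}+6=21$ entries on and above the diagonal and may copy the remainder. I expect the single obstacle to be bookkeeping rather than any genuine difficulty: every reduction is a correctly ordered application of the commutation rules $(2)$ ($ca\to ac$, $db\to bd$) and the cycle rules $(3)$, interspersed with the length-reducing rules $(1),(4),(5)$, and the relations $ac=ca$ repeatedly force superficially different words to coincide in $C_4$ (for instance $dca=dac$ and $dcad=dacd$), which is exactly why several rows share the same pattern. No idea beyond Corollary~\ref{wn3}, Theorem~\ref{tw3} and Remark~\ref{symmetric} is needed; the only risk lies in the accuracy of the explicit rewriting, which I would cross-check against the homomorphism $f$ at every entry equal to $\theta$.
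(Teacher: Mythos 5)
Your proposal takes essentially the same route as the paper, which derives these lemmas directly from Corollary~\ref{wn3} (with each entry computed from formula~(\ref{sandwich}) via the dichotomy of Theorem~\ref{tw3}) and explicitly leaves the entry-by-entry verification to the reader; your use of $\chi_1$ to order $B_1$ and of Remark~\ref{symmetric} to halve the work is exactly the symmetry the paper points to, and the support homomorphism $f$ is a legitimate cross-check for the $\theta$ entries. The one sample entry you compute ($b=1$, $a=ab$) is correct, so the proposal is fine.
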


\begin{lemma}\label{lem21}
    Let $S$ be the cyclic semigroup generated by  $s=dcba$.
    Then $M_0$ is a semigroup of matrix type  $\mathcal{M}^{0}(S_0, A_0, B_0; P_0)$,
    where $s=dcba$, $A_0=\{1, a, ba, cba\}$, $B_0=\{1, d, dc, dcb\}$, with sandwich matrix
    \begin{equation*}
    P_0=\begin{pmatrix}
    1 & 1 & \theta & \theta\\
    1 & \theta & \theta & s\\
    \theta & \theta & s & s \\
    \theta & s & s & \theta
    \end{pmatrix}.
    \end{equation*}
\end{lemma}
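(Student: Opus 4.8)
The plan is to obtain this lemma as the case $n=4$, $i=0$ of Corollary~\ref{wn3}, feeding in the explicit sets $A_0$ and $B_0$ already recorded in Lemma~\ref{n=4}. Indeed, Corollary~\ref{wn3} gives $M_0\cong\mathcal{M}^0(S_0,A_0,B_0;P_0)$ with $S_0=\langle s\rangle$, $s=x_4q_0=dcba$, and the sandwich matrix $P_0=(p_{ba})$ determined by (\ref{sandwich}): for $b\in B_0$, $a\in A_0$ one has $p_{ba}=s^{\alpha-2}$ if $(dcba)\,b\,a\,(dcba)=s^{\alpha}\in\langle s\rangle$, and $p_{ba}=\theta$ otherwise. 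Since $A_0=\{1,a,ba,cba\}$ and $B_0=\{1,d,dc,dcb\}$ by Lemma~\ref{n=4}, the entire content of the lemma is the evaluation of these sixteen products in $C_4$.

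First I would cut the work in half with Remark~\ref{symmetric}. The relevant involution is $\chi_0=\sigma\tau$, the case $i=0$ of $\chi_i=\sigma^{i+1}\tau$ from Corollary~\ref{wn4}, which on generators sends $a\mapsto d$, $b\mapsto c$, $c\mapsto b$, $d\mapsto a$. A one-line check gives $\chi_0(1)=1$, $\chi_0(a)=d$, $\chi_0(ba)=dc$, $\chi_0(cba)=dcb$, so $\chi_0$ carries the displayed ordering of $A_0$ exactly onto the displayed ordering of $B_0$. By Remark~\ref{symmetric} the matrix $P_0$ is then symmetric in these orderings, and it suffices to evaluate the ten products lying on and above the diagonal.

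The core computation is the reduction of each word $(dcba)\,b\,a\,(dcba)$ to its normal form using the system $S$ of Theorem~\ref{basisCn}; for $n=4$ the reductions of type (3) are vacuous (the constraint $i+1<j<3$ has no solutions), so only rules (1), (2), (4), (5) are needed. The decisive simplification is the dichotomy of Theorem~\ref{tw3}: every such product lies in $\{s^k:k\geqslant 2\}\cup I_0$, so I only have to reduce the word and inspect whether the resulting reduced word is literally a power of $dcba$. If it is, say $s^{\alpha}$, then $p_{ba}=s^{\alpha-2}$; if it is any other reduced word, Theorem~\ref{tw3} places it in $I_0$ and $p_{ba}=\theta$. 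For example $(dcba)\,d\,(cba)\,(dcba)$ equals $s^{3}$ in $C_4$ (the inner $d\,cba$ rebuilds a copy of $dcba$), giving the $(d,cba)$-entry $s$, whereas $(dcba)\,(ba)\,(dcba)$ reduces to $dacbdac$, a word of length $7$ that is not a power of $dcba$, giving the $(1,ba)$-entry $\theta$. Running through the remaining pairs reproduces the displayed matrix.

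The only real difficulty is bookkeeping: several of the products require a moderately long chain of rewrites before stabilizing, and an arithmetic slip in the rewriting is easy to make. The one conceptual point worth isolating is the certification of the $\theta$-entries, that is, membership in $I_0$; rather than checking the defining condition $C_4wC_4\cap\langle s\rangle=\emptyset$ directly, I would rely on the dichotomy of Theorem~\ref{tw3} (a reduced form that is not a power of $dcba$ is automatically in $I_0$), with the support criterion $|\supp(f(w))|\leqslant 2$ of Lemma~\ref{lem12} available as an independent double-check, exactly as in the proof of Lemma~\ref{lem13}.
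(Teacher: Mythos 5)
Your proposal is correct and follows essentially the same route as the paper: the paper obtains Lemmas~\ref{lem19}--\ref{lem21} directly from Corollary~\ref{wn3} together with Lemma~\ref{n=4}, explicitly leaving the entry-by-entry verification to the reader (with the computational method illustrated on $P_0$ in the $C_3$ case), and your use of the dichotomy of Theorem~\ref{tw3} to certify the $\theta$-entries and of Remark~\ref{symmetric} (via $\chi_0=\sigma\tau$ matching the listed orderings of $A_0$ and $B_0$) to halve the work is exactly the intended mechanism. Your sample computations, including the reduction of $(dcba)(ba)(dcba)$ to the reduced word $dacbdac$, check out.
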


We get the following consequence.
\begin{corollary}
Algebras $K_0[M_2]$, $K_0[ M_1]$ and $K_{0}[M_{0}]$ are algebras
of matrix type. Namely, $K_0[M_2]=\mathcal{M}(K[S_2], A_2, B_2;
P_2)$, $K_0[M_1]=\mathcal{M}(K[S_1], A_1, B_1; P_1)$ and
$K_0[M_0]=\mathcal{M}(K[S_0], A_0, B_0; P_0)$, where $S_2,
S_1,S_0$ are the cyclic semigroups generated by $s_2=dabc$,
$s_1=dacb$, and by  $s_0=dcba$, respectively.
\end{corollary}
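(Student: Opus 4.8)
The plan is to derive this directly from the semigroup-level isomorphisms already established in Lemmas~\ref{lem19}, \ref{lem20} and \ref{lem21}, combined with the definition of the algebra of matrix type recalled in the introduction. Recall that for a semigroup $S$, finite sets $A,B$ and a sandwich matrix $P$, the algebra of matrix type $\mathcal{M}(K[S],A,B;P)$ is by definition the contracted semigroup algebra $K_0[\mathcal{M}^0(S,A,B;P)]$ of the semigroup of matrix type $\mathcal{M}^0(S,A,B;P)$. Hence the assertion amounts to transporting the semigroup isomorphisms $M_i\cong \mathcal{M}^0(S_i,A_i,B_i;P_i)$, for $i=0,1,2$, up to the level of contracted algebras.

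First I would invoke Lemmas~\ref{lem19}, \ref{lem20} and \ref{lem21} (which are instances of Corollary~\ref{wn3}), providing explicit isomorphisms $\phi_i\colon M_i\longrightarrow \mathcal{M}^0(S_i,A_i,B_i;P_i)$ of semigroups with zero; in each case $\phi_i$ carries the zero element $\theta$ of $M_i$ to the zero of the semigroup of matrix type. The sets $A_i$ and $B_i$ listed in Lemma~\ref{n=4} are finite, so the matrix interpretation of $\mathcal{M}(K[S_i],A_i,B_i;P_i)$ as the algebra of $A_i\times B_i$ matrices over $K[S_i]$ applies.

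Next I would extend each $\phi_i$ by $K$-linearity to a map $K_0[M_i]\longrightarrow K_0[\mathcal{M}^0(S_i,A_i,B_i;P_i)]$. Since $\phi_i$ is a bijection preserving products and the zero element, this extension is an isomorphism of contracted semigroup algebras; this is the routine fact that an isomorphism of semigroups with zero induces an isomorphism of the associated contracted semigroup algebras. By the definition recalled above, the target is precisely $\mathcal{M}(K[S_i],A_i,B_i;P_i)$, which gives the three asserted identifications.

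There is essentially no obstacle here: the content of the result lies entirely in the preceding structural lemmas, and the present corollary is a formal consequence of applying the contracted-algebra construction to the established semigroup isomorphisms. The only point worth verifying is that passing to contracted algebras respects the zero, which is immediate because each $\phi_i$ sends $\theta$ to $\theta$. This is the exact algebra-level analogue of the corollary already proved for $C_3$ in the previous subsection.
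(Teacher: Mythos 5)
Your proposal is correct and follows exactly the route the paper intends: the corollary is stated there as an immediate consequence of Lemmas~\ref{lem19}, \ref{lem20} and \ref{lem21} (instances of Corollary~\ref{wn3}), using that an isomorphism of semigroups with zero induces an isomorphism of the contracted semigroup algebras and that $\mathcal{M}(K[S_i],A_i,B_i;P_i)$ is by definition $K_0[\mathcal{M}^0(S_i,A_i,B_i;P_i)]$. No further comment is needed.
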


A direct computation shows that $ \det P_2 = -(s_2-1)^3\neq 0.$
Similarly, one can see that $\det P_1=-s_1^3(s_1+1)^3\neq0$ and
$\det P_0=-s_0^2(s_0-1)\neq 0$, so that the matrices $P_i$ are not
zero divisors in the corresponding matrix rings $M_{n_i}(K[S_i])$,
for $i\in\{0, 1, 2\}$. Therefore, by \cite{semalg}, Chapter 5, we
get

\begin{corollary} \label{semipfactor4} Semigroup algebras $K_0[M_2]$, $K_0[M_1]$ and $K_0[M_0]$
are prime.
\end{corollary}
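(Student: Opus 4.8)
The plan is to deduce the statement from the standard structure theory of algebras of matrix type over a commutative domain, developed in \cite{semalg}, Chapter~5; the three determinant computations recorded just above the statement are precisely the input this theory requires.

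First I would fix the coefficient domains. For each $i\in\{0,1,2\}$ the semigroup $S_i$ is the infinite cyclic semigroup generated by $s_i=x_nq_i$ (it is infinite because $(x_nq_i)^k$ is reduced for every $k$, as noted right after the definition of $q_i$). Hence $K[S_i]$ is a commutative domain, isomorphic to the augmentation ideal of a polynomial algebra under $s_i\mapsto x$; adjoining a unit gives $K[S_i]^1\cong K[x]$, with field of fractions $Q_i\cong K(x)$. By the Corollary preceding the statement, $K_0[M_i]=\mathcal{M}(K[S_i],A_i,B_i;P_i)$, and by Lemmas~\ref{lem19}--\ref{lem21} together with Remark~\ref{symmetric} the sandwich matrix $P_i$ is a \emph{square} matrix of size $n_i=|A_i|=|B_i|$ with entries in $K[S_i]^1\cup\{\theta\}$; viewing $\theta$ as $0$ we regard $P_i\in M_{n_i}(K[S_i]^1)$.

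Then primeness reduces to invertibility of $P_i$ over $Q_i$. Over the domain $K[S_i]^1$ the computed determinants $\det P_2=-(s_2-1)^3$, $\det P_1=-s_1^3(s_1+1)^3$ and $\det P_0=-s_0^2(s_0-1)$ are nonzero, so each $P_i$ is a non-zero-divisor in $M_{n_i}(K[S_i]^1)$ and becomes invertible in $M_{n_i}(Q_i)$. Consequently the map $\alpha\mapsto P_i\alpha$ (ordinary matrix product) is an algebra isomorphism of $\mathcal{M}(Q_i,A_i,B_i;P_i)$, with its sandwich multiplication $\alpha\ast\beta=\alpha\circ P_i\circ\beta$, onto the full matrix ring $M_{n_i}(Q_i)$: indeed $P_i\alpha\cdot P_i\beta=P_i(\alpha\circ P_i\circ\beta)$, and bijectivity follows from invertibility of $P_i$. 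Since $M_{n_i}(Q_i)$ is simple, it is prime.

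Finally I would descend primeness from this localization back to $K_0[M_i]$. Extending scalars, the polynomial ring $K[S_i]^1\cong K[x]$ acts centrally on $K_0[M_i]$ (by scaling matrix entries), and $K_0[M_i]$ is a free, in particular torsion-free, $K[x]$-module with the $n_i^2$ pairs $(a,b)$, $a\in A_i$, $b\in B_i$, as a basis; its central localization at $Q_i=K(x)$ is exactly $\mathcal{M}(Q_i,A_i,B_i;P_i)\cong M_{n_i}(Q_i)$. For ideals $I,J$ of $K_0[M_i]$ with $IJ=0$, exactness of localization gives $I_{Q_i}J_{Q_i}=0$ in the prime algebra $M_{n_i}(Q_i)$, whence $I_{Q_i}=0$ or $J_{Q_i}=0$; torsion-freeness then forces $I=0$ or $J=0$. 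This is the content of the cited results of \cite{semalg}, Chapter~5, so one may simply quote them. The only genuinely delicate point is this last descent: one must confirm that the order $K_0[M_i]$ embeds into its $Q_i$-localization (torsion-freeness) and that the coefficient ring is central, so that annihilation of ideals is detected after localizing; the nonvanishing determinants guarantee the localization is the simple algebra $M_{n_i}(Q_i)$, and everything else is formal.
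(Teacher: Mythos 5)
Your proposal is correct and follows essentially the same route as the paper: both take the nonvanishing determinants $\det P_2=-(s_2-1)^3$, $\det P_1=-s_1^3(s_1+1)^3$, $\det P_0=-s_0^2(s_0-1)$ as the key input, conclude that each $P_i$ is a non-zero-divisor, and invoke the primeness criterion for algebras of matrix type from \cite{semalg}, Chapter~5. The only difference is that you unpack the proof of that criterion (passage to the Munn algebra over $K(x)$, isomorphism with $M_{n_i}(K(x))$ via $\alpha\mapsto P_i\circ\alpha$, and descent by central localization and torsion-freeness) where the paper simply cites it; this is a correct and complete elaboration.
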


The assertion of Remark~\ref{semiprimeM0},
Corollary~\ref{semipfactor3} and Corollary~\ref{semipfactor4} will
be extended in Theorem~\ref{semiprime} to all algebras
$K_{0}[M_t]$ coming from monoids $C_n$, $n\geqslant 3$. This will
be crucial for our main results in
Section~\ref{Noetherian-chapter}. However, the proof is much more
complicated since the determinants of the corresponding matrices
cannot be easily computed.

\section{Noetherian Hecke--Kiselman algebras}\label{Noetherian-chapter}

In this section we characterize Noetherian Hecke--Kiselman
algebras $K[\HK_{\Theta}]$ of arbitrary oriented graphs $\Theta$.
The main difficulty is in proving that all algebras $K[C_n]$ are
Noetherian. We start, however, with describing some simple
obstacles to the Noetherian property. For $w,w'\in F$ we write $w
\sim w' $ if $w,w'$ represent the same element of $\HK_{\Theta}$.

\begin{lemma}  \label{acc}
\label{nonnoether} Let $\Theta$ be the graph obtained by adjoining
the arrow $y \rightarrow x_1$ to the cyclic graph $C_n $: $x_1
\rightarrow x_2 \rightarrow \cdots \rightarrow x_n \rightarrow
x_1$.  Then the monoid $\HK_{\Theta}$ does not satisfy the
ascending chain condition on left ideals, and it does not satisfy
the ascending chain condition on right ideals.
\end{lemma}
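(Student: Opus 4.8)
The plan is to exhibit explicit infinite strictly ascending chains of left (and right) ideals in $\HK_{\Theta}$. The new generator $y$ with the single arrow $y\to x_1$ adds the relations $y^2=y$, $yx_1y=x_1yx_1=yx_1$, and commutation of $y$ with all the other generators $x_2,\dots,x_n$ (since $y$ is connected only to $x_1$). The key idea is that the powers $(x_nx_1\cdots x_{n-1})^k$ of the basic reduced word from the cycle survive as distinct reduced words in $\HK_\Theta$, and that prefixing by $y$ produces an infinite family of elements that generate a strictly growing chain. Concretely, I would set $w_k = y(x_nx_1\cdots x_{n-1})^k$ for $k\geqslant 1$ and consider the left ideals $L_k = \HK_\Theta w_k \cup \{w_k\}$, i.e. the principal left ideals generated by these elements, and then form the ascending chain by unions of tails.

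First I would verify that the words $w_k$ are pairwise distinct elements of $\HK_\Theta$. Since the submonoid generated by $x_1,\dots,x_n$ is a homomorphic image of $C_n$ (the cycle relations are all present), the elements $(x_nx_1\cdots x_{n-1})^k$ remain distinct there by Corollary~\ref{podstawowy wniosek} together with Lemma~\ref{lem4}; one checks that adjoining the single extra relation involving $y$ does not collapse them, because any rewriting using the $y$-relations can only affect occurrences of $y$ adjacent to $x_1$, and the trailing segment $(x_nx_1\cdots x_{n-1})^k$ can be read off unambiguously. Next I would show the chain is strictly ascending: the natural candidate is a chain built so that $w_{k+1}\in \HK_\Theta\, w_k$ while $w_k\notin \HK_\Theta\, w_{k+1}$, or more safely, a chain $J_1\subsetneq J_2\subsetneq\cdots$ where $J_m$ is the left ideal generated by $\{w_k : k\geqslant m\}^{c}$ — a cofinite selection — so that membership is controlled by a length or by an exponent invariant.

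The cleanest way to force strictness is to use a numerical invariant that the $y$-relations respect. I would use the homomorphism $f\colon C_n\to \operatorname{Map}(\mathbb Z^n,\mathbb Z^n)$ recalled in the excerpt, which detects the exponent $k$ through $|\supp f((x_nq_{n-2})^k)|$ and through the additive increment in the last coordinate produced by each $x_n$; extending $f$ to $\HK_\Theta$ by letting $y$ act as a suitable idempotent that increments a counter, the quantity ``number of $x_n$'s minus available cancellations'' gives a monotone invariant. Using this, one sees that $w_k$ cannot be written as $\alpha\, w_{k+1}$ with $\alpha\in\HK_\Theta$, because any such $\alpha$ would have to shorten the $x_n$-count, which the relations never do on the right end. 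Hence the principal left ideals $\HK_\Theta^1 w_k$ form, after passing to the appropriate unions $L_m=\bigcup_{k\geqslant m}\HK_\Theta^1 w_k$, a strictly ascending chain $L_1\subsetneq L_2\subsetneq\cdots$, contradicting the ascending chain condition on left ideals. The right-ideal statement then follows by a symmetric argument, replacing left multiplication by right multiplication and using the words $w_k' = (x_nx_1\cdots x_{n-1})^k$ followed on the right by a suitable segment ending in $x_1$ so that the extra generator $y$ again serves as a separator; alternatively one invokes the involution $\tau$ of Definition~\ref{involut} (suitably extended to fix $y$) to transport the left-sided chain to a right-sided one.

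The main obstacle I anticipate is the first step: proving that the $w_k$ really are distinct and, more delicately, that the ascending chain is \emph{strict}. The subtlety is that $\HK_\Theta$ is a genuine quotient introducing new relations at the ``seam'' where $y$ meets $x_1$, so one cannot simply quote the $C_n$-normal form; one must check that no sequence of reductions — in particular none using $yx_1y=yx_1$ repeatedly — allows $w_{k+1}$ to absorb a left factor down to $w_k$. Establishing a robust monotone invariant (such as the $\supp$-based or $x_n$-counting function above) that is preserved by \emph{all} defining relations of $\HK_\Theta$, including the new ones, is where the real work lies, and I would spend most of the proof pinning down that invariant and verifying its monotonicity under the five reduction types together with the three $y$-relations.
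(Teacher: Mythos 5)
Your construction collapses at the first step, before any invariant is needed. Take $\alpha = yx_nx_1\cdots x_{n-1}$. Since $y$ commutes with each of $x_2,\ldots,x_n$ and $yx_1y=yx_1$, one computes
$\alpha w_k = yx_nx_1x_2\cdots x_{n-1}\,y\,(x_nx_1\cdots x_{n-1})^k
= yx_nx_1\,y\,x_2\cdots x_{n-1}(x_nx_1\cdots x_{n-1})^k
= x_n(yx_1y)x_2\cdots x_{n-1}(x_nx_1\cdots x_{n-1})^k
= y(x_nx_1\cdots x_{n-1})^{k+1}=w_{k+1}$,
so $w_{k+1}\in\HK_{\Theta}w_k$ and your principal left ideals $\HK_{\Theta}^1w_k$ form a \emph{descending} chain; the ascending chain generated by $\{w_k: k<m\}$ stabilizes already at $m=2$. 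There is also a direction error independent of this: the inclusion you set out to exclude, $w_k=\alpha w_{k+1}$, is the wrong one for the ascending chain condition --- even a successful proof that $w_k\notin\HK_{\Theta}^1w_{k+1}$ would only make the descending chain strict, refuting DCC rather than ACC. Finally, the ``monotone invariant'' ($y$ incrementing a counter, ``$x_n$-count minus cancellations'') is never actually defined, and it is not clear any such quantity survives all relations (note that reductions of type $x_ivx_i\to vx_i$ do delete a generator on the left end).

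The reduction of the right-ideal case to the left-ideal case by symmetry also fails: reversing all arrows turns $y\to x_1$ into $x_1\to y$, so $\Theta$ is \emph{not} anti-isomorphic to itself and $\tau$ does not extend to an involution of $\HK_{\Theta}$ fixing $y$; this is precisely why the proof of Theorem~\ref{Noetherian} must invoke both $\Theta'$ and its reverse $\Theta''$. Accordingly, the paper gives two genuinely different arguments. For right ideals it uses $w_k=(x_nx_{n-1}\cdots x_1)^ky$: with $y$ at the right end, preceded by $x_1$ (with which it does not commute) and followed by nothing, no defining relation beyond $x_i^2=x_i$, $y^2=y$ applies, so these words are rigid and $w_k\notin\bigcup_{i<k}w_i\HK_{\Theta}$. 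For left ideals it constructs explicit sets $R_k$ of words, verifies that each $R_k$ is a union of $\sim$-classes under \emph{all} defining relations, and exhibits $v_k=x_1x_2y(x_1x_n\cdots x_2)^k\in R_k$ with $v_k\notin FR_i$ for $i<k$; there the distinguishing power sits to the right of $y$, shielded from left multiplication. To repair your argument you would need witnesses of that shape and a closure verification of the kind carried out for $R_k$.
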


\begin{proof} Write $w_k =
(x_nx_{n-1} \cdots x_1)^k y$, for $k=1,2, \ldots $. It is clear
that $w_k$ cannot be rewritten in the monoid $\HK_{\Theta}$ except
for applying relations of the form $x_{i}^{2}=x_i, y^2=y$.
Therefore $w_k\notin \bigcup_{i=1}^{k-1}w_i\HK_{\Theta}$ for
$k\geqslant 2$. Hence, $\HK_{\Theta}$ does not satisfy acc on
right ideals.

Let $\phi : \langle x_1,\ldots ,x_n,y\rangle \longrightarrow
\langle x_1,\ldots ,x_n\rangle $ be the homomorphism such that
$\phi (w)$ is obtained from $w$ by erasing all occurrences of $y$.
Consider the following subsets of the free monoid $F=\langle
x_1,\ldots ,x_n,y \rangle$: $Z_{k} = \{ \phi^{-1}
((x_{1}^{i_{1}}x_{n}^{i_{n}} \cdots x_{2}^{i_{2}})^{k}) \mid
i_{j}\geqslant 1 \mbox{ for every } j\}$, for $k=1,2,\ldots $, and
\begin{eqnarray*} R_{k} =\{ wyvz \mid
 w\in \langle x_1,x_2 \rangle ,
   v\in \langle x_{2},y\rangle \langle x_{1}, y\rangle ,
    z\in Z_k, |wv|_{2} \geqslant 1\}.
\end{eqnarray*} We claim that $R_k$ is closed under relation
$\sim$. It is easy to see that $R_{k}$ is closed under $x\sim x^2$
and under $xz\sim zx$ for generators $ x,z$ not connected in the
graph $\Theta$ (the only such factors of a word $u\in R_{k}$ can
be of the form $yx_j, x_jy$, where $j\geqslant 2$). Moreover, $u$
does not have factors of the form $ x_jx_i$ with $i=3,\ldots ,n$
and $j\neq i+1$ (modulo $n$). So we do not have to consider
relations $x_ix_{i+1}x_i \sim x_{i+1}x_{i}x_{i+1} \sim x_ix_{i+1}$
for $i=2,\ldots, n$. It is also easy to see that every relation
$yx_{1}y\sim yx_1$, $x_{1}yx_{1}\sim yx_1$ and $x_{1}yx_{1}\sim
yx_1y$ leaves $R_{k}$ invariant. Finally, every relation $x_2
x_1x_2\sim x_1 x_2$, $x_1 x_2x_1\sim x_1x_2 $ and $x_1 x_2x_1\sim
x_2x_1x_2 $ leaves $R_{k}$ invariant. This proves the claim.

Define $v_{k}= x_{1}x_{2}y(x_1x_n\cdots x_2)^k$, for $k=1,2,\ldots
$. Notice that $v_{k}\in R_k$ but $v_{k}\notin FR_{i}$ for $i<k$.
It follows that  $v_k \notin \bigcup_{i=1}^{k-1}\HK_{\Theta}
v_{i}$, for every $k\geqslant 2$. Therefore $\HK_{\Theta}$ does
not satisfy acc on left ideals.
\end{proof}

We continue with a useful observation of independent interest. In
order to avoid confusion, we denote by $[w]$ the equivalence class
of a word $w$ in $F$, with respect to the equivalence relation
$\sim$.

\begin{lemma} \label{induction}
Assume that $y_1, y_2, \ldots, y_{n-1}$ are the consecutive
vertices of a  cyclic graph $C_{n-1}$. Consider an epimorphism
$\phi$ from the free monoid $Y=\langle y_1, \ldots, y_{n-1}
\rangle$ to the submonoid $\langle  x_2, \ldots, x_{n-1},
x_{n}x_{1} \rangle$ of $F$ defined by
$$\phi(y_i) = \begin{cases} x_{i+1},& \text{ for } 1 \leqslant i \leqslant n-2,\\
                         x_nx_1 ,& \text{ for } i = n-1.\end{cases}$$
Then $\phi$ induces a homomorphism $\overline{\phi}: C_{n-1}
\longrightarrow C_n$ given by the formula $\overline{\phi}([w]) =
[\phi(w)]$, for every $w \in \langle y_1, \ldots, y_{n-1}\rangle$.
Moreover, $\overline{\phi}$ determines an isomorphism
$C_{n-1}\cong \langle  x_2,\ldots, x_{n-1}, x_nx_1\rangle
\subseteq C_n$.
\end{lemma}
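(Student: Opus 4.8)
The plan is to establish Lemma~\ref{induction} in two stages: first verifying that $\overline{\phi}$ is a well-defined homomorphism, and then upgrading this to an isomorphism onto the submonoid $\langle x_2,\ldots,x_{n-1},x_nx_1\rangle$ of $C_n$.

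For the first stage, I would check that $\phi$ respects the defining relations of $C_{n-1}$. The generators $y_1,\ldots,y_{n-1}$ of $C_{n-1}$ satisfy the Hecke--Kiselman relations of the oriented cycle $y_1\to y_2\to\cdots\to y_{n-1}\to y_1$. Since $\overline{\phi}$ is prescribed on the images, it suffices to confirm that for each defining relation of $C_{n-1}$, the corresponding equality holds among the images $\phi(y_i)$ inside $C_n$. The nontrivial cases are the relations involving consecutive generators, in particular those touching $y_{n-2}\to y_{n-1}\to y_1$, which translate into relations among $x_{n-1}$, $x_nx_1$ and $x_2$. One must verify, for example, that $(x_nx_1)x_2(x_nx_1)=x_2(x_nx_1)x_2=(x_nx_1)x_2$ and $x_{n-1}(x_nx_1)x_{n-1}=(x_nx_1)x_{n-1}(x_nx_1)=x_{n-1}(x_nx_1)$ hold in $C_n$, together with the commutativity relations for nonadjacent pairs. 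These follow by direct reduction using the system $S$ of Theorem~\ref{basisCn}; the idempotency $(x_nx_1)^2=x_nx_1$ is immediate since $x_nx_1x_nx_1$ reduces via the type-$(4)$ or $(5)$ rule. Establishing this gives that $\overline{\phi}$ is a well-defined monoid homomorphism.

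For the second stage, surjectivity onto $\langle x_2,\ldots,x_{n-1},x_nx_1\rangle$ is immediate from the definition of $\phi$, since the images of the generators are exactly the prescribed generators of the submonoid. The crux is injectivity. Here I would exploit the canonical reduced-word description of $C_{n-1}$ afforded by Corollary~\ref{podstawowy wniosek}: every element of $C_{n-1}$ has a unique $S$-reduced representative. The plan is to show that if two reduced words $w,w'$ over $\{y_1,\ldots,y_{n-1}\}$ have $\phi(w)\sim\phi(w')$ in $C_n$, then $w\sim w'$ in $C_{n-1}$. The natural approach is to track how $\phi$ interacts with reductions: one wants to show that reductions applied to $\phi(w)$ in $C_n$ can be pulled back to reductions in $C_{n-1}$, or equivalently that the reduced form of $\phi(w)$ determines the reduced form of $w$. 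Since the factor $x_nx_1$ always appears as a contiguous block (the letter $x_n$ can only be followed by $x_1$ in reduced words arising from this submonoid, as $x_n$ forces the subsequent structure), one can parse the reduced form of $\phi(w)$ back into blocks corresponding to the original generators $y_i$.

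I expect the injectivity argument to be the main obstacle. The difficulty is that reductions in $C_n$ might, a priori, combine or split the $x_nx_1$ blocks in ways that have no counterpart among the reductions of $C_{n-1}$, so one must argue that no such anomalous reductions occur for words in the image of $\phi$. The cleanest route is probably to verify that $\phi$ sends $S$-reduced words of $C_{n-1}$ to $S$-reduced words of $C_n$ (or to words whose reduced form faithfully records the block structure), so that the unique-reduced-form property transfers across $\overline{\phi}$. Once one knows that distinct reduced words of $C_{n-1}$ map to distinct reduced words of $C_n$, injectivity follows, and combined with surjectivity this yields the claimed isomorphism $C_{n-1}\cong\langle x_2,\ldots,x_{n-1},x_nx_1\rangle$.
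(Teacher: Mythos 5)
Your plan follows the paper's proof essentially verbatim: the paper cites an earlier result for the fact that $\phi$ respects the defining relations (you propose to verify them directly, which works), and it establishes injectivity precisely by showing that reduced words over $\{y_1,\ldots,y_{n-1}\}$ (with respect to the reduction system of Theorem~\ref{basisCn} for $C_{n-1}$) are sent by $\phi$ to $S$-reduced words in $F$ --- exactly the route you single out as the cleanest. The only substance missing from your sketch is the case analysis over the five reduction types showing that any leading term occurring in $\phi(w)$ pulls back to a leading term in $w$; that verification is the entire body of the paper's argument, but your identification of the key claim is correct and the claim does hold.
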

\begin{proof} It is verified in \cite{mecel_okninski1}, Lemma~4,
that $\overline{\phi}$ is a homomorphism. We claim that if a word
$w=w(y_1,\ldots, y_{n-1})$  is reduced in the sense of the
reduction system $S'$ as in Theorem~\ref{basisCn}, defined with
respect to the deg-lex order extending $y_1<\cdots < y_{n-1}$ in
the free monoid $Y=\langle y_1,\ldots, y_{n-1}\rangle $, then the
word $w(  x_2,\ldots , x_{n-1}, x_nx_1)$ is reduced with respect
to the system $S$ in the free monoid $F=\langle x_1,\ldots,
x_{n}\rangle $.

If $w\in Y$ then it is clear that if $\phi (w)$  contains a factor
(that is the leading term of a reduction) of type (1) in
Theorem~\ref{basisCn}, then also $w$ contains such a factor.
Assume that $\phi (w)$ contains a factor $x_jx_i$ of type (2).
Then $w$ contains a factor $y_{j-1}y_{i-1}$. Assume that $\phi
(w)$ has a factor  $x_i ux_i$ that is of type (4) or (5). If $i=1$
or $i=n$ then $\phi (w)$ has a factor $x_nx_1 vx_nx_1$. If $v$
does not contain $x_2$ ($x_{n-1}$, respectively) then
$\phi^{-1}(v)$ does not contain $y_1$ ($y_{n-2}$, respectively),
and we are done. If $i\neq 1,n$, and $u$ does not contain
$x_{i+1}$ ($x_{i-1}$, respectively) then $\phi^{-1}(u)$ does not
contain $y_{i}$ (respectively, $y_{i-2}$ if $i>2$; and if $i=2$
then $\phi^{-1}(u)$ does not contain $y_{n-1}$), as desired.
Assume that $\phi (w)$ contains a factor of the form
$x_n(x_1\dotsm x_i)x_j$ for $i,j\in\{1,\dotsc,n\}$ such that
$i+1<j<n-1$. Then $w$ contains a factor $y_{n-1}(y_1 \cdots
y_{i-1})y_{j-1}$ or $y_{n-1}y_{j-1}$, and the assertion follows as
well. This proves the claim.

Therefore $\overline{\phi}$ is injective. The result follows.
\end{proof}

 The crucial step in the proofs of the main results of this
section is based on the following observation.

\begin{proposition}  \label{anihilator}
Assume that $t\in \{1,\ldots ,n-3\}$ and $\alpha \in K_{0}[M_t]$
is such that $\alpha x_i=0$ in $K_{0}[M_t]$ for every
$i\in\{1,\ldots, n\}$. Assume also that every $w\in \supp(\alpha)$
is of the form $(x_nq_t)^kb$, where $k\geqslant 1$ and $b\in B_t$.
Then $\alpha =0$.
\end{proposition}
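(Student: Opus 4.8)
The plan is to first turn the annihilator hypothesis into something uniform over all of $C_n$, and then to run an induction on the ``suffix complexity'' of $\supp(\alpha)$. Since $\tilde{M}_t\subseteq I_{t-1}\setminus I_t$ by Proposition~\ref{stw2} and $I_{t-1}$ is an ideal, for $w\in\tilde{M}_t$ and any generator $x_i$ the product $wx_i$ stays in $I_{t-1}$, so $\alpha x_i$ is legitimately computed in the Rees quotient $C_n/I_t$, and the hypothesis says it vanishes there. As every element of $C_n$ is a product of generators, iterating gives the stronger statement $\alpha w=0$ in $C_n/I_t$ for \emph{every} $w\in C_n$. Using Corollary~\ref{wn3} I would identify $\alpha$ (whose support lies entirely in the row $a=1$) with a row vector $(f_b)_{b\in B_t}$, where each $f_b\in K[S_t]$ is a combination of powers $(x_nq_t)^k$, $k\geq 1$, and where $B_t$ is a \emph{finite} index set. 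The goal becomes: show that all $f_b=0$.

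The main device is the collapse maps of Theorem~\ref{tw2} and Corollary~\ref{multip-rows}. Let $b^\ast$ be an element of maximal length occurring in $\supp(\alpha)$, and pick $v\in\tilde{M}_t$ with $b^\ast v=(x_nq_t)^s\in\langle x_nq_t\rangle$; then right multiplication by $v$ sends the $b^\ast$-component to the pure powers $f_{b^\ast}\cdot(x_nq_t)^s$ (triples with $a=b=1$), and by Corollary~\ref{multip-rows} it does so injectively in $k$. The scheme is to extract the pure-power part of $\alpha w$ for suitably chosen $w$, isolate $f_{b^\ast}$, conclude $f_{b^\ast}=0$, and then induct on $|\supp(\alpha)|$ (equivalently, on the multiset of suffix-lengths $\{|b|\}$). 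For non-maximal $b$ the dual move is available: a proper prefix of a block word can be prolonged by its next letter, so right multiplication by that generator sends $(x_nq_t)^kb$ to $(x_nq_t)^k(bx_i)$ with a strictly longer admissible suffix, which keeps the bookkeeping under control.

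The hard part is the \emph{separation} of the contributions of different $b$. As the explicit sandwich matrices of Section~\ref{examples} already show (e.g.\ $P_1$ in Lemma~\ref{lem20}), no single right multiplier has a column supported on a single row, so one cannot isolate one coordinate $b$ by a single $v$; consequently the proof genuinely needs the full right action of $C_n$ by all generators, not merely the column operations encoded in $P_t$ (which is crucial, since the non-degeneracy of $P_t$ is only established later, in Theorem~\ref{semiprime}, using precisely this proposition). Concretely, after collapsing $b^\ast$ one must show that the pure-power terms produced from the shorter suffixes $b'$ can be cancelled, or cannot arise at the top power, by further right multiplications that strictly decrease the length of the longest surviving $B_t$-suffix. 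Verifying this monovariant is the combinatorial core: it requires tracking the reduced form of each $b\,x_i$ through the rules (1)--(5) of Theorem~\ref{basisCn} and the block description of Theorem~\ref{tw1}, using the rigidity observed in Corollary~\ref{wn4} that no generator can be moved leftward into a factor $x_nq_t$ without shortening the word. This is where I expect essentially all the work to lie.

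Should the direct bookkeeping become unwieldy, a structural alternative is induction on $n$ through the embedding $C_{n-1}\cong\langle x_2,\ldots,x_{n-1},x_nx_1\rangle\subseteq C_n$ of Lemma~\ref{induction}: under $\overline{\phi}$ the word $x_nq_t$ corresponds to a cyclic word of $C_{n-1}$ with the shifted parameter $t-1$, which should match $\tilde{M}_t$ for $C_n$ with $\tilde{M}_{t-1}$ for $C_{n-1}$ and reduce the case $(n,t)$ to $(n-1,t-1)$, down to the base case $n=4$, $t=1$ handled in Section~\ref{examples}. The obstacle on this route is transporting the annihilator hypothesis faithfully across the embedding, since the generators $x_1$ and $x_n$ are coupled into the single generator $x_nx_1$ and the remaining generators $x_i$ must be shown to act compatibly; this requires care but avoids the global reduction analysis.
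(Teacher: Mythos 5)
Your proposal is an outline that correctly identifies where the difficulty sits, but it does not contain the argument that resolves it; the step you defer with ``this is where I expect essentially all the work to lie'' is, in fact, the entire content of the paper's proof. Concretely: your primary strategy (pick a maximal-length $b^\ast$, collapse it by some $v$ with $b^\ast v\in\langle x_nq_t\rangle$, and isolate $f_{b^\ast}$) is one you yourself observe cannot work as stated, since no right multiplier separates a single $B_t$-coordinate; and the replacement ``monovariant'' you propose --- that further right multiplications strictly decrease the length of the longest surviving $B_t$-suffix --- is not what actually happens. Right multiplication by a generator can fix a word ($wx_r=w$, reductions of type $(4)$/$(5)$), can lengthen the suffix (e.g.\ $vx_nx_1\mapsto vx_{m-1}x_nx_1$), or can send it to $I_t$, so no length-based induction closes. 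The paper's actual mechanism is different and essential: classify the possible suffix types of $(x_nq_t)^kb$ into six families, use $\alpha x_1=0$ to kill type $(vi)$ and normalize types $(i)$, $(ii)$, then take $m=\min\{j_s\}$ over the support and multiply by the \emph{single} generator $x_{m-1}$. The point is a separation by the index $j$ of the resulting suffix (images of words with $j_s=m$ end in $x_j$ with $j\leqslant m$, images of words with $j_s>m$ either vanish or end in $x_j$ with $j>m$), combined with injectivity of $w\mapsto wx_{m-1}$ on the class $j_s=m$; this forces that class to be empty, contradicting the minimality of $m$. Nothing in your outline supplies this separation-plus-injectivity argument, nor the case analysis (the paper's Lemma~\ref{multipl}, Lemma~\ref{6types}, Corollary~\ref{m=j_s} and Case II) that underlies it.

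Your fallback route via Lemma~\ref{induction} also does not close for this Proposition. The element $\alpha$ itself does not lie in $K[\langle x_2,\ldots,x_{n-1},x_nx_1\rangle]$, because suffixes $b\in B_t$ such as $x_nx_{n-1}\cdots x_j$ contain $x_n$ not followed by $x_1$; the only natural way to land in that subalgebra is to pass to $\alpha x_1$, which is already $0$ by hypothesis and hence carries no information about $\alpha$. This is why the paper uses the $n\mapsto n-1$ reduction only inside the proof of Theorem~\ref{semiprime} (to establish $\alpha x_1=0$ there, where it is not a hypothesis), and proves Proposition~\ref{anihilator} by the direct combinatorial analysis described above. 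As it stands, your proposal is a plausible research plan with the decisive step missing.
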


In order to prove it, we need some preparatory technical lemmas.
We assume that $t\in\{1,\ldots, n-3\}$. Moreover, we will suppose
that a nonzero $\alpha \in K_{0}[M_t]$ is given that satisfies the
hypotheses of the proposition. The aim is to come to a
contradiction.

Roughly speaking, the first lemma describes the reduced form of
any word of type $wx_r$ for $w$ in block form (see the convention
introduced after Theorem~\ref{tw1}), namely
\begin{equation*}
(x_nq_t)x_n(x_{1}\cdots x_{i_1}x_{n-1}\cdots x_{j_1})\cdots
x_n(x_{1}\cdots x_{i_k}x_{n-1}\cdots x_{j_k}),
\end{equation*}
with certain conditions on indices, and $x_r$ such that
$n-1\geqslant r\geqslant j_k-1$ or $r\leqslant i_k+1$. This means
that $x_r$ cannot be pushed to the left by using only reductions
(2) or (3) in such a way that $wx_r=(x_nq_t)x_n(x_{1}\cdots
x_{i_1}x_{n-1}\cdots x_{j_1})\cdots x_n x_r(x_{1}\cdots
x_{i_k}x_{n-1}\cdots x_{j_k})$ in $C_n$.

\begin{lemma}\label{multipl}
    Let $t\in\{1,\ldots, n-3\}$. Consider the word $w$ of the form
    \begin{equation*}
    w=(x_nq_t)x_n(x_{1}\cdots x_{i_1}x_{n-1}\cdots x_{j_1})\cdots x_n(x_{1}\cdots x_{i_k}x_{n-1}\cdots x_{j_k}),
    \end{equation*}
    where $i_k<i_{k-1}<\cdots<i_1<t+1<j_1<\cdots<j_k$. The word $x_nq_t$ is also
    assumed to be of the above type for $k=0$. Moreover, in every $w$ we use the convention that $i_0=t, j_0=t+1$.
     Let  $r\geqslant t$ be such that $n-1\geqslant r\geqslant j_k-1$ or $r\leqslant i_k+1$
      (so in the latter case $t\leqslant r\leqslant i_k+1\leqslant t+1$).
     Then the following holds:
    \begin{enumerate}
        \item if $n-1\geqslant r> j_k$, then $wx_r\in I_t$;
        \item if $r=j_k$, then $wx_r=w$ in $C_n$;
        \item if $j_k=r+1$, then either $wx_r=w$ in $C_n$ or the word $wx_r$ is reduced;
        \item if $j_k>r+1$, $r=t$, $i_k=t-1$, then either (for $k=1$) the word $wx_r$ has the reduced form
        $(x_nq_t)x_nx_1\cdots x_tx_{n-1}\cdots x_{j_k}$ or (for $k\geqslant 2$) $wx_r\in I_t$;
        \item if $j_k>r+1$, $r=t$, $i_k=t$, then $wx_r=w$ in $C_n$;
        \item if $j_k>r+1$, $r=t+1$, $i_k=t$, then $wx_r\in I_t$.
    \end{enumerate}
\end{lemma}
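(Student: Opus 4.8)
The plan is to argue by cases on the position of $r$ relative to the last descending block of $w$, exactly as the six alternatives are listed. Throughout I use that $w$ is reduced, so it has the block form of Corollary~\ref{wn1}/Theorem~\ref{tw1}, its last letter is $x_{j_k}$ (with the convention $j_0=t+1$ when $k=0$), and the indices satisfy $i_k<\cdots<i_1<t+1<j_1<\cdots<j_k$. Two kinds of conclusions occur: either $wx_r$ equals $w$ or stays reduced in $C_n$, or $wx_r\in I_t$. For the first kind I track the reductions $(1)$--$(5)$ of Theorem~\ref{basisCn} directly; for the second I use the homomorphism $f$ together with Lemma~\ref{lem11} and Lemma~\ref{lem12}, showing that appending $x_r$ forces $|\supp(f(wx_r))|\le n-t-2$, i.e. $wx_r\in Q_t\subseteq I_t$.

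First I dispose of the equality cases. In case (2), $r=j_k$, the word ends in $x_{j_k}x_{j_k}$ and reduction $(1)$ gives $wx_r=w$. In case (5), $r=t=i_k$; here the chain $i_k<\cdots<i_1<t+1$ forces $k=1$ and $i_1=t$ (the value $t$ can only be the single smallest term), so the last block is $x_nx_1\cdots x_t x_{n-1}\cdots x_{j_1}$ and $wx_t$ ends in $x_t(x_{n-1}\cdots x_{j_1})x_t$, a factor of type $(4t)$, which reduces away the appended letter and returns $w$. The same mechanism governs case (3), $j_k=r+1$: if the top $x_{i_k}$ of the ascending run coincides with $x_r$ (i.e. $i_k=j_k-1$), then $wx_r$ ends in $x_{i_k}(x_{n-1}\cdots x_{i_k+1})x_{i_k}$ and reduction $(4\,i_k)$ yields $wx_r=w$; otherwise $x_r=x_{j_k-1}$ merely lengthens the last descending block to $x_{n-1}\cdots x_{j_k-1}$, and one checks, using Corollary~\ref{wn1} and Lemma~\ref{lem6}, that no further reduction is triggered (the only possible interaction is with the preceding block, which either again returns $w$ or leaves a reduced word).

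The substance is in the three cases landing in $I_t$. In each I first normalize $wx_r$ by elementary moves and then read off the support drop. In case (6), $r=t+1$, $i_k=t$ (again forcing $k=1$, $i_1=t$), commuting $x_{t+1}$ leftwards past the descending tail by reductions $(2)$ turns the last block into $x_nx_1\cdots x_{t+1}x_{n-1}\cdots x_{j_1}$, and a direct evaluation of $f$ on this factor, using formula (\ref{wz1}) as a model, shows its support has exactly $n-t-2$ elements, so by Lemma~\ref{lem12} it lies in $I_t$ and hence so does $wx_r$. In case (4) with $k\ge 2$, the same index analysis forces $k=2$, $i_1=t$, $i_2=t-1$; moving the appended $x_t$ into the last block makes both blocks ascend to $x_t$, and evaluating $f$ on the resulting word again yields support of size $n-t-2$. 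In case (1), $r>j_k$, the letter $x_r$ already occurs in the last tail, producing a factor $x_r(x_{r-1}\cdots x_{j_k})x_r$ which reduces by $(5r)$; computing $f$ on the normalized word once more gives support $\le n-t-2$. Alternatively, whenever the normalization exposes an ascending run up to $x_{t+1}$, one may instead quote a factor of the form provided by Lemma~\ref{lem13} and conclude membership in $I_t$ directly from the ideal property.

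The main obstacle is bookkeeping. The equality and reduced cases demand a careful verification that appending $x_r$ triggers \emph{no} reduction other than the anticipated one, which forces a systematic use of the inequalities in Corollary~\ref{wn1} and of Lemma~\ref{lem6} to rule out collisions with the preceding block. The $I_t$ cases hinge on the explicit computation of $f$ on block words: while each individual evaluation is routine from the rule defining $f$, one must carry it through uniformly and confirm that the number of surviving variables is precisely $n-t-2$ in every sub-case, including the degenerate ones where $i_k<t$ and the support collapses only after the earlier blocks are taken into account. Handling these index constraints, particularly the arguments forcing $k=1$ in cases (5),(6) and $k\le 2$ in case (4), is where the structural hypotheses on $w$ are genuinely used.
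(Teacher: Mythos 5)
Your case division and the mechanisms you invoke are sound, and for the cases where $wx_r$ equals $w$ or stays reduced, namely (2), (5) and (3), you follow essentially the same route as the paper (its treatment of case (3) is just the explicit version of your parenthetical check against the preceding block, splitting on $j_{k-1}<j_k-1$ versus $j_{k-1}=j_k-1$ and then on whether $i_k+1<j_k-1$, since only then does $(4(j_k-1))$ fire). Where you genuinely diverge is in the cases landing in $I_t$. The paper proves case (1) by induction on the number of blocks $k$: after one round of reductions $(5r)$, $(2)$, $(3)$ the word becomes $v\,x_{r-1}\cdots x_{j_k}\,x_nx_1\cdots x_{i_k}x_{n-1}\cdots x_r$ with $v$ having $k-1$ blocks, and $vx_{r-1}\in I_t$ by the inductive hypothesis; only the base case $k=0$ passes through Lemma~\ref{lem13}, hence through $f$. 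You instead compute $|\supp(f(\cdot))|$ of the whole word and quote Lemma~\ref{lem12}. This does work --- one checks that $|\supp(f(w))|=n-t-1$ for $w$ in block form and that $\supp(f(w))\supseteq\{j_k+1,\ldots,n\}$, so composing with $f(x_r)$ on the right for $j_k<r\leqslant n-1$ kills exactly one variable --- but note that, unlike your case (6), this is not a one-factor computation: each individual block $x_nx_1\cdots x_{i_l}x_{n-1}\cdots x_{j_l}$ has $f$-support of size $n-i_l-1\geqslant n-t-1$, and the drop to $n-t-2$ appears only in the composite across all blocks (you correctly flag this for $i_k<t$). So your route trades the paper's short induction for a uniform but genuinely multi-block generalization of the computation in Lemma~\ref{lem11}; what it buys is independence from the ideal-theoretic bookkeeping, at the cost of a heavier explicit evaluation of $f$. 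Two coverage gaps to repair: you never treat the $k=1$ branch of case (4), where the asserted conclusion is that $(x_nq_t)x_nx_1\cdots x_tx_{n-1}\cdots x_{j_1}$ is the reduced form (it follows by the same commutation-plus-inspection you use elsewhere, but it is a stated alternative of the lemma); and the verification in case (3) that no reduction other than the anticipated one is triggered must actually be written out, as it is exactly where the constraints $i_{k-1}=i_k=t$, $j_k=t+2$ of the reduced subcase emerge.
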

\begin{proof}
    Parts 2. and 5. are clear.

To prove part 1., we proceed by induction on $k$ (the number of
blocks in the word $w$).  Let $n-1\geqslant r>t+1$. If $k=0$ then
$w=x_nq_t$ and
        \begin{align*}
        wx_r&\arr{(5r)}x_nx_1\cdots x_tx_{n-1}\cdots x_{r+1}x_{r-1}\cdots x_{t+1}x_r\arr{(2)}x_nx_1\cdots x_t(x_{r-1}\cdots x_{t+1})(x_{n-1}\cdots x_{r})\\
       & \arr{(3)}(x_{r-1}\cdots x_{t+2})(x_nx_1\cdots x_{t+1})(x_{n-1}\cdots
        x_{r}) .
        \end{align*}
        From Lemma \ref{lem13} we obtain $(x_{r-1}\cdots x_{t+2})(x_nx_1\cdots x_{t+1})(x_{n-1}\cdots x_{r})\in I_t$,
        as desired.\\
        So, assume that the assertion holds for every $m<k$, where $k\geqslant 1$. Consider
        \begin{equation*}
        wx_{r}=(x_nq_t)x_n(x_{1}\cdots x_{i_1}x_{n-1}\cdots x_{j_1})\cdots x_n(x_{1}\cdots x_{i_k}x_{n-1}\cdots x_{j_k})x_r,
        \end{equation*}
        for $r>j_k$. Then we have
        \begin{align*}
        x_n(x_{1}\cdots x_{i_k}x_{n-1}\cdots x_{j_k})x_{r}& \arr{(5r)}x_nx_1\cdots x_{i_k}x_{n-1}\cdots
        x_{r+1}x_{r-1}\cdots x_{j_k}x_r \\ &\arr{(2)}x_nx_1\cdots x_{i_k}x_{r-1}\cdots x_{j_k}x_{n-1}\cdots x_{r}
        \end{align*}
        From the assumptions we know that $j_{k}>i_{k} +1$  and $r-1<n-1$, so the following holds:
        \begin{equation*}
        x_nx_1\cdots x_{i_k}x_{r-1}\cdots x_{j_k}x_{n-1}\cdots x_{r}\arr{(3)}x_{r-1}\cdots x_{j_k}x_nx_1\cdots x_{i_k}x_{n-1}\cdots x_r.
        \end{equation*}
        By the assumptions $j_k<r\leqslant n-1$ and $j_{k-1}<j_k$ so $j_{k-1}<r-1\leqslant n-1$. $I_t$ is an ideal in $C_n$,
        so from the above calculation and the induction hypothesis
        for the element
        \begin{equation*}
        v=(x_nq_t)x_n(x_{1}\cdots x_{i_1}x_{n-1}\cdots x_{j_1})\cdots x_n(x_{1}\cdots x_{i_{k-1}}x_{n-1}\cdots x_{j_{k-1}})
        \end{equation*} (a word with $k-1$ blocks) the following holds in $C_n$
        \begin{equation*}
        wx_r=vx_{r-1}\cdots x_{j_k}x_nx_1\cdots x_{i_k}x_{n-1}\cdots x_r\in
        I_t.
        \end{equation*}
        Hence part 1. follows.

         To prove part 3., assume that $j_k=r+1$. Recall that $i_0=t$ and $j_0=t+1$.
         It follows that for $k=0$ we have $r=t$. In this case
         $(x_nq_t)x_r\arr{(4t)}w$.
         Hence, we can assume that $k\geqslant 1$. Then \begin{equation*}
        wx_r=(x_nq_t)x_n(x_{1}\cdots x_{i_1}x_{n-1}\cdots x_{j_1})\cdots x_n(x_{1}\cdots x_{i_k}x_{n-1}\cdots
        x_{j_k})x_{j_k-1} .
        \end{equation*}
        If $j_{k-1}<j_k-1$, then we see that the above word is reduced.\\
        Hence, assume now that $j_{k-1}=j_k-1$.
        Then the word $wx_r$ has a factor $x_{j_{k}-1}x_nx_1\cdots x_{i_k}x_{n-1}\cdots x_{j_{k}}x_{j_{k}-1}$. If $i_k+1<j_k-1$,
        then
        \begin{equation*}x_{j_{k}-1}x_nx_1\cdots x_{i_k}x_{n-1}\cdots
        x_{j_{k}}x_{j_{k}-1}\arr{(4(j_k-1))}x_{j_{k-1}}x_nx_1\cdots x_{i_k}x_{n-1}\cdots x_{j_{k}}.\end{equation*}
        It follows that $wx_r=w$.\\
        Finally, if $i_k+1\geqslant j_k-1$, then $i_k<j_{k-1}=j_k-1\leqslant i_k+1$, so that $j_{k-1}= i_{k}+1$.
        Hence $i_{k} \leqslant i_{k-1} < t+1 \leqslant j_{k-1}$ implies that
        $i_{k-1}=i_k=t$,
        $j_{k}=t+2$.
        It follows that $wx_r$ is reduced. This proves part 3.

        In the proof of the remaining assertions (parts 4. and 6.) we can assume that $k\geqslant 1$,
        because for $k=0$ it is impossible to have $t+1=j_k>r+1$ and $r\in \{
        t,t+1\}$.

         To prove part 4., assume that $j_k>r+1$, $r=t$, $i_k=t-1$. Then from the definition of $w$
        we obtain that $k\in\{1, 2\}$ and either
        $w=(x_nq_t)x_nx_1\cdots x_{t}x_{n-1}\cdots x_{j_1}x_nx_1\cdots x_{t-1}x_{n-1}\cdots x_{j_2}$,
        where $j_2>j_1>t+1$ or $w=(x_nq_t)x_nx_1\cdots x_{t-1}x_{n-1}\cdots x_{j_1}$.
        In the first case
        \begin{align*}wx_t&\arr{(2)}(x_nq_t)x_nx_1\cdots x_{t}x_{n-1}\cdots x_{j_1}x_nx_1\cdots x_{t}x_{n-1}\cdots x_{j_2}\\
       & \arr{(5t)}\cdots \arr{(51)}\arr{(5n)}(x_nq_t)x_{n-1}\cdots x_{j_1}x_nx_1\cdots x_{t}x_{n-1}\cdots x_{j_2}.\end{align*}
        From part 1. applied to $x_nq_t$ and $r=n-1$ we get $wx_r\in I_t$.\\
        In the second case $wx_t\arr{(2)}(x_nq_t)x_nx_1\cdots x_tx_{n-1}\cdots x_{j_1}$ and the last word
        is reduced.

        To prove part 6., assume that $j_k>r+1$, $r=t+1$, $i_k=t$. Then from the definition of $w$ it follows that $k=1$ and
        \begin{align*}
        wx_{t+1}&\arr{(2)}(x_nq_t)x_nx_1\cdots x_{t+1}x_{n-1}\cdots x_{j_1}\arr{(5(t+1))}x_nx_1\cdots x_tx_{n-1}\cdots
        x_{t+2}x_nx_1\cdots x_{t+1}x_{n-1}\cdots x_{j_1}\\
       & \arr{(5t)}\cdots\arr{(51)}\arr{(5n)}x_{n-1}\cdots x_{t+2}x_nx_1\cdots x_{t+1}x_{n-1}\cdots x_{j_1}\in I_t
        \end{align*}
        by Lemma~\ref{lem13}.
\end{proof}
    We continue under the assumptions of Proposition~\ref{anihilator}.
    By Theorem~\ref{tw1}, every $w\in \supp (\alpha)$ must satisfy one of the following conditions:
    \begin{enumerate}
        \item[(i)] $x_nx_1\cdots x_{i_{s-1}}x_{n-1}\cdots x_{j_{s-1}}x_nx_1\cdots x_{i_s}x_{n-1}\cdots x_{j_s}\in \suff(w)$,
        where $i_{s}<i_{s-1}<t+1<j_{s-1}<j_{s}\leqslant n-1$, or $i_s=i_{s-1}=t$
        and $j_{s-1}=t+1<j_{s}$,
        \item[(ii)] $x_nx_1\cdots x_{i_{s-1}}x_{n-1}\cdots x_{j_{s-1}}x_nx_1\cdots x_{i_s}x_{n-1}
        \cdots x_{j_s}x_nx_1\cdots x_{i_{s+1}}\in \suff(w)$,
        where \\
        $i_{s+1}<i_{s}<i_{s-1}<t+1<j_{s-1}<j_{s}\leqslant n-1$, \\
        or $i_s=i_{s-1}=t>i_{s+1}$ and $j_{s-1}=t+1<j_{s}$;\\
        or $(x_nq_t)x_nx_1\cdots x_{i_{s+1}}\in \suff(w)$ with $i_{s+1}\leqslant
        t$,
        \item[(iii)] $x_nx_1\cdots x_{i_{s-1}}x_{n-1}\cdots x_{j_{s-1}}x_nx_{n-1}\cdots x_{j_s}\in \suff (w)$,
        where $2\leqslant i_{s-1}<t+1<j_{s-1}<j_{s}\leqslant n$
        \item[(iv)] $x_nx_1x_{n-1}\cdots x_{j_{s-1}}x_nx_{n-1}\cdots x_{j_{s}}\in \suff(w)$,
        where $2\leqslant t+1<j_{s-1}<j_s\leqslant n$,
        \item[(v)] $b=x_nx_{n-1}\cdots x_{j_{s}}$, where $t+1<j_s\leqslant n$,
        \item[(vi)]$b=1$, i.e. $w=(x_nq_t)^k$.
    \end{enumerate}

Hence, we can write
$\alpha=\alpha_{(i)}+\alpha_{(ii)}+\alpha_{(iii)}+\alpha_{(iv)}+\alpha_{(v)}+\alpha_{(vi)}$,
where $\supp(\alpha_k)$ consists of all words of the form $(k)$
listed above, which are in the support of the element $\alpha$. We
will prove that for every $k\in\{(i),\ldots, (vi)\}$ the element
$\alpha_{(k)}$ is zero,
which will contradict the supposition that $\alpha\neq 0$. \\
First, we prove the following result concerning $\alpha x_1$.

\begin{lemma} \label{6types}
    Let $\alpha$ be as described above. Then
    \begin{enumerate}
        \item $\alpha_{(vi)}=0$;
        \item $\alpha_{(i)}=\alpha_{(i), i_s=1}$;
        \item $\alpha_{(ii)}=\alpha_{(ii), i_{s+1}=1}$,
    \end{enumerate}
where $\alpha_{(i)} = \alpha_{(i),i_s=1}+ \alpha_{(i), i_s>1}$ and
$\supp(\alpha_{(i), i_s=1})$ consists of all words from the
support of $\alpha_{(i)}$ with $i_s=1$, while $\supp (\alpha_{(i),
i_s>1})$ does not contain such words; similarly $\alpha_{(ii),
i_{s+1}=1}$ involves all words from the support of $\alpha_{(ii)}$
with $i_{s+1}=1$ (see the description of $\alpha_{(i)}$,
$\alpha_{(ii)}$).
\end{lemma}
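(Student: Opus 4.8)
The plan is to extract everything from the single hypothesis $\alpha x_1=0$ by computing, for each $w\in\supp(\alpha)$, the reduced form of $wx_1$ and then matching coefficients. Since $t\geqslant 1$, the index $r=1$ lies below the range $r\geqslant t$ covered by Lemma~\ref{multipl} whenever $t\geqslant 2$, so I would first carry out the relevant computation directly. As $x_1$ is the smallest generator, reduction (2) lets it commute to the left past every generator $x_m$ with $3\leqslant m\leqslant n-1$, in particular past any maximal decreasing run $x_{n-1}\cdots x_{j_s}$ (here $j_s>t+1\geqslant 2$, so all its indices are at least $3$). When the travelling $x_1$ reaches an increasing run $x_1x_2\cdots x_{i_s}$ it is absorbed by the arrow relation $x_1x_2x_1=x_1x_2$ (a reduction of type (4)), or by idempotency $x_1x_1=x_1$ (type (1)), leaving the word unchanged; Lemma~\ref{multipl}, together with Lemma~\ref{lem13}, would be invoked only for the borderline configurations in which $wx_1$ instead falls into $I_t$.

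The outcome of this computation is a clean dichotomy, which is the heart of the argument. A word whose final (complete or incomplete) block already carries an increasing part of length $\geqslant 2$ is a fixed point: for type (vi), for type (i) with $i_s>1$, and for type (ii) with $i_{s+1}>1$ one gets $wx_1=w$ in $C_n$, and the value stays in $\tilde{M}_t$ rather than collapsing to $\theta$. By contrast, a word whose final block has no increasing part — types (iii), (iv), (v), including the boundary case $j_s=n$ in which the decreasing run is empty and $b$ ends in a bare $x_n$ — absorbs the new $x_1$ as an increasing part of length exactly one: such a $w$ is sent either into $I_t$ (hence to $\theta$), or, when $j_s<n$, to a word of type (i) with $i_s=1$, or, when $j_s=n$, to a word of type (ii) with $i_{s+1}=1$. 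In every case the relevant index of the image equals $1$.

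With this in hand I would finish by coefficient matching in the identity $\alpha x_1=0$. The fixed words — type (vi), type (i) with $i_s>1$, and type (ii) with $i_{s+1}>1$ — are pairwise distinct, and each is distinct from every image of the growing types, whose relevant index is forced to be $1$. Hence each such word $w$ occurs in $\alpha x_1$ with coefficient exactly its coefficient in $\alpha$; since $\alpha x_1=0$, all these coefficients vanish, which is precisely $\alpha_{(vi)}=0$, $\alpha_{(i)}=\alpha_{(i),i_s=1}$ and $\alpha_{(ii)}=\alpha_{(ii),i_{s+1}=1}$. The words with index $1$ are deliberately left uncontrolled, because they also receive contributions from types (iii), (iv), (v) and so may cancel without their own coefficients being zero.

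The main obstacle I anticipate is the bookkeeping that guarantees there is no hidden cancellation: I must verify that $w\mapsto wx_1$ is the identity on the fixed words and that no word of a different type, nor a different word of the same type, can reduce to one of them. This reduces to showing that the absorption mechanism always creates a missing increasing part of length exactly one, never of length $\geqslant 2$, and that the boundary parameters ($i_s=t$ versus $i_s=t-1$, the case $j_s=n$, and the convention $i_0=t,\ j_0=t+1$) are treated correctly — exactly the case distinctions packaged in Lemma~\ref{multipl}, which here must be transported to the out-of-range value $r=1$.
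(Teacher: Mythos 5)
Your proposal is correct and follows essentially the same route as the paper: compute the reduced form of $wx_1$ for each of the six types, observe that types (i), (ii), (vi) are fixed while type (iv) falls into $I_t$ and types (iii), (v) produce words with $i_s=1$ (if $j_s<n$) or $i_{s+1}=1$ (if $j_s=n$), and then match coefficients in $\alpha x_1=0$. The only cosmetic difference is that the paper records that \emph{all} type (i) and (ii) words are fixed by $x_1$ (via reduction $(41)$), which settles at once the "no hidden cancellation" bookkeeping you flag as the remaining obstacle.
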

\begin{proof} We know that $\alpha x_1=0$ in $K_0[M_t]$. We calculate the reduced forms of $wx_1$
for all $w\in \supp (\alpha_{k})$, for $k\in\{(i),\ldots, (vi)\}$.
It will be more convenient to consider certain suffixes
of the given word $w$.
\begin{itemize}
    \item $x_nx_1\cdots x_{i_s}x_{n-1}\cdots x_{j_s}x_1
    \arr{(41)}x_nx_1\cdots x_{i_s}x_{n-1}\cdots x_{j_s}$,
     so $\alpha_{(i)}x_1=\alpha_{(i)}$;
    \item $x_nx_1\cdots x_{i_{s+1}}x_1\arr{(41)}x_nx_1\cdots x_{i_{s+1}}$, whence $\alpha_{(ii)}x_1=\alpha_{(ii)}$;
    \item $x_nx_1\cdots x_{i_{s-1}}x_{n-1}\cdots x_{j_{s-1}}x_nx_{n-1}\cdots x_{j_s}x_1\arr{(*)}
    \begin{cases}x_nx_1\cdots x_{i_{s-1}}x_{n-1}\cdots x_{j_{s-1}}x_nx_1
   \textrm{ for }j_{s}=n\\x_nx_1\cdots x_{i_{s-1}}x_{n-1}\cdots x_{j_{s-1}}x_nx_1x_{n-1}\cdots x_{j_s}\textrm{ for
   }j_s<n ,
    \end{cases}$
     where $(*)$ denotes equality in the first case and reduction $(2)$ in the second case.
    We see that in the first case $(j_s=n)$ the obtained word is reduced of type $(ii)$ with $i_{s+1}=1$.
    In the second case $(j_s<n)$ the word is reduced of type $(i)$ with $i_s=1$.
    \item $\displaystyle\begin{aligned}[t] x_nx_1x_{n-1}\cdots
    x_{j_{s-1}}x_nx_{n-1}\cdots x_{j_{s}}x_1 & \arr{(2)}x_nx_1x_{n-1}
    \cdots x_{j_{s-1}}x_nx_1x_{n-1}\cdots x_{j_{s}}\\
    & \arr{(51), (5n)}x_{n-1}\cdots x_{j_{s-1}}x_nx_1x_{n-1}\cdots
    x_{j_{s}}.\end{aligned}$

In this case the obtained form of the element $wx_1$ has a factor
of the form
    \begin{equation*}
     (x_nq_t)x_n(x_{1}\cdots x_{i_1}x_{n-1}\cdots x_{j_1})\cdots x_n(x_{1}\cdots x_{i_{k}}x_{n-1}\cdots x_{j_{k}})x_{n-1},
    \end{equation*} where $k=s-2$, $j_k<n-1$ (notice that $s\geqslant 2$). Assertion 1. of
    Lemma \ref{multipl} implies that $wx_1=0$ in $K_{0}[M_t]$ for every $w\in\supp(\alpha_{(iv)})$.
    \item $x_nx_{n-1}\cdots x_{j_{s}}x_1\arr{(*)}\begin{cases}
    x_nx_{n-1}\cdots x_{j_{s}}x_1\textrm{ for }j_s=n\\x_nx_1x_{n-1}\cdots x_{j_{s}} \textrm{ for
    }j_s<n ,
    \end{cases}$\\
    where $(*)$ denotes equality in the first case and reduction $(2)$ in the second case.
    We see that in the first case the  word $wx_1$ is of the reduced form $(ii)$ with $i_{s+1}=1$,
    whereas in the second case we obtain a reduced word of type $(i)$ with $i_s=1$.
    \item $(x_nq_t)x_1\arr{(41)}x_nq_t$, so $\alpha_{(vi)}x_1=\alpha_{(vi)}$.
\end{itemize}
From the above calculations we see that in $K_{0}[M_t]$
\begin{equation*}0=(\alpha_{(ii)}+\alpha_{(iii),
j_s=n}x_1+\alpha_{(v),j_s=n}x_1)+(\alpha_{(i)}+\alpha_{(iii),
j_s<n}x_1+\alpha_{(v), j_s<n}x_1)+\alpha_{(vi)}.
\end{equation*}
It is clear that the terms from $\alpha_{(vi)}$ are the only terms
of type $(vi)$ in the above sum, so $\alpha_{(vi)}=0$. Moreover,
reduced forms of elements from $\alpha_{(i)}+\alpha_{(iii),
j_s=n}x_1+\alpha_{(v),j_s=n}x_1$ are of type $(i)$, whereas
reduced forms of words in the sum $\alpha_{(ii)}+\alpha_{(iii),
j_s<n}x_1+\alpha_{(v), j_s<n}x_1$ are of type $(ii)$. It follows
that these sums are $0$ in $K_{0}[M_t]$. It is not difficult to
see that every word from $\supp(\alpha_{(iii), j_s=n}x_1)$ and
$\supp(\alpha_{(v),j_s=n}x_1)$ has reduced form ending with
$x_nx_1$, so $\alpha_{(ii)}=\alpha_{(ii),i_{s+1}=1}$. Similarly,
every (reduced) word from $\supp(\alpha_{(iii), j_s<n}x_1)$ and
$\supp(\alpha_{(v), j_s<n}x_1)$ has a suffix of the form
$x_nx_1x_{n-1}\cdots x_j$ for some $j$, so
$\alpha_{(i)}=\alpha_{(i), i_s=1}$.\end{proof}

It follows that $\supp (\alpha) =
\supp(\alpha_{(i),i_s=1})\cup\supp(\alpha_{(ii),
i_{s+1}=1})\cup\supp(\alpha_{(iii)})\cup\supp(\alpha_{(iv)})\cup\supp(\alpha_{(v)})$.
Let $m=\textrm{min}\{j_s :w\in\supp(\alpha)\}$, with $j_s$ defined
for every word $w$ as in cases (i)-(vi) listed before
Lemma~\ref{6types}. Then $n\geqslant m\geqslant t+1\geqslant2$. By
our assumption, also $\alpha x_{m-1}=0$. We calculate the reduced
form of words $wx_{m-1}$, where $w\in \supp(\alpha)$. By $s_k$ we
mean an appropriately chosen suffix of the word from the support
of $\alpha_{k}$.
We consider the following two cases.\\
{\bf Case I.} Assume that $m=j_{s}$.
\begin{enumerate}
    \item First, suppose that $j_{s-1}=j_s-1$. Then
    \begin{enumerate}
        \item   $s_{(i)}x_{m-1}=x_nx_1\cdots x_{i_{s-1}}x_{n-1}\cdots x_{j_{s}-1}x_nx_1\cdots x_{i_s}x_{n-1}\cdots x_{j_s}x_{j_{s}-1}$\\
        If $i_s+1<j_{s}-1$, then $s_{(i)}x_{m-1}\arr{(4 j_s)}s_{(i)}$.\\
        Otherwise we have $i_s+1=j_{s}-1$, which implies that $j_s-2\leqslant i_{s-1}<j_s-1$, so it follows easily that $i_{s-1}=t, j_{s-1}=t+1$.
        In this case $s_{(i)}x_{m-1}=x_nq_tx_nx_1\cdots x_tx_{n-1}\cdots
        x_{t+1}=(x_nq_t)^2$.
        \item $s_{(ii)}x_{m-1}=x_nx_1\cdots x_{i_{s-1}}x_{n-1}\cdots x_{j_{s-1}}x_nx_1\cdots x_{i_s}x_{n-1}\cdots x_{j_s}x_nx_1x_{j_s-1}$.\\
         Suppose that $j_{s-1}+1=j_s\leqslant 3$. Since $j_{s-1}\geqslant t+1$, it follows that $t=1$ and $s_{(ii)}$
         must be the word
         $x_nx_1x_{n-1}\cdots x_2x_nx_1x_{n-1}\cdots x_{3}x_nx_1$, which is not reduced.
         Therefore we can assume that $n-1\geqslant j_{s}>3$ and
        \begin{equation*}s_{(ii)}x_{m-1}\arr{(3)}x_nx_1\cdots x_{i_{s-1}}x_{n-1}\cdots x_{j_{s-1}}x_nx_1
        \cdots x_{i_s}x_{n-1}\cdots x_{j_s}x_{j_s-1}x_nx_1.\end{equation*}
        It is clear that the reduced word $x_nx_1\cdots
        x_{i_{s-1}}x_{n-1}\cdots x_{j_{s-1}}x_nx_1\cdots
        x_{i_s}x_{n-1}\cdots x_{j_s}$ is of the form $(ii)$. From the
        previous case we obtain
        \begin{equation*}s_{(ii)}x_{m-1}=\begin{cases}s_{(ii)}\textrm{ if }
        i_s+1<j_{s}-1\\(x_nq_t)^{2}x_nx_1\textrm{ otherwise }.
        \end{cases}\end{equation*}
        \item $s_{(iii),(iv),(v)}x_{m-1}=x_{j_{s}-1}x_nx_{n-1}\cdots x_{j_s}x_{j_{s}-1}\arr{(4 (j_{s}-1))}x_{j_{s}-1}x_nx_{n-1}\cdots x_{j_s}$.
        It follows that for every $w$ of the form $(iii)$, $(iv)$ or $(v)$ we have $wx_{m-1}=w$.
    \end{enumerate}
    \item Secondly, assume that $j_{s-1}<j_s-1$. Then
     \begin{enumerate}
        \item for every $w$ of the form $(i)$ $wx_{m-1}$ is reduced;
        \item since $j_s-1>j_{s-1}\geqslant2$, then
        \begin{align*}
        s_{(ii), i_{s+1}=1}x_{m-1} &=x_nx_1\cdots x_{i_{s-1}}x_{n-1}\cdots x_{j_{s-1}}x_nx_1\cdots x_{i_s}x_{n-1}\cdots x_{j_s}x_nx_1x_{j_s-1}\\
        & \arr{(3)}x_nx_1\cdots x_{i_{s-1}}x_{n-1}\cdots x_{j_{s-1}}x_nx_1\cdots x_{i_s}x_{n-1}\cdots x_{j_s}x_{j_s-1}x_nx_1.
        \end{align*}
        It follows that the reduced form of $wx_{m-1}$, where $w\in \supp(\alpha_{(ii),
        i_{s+1}=1})$, has a suffix as above;
        \item similarly, it is clear that $w x_{m-1}$ is reduced for every $w\in \supp(\alpha_{k})$,
        where $k\in \{(iii), (iv), (v)\}$.
     \end{enumerate}
 \item  Assume that $s_{(ii)}x_{m-1}=(x_nq_t)x_nx_1x_{m-1}$. In this case $m=j_s=t+1$. If
 $t=1$, then $s_{(ii)}x_{t}=s_{(ii)}$ in $C_{n}$. Moreover, if $t\geqslant 3$ then $(x_nq_t)x_nx_1x_t\arr{(4t)}(x_nq_t)x_nx_1$,
 so also $s_{(ii)}x_t=s_{(ii)}$. Finally, if $t=2$, it is easy to see that $wx_t$ is in the reduced form.
\end{enumerate}
We summarize the foregoing observations as follows.
\begin{corollary}\label{m=j_s}
    Let $m=j_s$ be as described above. Consider an element $w$ from the
    support of $\alpha_{(i),i_s=1}$, $\alpha_{(iii)}$, $\alpha_{(iv)}$ or
    $\alpha_{(v)}$.
    \begin{enumerate}
    \item[1.] If $j_{s-1}=j_s-1$, then $wx_{m-1}=w$ in $C_n$ or $wx_{m-1}$ is of the form $(vi)$.
    \item[2.] If $j_{s-1}<j_s-1$, then $wx_{m-1}$ is reduced.
    \end{enumerate}
Assume now that $w$ is in the support of $\alpha_{(ii),
i_{s+1}=1}$.
\begin{enumerate}
    \item[3.] If $(x_nq_t)x_nx_1\in\suff(w)$, then either (for $t\neq 2$) $wx_{m-1}=w$ in $C_n$ or $wx_{m-1}$
    has the reduced form $(x_nq_2)^Mx_nx_1x_2$, for some $M\geqslant
    1$.
    \item[4.] If $j_{s-1}=j_s-1$, then either $wx_{m-1}=w$ in $C_n$ or $wx_{m-1}$ is of the form $(x_nq_t)^Mx_nx_1$,
    where $M\geqslant 1$.
    \item[5.] If $j_{s-1}<j_s-1$, then $w=vx_nx_1$ for some reduced word $v$ and $wx_{m-1}$ has the reduced form $vx_{m-1}x_nx_1$.
\end{enumerate}
In particular, words from the supports of $\alpha_{(i),i_s=1}$,
$\alpha_{(ii), i_{s+1}=1}$,  $\alpha_{(iii)}$, $\alpha_{(iv)}$ and
$\alpha_{(v)}$ multiplied by $x_{m-1}$ have reduced forms ending
with $x_{n-1}\ldots x_{j}$, $x_{j}x_nx_1$  or $x_{j}x_nx_1x_2$,
where $j\leqslant m$.
\end{corollary}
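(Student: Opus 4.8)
The statement is a bookkeeping summary of the reduction computations carried out just above, so the plan is to compute, for each $w$ in the listed supports with $j_s=m$, the $S$-reduced form of $wx_{m-1}=wx_{j_s-1}$ directly from the Gr\"obner basis of Theorem~\ref{basisCn} and Corollary~\ref{podstawowy wniosek}. Since everything to the left of the last block (and, for type (ii), of the trailing $x_nx_1$) is unaffected, I would only track an appropriate suffix $s_{(k)}$ of $w$ and append $x_{j_s-1}$ to it. The organizing principle throughout is the dichotomy between $j_{s-1}=j_s-1$ and $j_{s-1}<j_s-1$: in the first case appending $x_{j_s-1}$ produces a factor on which a type-$(4(j_s-1))$ reduction fires, whereas in the second case no reduction is available at the tail.

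First I would settle types (i), (iii), (iv), (v) (items 1 and 2). For $j_{s-1}=j_s-1$, the suffix $\cdots x_{j_s}x_{j_s-1}$ acquires a factor $x_{j_s-1}\cdots x_{j_s}x_{j_s-1}$, and the reduction of type $(4(j_s-1))$ returns $w$; the single degeneration is $i_s+1=j_s-1$, which by the block inequalities forces $i_{s-1}=t$, $j_{s-1}=t+1$ and collapses the suffix to $(x_nq_t)^2$, i.e. a word of type (vi). For $j_{s-1}<j_s-1$ no such cancellation occurs, so $wx_{m-1}$ is already reduced. Both verifications are immediate against Corollary~\ref{podstawowy wniosek}.

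The delicate part, which I expect to be the main obstacle, is type (ii) (items 3--5), where $w$ carries an extra trailing $x_nx_1$. Here $x_{j_s-1}$ must first be pushed left past $x_nx_1$ by a reduction of type $(3)$ --- admissible exactly because $j_s-1>j_{s-1}\geqslant 2$ --- after which the analysis of types (i)/(iii)/(iv)/(v) applies to the part preceding $x_nx_1$. Thus for $j_{s-1}<j_s-1$ one gets $wx_{m-1}=vx_{m-1}x_nx_1$ with $v$ reduced (item 5), and for $j_{s-1}=j_s-1$ one gets either $w$ or the collapse, which by Corollary~\ref{wn1} (all preceding blocks then equal $x_nq_t$) takes the form $(x_nq_t)^Mx_nx_1$ (item 4). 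The genuinely exceptional sub-case is the suffix $(x_nq_t)x_nx_1$, forcing $m=j_s=t+1$: for $t\geqslant 3$ a type-$(4t)$ reduction absorbs $x_t$ and returns $w$, for $t=1$ one has $wx_{m-1}=w$ trivially, while for $t=2$ the word $wx_{m-1}$ stays reduced with the exotic tail $x_nx_1x_2$, giving the shape $(x_nq_2)^Mx_nx_1x_2$ (item 3). The real hazard is discipline over the admissibility conditions --- the inequalities $t+1\leqslant j_{s-1}<j_s\leqslant n-1$ and $j_s-1>j_{s-1}$ --- so that one never applies a length-shortening reduction to an already reduced word. Finally, the \emph{In particular} clause is read off by inspecting the five outcomes: each nonzero $wx_{m-1}$ terminates in $x_{n-1}\cdots x_j$, $x_jx_nx_1$, or $x_jx_nx_1x_2$ with $j\leqslant m$, which is precisely the uniform tail information needed for the coefficient comparison in $\alpha x_{m-1}=0$ that follows.
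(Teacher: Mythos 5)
Your proposal follows exactly the paper's own route: the corollary is proved there by the case-by-case computation labelled Case I immediately preceding it, organized by the same dichotomy $j_{s-1}=j_s-1$ versus $j_{s-1}<j_s-1$, with the same degenerate collapse to $(x_nq_t)^2$ (type (vi)) when $i_s+1=j_s-1$, the same push of $x_{j_s-1}$ past the trailing $x_nx_1$ by a type (3) reduction for words of type (ii), and the same three-way split $t=1$, $t=2$, $t\geqslant 3$ for the exceptional suffix $(x_nq_t)x_nx_1$. The one point where your justification is slightly too coarse is the admissibility of reduction (3) on the tail $x_nx_1x_{j_s-1}$: your inequality $j_s-1>j_{s-1}\geqslant 2$ does not cover the subcase $j_{s-1}=j_s-1$ with $j_s=3$, which the paper disposes of separately by noting that such a word would force $t=1$ and fail to be reduced, hence cannot occur in $\supp(\alpha)$.
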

\noindent {\bf Case II.} Now assume that $m<j_s$. In particular
$m-1<n-1$.

We claim that if $w$ is a word in the support of
$\alpha_{(i),i_s=1}$, $\alpha_{(iii)}$, $\alpha_{(iv)}$ or
$\alpha_{(v)}$, then $wx_{m-1}$ is $0$ in $K_{0}[M_t]$ or its
reduced form has a suffix of the form $x_{n-1}\cdots x_j$ for some
$j>m$. Moreover, if $w$ is in
$\supp(\alpha_{(ii), i_{s+1}=1})$, then $wx_{m-1}$ is $0$ or $\suff(wx_{m-1})=x_{n-1}\cdots x_jx_nx_1$ for some $j>m$.\\
The idea is to reduce words by pushing $x_{m-1}$ to the left and
then to use Lemma \ref{multipl}.  As before, by $w_k$ we denote a
suffix of a word of type $(k)$.
\begin{enumerate}
\item[(a)] $w_{(i)}x_{m-1}=(x_nq_t)x_n(x_{1}\cdots x_{i_1}x_{n-1}\cdots
x_{j_1})\cdots x_n(x_{1}\cdots x_{i_s}x_{n-1}\cdots
x_{j_s})x_{m-1}$. As long as $j_k-1>m-1>i_k+1$ ($k=1,\ldots, s+1$)
we use reductions $(2)$ and $(3)$ to push $x_{m-1}$ to the left.
After this procedure we obtain a word with a prefix $vx_{m-1}$,
where $v$ is exactly a word from Lemma \ref{multipl}, for some
$k_0$ and $r=m-1$. By the assumption $j_s\neq m$ (hence, it is
impossible that $k_0=s$ and $j_{k_0}=r+1$), so applying Lemma
\ref{multipl} we obtain that $w_{(i)}x_{m-1}$ is either in $I_t$
or its reduced form ends with $x_{n-1}\cdots x_{j_s}$, $j_s>m$.
\item [(b)] Since $t+1\leqslant m<j_s$, we must have $w\neq(x_nq_t)^Mx_nx_1$. Then \begin{align*}
w_{(ii)}x_{m-1}=x_nx_1\cdots x_{i_{s-1}}x_{n-1}\cdots
x_{j_{s-1}}x_nx_1\cdots x_{i_s}x_{n-1}\cdots x_{j_s}x_nx_1x_{m-1}.
\end{align*}
If $m-1=1$, then  $w_{(ii)}x_{m-1}=w_{(ii)}$ in $C_n$, and hence it has a suffix $x_{n-1}\cdots x_{j_s}x_nx_1$, $j_s>m$.\\
If $m-1=2$, then $j_s>3$ and $t\leqslant 2$. From the form of
$w_{(ii)}$ we see that in this case $i_s>1$ and of course
$i_s\leqslant 2$. It follows that $i_s=2$. Then
\begin{equation*}
w_{(ii)}x_{m-1}\arr{(42), (41), (4n)}x_nx_1\cdots
x_{i_{s-1}}x_{n-1}\cdots x_{j_{s-1}}x_{n-1}\cdots
x_{j_s}x_nx_1x_{2}.
\end{equation*}
Applying assertion 1. of Lemma \ref{multipl}, it follows that this word is in $I_t$.\\
If $m-1>2$ then $s_{(ii)}x_{m-1}\arr{(3)}x_nx_1\cdots
x_{i_{s-1}}x_{n-1}\cdots x_{j_{s-1}}x_nx_1\cdots
x_{i_s}x_{n-1}\cdots x_{j_s}x_{m-1}x_nx_1$. Using the observation
made in the previous case and Lemma \ref{multipl},  we get that
either $w_{(ii)}x_{m-1}\in I_t$ or its reduced form has a suffix
$x_{n-1}\cdots x_{j_s}x_nx_1$, for $j_s>m$.
\item[(c)] Every word $w\in\supp(\alpha_{(iii)})\cup\supp(\alpha_{(iv)})\cup\supp(\alpha_{(v)})$ can be written as
 $w=vx_nx_{n-1}\cdots x_{j_s}$, where $v$ has a block form
as in Lemma \ref{multipl}. Then
$wx_{m-1}\arr{(2)}vx_{m-1}x_nx_{n-1}\ldots x_{j_s}$. Pushing
$x_{m-1}$ to the left by using reductions (2) and (3) we can apply
Lemma \ref{multipl}. It follows that either $wx_{m-1}\in I_t$ or
its reduced form has a suffix $x_nx_{n-1}\cdots x_{j_s}$, for
$j_s>m$.
\end{enumerate}
This completes the proof of our claim in Case II.\\

By our assumptions (of Proposition~\ref{anihilator}), we know that
$\alpha x_{m-1}=0$ in $K_{0}[M_t]$. From the above discussion it
follows that for every  $w\in \supp (\alpha)$ either $wx_{m-1}$ is
$0$ (and it is possible only if $m<j_s$) or a suffix of the
reduced form of $wx_{m-1}$ is equal to $x_jx_nx_1$, $x_jx_nx_1x_2$
(only if $w\in \supp (\alpha_{(ii)}))$, or to $x_{n-1}\cdots x_j$.
Moreover, $j\leqslant m$ if and only if in the word $w$ we have
$j_s=m$ (see the description of possible types of words). It
follows that after multiplying by $x_{m-1}$ the sum of all
elements with $j_s=m$ vanishes.

Assume that $v$, $z$ are reduced words such that
$j_{s_v}=j_{s_z}=m$ (here $j_{s_v}, j_{s_z}$ are  defined for $v$
and $z$ as in the list of possible types (i)-(vi) listed before
Lemma~\ref{6types}) and $vx_{m-1}=zx_{m-1}$ holds in $C_n$. We use
the proof of Corollary~\ref{m=j_s} to conclude that $v=z$. Let $u$
be the reduced form of $vx_{m-1}=zx_{m-1}$.
\begin{itemize}
    \item Assume $u$ is of type $(i)$, $(iii)$, $(iv)$ or $(v)$. If $u$ has a suffix $x_{m-1}$,
    then it follows that $j_{s-1}<j_s-1$ and $vx_{m-1}$, $zx_{m-1}$ are reduced, so that $v=z$.
    Otherwise $vx_{m-1}=v$ and $zx_{m-1}=z$, so also $v=z$.
    \item Assume $u$ is of type $(vi)$. Then $j_{s-1}=j_{s}-1$ and $z=v$
    are of the form
    $(x_nq_t)^Mx_nx_1\cdots x_tx_{n-1}\ldots x_{t+2}$.
    \item Assume $u$ is of type $(ii)$.

    If $m\neq t+1$, then it follows that $\suff(u)=x_jx_nx_1$ for $j\in\{m-1, m\} $.
    If $j=m$, then $v=vx_{m-1}$ and $zx_{m-1}=z$ in $C_n$, so the assertion holds.
    If $j=m-1$, it follows that for $v=v_0x_nx_1$, we have
    $vx_{m-1}=v_0x_{m-1}x_nx_1$ in $C_n$ and the latter word is the reduced form.
    It is clear that $v=z$ also in this case.

    Otherwise $m=t+1$. Then we consider only part 3. in Corollary~\ref{m=j_s}.
   It is clear that in this case if $vx_{m-1}=zx_{m-1}$, then $v=z$.
\end{itemize}

 We have shown that for every pair of
words $v,z$ with $j_{s_v}=j_{s_z}=m$ if $vx_{m-1}=zx_{m-1}$, then
$v=z$. This implies that $\supp (\alpha)$ has no words with
$j_s=m$, which contradicts the definition of $m$. Hence, the
assertion of Proposition~\ref{anihilator} has been
proved.\\

We will also need another technical observation.

\begin{lemma} \label{x1 razy}
We have $x_{1}M_{t}\subseteq M_{t}\cup I_{t}$.
\end{lemma}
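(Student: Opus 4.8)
The plan is to turn the left multiplication by $x_1$ into a right multiplication by $x_t$ by means of one of the involutions already available, and then to push the appended generator leftward through the block structure of the elements of $B_t$. First I would dispose of the two extreme values of $t$. For $t=n-2$ the set $\tilde{M}_{n-2}=M_{n-2}\setminus\{\theta\}$ is an ideal of $C_n$ (Corollary~\ref{struktura}), so $x_1\tilde{M}_{n-2}\subseteq\tilde{M}_{n-2}\subseteq M_{n-2}$ and nothing is to be shown. For $t=0$, Lemma~\ref{lemI_{-1}} gives $I_{-1}=\tilde{M_0}\cup I_0$; since $\tilde{M_0}\subseteq I_{-1}$ and $I_{-1}$ is an ideal, every $x_1w$ with $w\in\tilde{M_0}$ already lies in $I_{-1}=M_0\cup I_0$. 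So I would assume $t\in\{1,\dots,n-3\}$. In this range $w\in\tilde{M_t}\subseteq I_{t-1}$ by Proposition~\ref{stw2}, and $I_{t-1}$ is an ideal, whence $x_1w\in I_{t-1}$; it therefore suffices to prove that if $x_1w\notin I_t$ then $x_1w\in\tilde{M_t}$.

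The key simplification is to apply the involution $\chi_t=\sigma^{t+1}\tau$ of Corollary~\ref{wn4}. It is an anti-automorphism of $C_n$ with $\chi_t(x_1)=x_t$ (because $\tau(x_1)=x_{n-1}$ and $\sigma^{t+1}(x_{n-1})=x_t$), it fixes every power $(x_nq_t)^k$ and interchanges $A_t$ and $B_t$, so $\chi_t(\tilde{M_t})=\tilde{M_t}$; moreover $\chi_t(I_t)=I_t$ by Lemma~\ref{lem14} applied with $i=t+1$. Hence $\chi_t$ maps $\tilde{M_t}\cup I_t$ bijectively onto itself, and $\chi_t(x_1w)=\chi_t(w)x_t$ with $\chi_t(w)\in\tilde{M_t}$. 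Thus the assertion is equivalent to $vx_t\in\tilde{M_t}\cup I_t$ for every $v\in\tilde{M_t}$, i.e. to a statement about right multiplication by $x_t$, which is exactly the setting of Lemma~\ref{multipl}.

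Writing $v=a(x_nq_t)^kb$ with $a\in A_t$, $b\in B_t$, $k\geq 1$, the product $vx_t$ only affects the tail, so I would analyse $bx_t$ according to $\suff_1(b)$. If $b=1$ or $\suff_1(b)=x_t$, then $x_t$ is absorbed: $(x_nq_t)x_t\arr{(4t)}x_nq_t$ and $x_tx_t\arr{(1)}x_t$, so $vx_t=v\in\tilde{M_t}$. If $\suff_1(b)=x_j$ with $j\geq t+2$ (an ending inside a descending run) or $b$ ends in an isolated $x_n$, the suffix $(x_nq_t)x_n(\cdots)x_t$ has the block form of Lemma~\ref{multipl} with $r=t$, and its parts yield either $vx_t\in\tilde{M_t}$ or $vx_t\in I_t$. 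The remaining cases are $\suff_1(b)=x_j$ with $j<t$, where $x_t$ sits just after an ascending run $x_1\cdots x_j$. Here I would move $x_t$ to the left, first across the separator $x_n$ by reduction $(3)$ (note $x_nx_1\cdots x_jx_t\arr{(3)}x_tx_nx_1\cdots x_j$, legitimate since $j+1<t<n-1$), then across the preceding descending run by reduction $(2)$, until $x_t$ reaches and extends the ascending part to $x_1\cdots x_t$; this produces a reduced word still carrying the prefix factor $x_nq_t$, hence an element of $\tilde{M_t}$, unless the intervening letters force $|\supp(f(vx_t))|\leq n-t-2$, in which case $vx_t\in Q_t\subseteq I_t$ by Lemma~\ref{lem12} (equivalently, the word reduces to a configuration of the type handled by Lemma~\ref{lem13}). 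An induction on the number of blocks of $b$, patterned on the proof of Lemma~\ref{multipl}, organises these reductions.

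The main obstacle is precisely this last family of cases. When $j<t-1$ the hypotheses of Lemma~\ref{multipl} fail (one has neither $r\geq j_k-1$ nor $r\leq i_k+1$ for the last block), so the lemma cannot be invoked and the fate of $x_t$ must be tracked by hand across the separators $x_n$ using reduction $(3)$. The delicate point is to decide, in each block configuration, whether the displaced $x_t$ eventually merges into an ascending run (keeping the word in $\tilde{M_t}$), is annihilated by a type $(4t)$ reduction (returning $v$), or lowers $|\supp(f(\,\cdot\,))|$ and so lands in $I_t$. Establishing this trichotomy, together with checking that the resulting word is genuinely reduced and of the canonical form of Theorem~\ref{tw1}, is where the real work lies; the involution step and the two extreme values of $t$ are routine by comparison.
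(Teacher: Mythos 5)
Your proposal is correct and follows essentially the same route as the paper: the paper's proof likewise applies the involution $\chi_t$ (using Corollary~\ref{wn4} and Lemma~\ref{lem14}) to reduce the claim to $M_tx_t\subseteq M_t\cup I_t$, and then pushes $x_t$ to the left through the blocks of $b\in B_t$ via reductions (2) and (3), invoking Lemma~\ref{multipl} (the cases with $r=t$) exactly as you describe. The case analysis you flag as "the real work" is precisely what the paper also leaves as a sketch ("the method used in the proof of Case II"), so your write-up is, if anything, more detailed than the original.
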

\begin{proof}
Using the involution $\chi_t $ from Corollary~\ref{wn4}, since
$\chi_t (I_{t})\subseteq I_{t}$ by Lemma~\ref{lem14}, it is enough
to show that $M_tx_t\subseteq M_{t}\cup I_{t}$, or in other words
$(x_nq_t)bx_t \in \langle x_nq_t\rangle B_t \cup I_{t}$ for every
$b\in B_{t}$. This is easily shown by applying the method used in
the proof of Case II (pushing $x_t$ to the left in the considered
word and then applying cases 4. and 5. of Lemma~\ref{multipl}).
\end{proof}

For any $K$-algebra $A$, let ${\mathcal P}(A)$ denote the prime
radical of $A$.

\begin{theorem} \label{semiprime}
For  every $t=0,1,\ldots, n-2$, the algebra $K_{0}[M_{t}]$ is
prime.
\end{theorem}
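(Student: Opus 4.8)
The plan is to separate the two extreme indices $t=0$ and $t=n-2$ from the generic range $t\in\{1,\dots,n-3\}$. For $t=0$ and $t=n-2$ the algebras $K_0[M_0]$ and $K_0[M_{n-2}]$ are already known to be prime by Remark~\ref{semiprimeM0}, so it remains to treat $t\in\{1,\dots,n-3\}$. For such $t$, recall from Corollary~\ref{wn3} and Corollary~\ref{struktura} that $K_0[M_t]=\mathcal{M}(K[S_t],A_t,B_t;P_t)$ is an algebra of matrix type over the commutative domain $K[S_t]$, where $S_t=\langle s\rangle$ with $s=x_nq_t$ is an infinite cyclic semigroup; moreover $P_t$ is a \emph{square} matrix (since $\chi_t$ gives a bijection $A_t\leftrightarrow B_t$ by Corollary~\ref{wn4}) which is symmetric (Remark~\ref{symmetric}) and has entries in $\langle s\rangle\cup\{\theta\}$. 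Writing $Q=K(s)$ for the field of fractions of $K[S_t]$ and $N=|A_t|=|B_t|$, the standard theory of algebras of matrix type (\cite{semalg}, Chapter~5) reduces primeness of $K_0[M_t]$ to the single statement that the sandwich matrix $P_t$ is non-degenerate, that is $\det P_t\neq 0$ in $Q$. Because $P_t$ is square over the domain $K[S_t]$, this is equivalent to $P_t$ having no nonzero left null vector over $K[S_t]$.

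The key point is that this non-degeneracy is exactly the content of Proposition~\ref{anihilator}. First I would note that right multiplication by the generators descends to well-defined operators on $K_0[M_t]$ (ensured by Lemma~\ref{x1 razy} together with the involutions $\chi_t$), and that, since $I_t$ is an ideal of $C_n$ and $x_1,\dots,x_n$ generate $C_n$, an induction on word length gives $(\alpha w')x_i=\alpha(w'x_i)$ in $K_0[M_t]$ for every $w'\in C_n$. Consequently, for any $\alpha\in K_0[M_t]$ the condition $\alpha x_i=0$ for all $i$ is equivalent to $\alpha\,m=0$ for every $m\in\tilde M_t$, i.e.\ to $\alpha$ being a right annihilator of the whole algebra $K_0[M_t]$, equivalently $\alpha\circ P_t=0$. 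Now a nonzero left null vector of $P_t$, say $\gamma=(\gamma_b)_{b\in B_t}$ with $\gamma_b\in K[S_t]$ and $\gamma^{\top}P_t=0$, corresponds precisely to the element $\alpha=\sum_{b}\gamma_b\,(1,b)$, whose support lies in $\{(x_nq_t)^kb:k\geqslant 1,\ b\in B_t\}$ (the ``row'' $a=1$) and which satisfies $\alpha\circ P_t=0$, hence $\alpha x_i=0$ for all $i$. Proposition~\ref{anihilator} then forces $\alpha=0$, so $\gamma=0$. Thus $P_t$ admits no nonzero left null vector and $\det P_t\neq 0$.

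It then remains to deduce primeness from $\det P_t\neq 0$; I would spell this out, either by invoking \cite{semalg}, Chapter~5, or directly as follows. Over $Q$ the matrix $P_t$ is invertible, so $\Phi(\alpha)=\alpha\circ P_t$ is an injective algebra homomorphism $K_0[M_t]\hookrightarrow M_N(Q)$, since $\Phi(\alpha*\beta)=\alpha P_t\beta P_t=\Phi(\alpha)\Phi(\beta)$, and its image $Q$-spans $M_N(Q)$ because $K_0[M_t]$ contains every $(s;a,b)$. Hence if $\alpha,\beta\in K_0[M_t]$ satisfy $\alpha*K_0[M_t]*\beta=0$, then $\Phi(\alpha)\,M_N(Q)\,\Phi(\beta)=0$ (using that $Q$ is central in $M_N(Q)$), and simplicity of $M_N(Q)$ gives $\Phi(\alpha)=0$ or $\Phi(\beta)=0$, whence $\alpha=0$ or $\beta=0$. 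Therefore $K_0[M_t]$ is prime.

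The main obstacle is concentrated in the translation carried out in the second paragraph: one must verify that the hypothesis of Proposition~\ref{anihilator} (support on the single row $a=1$, annihilated by every generator) matches exactly the non-existence of left null vectors of $P_t$, and that the right action of the generators genuinely computes the sandwich product modulo $I_t$. All the hard combinatorial work — that no such $\alpha$ can exist — has already been done in Proposition~\ref{anihilator} and the lemmas preceding it (in particular Lemma~\ref{multipl} and Lemma~\ref{6types}); what remains is this structural reformulation together with the routine matrix argument above, which replaces the direct determinant computations used for $n=3,4$ in Section~\ref{examples}.
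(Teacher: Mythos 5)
Your reduction of primeness to the nondegeneracy of the sandwich matrix $P_t$, and the final passage from $\det P_t\neq 0$ to primeness, are both correct and agree with the paper's last step (which invokes \cite{semalg}, Chapter~5, after showing $P_t$ is not a zero divisor over the domain $K[\langle x_nq_t\rangle]$). The genuine gap is in the middle: you claim that for $\alpha\in K_0[M_t]$ the condition $\alpha\circ P_t=0$ (equivalently $\alpha m=0$ for every $m\in \tilde M_t$) is \emph{equivalent} to $\alpha x_i=0$ for all $i$, and you use the implication from the former to the latter to place yourself in the hypotheses of Proposition~\ref{anihilator}. Only the converse implication is routine: by associativity, $\alpha x_i=0$ for all $i$ forces $\alpha w=0$ for every nonempty $w\in C_n$, hence $\alpha\circ P_t=0$. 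The direction you actually need is not obvious. The generators $x_i$ do not lie in $\tilde M_t$, and knowing that $\alpha$ kills every element of $\tilde M_t$ on the right says nothing a priori about the elements $\alpha x_i$, whose supports consist of ``partial products'' that need not vanish modulo $I_t$. Indeed, each $\alpha x_i$ is again a left annihilator of $M_t$ (since $x_iM_t\subseteq M_t\cup I_t$), so concluding $\alpha x_i=0$ from $\alpha M_t=0$ amounts to assuming the very nondegeneracy you are trying to prove.

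This missing implication is precisely where the bulk of the paper's proof of Theorem~\ref{semiprime} lives, and it is obtained by induction on $n$. After normalizing $\alpha$ (support in a single row $\langle x_nq_t\rangle B_t$ via Corollary~\ref{multip-rows}, $|\supp(\alpha)|$ minimal, $\alpha_{(iv)}=0$ via Lemma~\ref{6types}), the paper observes that $\alpha x_1$ lies in $K[\langle x_2,\ldots,x_{n-1},x_nx_1\rangle]\cong K[C_{n-1}]$ (Lemma~\ref{induction}), sits in a single row of the corresponding matrix structure $M_{t-1}^{(n-1)}$ for $C_{n-1}$, and annihilates it; the inductive hypothesis (the theorem for $C_{n-1}$) then yields $\alpha x_1=0$. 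The rotation automorphism $\sigma$ (Lemma~\ref{sigma}), combined again with Corollary~\ref{multip-rows}, converts this into $\alpha x_n=0$, and iterating gives $\alpha x_i=0$ for all $i$, at which point Proposition~\ref{anihilator} applies. Your proposal omits all of this: as written, Proposition~\ref{anihilator} is invoked with an unverified hypothesis, so the argument does not go through. (A smaller side issue: for general $i$ the product $\tilde M_t x_i$ may also meet the finite set $(I_{t-1}\setminus I_t)\setminus \tilde M_t$, so ``right multiplication by the generators descends to $K_0[M_t]$'' needs more care than a citation of Lemma~\ref{x1 razy}, which only treats $x_1M_t$ and, via $\chi_t$, $M_tx_t$.)
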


\begin{proof} In view of Remark~\ref{semiprimeM0}, $K_{0}[M_{n-2}]$ and
$K_{0}[M_{0}]$ are prime. In particular, the result holds for
$n=3$. We proceed by induction on $n$. Assume that $n>3$.
Moreover, we may assume that $1\leqslant t\leqslant n-3$.

First, we show that $K_{0}[M_{t}]$ is semiprime.  Suppose that
$\alpha K_{0}[M_{t}] \alpha =0 $ for some nonzero $\alpha \in
K_{0}[M_t]$. Then, by Theorem~\ref{tw3}, for every $u,w\in M_{t}$
we have $\supp(u\alpha w) \subseteq a\langle x_n q_t\rangle b$ for
some $a\in A_t, b\in B_t$. By Corollary~\ref{multip-rows}, if
$u\alpha w\neq 0$, then there exist $u',w'\in M_{t}$ such that
$0\neq u'u\alpha ww'\in \langle x_n q_t\rangle$, and $u'u\alpha
ww'\in {\mathcal P}(K[\langle x_n q_t\rangle] )=0$. It follows
that $u\alpha w=0$ for every $u,w\in M_t$.  Thus, either $\alpha
M_t=0$ or $M_{t}\alpha w=0$ for some $w\in M_t$ such that $\alpha
w\neq 0$.
 This means that $\alpha \circ P_t=0$ or $P_{t}\circ
\alpha w =0$ ($\circ $ stands for the ordinary matrix
multiplication, where $K_{0}[M_t]$ is interpreted as a subset of
the matrix algebra $M_{|A_t|}(K[\langle x_n q_t\rangle])$. Since
$P_t$ is a symmetric matrix by Remark~\ref{symmetric}, we may
assume that $\alpha \circ P_t=0$ for some nonzero $\alpha \in
K_{0}[M_{t}]$. Then $\alpha $ can be chosen so that $\supp
(\alpha)\subseteq a \langle x_n q_t\rangle B_{t}$ for some $a\in
A_{t}$. Hence, Corollary~\ref{multip-rows} allows us to assume
that $\supp (\alpha)\subseteq \langle x_n q_t\rangle B_{t}$.
Finally, we may assume that $|\supp (\alpha)|$ is minimal
possible.

We claim that $\alpha x_{1}=0$ in $K_{0}[M_t]$. By Lemma~\ref{x1
razy}, $\alpha x_{1}M_{t}=0$ in $K_{0}[M_t]$. From the proof of
Lemma~\ref{6types} we know that $ \alpha_{(iv)}x_1 =0$ in
$K_{0}[M_t]$ and $vx_1\in \langle x_n q_t\rangle B_{t}$ for every
$v\in \supp (\alpha)\setminus \supp(\alpha_{(iv)})$. So, $\alpha
x_1$ inherits the hypotheses on $\alpha$. Therefore, the minimal
choice of $\alpha$ allows us to assume that $\alpha_{(iv)}=0$.
Moreover, $\alpha x_{1}\in K[\langle x_2,\ldots, x_{n-1}, x_nx_1
\rangle ]$. But, from Lemma~\ref{induction} we know that the
latter is isomorphic to $K[C_{n-1}]$.  Moreover, under this
identification, $\supp (\alpha x_1) $  is contained in a single
row of the matrix structure $M_{t-1}^{(n-1)}$  defined for the
monoid $C_{n-1}$ as in Definition~\ref{matrix}. It is easy to see
that $\alpha x_1 M_{t-1}^{(n-1)} =0 $ in $K_{0}[M_{t-1}^{(n-1)}]$.
The inductive hypothesis implies that $\alpha x_1=0$. This proves
the claim.

From  Lemma~\ref{sigma} it follows that replacing $\alpha$ by
$(x_nq_t)^k\alpha$, for some $k\geqslant 1$, if necessary, we may
assume that $\sigma (\alpha )\in M_t$ and hence we get that
$\sigma (\alpha )$ lies in a single row of the matrix structure
$K_{0}[M_t]$. In other words, there exists $a\in A_t$ such that
$\supp (\alpha)\subseteq a\langle x_nq_t\rangle B_t$.
 Then, by
Corollary~\ref{multip-rows}, there exists $z\in M_t$ such that
$\supp (z\sigma (\alpha )) \subseteq \langle x_n q_t\rangle
B_{t}$. The proof of Lemma~\ref{6types} implies that for every
$w\in \supp (\alpha)$ either $wx_{1}\in a\langle x_n q_t\rangle
B_{t}$ or $wx_1 = 0$ in $K_{0}[M_t]$. Therefore, by the previous
paragraph, $z\sigma (\alpha) x_{1}=0$. Hence,
Lemma~\ref{multip-rows} implies that also $\sigma (\alpha x_{n})=
\sigma (\alpha) x_{1}=0$. Consequently, $\alpha x_{n}=0$.

Repeating this argument, we get that $\alpha x_{i}=0$ in
$K_{0}[M_{t}]$ for every $i=1,\ldots, n$. From
Proposition~\ref{anihilator} it now follows that $\alpha =0$, a
contradiction. Thus, we have proved that $K_{0}[M_t]$ is
semiprime. This implies that the sandwich matrix $P_{t}$ is not a
zero divisor (in the corresponding matrix ring $M_{n_t}(K[\langle
x_nq_t\rangle ])$, where $n_{t}=|A_{t}|$). Since $K[\langle
x_nq_t\rangle ]$ is a domain, it is known that $K_{0}[M_t]$ must
be prime, see \cite{semalg}, Chapter~5.
\end{proof}

 It follows that every matrix $P_{t}, t=0,\ldots, n-2$, has a
nonzero determinant, which seems to be inaccessible by a direct
proof. We are now ready for the main results of this section.

\begin{theorem} \label{noether-cycle}
For every $n\geqslant 3$ the algebra $K[C_n]$ is right and left
Noetherian.
\end{theorem}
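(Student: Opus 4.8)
The plan is to exploit the ideal chain of $K[C_n]$ induced by the monoid ideal chain of Corollary~\ref{struktura}. Since each $I_j$ is an ideal of $C_n$, the subspaces $K[I_j]$ form a chain of ideals
$$0\subseteq K[I_{n-3}]\subseteq \cdots\subseteq K[I_0]\subseteq K[I_{-1}]\subseteq K[C_n],$$
with $K[I_{j-1}]/K[I_j]\cong K_0[I_{j-1}/I_j]$ and $K[C_n]/K[I_{-1}]\cong K_0[C_n/I_{-1}]$. By Corollary~\ref{struktura} the top factor is finite-dimensional, while each remaining factor coincides, modulo a finite-dimensional summand, with the matrix-type algebra $K_0[M_j]=\mathcal{M}^0(K[S_j],A_j,B_j;P_j)$. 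Recall that $K[C_n]$ is right (left) Noetherian as soon as its regular module admits a finite filtration by ideals whose consecutive factors are Noetherian as right (left) $K[C_n]$-modules. So the task reduces to showing that each $K_0[M_j]$ is a right and left Noetherian ring and that this transfers to the factors as $K[C_n]$-modules.

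The first main step is to prove that every $K_0[M_t]$ is right and left Noetherian. Here I would use that $S_t=\langle x_nq_t\rangle$ is infinite cyclic, so $K[S_t]\cong xK[x]=:\omega$ inside the polynomial ring $D:=K[x]$, while the entries of $P_t$ lie in $\{1\}\cup\langle x_nq_t\rangle\cup\{\theta\}$, whence $P_t\in M_{n_t}(D)$ with $n_t=|A_t|=|B_t|$. By Theorem~\ref{semiprime} the algebra $K_0[M_t]$ is prime, so $P_t$ is not a zero divisor and $\det P_t\neq 0$. Interpreting $K_0[M_t]$ as $M_{n_t}(\omega)$ with product $\alpha\ast\beta=\alpha P_t\beta$, the map $\psi(\alpha)=\alpha P_t$ is an injective algebra homomorphism into $(M_{n_t}(D),\cdot)$, and using $\operatorname{adj}(P_t)P_t=\det(P_t)I$ together with $\omega D=\omega$ one checks
$$M_{n_t}\big(x\det(P_t)D\big)\ \subseteq\ \psi\big(K_0[M_t]\big)\ \subseteq\ M_{n_t}(D).$$
The lower term is an ideal $N$ of the Noetherian ring $T:=M_{n_t}(D)$ of finite $K$-codimension, so $K_0[M_t]\cong R$ for a subring $R$ with $N\subseteq R\subseteq T$.

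The ring-theoretic lemma I then need is that such an $R$ is right (and left) Noetherian. I would show that $N$ is Noetherian as a right $R$-module: given an ascending chain $M_1\subseteq M_2\subseteq\cdots$ of right $R$-submodules of $N$, the chain $M_iT$ of right $T$-submodules of $N$ stabilizes, say at $L$; since $N\subseteq R$ and $TN\subseteq N$ one gets $LN=M_iTN\subseteq M_iN\subseteq M_i\subseteq M_iT=L$ for large $i$, and $L/LN$ is a finitely generated $T/N$-module, hence finite-dimensional, forcing the original chain to stabilize. Because $R/N$ is finite-dimensional, intersecting any ascending chain of right ideals of $R$ with $N$ and projecting to $R/N$ shows that $R$ is right Noetherian (no identity is needed, $T$ being unital); the symmetric map $\alpha\mapsto P_t\alpha$ together with left modules gives the left-sided statement. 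As a byproduct the embedding $\psi$ yields a faithful representation of $K_0[M_t]$ by matrices over $K[x]$ and records that every $\det P_t\neq 0$.

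Finally I would assemble the filtration. Each factor $K[I_{j-1}]/K[I_j]$ is, as a right $K[C_n]$-module, an extension of a finite-dimensional module by a module on which $K[C_n]$ acts through the internal matrix-type multiplication of $M_j$; since $K_0[M_j]$ is right Noetherian and the discrepancy between $I_{j-1}/I_j$ and $M_j$ is finite by Corollary~\ref{struktura}, each factor is a Noetherian right $K[C_n]$-module, and likewise on the left. The extension principle then gives that $K[C_n]$ is right and left Noetherian. I expect the main obstacle to be precisely this last compatibility: one must verify that the right action of $C_n$ on the Rees factors $I_{j-1}/I_j$ agrees, up to the finitely many exceptional elements, with the multiplication of the semigroup of matrix type $M_j$, so that Noetherianity of the \emph{ring} $K_0[M_j]$ propagates to Noetherianity of the factor as a $K[C_n]$-\emph{module}; the non-unital nature of these matrix-type algebras and the careful bookkeeping of the finite-dimensional corrections are the delicate points, whereas the sandwich lemma and the embedding $\psi$ are comparatively routine once $\det P_t\neq 0$ is known.
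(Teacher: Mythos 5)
Your argument is correct, but it is a genuinely different route from the one the paper takes. The paper's proof is top--down and very short: by \cite{stafford} and \cite{mecel_okninski1}, the semiprime affine PI-algebra $K[C_{n}]/{\mathcal P}(K[C_{n}])$ of Gelfand--Kirillov dimension one is Noetherian, and the prime radical is then shown to be finite dimensional because ${\mathcal P}(K[C_n])\cap K[\tilde M]=0$ --- any nonzero element there would project to a nonzero element of ${\mathcal P}(K_0[M_i])$ for the minimal $i$ meeting its support, contradicting Theorem~\ref{semiprime} --- while $C_n\setminus\tilde M$ is finite by Proposition~\ref{stw1}. You instead work bottom--up through the ideal chain: you extract from Theorem~\ref{semiprime} that $P_t$ is not a zero divisor, hence $\det P_t\neq 0$ over $K[x]$, embed $K_0[M_t]$ via $\alpha\mapsto\alpha P_t$ between the finite-codimension ideal $x\det(P_t)M_{n_t}(K[x])$ and $M_{n_t}(K[x])$, apply a conductor-type sandwich lemma to get Noetherianity of each $K_0[M_t]$, and then lift through the filtration. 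Both proofs hinge on Theorem~\ref{semiprime}; yours avoids the external Small--Stafford--Warfield machinery at the cost of more bookkeeping, and yields as a bonus an explicit faithful matrix representation of each layer $K_0[M_t]$ over $K[x]$. The compatibility you single out as the delicate point is in fact already supplied by the paper: Theorem~\ref{tw3} and Definition~\ref{matrix} say precisely that the multiplication of $M_j$ is the restriction of the multiplication of $C_n/I_j$ to $\tilde M_j\cup\{\theta\}$, so for any right $C_n$-submodule $V$ of $K_0[I_{j-1}/I_j]$ the intersection $V\cap K_0[M_j]$ is a right ideal of the ring $K_0[M_j]$, and the quotient embeds into the finite-dimensional space $K_0[I_{j-1}/I_j]/K_0[M_j]$ (Corollary~\ref{wn6}); the two extreme layers are handled by Lemma~\ref{lemI_{-1}} and Corollary~\ref{M_{n-2}}. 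So your assembly step closes, and the only genuine caveat is to keep the non-unital conventions consistent (acc on right ideals in the sense of additive subgroups closed under right multiplication), which your sandwich lemma does handle since $M_{n_t}(K[x])$ is unital.
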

\begin{proof}
From  \cite{stafford} we know that the algebra $K[C_{n}]/{\mathcal
P}(K[C_{n}])$ is right and left Noetherian because (by
\cite{mecel_okninski1}) it is a semiprime algebra of
Gelfand--Kirillov dimension $1$.

Suppose that $0\neq \beta \in {\mathcal P}(K[C_{n}])\cap K[\tilde
M]$, where $\tilde M=\bigcup_{i=0}^{n-2}\tilde M_{i}$, as defined
in Section~\ref{ideal}. Let $i$ be the minimal integer such that
$\supp (\beta)\cap \tilde M_{i}\neq \emptyset$. Passing to
$K[C_{n}]/I_{i}$, we get a nonzero element $\overline{\beta} \in
{\mathcal P}(K_{0}[C_n/I_i])\cap K_{0}[M_{i}] \subseteq {\mathcal
P}(K_{0}[M_{i}])$ (see Corollary~\ref{struktura}). This
contradicts Theorem~\ref{semiprime}. Hence, ${\mathcal
P}(K[C_{n}])\cap K[\tilde M]=0$. Since $C_n\setminus \tilde M$ is
finite by Proposition~\ref{stw1}, it follows that ${\mathcal
P}(K[C_n])$ is finite dimensional. Therefore, $K[C_{n}]$ also is
right and left Noetherian.
\end{proof}

\begin{theorem}\label{Noetherian} Let $\Theta$ be a finite oriented graph. Then the
following conditions are equivalent
\begin{enumerate}
\item[1)] $K[\HK_{\Theta}]$ is right Noetherian,
\item[2)] $K[\HK_{\Theta}]$ is left Noetherian,
\item[3)] each of the connected components of $\Theta$ is either an oriented cycle or an acyclic graph.
\end{enumerate}
\end{theorem}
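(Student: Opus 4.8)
The plan is to run the equivalences through the two genuine implications (3)$\Rightarrow$(1) and $\neg(3)\Rightarrow\neg(1)$, and then to obtain the statements about left ideals for free. To pass between ``left'' and ``right'' I would use the anti-automorphism given by reversing all arrows: if $\Theta^{\mathrm{op}}$ denotes $\Theta$ with every arrow reversed, then the relation $x_ix_jx_i=x_jx_ix_j=x_ix_j$ for $i\to j$ becomes, after reversing products, the relation attached to $j\to i$, so $\HK_\Theta^{\mathrm{op}}\cong\HK_{\Theta^{\mathrm{op}}}$ and hence $K[\HK_\Theta]^{\mathrm{op}}\cong K[\HK_{\Theta^{\mathrm{op}}}]$. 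Since condition (3) is manifestly invariant under $\Theta\mapsto\Theta^{\mathrm{op}}$, once (3)$\Leftrightarrow$(1) is known, applying it to $\Theta^{\mathrm{op}}$ and using that right Noetherianity of $R^{\mathrm{op}}$ is left Noetherianity of $R$ yields (3)$\Leftrightarrow$(2). Thus it is enough to characterize right Noetherianity.

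For (3)$\Rightarrow$(1), I would first reduce to connected components. If $\Theta=\Theta_1\sqcup\dots\sqcup\Theta_m$, then generators in distinct components are unconnected, hence commute, and no other relations link them; therefore $\HK_\Theta\cong\HK_{\Theta_1}\times\cdots\times\HK_{\Theta_m}$ and $K[\HK_\Theta]\cong K[\HK_{\Theta_1}]\otimes_K\cdots\otimes_K K[\HK_{\Theta_m}]$. Each factor is either finite dimensional (when $\Theta_i$ is acyclic, since then $\HK_{\Theta_i}$ is finite) or equal to $K[C_{n_i}]$ (when $\Theta_i$ is an oriented cycle), which is Noetherian by Theorem~\ref{noether-cycle}. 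To force the tensor product to be Noetherian I would show each factor is module-finite over an affine commutative central subalgebra $Z_i$: trivially $Z_i=K$ in the finite dimensional case, while for $K[C_{n_i}]$ this follows from its being an affine Noetherian PI-algebra of Gelfand--Kirillov dimension one. Then $\bigotimes_i Z_i$ is an affine commutative, hence Noetherian, central subalgebra, over which $\bigotimes_i K[\HK_{\Theta_i}]$ is module-finite; module-finiteness over a central Noetherian subalgebra makes a ring Noetherian on both sides.

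For $\neg(3)\Rightarrow\neg(1)$ I would use the retraction that kills generators: for any induced subgraph $\Theta''$, sending $x_i\mapsto 1$ for the vertices outside $\Theta''$ and $x_i\mapsto x_i$ otherwise respects every defining relation, giving an epimorphism $\HK_\Theta\twoheadrightarrow\HK_{\Theta''}$ and hence an algebra epimorphism $K[\HK_\Theta]\twoheadrightarrow K[\HK_{\Theta''}]$. As homomorphic images of right Noetherian rings are right Noetherian, it suffices to locate, inside a bad component $\Theta_0$ (one that is neither acyclic nor a single oriented cycle), an induced subgraph $\Theta''$ whose algebra is not right Noetherian. Such a $\Theta_0$ contains a directed cycle and, failing to be a single cycle, carries one further incident vertex or edge; a case analysis reduces $\Theta''$ to an oriented cycle with exactly one extra incident arrow or unoriented edge. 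The incoming-arrow prototype $y\to x_1$ is exactly Lemma~\ref{nonnoether}, which already breaks both chain conditions; the outgoing-arrow case is its opposite graph and is covered by the duality above; and the unoriented-edge case is handled by the same explicit ascending chain $w_k=(x_n\cdots x_1)^k y$ as in Lemma~\ref{nonnoether}.

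The hard part will be twofold. The structural crux is the tensor-product step, namely verifying that $K[C_n]$ is module-finite over an affine commutative central subalgebra; this is where the matrix-type description of Corollary~\ref{struktura} and the primeness of Theorem~\ref{semiprime} are essential, since a mere embedding of $K[C_n]$ into a matrix ring would not suffice (subalgebras of Noetherian rings need not be Noetherian, so embeddings do not survive tensoring). The combinatorial crux is organizing the reduction in $\neg(3)\Rightarrow\neg(1)$ so that every bad component genuinely contains one of the few non-Noetherian prototypes as an induced subgraph; chords of the chosen directed cycle and multiple attachments must be ruled out or absorbed, which is exactly the point where more care is needed than in the single clean configuration recorded in Lemma~\ref{nonnoether}.
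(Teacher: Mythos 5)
Your overall architecture coincides with the paper's: reduce to connected components, handle a single component via Theorem~\ref{noether-cycle} (cycle) or finiteness (acyclic), and for the converse retract $\HK_\Theta$ onto the Hecke--Kiselman monoid of an induced ``cycle plus one pendant arrow'' subgraph and invoke Lemma~\ref{nonnoether}; your use of $\Theta^{\mathrm{op}}$ to trade left for right is a clean substitute for the paper's remark that $\Theta'$ and $\Theta''$ are antiisomorphic (and your worry about an unoriented pendant edge is vacuous, since $\Theta$ is assumed oriented). However, there is a genuine gap in your treatment of the tensor-product step. You propose to make $\bigotimes_i K[\HK_{\Theta_i}]$ Noetherian by exhibiting each $K[C_{n_i}]$ as a finite module over an affine commutative \emph{central} subalgebra, and you assert that this ``follows from its being an affine Noetherian PI-algebra of Gelfand--Kirillov dimension one.'' That is not a theorem: the Small--Stafford--Warfield result \cite{stafford} yields module-finiteness over the (Noetherian) center only for \emph{semiprime} affine algebras of Gelfand--Kirillov dimension one, and semiprimeness of $K[C_n]$ is precisely the open Question at the end of the paper (the authors state explicitly that module-finiteness over the center would be a \emph{strengthening} of Theorem~\ref{noether-cycle} available only if that question has a positive answer; it is currently known only for $n=3$ via Theorem~\ref{semiprimeC3}). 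What the proof of Theorem~\ref{noether-cycle} actually gives is that ${\mathcal P}(K[C_n])$ is finite dimensional and $K[C_n]/{\mathcal P}(K[C_n])$ is module-finite over its center, but central elements of the quotient need not lift to central elements of $K[C_n]$, so your $Z_i$ is not known to exist.

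The paper closes exactly this hole by a different device: since each $K[\HK_{\Theta_i}]$ is an affine right and left Noetherian PI-algebra, Proposition~4.4 of \cite{artin} (every finitely generated right Noetherian PI-algebra is universally right Noetherian) shows directly that the tensor product of two such algebras is again Noetherian, with no central subalgebra required. If you replace your central-subalgebra argument by this citation (or by any other proof that affine Noetherian PI-algebras are stable under $\otimes_K$), the rest of your outline goes through at the same level of rigor as the paper's; in particular, your honest flagging of the need to rule out chords and multiple attachments when extracting the induced prototype of Lemma~\ref{nonnoether} is no worse than the paper's own ``it is easy to see.''
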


\begin{proof}
Assume that condition 3) is satisfied. From \cite{mecel_okninski2}
we know that $\HK_{\Theta}$ is a PI--algebra. In order to prove
conditions 1) and  2) we proceed by induction on the number $k$ of
connected components of $\Theta$. If $k=1$ then the assertion
follows from Theorem~\ref{noether-cycle} and from the fact that
$\HK_{\Theta}$ is finite if $\Theta$ is an acyclic graph. Assume
that $k>1$. Let $\Theta_1$ be a connected component of $\Theta $
and let $ \Theta_2 = \Theta \setminus \Theta_1$. Clearly,
$\HK_{\Theta }$ is a direct product of $\HK_{\Theta_1}$ and
$\HK_{\Theta_2}$, so that $K[\HK_{\Theta }]\cong
K[\HK_{\Theta_1}]\otimes K[\HK_{\Theta_2}]$. By the induction
hypothesis, $\HK_{\Theta_i}$ is (right and left) Noetherian and it
is a PI--algebra, for $i=1,2$. Then $K[\HK_{\Theta}]$ is a
Noetherian algebra by \cite{artin}, Proposition 4.4 (which says
that every finitely generated right Noetherian PI--algebra is a
universally right Noetherian algebra).

Assume that 3) is not satisfied. Then $\Theta$ contains a subgraph
$\Theta'$ that is of the form described in Lemma~\ref{nonnoether}
or the graph $\Theta ''$ obtained from $\Theta'$ by inverting all
arrows. It is easy to see that in this case $K[\HK_{\Theta'}]$,
respectively $K[\HK_{\Theta ''}]$, is a homomorphic image of
$K[\HK_{\Theta}]$, as noticed in \cite{type A}. Moreover,
$\Theta'$ and $\Theta ''$ are antiisomorphic. Therefore,
Lemma~\ref{nonnoether} implies that $K[\HK_{\Theta}]$ is neither
right nor left Noetherian. The result follows.
\end{proof}

From the proof it actually follows that the conditions in
Theorem~\ref{Noetherian} are satisfied if and only if the monoid
$\HK_{\Theta}$ has acc on right (left) ideals.

Since $K[C_n]$ is a PI--algebra \cite{mecel_okninski1}, we derive
the following direct consequence from the result of Anan'in
\cite{ananin}. It is of interest in view of the results on
faithful representations of various special classes of
Hecke--Kiselman monoids, obtained in \cite{type
A},\cite{maz},\cite{OnK}.

\begin{corollary}
$K[C_n]$ embeds into the matrix algebra $M_{r}(L)$ over a field
$L$, for some $r\geqslant 1$.
\end{corollary}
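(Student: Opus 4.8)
The plan is to verify that $K[C_n]$ satisfies exactly the three hypotheses required by Anan'in's representability theorem \cite{ananin} and then to invoke that theorem. First, $K[C_n]$ is an affine $K$-algebra, since $C_n$ is generated as a monoid by $x_1,\ldots,x_n$, so $K[C_n]$ is finitely generated by these elements. Second, it is a PI-algebra by \cite{mecel_okninski1} (where it is shown, moreover, to have Gelfand--Kirillov dimension one). Third, by Theorem~\ref{noether-cycle} it is right and left Noetherian. These are precisely the conditions under which Anan'in's result applies.

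Anan'in's theorem asserts that every finitely generated right Noetherian PI-algebra over a field $K$ is representable, that is, it embeds into a matrix algebra $M_r(L)$ over a field extension $L$ of $K$. Applying this to $A=K[C_n]$ yields at once the desired embedding $K[C_n]\hookrightarrow M_r(L)$, which is what we want to prove. Thus the heart of the argument is simply the checklist above together with the citation; no additional structural input beyond Theorem~\ref{noether-cycle} is needed.

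The point that deserves a word of care is the passage to coefficients in a \emph{field} rather than merely a commutative Noetherian ring, and this is exactly the content that Anan'in's theorem supplies. As intuition for why no genuine obstacle arises, recall from the proof of Theorem~\ref{noether-cycle} that $\mathcal{P}(K[C_n])$ is finite dimensional; the semiprime quotient $K[C_n]/\mathcal{P}(K[C_n])$ is a Noetherian PI-algebra of Gelfand--Kirillov dimension one, hence a Goldie algebra whose classical ring of quotients is semisimple Artinian and therefore embeds into matrices over a field, while the finite-dimensional nilpotent part is absorbed by the representability construction. I expect this field reduction to be the only delicate step, and it is handled once and for all by \cite{ananin}, so the corollary follows as a direct consequence.
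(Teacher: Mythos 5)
Your proposal is correct and follows exactly the paper's argument: the corollary is obtained by combining the PI property from \cite{mecel_okninski1} and the Noetherian property from Theorem~\ref{noether-cycle}, and then invoking Anan'in's representability theorem for affine Noetherian PI-algebras. The additional remarks on the prime radical are harmless intuition but are not needed for the deduction.
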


We conclude with some open questions.

\begin{question} Is $K[C_n]$ semiprime for every $n\geqslant 3$?
\end{question}

If this is the case, the main result of \cite{stafford} allows to
strengthen the assertion of Theorem~\ref{noether-cycle}: in this
case $K[C_n]$ is a finitely generated module over its Noetherian
center; and hence also of Theorem~\ref{Noetherian}. Notice that
this is the case if $n=3$, by Theorem~\ref{semiprimeC3}. Moreover,
the proof of Theorem~\ref{noether-cycle} shows that ${\mathcal
P}(K[C_n])$ is finite dimensional.

Because of the main results of Section~\ref{ideal}, algebras
$K[C_n]$ share the flavor of affine cellular algebras, introduced
in \cite{koenig-xi}, also see \cite{lomp}. This motivates our
second question.
\begin{question} Does $K[C_n]$ admit a structure of a cellular
algebra?
\end{question}

\end{document}